\newcommand{\<}{\langle}
\renewcommand{\>}{\rangle}
\renewcommand{\baselinestretch}{1.5}
\newtheorem{theorem}{Theorem}[section]
\newtheorem{corollary}[theorem]{Corollary}
\newtheorem{definition}[theorem]{Definition}
\newtheorem{lemma}[theorem]{Lemma}
\newtheorem{proposition}[theorem]{Proposition}
\newtheorem{remark}[theorem]{Remark}
\begin{document}
\pagenumbering{roman}

%

\thispagestyle{empty}

\mbox{}
\begin{center}

\textbf{{\large Universidade Federal da Paraíba}}

\textbf{{\large Programa de Pós-Graduação em Matemática}}

\textbf{{\large Doutorado em Matemática}}

\vspace {4.6cm}

\textbf{{\huge Fractional one-sided measure theoretic second-order elliptic operators and applications to stochastic partial differential equations}}

\vspace {4.0cm}

\textbf{por}

\end{center}

\vspace {0.3cm}

\begin{center}
\textbf{{\Large Kelvin Jhonson Ribeiro de Sousa Almeida Silva}}
\end{center}

\vspace {4.0cm}
\begin{center}
\textbf{João Pessoa - PB}

\textbf{Fevereiro/2022}\\[0pt]
\end{center}

\newpage

\setcounter{page}{2}
\pagestyle{plain}

\begin{center}
\textbf{\huge  Fractional one-sided measure theoretic second-order elliptic operators and applications to stochastic partial differential equations}
\end{center}

\begin{center}
\textbf{por}
\end{center}

\begin{center}
\textbf{{\large Kelvin Jhonson Ribeiro de Sousa Almeida Silva}}{
\setcounter{footnote}{1}\renewcommand{%
\thefootnote}{\fnsymbol{footnote}}\footnote{Este trabalho contou com
apoio financeiro da CAPES.}}
\end{center}

\vspace {0.1cm}

\begin{center}
\textbf{sob orientação do}
\end{center}

\vspace {0.1cm}

\begin{center}
\textbf{{\large Prof. Dr. Alexandre de Bustamante Simas}}
\end{center}




\vspace{3.0cm} {\hspace{4cm}
\begin{minipage}[t]{10cm}
Tese apresentada ao Corpo Docente do Programa de Pós-Graduação em Matemática - UFPB,
como requisito parcial para obtenção do título de Doutor em Matemática.
\end{minipage}

\vspace{4.0cm}}

\begin{center}
\textbf{João Pessoa - PB}

\textbf{Fevereiro/2022}\\[0pt]
\end{center}

%

\newpage

\mbox{} \vspace{2.0cm}

\begin{center} \textbf{{\huge Abstract}} \end{center}

\vspace{1.5cm}

\noindent In this work we introduce and study fractional measure theoretic elliptic operators on the torus and a new stochastic process named W-Brownian motion. We establish some regularity and spectral results related to the operators cited above, more precisely, we were able to provide sharp bounds for the growth rate of eigenvalues to an associated eigenvalue problem. Moreover, we show how the Cameron-Martin space associated to the W-Brownian motion relates to Sobolev spaces connected with the elliptic operators mentioned above. Finally applications of the theory developed on stochastic partial differential equations are given.

\vspace{1.0cm}
\noindent \textbf{Keywords:} Fractional Sobolev Spaces; Brownian Motion; Elliptic Operators.

%

\newpage

\mbox{} \vspace{2.0cm}

\begin{center} \textbf{{\huge Resumo}} \end{center}

\vspace{1.5cm}

\noindent Neste trabalho apresentamos e estudamos os operadores elípticos unilaterais fracionários no sentido teorético da medida sobre o toro e um novo processo estocástico denominado movimento W-Browniano. Estabelecemos alguns resultados de regularidade e espectrais relacionados aos operadores citados acima, mais precisamente, fomos capazes de fornecer limites nítidos para a taxa de crescimento de autovalores para um problema de autovalor associado. Além disso, mostramos como o espaço de Cameron-Martin associado ao movimento W-Browniano se relaciona com espaços Sobolev naturalmente associado aos operadores elípticos mencionados anteriormente. Finalmente, fornecemos algumas aplicações da teoria deselvolvida em equações diferenciais parciais estocásticas.

\vspace{1.0cm}
\noindent \textbf{Keywords:} Espaços de Sobolev Fracionários; Movimento Browniano; Operadores Elípticos.
%

\newpage
\mbox{} \vspace{1.0cm}

\noindent \textbf{{\huge Agradecimentos}}

\vspace{1.5cm}
\noindent

Agradeço primeiramente a DEUS por ter abençoado meu caminho e ter permitido que eu chegasse até aqui, garantindo a minha saúde e paz fronte as eventuais adversidades que nem mesmo pude perceber que estavam presentes.

\vspace{0.2cm}
\noindent

Ao professor Alexandre de Bustamente Simas por toda a paciência e apoio, pelos valorosos conselhos de vida e também por todo conhecimento matemático que ele compartilhou comigo ao longo desta etapa de minha vida. Sua participação foi de fundamental importância para a realização deste sonho.

\vspace{0.2cm}
\noindent

Aos meus famíliaares por todo apoio moral ao longo deste período do Doutorado e também por entenderem os evetuais momentos em que precisei estar um pouco ausente, em especial a minha querida mãe Ivanda Ribeiro de Sousa.

\vspace{0.2cm}
\noindent

A minha amada Mirian dos Santos Mendes por ter acreditado no meu potencial, por todo o companheirismo e por estar do meu lado em um dos momentos mais difíceis que tive ao longo do Doutorado. Meus agradecimentos também a sua família por todo o apoio.

\vspace{0.2cm}
\noindent

A todos os amigos, colegas e professores que tive a oportunidade de conhecer e estudar ao longo de todo meu período acadêmico, o acolhimento de vocês foi essencial. Agradeço a cada um destes que, entre um copo e outro de café, puderam me ensinar um pouco mais sobre a vida e me tornaram uma pessoa melhor.
\vspace{0.2cm}
\noindent

A CAPES, pelo apoio financeiro.

\vspace{0.2cm}
\noindent

A todos que de alguma forma me ajudaram.

\newpage

\mbox{} \vspace{5.0cm}


\vspace{12.0 cm} {\hspace{4.0 cm}
\begin{minipage}[t]{10.0 cm}
{\it ``Que ninguém se engane, só se consegue a simplicidade através de muito trabalho.''}\\
\begin{flushright}
{\it Clarice Lispector}
\end{flushright}
\end{minipage}

\newpage

\mbox{} \vspace{3.0cm}

\noindent \textbf{{\huge Dedicatória}}

\vspace{15.0 cm} {\hspace{7.0 cm}
\begin{minipage}[t]{7.0 cm}
A todos os meus professores, e colegas do meio acadêmico.
\end{minipage}

\renewcommand{\baselinestretch}{1.2}
\tableofcontents

\newpage \setcounter{page}{1}
\pagenumbering{arabic}

%

\pagenumbering{arabic} \setcounter{page}{1} \thispagestyle{empty} 
\addcontentsline{toc}{section}{{\bf Introduction}}
\chapter*{Introduction}

A class of second order differential operators $\frac{d}{dV}\frac{d}{dx}$ introduced by W. Feller in \cite{feller} gave rise to research in many different mathematical fields, for instance in the field of stochastic differential equations, real analysis, fractal geometry, etc. We can cite \cite{uta, franco, farfansimasvalentim} and \cite{wsimas} to name a few. On the other hand, \cite{pouso}, along with some other works from the same authors, have focused their attention in an interesting way to define a consistent local definition of the derivative in the sense of Stieltjes in such a way that some useful results in analysis can be re-obtained as a consequence of local definition of such Stieltjes derivatives. On another direction, Franco and Landim \cite{franco} studied the formal adjoint of Feller's original operator, which is given by $\frac{d}{dx}\frac{d}{dW}$, that is, in a way (since \cite{uta} requires the measure induced by $W$ to be nonatomic, whereas in \cite{franco} there is no such a restriction), a particular case of the operator $\frac{d}{dV}\frac{d}{dW}$ introduced by Uta in \cite{uta}. However, unlike Uta \cite{uta}, Franco and Landim \cite{franco} started with a local operator that was defined in a manner that it depends on a type of Stieltjes derivative of càdlàg functions. The link between the operators in \cite{uta} and \cite{franco} is very interesting and some intriguing questions emerge from the different approaches adopted to the study these second-order operators.

On the second chapter of this thesis we provide a general operator that can connect, in some sense, all of the operators above. We begin by considering the approach of \cite{franco} to generalize the formal operator $\frac{d}{dV}\frac{d}{dW}$. This operator, which comes from a local definition, agrees with the ``global'' operator introduced in \cite{uta} in the case the measures induced by $V$ and $W$ are atomless. Note that we do not require such a restriction. Further, we work with a generalization of the one-sided derivatives, which can be understood as a one-sided derivative in the Stieltjes sense. More precisely, we fix two increasing functions $W,V:\mathbb{R}\to\mathbb{R}$ where $W$ is a càdlàd (a right-continuous function with left limits) function and $V$ is a càglàd (a left-continuous function with right limits) function that satisfy some periodic conditions to properly define finite Borel measures $dW$ and $dV$ on $\mathbb{T}=\mathbb{R}/\mathbb{Z}$. With the assumptions on $V$ and $W$ in mind, we say that $f:\mathbb{T}\to\mathbb{R}$ is $W$-left differentiable or $V$-right differentiable, respectively, according to the existence of the limits 
$$D^{-}_{W}f(x):=\lim_{h\to 0^{-}}\frac{f(x+h)-f(x)}{W(x+h)-W(x)}$$
or $$D^{+}_{V}f(x):=\lim_{h\to 0^{+}}\frac{f(x+h)-f(x)}{V(x+h)-V(x)}$$
for all $x\in\mathbb{T}.$ By using these notions of generalized one-sided derivatives, we introduce the domain $\mathfrak{D}_{W,V}$ of the new symmetric and non-positive second order operator $$D^{+}_{V}(D^{-}_{W}):\mathfrak{D}_{W,V}\subset L^{2}_{V}(\mathbb{T})\to L^{2}_{V}(\mathbb{T}).$$ At this point is very important pay attention on the subscript ``W,V'', whenever we change the order of the functions, it means that we are considering the ``dual'' scenario, in which we reverse the operations between $V$ and $W$. Furthermore, the operator $D^{+}_{V}(D^{-}_{W})$ satisfies all the required hypothesis to ensure the existence of the Friedrichs extension. Let, then,  $\left(I-\Delta_{W,V}\right):\mathcal{D}_{W,V}\subset L^{2}_{V}(\mathbb{T})\to L^{2}_{V}(\mathbb{T})$ be the Friedrichs extension of the operator $I-D_{V}^+D_W^-$, and let the associated energetic space be given by $H_{W,V}(\mathbb{T})$. Moreover, define the formal second order operator $$\Delta_{W,V}:=I-(I-\Delta_{W,V}):\mathcal{D}_{W,V}\subset L^{2}_{V}(\mathbb{T})\to L^{2}_{V}(\mathbb{T})$$ 
that can be understood as the model for the measure theoretic Laplacian $\frac{d}{dV}\frac{d}{dW}$ introduced on the torus. One notable result we were able to obtain is refers to the regularity of the eigenvectors of the problem 
\begin{equation*}
		\begin{cases}
			-\Delta_{W,V}\phi = \lambda\phi,\,\lambda\in\mathbb{R}\setminus\{0\};\\
			u\in\mathcal{D}_{W,V}(\mathbb{T}).
		\end{cases}
	\end{equation*}
We proved that each solution $\phi$ of the above problem belongs to the space $C^{\infty}_{W,V}(\mathbb{T})$, that is the space of càdlàg functions $f:\mathbb{T}\to\mathbb{R}$ such that $D^{-}_{W}f$ exists and is a zero-mean càglàd function, $D^{+}_{V}(D^{-}_{W}f)$ exists and is zero-mean càdlàg function, $D^{-}_{W}[D^{+}_{V}(D^{-}_{W}f)]$ exists and is a zero-mean càglàd function, and so on. The space $C^{\infty}_{W,V}(\mathbb{T})$ will work as a new space of test functions to define the notion of $W$-left weak derivative that will allow us to characterize the space $H_{W.V}(\mathbb{T})$ as a $W$-$V$-Sobolev space. Furthermore, we can characterize the space $H_{W,V}(\mathbb{T})$ in terms of the Fourier coefficients of the functions. More precisely, we prove that $$H_{W,V}(\mathbb{T})=\left\{f\in L^{2}_{V}(\mathbb{T}); f=\alpha_{0}+\sum_{i=1}^{\infty}\alpha_{i}\nu_{i}; \sum_{i=1}^{\infty}\lambda_{i}\alpha_{i}^2<\infty\right\},$$
where $\{\lambda_{i}\}_{i\geq 1}$ are the eigenvalues of the operator $-\Delta_{W,V}$ which exist due to the compact immersion of $H_{W,V}(\mathbb{T})$ into $L^{2}_{V}(\mathbb{T})$, which we proved in Theorem $\ref{compactemb}.$ After some applications on partial differential equations, we introduce, on the subsection 2.7 a new generalization of the Brownian motion, and called $W$-Brownian motion, denoted by $B_{W}$, which possess the following properties:
\begin{enumerate}
	\item The finite-dimensional distributions of $B_W$ are Gaussian;
	\item The sample paths of $B_W$ are c\`adl\`ag and may have jumps.
\end{enumerate}
Satisfy both conditions at the same time are uncommon features in the literature in the range of generalizations of the Brownian motion. Moreover, we establish a very deep and intimate connection between the $W$-Brownian motion when we obtain the Cameron-Martin space of $B_{W}$ as the space $H_{W,V,\mathcal{D}}(\mathbb{T}) = \{f\in H_{W,V}(\mathbb{T}): f(0) = 0\}$. This shows that the Cameron-Martin space of $B_W$ is, actually, our $W$-$V$-Sobolev space with a Dirichlet condition. This, in turn, shows that the $W$-Brownian motion can be obtained as the process whose distribution in $L^{2}_{V}(\mathbb{T})$ is the unique Gaussian measure associated to the Cameron-Martin space $H_{W,V,\mathcal{D}}(\mathbb{T})$. Finally, some applications to stochastic partial differential equations are presented.

In Chapter 3, we provide sufficient conditions for a functions on $C^\infty_{W,V}(\mathbb{T})$ to be represented as an analogue of MacLaurin series expansion. This representation can be seen as the $W$-$V$-version of analytical functions. Thus, we provide sufficient conditions for a function to be analytical in some sense.  More precisely, if we define $F_{1}(x)=W(x)$ and recursively define
\begin{equation*}
    F_{n}(x,s)=\begin{cases}
         \displaystyle\int_{[0,s)}\left[F_{n-1}(x,x)-F_{n-1}(x,\xi)\right]dV(\xi),\; \mbox{n even};\\
        \displaystyle\int_{(0,s]}\left[F_{n-1}(x,x)-F_{n-1}(x,\xi)\right]dW(\xi),\; \mbox{n odd},
    \end{cases}
\end{equation*}
then the sequence $\{F_{k}(x,x)\}_{k\geq 1}$ plays the role of the polynomial terms $\dfrac{x^{n}}{n!}$ in the classical MacLaurin expansion for $W(x)=V(x)=x$. We provide suitable assumptions on the decay of the sequence given by the one-sided derivatives of $f$ at $0$ denoted by $D^{n}_{W,V}f(0)$, under which, we can obtain the uniform convergence of $\sum_{k=1}^{k}D^{(n)}_{W,V}f(0)F_{n}(x,x)$ on $\mathbb{T}$. We also found the compatibility conditions to ensure that the series will be well defined on $\mathbb{T}$. Moreover, all these results allowed us to determine a representation of the eigenvectors $\nu_{i}$ of $-\Delta_{W,V}$ in terms of the measure theoretic trigonometric functions $S_{W,V}(\alpha,x)$, $C_{W,V}(\alpha,x)$, $S_{V,W}(\alpha,x)$ and $C_{V,W}(\alpha,x)$ that are generalizations of the  classical trigonometric functions $\cos(\alpha x)$ and $\sin(\alpha x)$. In this chapter we also introduce the fractional order $W$-$V$-Sobolev spaces, which as defined in terms of their Fourier coefficients: $$\mathscr{D}(I-\Delta_{W,V})^{s/2}:=H^s_{W,V}(\mathbb{T})=\left\{f\in L^{2}_{V}(\mathbb{T}); f=\alpha_{0}+\sum_{i=1}^{\infty}\alpha_{i}\nu_{i}; \sum_{i=1}^{\infty}\lambda_{i}^s\alpha_{i}^2<\infty\right\},$$
for $s\in\mathbb{R}$. By using measure trigonometrical characterization of the eigenvectors of $-\Delta_{W,V}$ we were able to prove, by using the theory of entire functions, the existence of $\rho\in(0,1/2]$ such that $$\sum_{i=1}^{\infty}\frac{1}{\lambda_{i}^s}$$
forall $s>\rho$, or equivalently, that the operator $(I-\Delta_{W,V})^{s/2}$ is trace class for $s>\rho.$ We then provide a generalization of the results for dimensions $d$. That is, we introduced the generalized Laplacian on the $d$-dimensional torus $\mathbb{T}^d$ by taking tensor products of the eigenvectors of $\Delta_{W,V}$. We, then, provide $d$-dimensional counterparts to several results obtained for dimensions 1. The most remarkable being that for $s>d/2$ the operator $(I-\Delta_{W,V})^{s/2}$ is trace class, recovering the same lower bound for the Laplacian on $\mathbb{T}^d$. Finally, Chapter 3 ends with some applications on second order stochastic differential equations.

The thesis is concluded with a final discussion on some interesting open problems related to this theory.

%
\chapter{One-sided measure theoretic Laplacian and applications}
Recently some works have studied classes of generalized differential operators based on the concept of derivative $D_{W}:=\frac{d}{dW}$ with respect to an increasing function $W:\mathbb{R}\to\mathbb{R}$ which is right-continuous (or left-continuous). See, for instance, $\cite{frag, pouso,wsimas,trig,uta}$ and $\cite{franco}$.

Note that the second-order differencial operator $\frac{d}{dx}\left(\frac{d}{dW}\right)$ introduced in $\cite{frag}$ have one of its derivatives with respect to $W$ which may have jumps, and surprisingly these jumps of $W$ can be dense in $\mathbb{R}.$ This last observations ensures that regular functions with respect to such derivatives will, most likely, have discontinuity points. Typically, we will look for functions that are right-continuous and have left limits. On the other hand, it is noteworthy that the differential operator $\frac{d}{dW}\left(\frac{d}{dx}\right)f$ introduced by Feller in his seminal paper $\cite{feller}$ requires an initial space contained on the space of continuous functions, this is due to the requirement of the existence of the first derivative in the strong sense (and being derivatives from both sides, not only lateral derivatives). One should note that all the aforementioned operators generalize the usual laplacian operator $\frac{d}{dx}\left(\frac{d}{dx}\right)$, which is the case when $W(x)=x$.

In \cite{wsimas,simasvalentim2} they introduced the $W$-Sobolev spaces and obtained some elliptic regularity. They showed that the generalized derivative can be seen in the Sobolev perspective. However, as will also be discussed in Section \ref{sect4}, the usage of two-sided derivatives, meaning that the standard limit is taken in the definition of the derivative, instead of lateral limits, had a serious impact on the regularity study of such operators. Indeed, the space of regular functions on the $W$-Sobolev spaces, that arise from eigenvectors of the operators, which should be considered smooth, cannot be evaluated pointwisely. This shows a need for an adaptation of this theory, in such a way that we create a new type of $W$-Sobolev space  (which here will be more general, and we will call $W$-$V$-Sobolev spaces), with a nice regularity theory that allows for pointwise evaluations, but also agrees with the $W$-Sobolev spaces in \cite{wsimas} and also with the standard Sobolev spaces, see Remark \ref{sobagree}. 

The first goal of this paper is to consider a general one-sided second order differential operator in such a way that it generalizes at the same time both differential operators we mentioned above, namely $\frac{d}{dW}\left(\frac{d}{dx}\right)$  and $\frac{d}{dx}\left(\frac{d}{dW}\right)$. Indeed, we explain on Remark \ref{difop} how we can re-obtain these differential operator by just imposing regularity conditions on the range of those operators. On the second moment, given the generalized Laplacian $\Delta_{W,V}$, introduced in Definition \ref{laplacianWV}, we will introduce the natural space $C^{\infty}_{W,V}(\mathbb{T})$ where all eigenfunctions of the problem 
	\begin{equation}\label{eq:1}
		\begin{cases}
			-\Delta_{W,V}\phi = \lambda\phi,\,\lambda\in\mathbb{R}\setminus\{0\};\\
			u\in\mathcal{D}_{W,V}(\mathbb{T}),
		\end{cases}
	\end{equation}
	belong, and following the ideas of $\cite{wsimas}$ we introduce and new $(W,V)$-Sobolev space $H_{W,V}(\mathbb{T})$ and characterize this last space from other different perspectives. The insight to consider the space $C^{\infty}_{W,V}(\mathbb{T})$ is due to the regularity problem associated to the eigenvectors determined by the operator $\mathcal{L}_{W}$ studied in $\cite{franco}$ which we expected that each of them (almost surely) inherit the "interacted" regularity of the initial space $\mathfrak{D}_{W}$. 
	We then apply the theory developed to analyze some existence results related to the one-sided second order elliptic equations.
	
	Finally, to conclude the paper, we introduce the $W$-Brownian motion, which at first do not appear to be connected to these $W$-$V$-Sobolev spaces. However, the connection between the $W$-Brownian motion and the $W$-$V$-Sobolev spaces is very deep. Indeed, we show that the $W$-$V$-Sobolev space is the Cameron-Martin space of the distribution of the $W$-Brownian motion. As it is well-known, the Cameron-Martin space, in a sense, uniquely determines the Gaussian measure. So, the Gaussian measure associated to the $W$-$V$-Sobolev space is the law of the $W$-Brownian motion. 
	
	\section{The one-sided differential operator}
	\paragraph{} In this section we begin by introducing our differential operator in the strong sense, acting locally on functions. Then, we use it to define a strong second-order differential operator that extends the second derivative. After that, 
	we show that we can extend this second-order differential operator to a self-adjoint operator which extends the Laplacian. This operator turns out to have a compact resolvent. 
	
Let us introduce some notation. First, we say that a function is c\`adl\`ag (from french, ``continue à droite, limite à gauche'') if the function is right-continuous and has limits from the left. Similarly, we say that a function is c\`agl\`ad, if the function is left-continuous and has limits from the right. 

Let $W, V : \mathbb{R}\to\mathbb{R}$ be increasing functions that are, respectively, càdlàg and càglàd. Further, we assume that they are periodic in the sense that 
	\begin{equation}\label{periodic}
		\forall x \in \mathbb{R},\quad 
		\begin{cases}
			W(x+1)-W(x)=W(1)-W(0);\\
			V(x+1)-V(x)=V(1)-V(0), 
		\end{cases}
	\end{equation}
	\paragraph{}Without loss of generality we will assume that both $W$ and $V$ are continuous at zero, that is, $W(0)=0$ and $V(0)=0$. Indeed, since they are increasing, they can only have countably many discontinuities points, so if they are not continuous at zero, we can simply choose another point in which both of them are continuous and translate the functions to make them continuous at zero. Note that $W$ and $V$, thus, induce finite measures on $\mathbb{T}=\mathbb{R}/\mathbb{Z}$.
	We will now provide definitions of the one-sided differential operators. We begin by provinding their definitions in a strong sense, meaning that we see them as pointwise lateral limits, thus they act locally on functions. 
	
	Note that in the above definition we allow the measures induced by $W$ and $V$ to have atoms. This is a weaker assumption than some of the common assumptions found in literature, e.g., \cite{uta}, \cite{trig}, among others. 
	
	We will denote the $L^2$ space with the measure induced by $V$ by $L^2_V(\mathbb{T})$ and its norm (resp. inner product) by $\|\cdot\|_V$ (resp. $\langle \cdot,\cdot\rangle_V$). Similarly, we will denote the $L^2$ space with the measure induced by $W$ by $L^2_W(\mathbb{T})$ and its norm (resp. inner product) by $\|\cdot\|_W$ (resp. $\langle \cdot,\cdot\rangle_W$).
	
	\begin{definition}\label{defgendif}
		We say that a function $f : \mathbb{T}\to\mathbb{R}$ is $W$-left differentiable if  
		$$D^{-}_{W}f(x):=\lim_{h\to 0^{-}}\frac{f(x+h)-f(x)}{W(x+h)-W(x)}$$
		exists for all $x\in \mathbb{T}$. Similarly, we say that a function $g : \mathbb{T}\to\mathbb{R}$ is $V$-right differentiable if the limit 
		$$D^{+}_{V}g(x):=\lim_{h\to 0^{+}}\frac{g(x+h)-g(x)}{V(x+h)-V(x)}$$
		exists for all $x\in \mathbb{T}$.
	\end{definition}

The above definition of generalized lateral derivatives, besides having its natual appeal as a straightforward generalization of Newton's quotient, is also naturally obtained as the derivative operator when differentiating point processes with respect to a Borel measure. See, for instance, \cite{point}. It is also noteworthy that this definition is very general and will allow a proper regularity study. For instance, \cite{wsimas,simasvalentim2, simasvalentimjde, franco} considered a similar operator when $V(x) = x$, and when the derivative is not a lateral one. The drawback in their approach is that one cannot obtain a pointwise regularity theory. For instance, the eigenvectors of the differential operators considered in \cite{wsimas,simasvalentim2, simasvalentimjde, franco} can only be viewed in the $L^2$-sense, which was the reason the authors in \cite{simasvalentimjde} needed to create an auxiliary space for test functions and also obtain several results exclusively to deal with the lack of regularity. We will discuss more about that in Section \ref{sect4}.

A natural question is related to Definition \ref{defgendif} is if we have a suitable class of functions that are differentiable in this sense. More specifically, we are interested in the functions that are twice differentiable, meaning that they are differentiable with respect to $D_W^-$, and that the resulting function is further differentiable with respect to $D_V^+$. Let us study such a class.
	
	\begin{definition}\label{strongdomain}
		We will denote by $\mathfrak{D}_{W,V}(\mathbb{T})$ the set of càdlàg functions $f$ such that:
		\begin{itemize}
			\item $f$ is $W$-left differentiable.
			\item $D^{-}_{W}f$ is a càglàd function that is $V$-right differentiable.
			\item $D^{+}_{V}(D^{-}_{W}f)$ is a càdlàg function.
		\end{itemize}  
	\end{definition}
	
	Let us prove an auxiliary lemma that will help us characterization the functions in $\mathfrak{D}_{W,V}(\mathbb{T})$. 
	
	\begin{lemma}\label{lm:1}
		If $f : \mathbb{T}\to\mathbb{R}$ is càglàd and $D^{+}_{V}f\equiv 0$, then $f\equiv C,$ where $C$ is a constant. Similarly, if $g : \mathbb{T}\to\mathbb{R}$ is càdlàg and $D^{-}_{W}g\equiv 0$, then $g\equiv C$, where $C$ is a constant.
	\end{lemma}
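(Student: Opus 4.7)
The plan is to establish an analogue of the Stieltjes mean value theorem: the vanishing of $D_V^{+}f$ forces $f$ to be constant on any interval. I will argue the first assertion in detail; the second (for càdlàg $g$ with $D_W^{-}g\equiv 0$) follows by a mirror argument working from the right and using right-continuity instead of left-continuity.

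First I would fix two representatives $a<b$ in a lift of $\mathbb{T}$ to $\mathbb{R}$ (the claim is local and $f$, $V$ extend periodically), and prove $f(a)=f(b)$. For each $\epsilon>0$, introduce the auxiliary function
$$F_{\epsilon}(x):=f(x)-f(a)-\epsilon\bigl[(V(x)-V(a))+(x-a)\bigr],\qquad x\in[a,b].$$
Since $f$ and $V$ are càglàd and the identity is continuous, $F_{\epsilon}$ is càglàd, and $F_{\epsilon}(a)=0$. The extra term $\epsilon(x-a)$ is inserted precisely to handle points where $V$ is locally constant.

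The key step is to show $F_{\epsilon}\le 0$ on $[a,b]$ by a supremum/infimum argument. Suppose not, and put $t:=\inf\{x\in[a,b]:F_{\epsilon}(x)>0\}$. Since $F_{\epsilon}\le 0$ on $[a,t)$, left-continuity of $F_{\epsilon}$ at $t$ yields $F_{\epsilon}(t)\le 0$. Now invoke $D_V^{+}f(t)=0$: for the same $\epsilon$, there exists $\delta>0$ such that for $0<h<\delta$, if $V(t+h)>V(t)$ then $f(t+h)-f(t)<\epsilon(V(t+h)-V(t))$, so
$$F_{\epsilon}(t+h)-F_{\epsilon}(t)=\bigl[f(t+h)-f(t)\bigr]-\epsilon(V(t+h)-V(t))-\epsilon h<-\epsilon h<0;$$
if instead $V$ is locally constant on $[t,t+\delta_{0})$, then for the ratio defining $D_V^{+}f(t)$ to have a limit at all, $f$ must also be constant to the immediate right of $t$, and the same identity gives $F_{\epsilon}(t+h)=F_{\epsilon}(t)-\epsilon h<0$. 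Either way $F_{\epsilon}(t+h)<0$ for small $h>0$, contradicting $t=\inf\{F_{\epsilon}>0\}$.

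Hence $F_{\epsilon}\le 0$ on $[a,b]$, which reads $f(b)-f(a)\le\epsilon\bigl[V(b)-V(a)+b-a\bigr]$; letting $\epsilon\downarrow 0$ yields $f(b)\le f(a)$, and applying the same argument to $-f$ (which also has vanishing $V$-right derivative) gives $f(b)\ge f(a)$. Thus $f$ is constant. I expect the only genuine obstacle to be the case in which $V$ is locally constant to the right of $t$: without the regularizing $\epsilon(x-a)$ term one cannot extract a strict inequality from the ratio $0/0$, and it is exactly this device that closes the argument in both cases simultaneously.
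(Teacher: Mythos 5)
Your argument is correct and runs on the same engine as the paper's proof: take the first point $t$ (an infimum) at which the desired inequality fails, use left-continuity of $f$ and $V$ to see it still holds at $t$, then use $D_V^{+}f(t)=0$ to push strictly past $t$ and contradict the choice of $t$; the paper does exactly this with $c=\inf\{x\in(a,b]:|f(x)-f(a)|\geq\epsilon[V(x)-V(a)]\}$ and the specific choice $\epsilon=|f(b)-f(a)|/\bigl(2[V(b)-V(a)]\bigr)$, deriving a contradiction, whereas you work with an arbitrary $\epsilon>0$, the auxiliary barrier $F_{\epsilon}$, and let $\epsilon\downarrow 0$ at the end, applying the result to $\pm f$. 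The genuinely different ingredient is your regularizing term $\epsilon(x-a)$: it yields the strict decrease $F_{\epsilon}(t+h)-F_{\epsilon}(t)<-\epsilon h$ even where the $V$-increment bound is only non-strict, and it avoids dividing by $V(b)-V(a)$, so your version remains meaningful when $V(b)=V(a)$ or $V$ is flat near the critical point — situations the paper's proof silently ignores (its $\epsilon$ is then ill-defined, and its final step only produces ``$\leq$'' where a strict inequality is wanted). One caveat: in your locally-constant-$V$ branch, the claim that ``for the ratio defining $D_V^{+}f(t)$ to have a limit at all, $f$ must be constant to the immediate right of $t$'' does not really follow — if $V(t+h)=V(t)$ the Newton quotient is undefined whether or not $f$ moves, so what happens there is a matter of convention for Definition \ref{defgendif} rather than a consequence of it; this is a definitional edge case the paper does not address either, and in the nondegenerate case (the only one the paper treats) your proof is complete.
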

	\begin{proof}
		We will only prove the statement for $f$, since the other is entirely analogous.
	   To this end, let us assume, by contradiction, that $f(a)\neq f(b)$, for some $a,b$, with $a<b$. Let $$\epsilon := \frac{|f(b)-f(a)|}{2\left[V(b)-V(a)\right]},$$
		and $\displaystyle c=\inf \left\{x; x\in (a,b], |f(x)-f(a)|\geq \epsilon \left[V(x)-V(a)\right]\right \}$. Since $f$ if left continuous, we have that $c<b$ and $|f(c)-f(a)|\le\epsilon(\left[V(c)-V(a)\right].$ $D^{+}_{V}f\equiv 0$ implies that $(D^{+}_{V}f)(c)=0$. Therefore, there exists $\delta>0$ such that $0<h<\delta$ implies  $$|f(c+h)-f(c)|\le\epsilon\left[V(c+h)-V(c)\right].$$ 
		This, in turn, implies that for every $0<h<\delta$
		$$|f(c+h)-f(a)|\le |f(c+h)-f(c)| + |f(c)-f(a)| \le \epsilon \left[V(c+h)-V(a)\right].$$ 
		This contradicts the choice of $c$. 
	\end{proof}
	
We are now in a position to characterize the functions in the space $\mathfrak{D}_{W,V}(\mathbb{T})$.
	
	\begin{lemma}\label{lm:2}
		A function $f:\mathbb{T}\to \mathbb{R}$ belongs to $\mathfrak{D}_{W,V}(\mathbb{T})$ if, and only if, there exists a càdlàg function $g : \mathbb{T}\to\mathbb{R}$ such that
		\begin{equation}\label{c1}
			f(x)=f(0)+W(x)D^{-}_{W}f(0)+\int_{(0,x]}\int_{[0,y)} g(s) dV(s)dW(y),
		\end{equation}
		and
		$$W(1)D^{-}_{W}f(0)+\int_{(0,1]}\int_{[0,y)} g(s) dV(s)dW(y)=0,\;\;\; \int_{[0,1)} g(s) dV(s)=0.$$
	\end{lemma}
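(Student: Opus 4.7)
The plan is to establish both implications using Lemma \ref{lm:1} to invert the differential operators, combined with a careful pointwise computation of the one-sided derivatives of the integrals appearing in (\ref{c1}).

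For the forward direction, assume $f\in\mathfrak{D}_{W,V}(\mathbb{T})$ and set $g:=D^{+}_{V}(D^{-}_{W}f)$, which is c\`adl\`ag by hypothesis. I would apply Lemma \ref{lm:1} twice. First, define
$$h(y):=D^{-}_{W}f(y)-D^{-}_{W}f(0)-\int_{[0,y)}g(s)\,dV(s),$$
which is c\`agl\`ad because $D^{-}_{W}f$ is. A direct computation of the $V$-right derivative of the primitive $y\mapsto\int_{[0,y)}g\,dV$ yields $g(y)$, so $D^{+}_{V}h\equiv 0$, and Lemma \ref{lm:1} forces $h\equiv 0$; i.e., $D^{-}_{W}f(y)=D^{-}_{W}f(0)+\int_{[0,y)}g\,dV$. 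Then set
$$F(x):=f(x)-f(0)-W(x)D^{-}_{W}f(0)-\int_{(0,x]}\int_{[0,y)}g(s)\,dV(s)\,dW(y).$$
An analogous derivative computation shows $D^{-}_{W}F\equiv 0$, so the other half of Lemma \ref{lm:1} gives $F\equiv 0$, which is (\ref{c1}). Evaluating (\ref{c1}) and the formula for $D^{-}_{W}f$ at $x=1$ and using the $1$-periodicity of $f$ and $D^{-}_{W}f$ on $\mathbb{T}$, together with $W(0)=V(0)=0$, then yields the two consistency conditions.

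For the converse, given a c\`adl\`ag $g$ satisfying the consistency conditions, define $f$ pointwise by (\ref{c1}). The first consistency condition, together with the periodicity of $W$ and $V$, ensures $f(x+1)=f(x)$, so that $f$ descends to a function on $\mathbb{T}$; it is c\`adl\`ag as a sum of c\`adl\`ag pieces. The same derivative calculation as above gives $D^{-}_{W}f(x)=D^{-}_{W}f(0)+\int_{[0,x)}g\,dV$, which is c\`agl\`ad; the second consistency condition guarantees its $1$-periodicity. Finally $D^{+}_{V}(D^{-}_{W}f)=g$ is c\`adl\`ag, placing $f$ in $\mathfrak{D}_{W,V}(\mathbb{T})$.

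The main obstacle is the pointwise computation of the one-sided derivatives of the primitives at points where $W$ or $V$ carries an atom. Contrary to the classical case, at such a point the denominator of the Newton quotient does not vanish in the limit, and one must split the numerator into its atomic and non-atomic parts: the atomic contribution is evaluated exactly using the one-sided continuity of the integrand (c\`adl\`ag integrand for $V$-right derivatives, c\`agl\`ad for $W$-left derivatives) and cancels against the atom in the denominator, while the non-atomic contribution is controlled by a squeeze between its infimum and supremum on the shrinking interval, both of which converge to the pointwise value by the same one-sided continuity. This case analysis, applied at both nested levels, is the crux of the argument.
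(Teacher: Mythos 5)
Your proof is correct and follows essentially the same route as the paper, whose proof of Lemma \ref{lm:2} is simply the remark that it follows directly from Lemma \ref{lm:1}: your argument is precisely the working-out of that remark, applying Lemma \ref{lm:1} twice (once to recover $D^{-}_{W}f$ as a $dV$-primitive, once to recover $f$ as a $dW$-primitive) together with the pointwise computation of $D^{+}_{V}$ and $D^{-}_{W}$ of the primitives, with the periodicity on $\mathbb{T}$ yielding the two compatibility conditions. The handling of atoms via one-sided continuity of the integrands is exactly the right justification for those derivative computations, so no gap remains.
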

	\begin{proof}
		It follows directly from the Lemma \ref{lm:1}
	\end{proof}
	
	\begin{remark}\label{difop}
		Let us consider the special case in which $V(x)=x$, and $\mbox{ran}(D^{+}_{V}D^{-}_{W})\subseteq \{\mbox{continuous functions}\}$. In such a case, we have by (\ref{c1}) that 
	$$D_{W}f(x):=\lim_{h\to 0} \frac{f(x+h)-f(x)}{W(x+h)-W(x)}$$
	is well defined for all $x$ in $\mathbb{T}$, where $D_{W}f$ is continuous and differentiable, with $D_{x}D_{W}f$ continuous, i.e, the domain $\mathfrak{D}_{W,V}(\mathbb{T})$ is exactly the domain $\mathfrak{D}_{W}$ of $D_{x}D_{W}$ introduced in $\cite{franco}$. 
	In the same way, we can work with the operator $D^{-}_WD^{+}_V$ in the domain of càglàd functions $f : \mathbb{T}\to\mathbb{R}$ that are: (i) $V$-right differentiable;  (ii) $D^{+}_Vf$ is càdlàg and $W$-left differentiable; and (iii) $D^{-}_W(D^{+}_Vf)$ is càglàd. Therefore if $V(x)$ is continuous and $\mbox{ran}(D^{-}_WD^{+}_V) \subseteq \{\mbox{continuous functions}\}$, then the domain of $D^{-}_WD^{+}_V$ is exactly the domain $\mathfrak{D}(\mathbb{T})$ of the Feller Operator $D_{W}D^{-}_{V}$. More precisely, if $\overline{W}(x)=W(x-)$ we have $$D_{\overline{W}}D^{-}_{V}f=D_{W}D^{+}_{V}f.$$ 
	For more details on the operator $D_{W}D^{-}_{V}$, we refer the reader to $\cite{mandl}$.
	\end{remark}
	
\begin{remark}\label{orderVW}
		In the rest of the paper we need to pay attention with the position of $W$ and $V$ on the subindex. We will use the subscript $W,V$ for every structure stritly related to the operator $D^{+}_{V}D^{-}_{W}$. Similarly, whenever we use the subscript $V,W$, we will be referring to the, analogous, structure related to the operator $D^{-}_{W}D^{+}_{V}$. Notice that by doing this, we will keep changing between c\`adl\`ag and c\`agl\`ad functions. We ask the viewer to be attentive to these details as they are subtle.
\end{remark}
	
	\begin{theorem}\label{thm:1}
		The following statements are true
		\begin{enumerate}
			\item The set $\mathfrak{D}_{W,V}(\mathbb{T})$ is dense in $L^2_{V}(\mathbb{T})$
			\item The operator $D^{+}_{V}D^{-}_{W}: \mathfrak{D}_{W,V}(\mathbb{T})\subset L^2_{V}(\mathbb{T})\to L^2_{V}(\mathbb{T})$ is symmetric and non-positive. More precisely,
			$$\langle D^{+}_{V}D^{-}_{W}f,g\rangle_{V}=-\int_{\mathbb{T}}D^{-}_{W}f(s)D^{-}_{W}g(s) dW(s).$$
			\item (Poincaré-Friedrichs Inequality) Let $f$ be a càdlàg function such that $D^{-}_{W}f$  exists and $D^{-}_{W}f$ is a càglàg function. Then,
			\begin{equation}\label{i1}
				\|f\|^{2}_{V}\le W(1)V(1)\|D^{-}_{W}f\|^{2}_{W}+V(1)f_{\mathbb{T}}^{2},
			\end{equation}
			where $f_{A}=\frac{\int_{A}f(s)dV(s)}{\int_{A}1dV(s)}.$
		\end{enumerate}
	\end{theorem}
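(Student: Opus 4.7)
The plan is to establish (2) first, then (3) (which uses the Stieltjes fundamental theorem of calculus developed for (2)), and finally (1), the most delicate step. For (2), the main ingredient is the Stieltjes fundamental theorem of calculus implicit in Lemma \ref{lm:2}: for $f\in\mathfrak{D}_{W,V}(\mathbb{T})$ one has
\begin{equation*}
f(x)=f(0)+\int_{(0,x]}D^{-}_{W}f(y)\,dW(y),\qquad D^{-}_{W}f(x)=D^{-}_{W}f(0)+\int_{[0,x)}D^{+}_{V}D^{-}_{W}f(s)\,dV(s),
\end{equation*}
together with the two periodicity conditions $\int_{[0,1)}D^{+}_{V}D^{-}_{W}f\,dV=0$ and $\int_{(0,1]}D^{-}_{W}f\,dW=0$. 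Substituting the representation of $g\in\mathfrak{D}_{W,V}(\mathbb{T})$ into $\langle D^{+}_{V}D^{-}_{W}f,g\rangle_{V}$, then applying Fubini to interchange the outer $dV$-integral in $y$ with the inner $dW$-integral in $z$ coming from $g(y)=g(0)+\int_{(0,y]}D^{-}_{W}g(z)\,dW(z)$, and finally using the two periodicity conditions to kill the boundary contributions, what remains is exactly $-\int_{\mathbb{T}}D^{-}_{W}f\cdot D^{-}_{W}g\,dW$. Symmetry of the operator is immediate from the symmetry of this bilinear expression, and non-positivity follows from $\langle D^{+}_{V}D^{-}_{W}f,f\rangle_{V}=-\|D^{-}_{W}f\|_{W}^{2}\le 0$.

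For (3), I use the orthogonal decomposition $\|f\|_{V}^{2}=V(1)f_{\mathbb{T}}^{2}+\|f-f_{\mathbb{T}}\|_{V}^{2}$, the cross term vanishing by the very definition of $f_{\mathbb{T}}$. For the oscillation part, the Stieltjes FTC yields $f(x)-f(y)=\pm\int_{(\min\{x,y\},\max\{x,y\}]}D^{-}_{W}f\,dW$, so Cauchy--Schwarz gives $|f(x)-f(y)|^{2}\le W(1)\|D^{-}_{W}f\|_{W}^{2}$ uniformly in $x,y\in\mathbb{T}$. Writing $f(x)-f_{\mathbb{T}}=V(1)^{-1}\int_{\mathbb{T}}[f(x)-f(y)]\,dV(y)$, Jensen's inequality followed by integration against $dV(x)$ yields $\|f-f_{\mathbb{T}}\|_{V}^{2}\le V(1)W(1)\|D^{-}_{W}f\|_{W}^{2}$, which combined with the decomposition gives inequality \eqref{i1}.

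For (1), the idea is to exhibit a rich explicit subspace $S\subset\mathfrak{D}_{W,V}(\mathbb{T})$ via Lemma \ref{lm:2}: for any c\`adl\`ag $g$ with $\int_{[0,1)}g\,dV=0$, set $D^{-}_{W}f(0)$ by the compatibility relation stated in that lemma and let $f(0)\in\mathbb{R}$ vary, producing a linear subspace that contains the constants. To prove $S$ is $L^{2}_{V}$-dense I would test against $u\in L^{2}_{V}(\mathbb{T})$ orthogonal to $S$: orthogonality to constants forces $\int u\,dV=0$; then orthogonality to all the double-integral constructions, together with Fubini and the freedom to choose any zero-$dV$-mean c\`adl\`ag $g$, should force $u=0$, essentially by reducing the statement to showing that the iterated integral operator $g\mapsto\int_{(0,\cdot]}\int_{[0,y)}g(s)\,dV(s)\,dW(y)$ has dense range modulo constants. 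An alternative is a direct mollifier-type approximation of indicators $\mathbf{1}_{(a,b]}$ by elements of $\mathfrak{D}_{W,V}(\mathbb{T})$ regularized in both the $W$ and $V$ variables.

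The main obstacle is precisely (1). The difficulty is that $\mathfrak{D}_{W,V}(\mathbb{T})$ is defined by \emph{pointwise} conditions on generalized Stieltjes derivatives while $L^{2}_{V}(\mathbb{T})$ only sees functions up to $dV$-null sets; moreover, the presence of atoms in both $dW$ and $dV$ makes the c\`adl\`ag/c\`agl\`ad distinction genuine, and the compatibility conditions in Lemma \ref{lm:2} nontrivially couple $D^{-}_{W}f(0)$ to the iterated integral of $g$. One must therefore verify with care that the constructed approximants truly lie in $\mathfrak{D}_{W,V}(\mathbb{T})$ and that every boundary manipulation respects the c\`adl\`ag/c\`agl\`ad conventions of the representation.
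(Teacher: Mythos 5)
Your treatments of parts (2) and (3) are sound: for (2) you replace the paper's partition/summation-by-parts derivation of the identity $\int_{[a,b)}g\,D^{+}_{V}f\,dV=[fg]_{a}^{b}-\int_{(a,b]}f\,D^{-}_{W}g\,dW$ by substituting the representation of Lemma \ref{lm:2} and applying Fubini, and the two mean-zero/periodicity conditions do indeed kill the boundary terms (one checks, e.g., that $\int_{\mathbb{T}}\mathbf{1}_{(0,s]}(z)\,D^{+}_{V}D^{-}_{W}f(s)\,dV(s)=D^{-}_{W}f(0)-D^{-}_{W}f(z)$, and then $\int_{\mathbb{T}}D^{-}_{W}g\,dW=0$ finishes it); part (3) is essentially the paper's argument (decomposition about the mean, FTC, Cauchy--Schwarz/Jensen). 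So the only issue is part (1), which you yourself flag as the main obstacle.

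For (1) you do not actually have a proof. Your duality plan ends with the phrase that orthogonality of $u$ to the double-integral constructions ``should force $u=0$, essentially by reducing the statement to showing that the iterated integral operator $g\mapsto\int_{(0,\cdot]}\int_{[0,y)}g\,dV\,dW$ has dense range modulo constants'' --- but that dense-range assertion \emph{is} (up to the constant and the $W$-multiple handled by the compatibility relation) exactly the density of $\mathfrak{D}_{W,V}(\mathbb{T})$ that you are trying to prove, so the reduction is circular rather than a proof. If you want to push the duality route you must carry out the computation: Fubini turns the orthogonality into $\int g(s)\bigl[\int_{(s,1)}u(x)(W(x)-W(s))\,dV(x)-c\,(W(1)-W(s))\bigr]dV(s)=0$ for all zero-$dV$-mean c\`adl\`ag $g$, hence the bracket is $V$-a.e.\ constant, and you then need a genuine argument (two successive ``differentiations'', with care at atoms of $dW$ and $dV$) to force $u=0$; none of this is in your proposal, nor is the alternative ``mollifier-type approximation of indicators'' developed. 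The paper avoids all of this by a constructive two-stage scheme: continuous functions are dense in $L^{2}_{V}(\mathbb{T})$, a continuous $f$ is uniformly approximated by $G(x)=f(0)+\int_{(0,x]}g\,dW$ with $g$ a c\`agl\`ad step function built from $W$-difference quotients of $f$ on a fine mesh, then $g$ is in turn uniformly approximated by $P(x)=g(0)+\int_{[0,x)}p\,dV$ with $p$ a c\`adl\`ag step function of zero $dV$-mean, and finally the constant $b$ is adjusted so that the assembled $h(x)=f(0)+\int_{(0,x]}\bigl(b+g(0)+\int_{[0,\xi)}p\,dV\bigr)dW(\xi)$ satisfies the periodicity constraint and lies in $\mathfrak{D}_{W,V}(\mathbb{T})$, with uniform error control at each stage. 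Either complete the adjoint computation sketched above or adopt such an explicit construction; as written, part (1) is a plan, not a proof.
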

	\begin{proof}
		Begin by noting that the space of continuous functions is dense in $L^2_{V}(\mathbb{T})$. Therefore, it is enough to show that every continuous function $f : \mathbb{T}\to\mathbb{R}$ can be aproximated, in the uniform norm, by functions in $\mathfrak{D}_{W,V}(\mathbb{T})$. Since $\mathbb{T}$ is compact, given any $\epsilon>0$, there exists $n\in\mathbb{N}$ such that $|x-y|\le 1/n$ implies $|f(x)-f(y)|\le\epsilon$. Now, let us consider the function $g: \mathbb{T}\to \mathbb{R}$
		defined by
		$$g(x)=\sum_{j=0}^{n-1}\dfrac{f\left(\frac{j+1}{n}\right)-f\left(\frac{j}{n}\right)}{W\left(\frac{j+1}{n}\right)-W\left(\frac{j}{n}\right)}\textbf{1}_{\{(j/n,(j+1)/n]\}}(x),$$
		where $\textbf{1}_{A}$ stands for the indicator of the set $A$. Let $G : \mathbb{T}\to\mathbb{R}$ be given by
			$$G(x) = f(0)+\int_{(0,x]}g(y)dW(y).$$
			Therefore, for $\frac{j}{n}<x\le \frac{j+1}{n}$, $0\le j \le n-1$, we have that
		$$G(x)=f\left(\frac{j}{n}\right)+\dfrac{f\left(\frac{j+1}{n}\right)-f\left(\frac{j}{n}\right)}{W\left(\frac{j+1}{n}\right)-W\left(\frac{j}{n}\right)}\left(W(x)-W\left(\frac{j}{n}\right)\right).$$	
		Hence, $G(j/n)=f(j/n)$ and, for $\frac{j}{n}<x< \frac{j+1}{n}$, we have
		$$|G(x)-f(x)|\le|G(x)-f(j/n)|+|f(x)-f(j/n)|.$$
		Now, observe that our choice of $n$ implies that $\| G-f \|_{\infty}\le 2 \epsilon$. Note, also, that $g$ is càglàg, and 
		$$\int_{\mathbb{T}}g(s)dW(s)=0.$$ 
		Since $g$ is a càglàd function, given $\epsilon>0$ there exists a partition of $\{0=z_{0}<z_{1}<\cdots<z_{k}=1\}$ of $\mathbb{T}$ such that $|g(b)-g(a)|\le \epsilon$ for $z_{k-1}\le b$ and $a<z_{k}.$ Now, let us define a cadlàg function $p:\mathbb{T}\to\mathbb{R}$ given by 
		$$p(x)=\sum_{i=1}^{k}\dfrac{g(z_{i})-g(z_{i-1})}{V(z_{i})-V(z_{i-1})}\textbf{1}_{\{[z_{i-1},z_{i})\}}(x).$$
		Note that $p$ is càdlàd and $\int_{\mathbb{T}}p(s)dV(s)=0$. Let us now use this $p$ to define $P:\mathbb{T}\to\mathbb{R}$ by 
		$$P(x)=g(0)+\int_{[0,x)}p(s)dV(s).$$
		By the choice of the partition above, we have that $\|P-g\|_{\infty}\le 2\epsilon.$ Finally, note that
		$$h(x)=f(0)+\int_{(0,x]}dW(\xi)\left(b+g(0)+\int_{[0,\xi)}p(\eta)dV(\eta)\right),$$
		belongs to $\mathfrak{D}_{W,V}$, where $b=-g(0)-(W(1))^{-1}\int_{(0,1]}dW(\xi)\int_{[0,\xi)}p(\eta)dV(\eta)$. The continuity of $f$ implies that $g(0)\to 0$ as $n\to\infty$. By using this last fact, we can find a constant $C>0$, that does not depend on $n$ nor on $k$, such that  $$\|h-G\|_{\infty}\le C\epsilon.$$ 
		Finally, the triangular inequality yields $\|h-f\|_{\infty}\le (C+2)\epsilon$. This proves $(a)$.
		
		\paragraph{}To prove (b), suppose $g$ is càdlag $W$-left differentiable function with $D^{-}_{W}g$ càglàd, and that $f$ is a càglàd $V$-right differentiable function with $D^{+}_{V}f$ càdlàg. For any $\epsilon>0$ choose a partition $\{a=z_{1}<z_{2}<\cdots<z_{n+1}=b\}$ of $[a,b]\subset\mathbb{T}$ such that
		\[\left\{ \begin{array}{ll}
			|g(s)-g(t)|\le\epsilon, \mbox{if}\; z_{k}\le s,t<z_{k+1};\\\\
			|f(s)-f(t)|\le\epsilon, \mbox{if} \;z_{k}< s,t\le z_{k+1}.\end{array} \right. \] 
		We, then, have
		\begin{align}\label{eq1}
			\nonumber A_{1}  &= \left|\int_{[a,b)}g(s)D^{+}_{V}f(s) dV - \sum_{k=1}^{n}g(z_{k})[f(z_{k+1})-f(z_{k})]\right| \\
			& = \left|\sum_{k=1}^{n}\int_{[z_{k},z_{k+1})}D^{+}_{V}f(s)[g(s)-g(z_{k})]dV(s)\right|\le \epsilon\|D^{+}_{V}f\|_{\infty}[V(1)].
		\end{align}
		On the other hand
		$$\sum_{k=1}^{n}g(z_{k})[f(z_{k+1})-f(z_{k})]=[f(b)g(b)-f(a)g(a)]-\sum_{k=2}^{n+1}f(z_{k})[g(z_{k})-g(z_{k-1})]$$
		and 
		\begin{align}\label{eq2}
			\nonumber A_{2}  &= \left|\int_{(a,b]}f(s)D^{-}_{W}g(s) dW - \sum_{k=2}^{n+1}f(z_{k})[g(z_{k})-g(z_{k-1})]\right| \\
			& = \left|\sum_{k=1}^{n}\int_{(z_{k-1},z_{k}]}D^{-}_{W}g(s)[f(s)-f(z_{k})]dW(s)\right|\le \epsilon\|D^{-}_{W}g\|_{\infty}[W(1)].
		\end{align}
		Finally, (\ref{eq1}) and (\ref{eq2}) imply
		\begin{equation}
			\nonumber \left| \int_{[a,b)}g(s)D^{+}_{V}f(s) dV(s)+\int_{(a,b]}f(s)D^{-}_{W}g(s) dW(s) -[f(b)g(b)-f(a)g(a)]\right|\le \epsilon C,
		\end{equation}
		where $C=\{\|D^{+}_{V}f\|_{\infty}[V(1)]+\|D^{-}_{W}g\|_{\infty}[W(1)]\}$. Since $\epsilon$ is arbitrary, we obtain
		\begin{equation}\label{e3}
			\int_{[a,b)}g(s)D^{+}_{V}f(s) dV(s)=[f(b)g(b)-f(a)g(a)]-\int_{(a,b]}f(s)D^{-}_{W}g(s) dW(s).
		\end{equation}
		Using (\ref{e3}), we can easily see that, for $f,g \in \mathfrak{D}_{W,V}(\mathbb{T})$, we have
		\begin{equation}\label{e4}
			\langle D^{+}_{V}D^{-}_{W}f,g\rangle_{V}=\int_{\mathbb{T}}g(s)D^{+}_{V}D^{-}_{W}f(s) dV(s)=-\int_{\mathbb{T}}D^{-}_{W}f(s)D^{-}_{W}g(s) dW(s).
		\end{equation}
		
		To prove $(c)$ note that  $$\int_{\mathbb{T}}f^{2}(\xi)dV(\xi)-V(1)f_{\mathbb{T}}^{2}=\dfrac{1}{V(1)^2}\int_{\mathbb{T}}\left(\int_{\mathbb{T}}[f(\xi)-f(\eta)]dV(\eta)\right)^2dV(\xi).$$
		On the right hand side we use that $f(\xi)-f(\eta)=\int_{(\eta,\xi]}D^{-}_{W}f(s)dW(s)$, and then apply H\"older's inequality to obtain $(\ref{i1})$.
	\end{proof} 

The following consequence of Theorem \ref{thm:1} will be very useful through this work, but first we need a definition.
	
	\begin{definition}
		Let $L^2_{W,0}(\mathbb{T})$ be the subspace of $L^2_W(\mathbb{T})$ consisting of functions with mean zero: 
		$$L^{2}_{W,0}(\mathbb{T})=\left\{H\in L^{2}_{W}(\mathbb{T}); \int_{\mathbb{T}}HdW=0\right\}.$$ 
	\end{definition}
	\begin{corollary}\label{cr:1}
		The set of $W$-left-derivatives of functions in $\mathfrak{D}_{W,V}$ is dense in $L^{2}_{W,0}(\mathbb{T})$ in the uniform topology. In particular,
		$$\overline{\{D^{-}_{W}f; f\in \mathfrak{D}_{W,V}\}}^{\|\cdot\|_{L^2_{W}(\mathbb{T})}}=L^2_{W,0}(\mathbb{T}).$$
	\end{corollary}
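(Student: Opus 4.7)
The plan is to revisit the constructive approximation from the proof of Theorem \ref{thm:1}(a) and extract from it not just a uniform approximation of continuous functions by elements of $\mathfrak{D}_{W,V}(\mathbb{T})$, but a uniform approximation of any prescribed zero-mean function by a derivative $D^-_W h$ with $h\in\mathfrak{D}_{W,V}(\mathbb{T})$. Since continuous functions $\phi:\mathbb{T}\to\mathbb{R}$ with $\int_\mathbb{T}\phi\,dW=0$ are dense in $L^2_{W,0}(\mathbb{T})$ (use density of continuous functions in $L^2_W(\mathbb{T})$ and subtract the mean), it suffices to show that for every such $\phi$ and every $\epsilon>0$ there is $h\in\mathfrak{D}_{W,V}(\mathbb{T})$ with $\|D^-_W h-\phi\|_\infty\le\epsilon$. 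This simultaneously yields the stated uniform density and, since $dW$ is a finite measure on $\mathbb{T}$, the $L^2_W$-density.

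I would first approximate $\phi$ uniformly by a c\`agl\`ad step function $\psi_0$ adapted to a fine partition, using uniform continuity on the compact torus; then subtract the constant $W(1)^{-1}\int_\mathbb{T}\psi_0\,dW$, whose absolute value is at most $\|\psi_0-\phi\|_\infty$ precisely because $\int_\mathbb{T}\phi\,dW=0$. This yields a c\`agl\`ad step function $\psi$ with mean zero such that $\|\psi-\phi\|_\infty$ is still of order $\epsilon$. Next, following the inner approximation step from the proof of Theorem \ref{thm:1}(a), I would choose a refined partition $0=z_0<\cdots<z_k=1$ on which the oscillation of $\psi$ is at most $\epsilon$, define the c\`adl\`ag step function
$$p(x):=\sum_{i=1}^{k}\frac{\psi(z_i)-\psi(z_{i-1})}{V(z_i)-V(z_{i-1})}\,\mathbf{1}_{[z_{i-1},z_i)}(x),$$
which satisfies $\int_\mathbb{T}p\,dV=\psi(1)-\psi(0)=0$ by periodicity, and set $P(x):=\psi(0)+\int_{[0,x)}p(\eta)\,dV(\eta)$, obtaining $\|P-\psi\|_\infty\le 2\epsilon$. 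Finally, I would define
$$h(x):=\int_{(0,x]}\bigl(b+P(\xi)\bigr)\,dW(\xi),\qquad b:=-\,W(1)^{-1}\!\int_{(0,1]}P(\xi)\,dW(\xi),$$
and verify via Lemma \ref{lm:2} that the two compatibility conditions are fulfilled, so that $h\in\mathfrak{D}_{W,V}(\mathbb{T})$ with $D^-_W h=b+P$.

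The closing step is the estimate on $b$. Because $\int_\mathbb{T}\psi\,dW=0$, one has
$$|b|=W(1)^{-1}\Bigl|\int_{(0,1]}\bigl(P(\xi)-\psi(\xi)\bigr)\,dW(\xi)\Bigr|\le \|P-\psi\|_\infty\le 2\epsilon,$$
so by the triangle inequality $\|D^-_W h-\phi\|_\infty\le |b|+\|P-\psi\|_\infty+\|\psi-\phi\|_\infty=O(\epsilon)$. The only conceptual obstacle, and the reason the ambient space must be $L^2_{W,0}(\mathbb{T})$ and not $L^2_W(\mathbb{T})$, is that the periodicity correction $b$ forced by Lemma \ref{lm:2} is small only when $\phi$ has zero mean with respect to $dW$; without this hypothesis the construction would approximate $\phi$ uniformly only modulo the additive constant $W(1)^{-1}\int_\mathbb{T}\phi\,dW$, which is exactly the image of $\phi$ under the projection $L^2_W(\mathbb{T})\to L^2_W(\mathbb{T})/L^2_{W,0}(\mathbb{T})$.
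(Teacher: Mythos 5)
Your proposal is correct, but it follows a different route than the paper. The paper's proof is a short duality argument: it fixes $f\in L^{2}_{W,0}(\mathbb{T})$, invokes the $(V,W)$-version of Theorem \ref{thm:1}(a) (see Remark \ref{orderVW}) to produce $p_{n}\in\mathfrak{D}_{V,W}$ with $p_{n}\to f$ uniformly, and then sets $f_{n}(x)=\int_{(0,x]}p_{n}\,dW-\frac{W(x)}{W(1)}\int_{\mathbb{T}}p_{n}\,dW$, so that $f_{n}\in\mathfrak{D}_{W,V}$, $D^{-}_{W}f_{n}=p_{n}-W(1)^{-1}\int_{\mathbb{T}}p_{n}\,dW$, and the correcting constant is bounded by $\|p_{n}-f\|_{\infty}$ precisely because $\int_{\mathbb{T}}f\,dW=0$ --- the same mechanism you isolate for your constant $b$. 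You instead bypass the dual statement and re-run the two-step construction inside the proof of Theorem \ref{thm:1}(a) directly at the level of the prospective derivative: a mean-corrected c\`agl\`ad step approximation $\psi$ of $\phi$, its $V$-interpolant $P$, and then $h=\int_{(0,\cdot]}(b+P)\,dW$, verified to lie in $\mathfrak{D}_{W,V}$ via Lemma \ref{lm:2}. What your route buys is self-containedness (you never need to formulate the $(V,W)$-dual of Theorem \ref{thm:1}) and a weaker requirement on the approximant of $\phi$ (you only need it to be once $V$-right differentiable with c\`adl\`ag derivative, not a full member of $\mathfrak{D}_{V,W}$); you also handle the reduction from general $f\in L^2_{W,0}(\mathbb{T})$ to continuous zero-mean $\phi$ more explicitly than the paper does. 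What the paper's route buys is brevity and the symmetry between the $(W,V)$ and $(V,W)$ theories that it emphasizes in the remark following the corollary. One small bookkeeping remark: your intermediate bound $\|P-\psi\|_{\infty}\le 2\epsilon$ implicitly uses that the jumps of $\psi$ are themselves $O(\epsilon)$ (which is true here, since $\psi$ is uniformly $2\epsilon$-close to the continuous $\phi$, provided the partition is also fine for the modulus of continuity of $\phi$); the constant may come out larger than $2$, but your final estimate $\|D^{-}_{W}h-\phi\|_{\infty}=O(\epsilon)$, and hence the corollary, stands.
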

	\begin{proof}
		Fix a function $f\in L^{2}_{W,0}(\mathbb{T})$. By the version of Theorem 1 for $\mathfrak{D}_{V,W}$ (that is, related to the $V,W$-structure, see Remark \ref{orderVW}) there is a sequence $p_{n}\in \mathfrak{D}_{V,W}$ such that $p_{n}\to f$ uniformly. Now, let $$f_{n}(x):=\int_{(0,x]}p_{n}dW-\dfrac{W(x)}{W(1)}\int_{\mathbb{T}}p_{n}dW.$$
		We have that $f_{n}\in \mathfrak{D}_{W,V}$ and $D_{-}f_n = p_{n}-\frac{1}{W(1)}\int_{\mathbb{T}}p_{n}dW.$  The triangular inequality yields
		$$\|D^{-}_{W}f_{n}-f\|_{\infty}\le \left|\frac{1}{W(1)}\int_{\mathbb{T}}p_{n}dW\right|+\|p_{n}-f\|_{\infty}.$$
		Using the uniform convergence of $p_{n}$ and the last estimate, we have that $D^{-}_{W}f_{n}\to f$ uniformly in $\mathbb{T}$. This yields the desired convergence in $L^{2}_{W}(\mathbb{T}).$
	\end{proof}

\begin{remark}
	Notice that the above Corollary is proved explicitly in a transparent manner. The idea was to exploit the symmetric properties between the theory in $(W,V)$ and the theory in $V,W$.
\end{remark}

\section{$W$-$V$-Sobolev spaces}

	\begin{definition}\label{sobolev1}
		We define the first order $W$-$V$-Sobolev space, denoted by $H_{W,V}(\mathbb{T})$, as the energetic space (in the sense of Zeidler, \cite[Section 5.3]{zeidler}) associated to the operator $(I-D^{+}_{V}D^{-}_{W}): \mathfrak{D}_{W,V}(\mathbb{T})\subset L^2_{V}(\mathbb{T})\to L^2_{V}(\mathbb{T})$. That is, we define the norm $$\|f\|_{1,2}^2 = \langle I - D^{+}_{V}D^{-}_{W}f,f\rangle_{V}$$
		in $\mathfrak{D}_{W,V}(\mathbb{T})$ and say that $f\in L^2_{V}(\mathbb{T})$ belongs to $H_{W,V}(\mathbb{T})$ if, and only if, the following conditions hold:
		\begin{enumerate}
			\item There exists a sequence $f_n\in \mathfrak{D}_{W,V}(\mathbb{T})$ such that $f_n\to f$ in $L^2_V(\mathbb{T})$;
			\item The sequence $f_n$ is Cauchy with respect to the energetic norm $\|\cdot\|_{1,2}$.
		\end{enumerate}
		A sequence $(f_n)_{n\in\mathbb{N}}$ in $\mathfrak{D}_{W,V}(\mathbb{T})$ satisfying 1 and 2 is called an \emph{admissible sequence}. 
	\end{definition}
	
	\begin{remark}
		Notice that, by condition 2 of Definition \ref{sobolev1}, the norm $\|\cdot\|_{1,2}$ can be uniquely extended to $H_{W,V}(\mathbb{T})$. Therefore, we endow $H_{W,V}(\mathbb{T})$ with this extended norm, which we will also denote by $\|\cdot\|_{1,2}$.
	\end{remark}
	
	\begin{theorem}\label{sobchar}
		A function $f\in L^{2}_{V}(\mathbb{T})$ belongs to $H_{W,V}(\mathbb{T})$ if, and only if, there is a function $F\in L^{2}_{W,0}(\mathbb{T})$ and a finite constant c such that
		\begin{equation}\label{c3}
			f(x)=c+\int_{(0,x]}F(s)dW(s),    
		\end{equation}
		$V$-a.e.
	\end{theorem}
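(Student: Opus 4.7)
The plan is to identify the energetic norm on $\mathfrak{D}_{W,V}(\mathbb{T})$ explicitly and then translate between the first-order datum $D^{-}_W f$ and the primitive $f$ itself. By Theorem \ref{thm:1}(b), setting $g = f$ in the symmetry identity yields
$$\|f\|_{1,2}^2 = \|f\|_V^2 + \|D^{-}_W f\|_W^2 \qquad \text{for } f \in \mathfrak{D}_{W,V}(\mathbb{T}),$$
so a sequence $(f_n)\subset \mathfrak{D}_{W,V}(\mathbb{T})$ is Cauchy for $\|\cdot\|_{1,2}$ if and only if it is Cauchy in $L^2_V(\mathbb{T})$ and $(D^{-}_W f_n)$ is Cauchy in $L^2_W(\mathbb{T})$. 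Moreover, Lemma \ref{lm:2} gives the fundamental-theorem-of-calculus representation $f_n(x) = f_n(0) + \int_{(0,x]} D^{-}_W f_n\, dW$; evaluating the compatibility condition in Lemma \ref{lm:2} forces $\int_{\mathbb{T}} D^{-}_W f_n\, dW = 0$, so $D^{-}_W f_n \in L^2_{W,0}(\mathbb{T})$.

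For the necessity direction, I would take an admissible sequence $(f_n)$ for $f$. Since $L^2_{W,0}(\mathbb{T})$ is closed in $L^2_W(\mathbb{T})$, the Cauchy limit $F := \lim_n D^{-}_W f_n$ lies in $L^2_{W,0}(\mathbb{T})$. A Cauchy--Schwarz estimate shows that the primitive map $G \mapsto \int_{(0,\cdot]} G\, dW$ is a bounded linear operator from $L^2_W(\mathbb{T})$ into $L^2_V(\mathbb{T})$, so the primitives converge in $L^2_V$ to $x\mapsto \int_{(0,x]} F\, dW$. Writing $c_n := f_n(0) = f_n(\cdot) - \int_{(0,\cdot]} D^{-}_W f_n\, dW$, the sequence of constants $(c_n)$ is then Cauchy in $L^2_V(\mathbb{T})$ and, being constant functions on a space with $V(1)>0$, Cauchy in $\mathbb{R}$; letting $c := \lim c_n$, the representation (\ref{c3}) follows $V$-a.e.\ by uniqueness of $L^2_V$ limits.

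For the sufficiency direction, given $F \in L^2_{W,0}(\mathbb{T})$ and $c \in \mathbb{R}$, I would invoke Corollary \ref{cr:1} to pick $(g_n)$ in $\mathfrak{D}_{W,V}(\mathbb{T})$ with $D^{-}_W g_n \to F$ in $L^2_W$, and then set $f_n := g_n + (c - g_n(0))$. Adding a constant preserves membership in $\mathfrak{D}_{W,V}(\mathbb{T})$, so $f_n \in \mathfrak{D}_{W,V}(\mathbb{T})$ with $D^{-}_W f_n = D^{-}_W g_n \to F$ in $L^2_W$; applying the primitive map once more gives $f_n \to c + \int_{(0,\cdot]} F\, dW = f$ in $L^2_V$. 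Hence $(f_n)$ is admissible and $f \in H_{W,V}(\mathbb{T})$. The step I expect to require the most care is the mean-zero condition in the necessity direction: one must verify that the constraint from Lemma \ref{lm:2} passes cleanly to the $L^2_W$ limit so that $F$ actually lies in $L^2_{W,0}(\mathbb{T})$ rather than in the full $L^2_W(\mathbb{T})$; once this is secured, everything else reduces to the continuity of the primitive map together with the constant-subtraction trick.
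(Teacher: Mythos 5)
Your proposal is correct and follows essentially the same approach as the paper: in both directions one passes through the $L^2_W$-limit of $D^-_W f_n$ for an admissible sequence, the representation $f_n(x)=f_n(0)+\int_{(0,x]}D^-_W f_n\,dW$ with the mean-zero constraint from Lemma \ref{lm:2}, and Corollary \ref{cr:1} for the converse. The only deviation is cosmetic: you obtain the constant by noting that $c_n=f_n(0)$ is a difference of two $L^2_V$-convergent sequences, whereas the paper extracts it by averaging over $\mathbb{T}$ with a dominated convergence argument; both are valid.
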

	\begin{proof}
		Let $f\in H_{W,V}(\mathbb{T})$. By the definition of energetic space and the relation (\ref{e4}), there exists an admissible sequence $(f_{n})_{n\in\mathbb{N}}$ such that $f_{n}\to f$ in $L^{2}_{V}(\mathbb{T})$ and $\left(D^{-}_{W}f_{n}\right)_{n\in\mathbb{N}}$ is Cauchy in $L^{2}_{W}(\mathbb{T})$ norm. Without loss of generality, we can suppose that $D^{-}_{W}f_{n}\to G$ in $L^{2}_{W}(\mathbb{T})$. Define $\displaystyle g(x)=\int_{(0,x]}G(\xi)dW(\xi)$ and note that $g(1)=0$ and
		$$g(y)-g(x)=\int_{(x,y]}G dW=\lim_{n\to \infty}\int_{(x,y]}D^{-}_{W}f_{n}dW=\lim_{n\to \infty}[f_{n}(y)-f_{n}(x)].$$
		Now, we need to prove that, for each fixed $y\in\mathbb{T}$, we have
		\begin{equation}\label{e5} \int_{\mathbb{T}}[f_{n}(y)-f_{n}(x)]dV(x)\to \int_{\mathbb{T}}[g(y)-g(x)]dV(x).
		\end{equation} 
	Indeed, we have that $f_{n}(y)-f_{n}(x)\to g(y)-g(x)$ for each fixed $y$, and by the H\"older's inequality
		$$|f_{n}(y)-f_{n}(x)|^2\le W(1)\int_{\mathbb{T}}\left(D^{-}_{W}f_{n}\right)^{2}dW,$$
		with the term on the right-hand of the above inequality being bounded due to the fact that $\left(D^{-}_{W}f_{n}\right)_{n\in\mathbb{N}}$ is Cauchy. We can now use the dominated convergence theorem to obtain (\ref{e5}). Note that the convergence $f_{n}\to f$ in $L_{V}^{2}(\mathbb{T})$ implies that, $V$-a.e, we have
		\begin{align*}
			V(1)f(y)=\lim_{n\to\infty} f_{n}(y) &= \lim_{n\to\infty}\left[V(1)f_{n}(y)-\int_{\mathbb{T}}f_{n}(x)dV(x)\right]+\lim_{n\to\infty}\int_{\mathbb{T}}f_{n}(x)dV(x)\\ &= V(1)g(y)-\int_{\mathbb{T}}g(x)dV+\int_{\mathbb{T}}fdV.
		\end{align*} 
		That is, for $V$-a.e. $y$, we have
		$$f(y)= c+\int_{(0,y]}G(\xi)dW(\xi),$$
		where $c=V(1)^{-1}\left(\int_{\mathbb{T}}fdV-\int_{\mathbb{T}}g(x)dV\right)$. 
		Conversely, if  $F\in L^{2}_{W,0}$ satisfies (\ref{c3}), we have by Corollary \ref{cr:1} that $\left\{D^{-}_{W}g; g\in \mathfrak{D}^{2}_{W,V}(\mathbb{T})\right\}$ is dense in the closed subspace $L^{2}_{W,0}(\mathbb{T})$. Now, choose $\left(D^{-}_{W}g_{n}\right)_{n\in\mathbb{N}}$ such that $D^{-}_{W}g_{n}\to F$ in $L^{2}_{W}(\mathbb{T})$, and let
		$$f_{n}(x)=c+\int_{(0,x]}D^{-}_{W}g_{n}dW.$$
		It follows from the dominated convergence theorem that $f_{n}\to f$ in $L^{2}_{V}(\mathbb{T})$ and $f_{n}\in \mathfrak{D}_{W,V}(\mathbb{T})$ for all $n\in\mathbb{N}$, that is, $g_{n}$ is admissible for $f$. \end{proof}
	\begin{remark}\label{sobagree}
		If we set $V(x)=x$ in the above Lemma, our space agrees with the energetic space considered in \cite{franco}, that is,  $H_{W,x}(\mathbb{T})=H^{1}_{2}(\mathbb{T})$, where $H^{1}_{2}(\mathbb{T})$ is the space introduced in \cite{franco}. 
		It is noteworthy that even though we defined our space in terms of one-sided derivatives, the resulting energetic space for that particular case in which $V(x)=x$ agrees with the space defined in \cite{franco}, where "two-sided" derivatives were used. Note that the energetic space $H_2^1(\mathbb{T})$ in \cite{franco} coincides with the $W$-Sobolev space in \cite{wsimas} when the dimension is 1. This shows that our approach for the $W$-$V$-Sobolev space is a natural one, as it generalized the standard $W$-Sobolev spaces, and in particular, our $W$-$V$-Sobolev space also agree with the standard Sobolev space when $V(x)=W(x)=x$.
	\end{remark}

We will now prove a $W,V$-version of the well-known Rellich-Kondrachov's theorem. The proof is similar to the proof in \cite[Lemma 3]{franco}. We will present the details here for completeness.

	\begin{theorem}\label{compactemb}
		The embedding $H_{W,V}(\mathbb{T})\hookrightarrow L^{2}_{V}(\mathbb{T})$ is compact.
	\end{theorem}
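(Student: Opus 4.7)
The plan is to exploit the integral representation from Theorem \ref{sobchar}: every $f \in H_{W,V}(\mathbb{T})$ can be written as $f(x) = c + TF(x)$, where $TF(x) := \int_{(0,x]} F(s)\,dW(s)$ and $F \in L^2_{W,0}(\mathbb{T})$. Under this representation, the energetic norm reads $\|f\|_{1,2}^2 = \|f\|_V^2 + \|F\|_W^2$ (extending by continuity the identity from Theorem \ref{thm:1}(b)), so boundedness in $H_{W,V}(\mathbb{T})$ translates into simultaneous $L^2_V$-boundedness of $f$ and $L^2_W$-boundedness of the ``weak $W$-derivative'' $F$. Compactness of the embedding will follow if I can show that the operator $T\colon L^2_{W,0}(\mathbb{T})\to L^2_V(\mathbb{T})$ is compact, and that the constant parts $c_n$ form a bounded sequence in $\mathbb{R}$.

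The concrete steps are as follows. First, take a bounded sequence $(f_n)\subset H_{W,V}(\mathbb{T})$ and write $f_n = c_n + TF_n$ with $F_n \in L^2_{W,0}(\mathbb{T})$; extract a subsequence (not relabeled) for which $c_n \to c$ in $\mathbb{R}$ (using that $|c_n|\le V(1)^{-1/2}\|f_n\|_V + \|TF_n\|_V$ and H\"older against $dW$) and $F_n \rightharpoonup F$ weakly in $L^2_{W,0}(\mathbb{T})$. Second, test weak convergence against the functions $\mathbf{1}_{(0,x]}\in L^2_W(\mathbb{T})$ to conclude that for every $x\in\mathbb{T}$
\begin{equation*}
TF_n(x) = \int_{(0,x]} F_n\,dW \longrightarrow \int_{(0,x]} F\,dW = TF(x).
\end{equation*}
Third, apply H\"older to get the uniform pointwise bound $|TF_n(x)-TF(x)|\le \sqrt{W(1)}\,\|F_n-F\|_W$, which is bounded because $(F_n)$ is bounded and hence so is $F_n - F$. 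The dominated convergence theorem then yields $TF_n\to TF$ in $L^2_V(\mathbb{T})$, and combining this with $c_n \to c$ gives $f_n \to c + TF$ in $L^2_V(\mathbb{T})$, as required.

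The main technical point is guaranteeing the identity $\|f\|_{1,2}^2 = \|f\|_V^2 + \|F\|_W^2$ for arbitrary $f \in H_{W,V}(\mathbb{T})$ (so that boundedness in the energetic norm does give boundedness of $F$ in $L^2_W$); this is handled by approximating $f$ by an admissible sequence $f_k \in \mathfrak{D}_{W,V}(\mathbb{T})$, applying Theorem \ref{thm:1}(b) to each $f_k$, and passing to the limit using that $D_W^- f_k$ converges in $L^2_W$ to the function $F$ furnished by Theorem \ref{sobchar}. The only genuine obstacle is verifying that $TF_n(x)\to TF(x)$ pointwise from the weak convergence of $F_n$, but this is immediate once one observes $\mathbf{1}_{(0,x]}\in L^2_W(\mathbb{T})$; everything else is bookkeeping. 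Alternatively, one could package the argument by showing directly that $T$ is Hilbert--Schmidt via
\begin{equation*}
\int_{\mathbb{T}}\int_{\mathbb{T}} \mathbf{1}_{(0,x]}(s)^2\, dW(s)\,dV(x) = \int_{\mathbb{T}} W(x)\,dV(x) \le W(1)V(1) < \infty,
\end{equation*}
which makes the compactness of $T$ transparent and reduces the proof to the trivial observation that the map $f \mapsto (c, F)$ realizing the decomposition is continuous from $H_{W,V}(\mathbb{T})$ into $\mathbb{R}\oplus L^2_{W,0}(\mathbb{T})$.
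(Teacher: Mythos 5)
Your argument is correct and follows essentially the same route as the paper's proof: decompose $f_n = c_n + \int_{(0,x]}F_n\,dW$ via Theorem \ref{sobchar}, bound $c_n$ and $\|F_n\|_W$ by the energetic norm, pass to a weakly convergent subsequence of $(F_n)$ in $L^2_W(\mathbb{T})$, obtain pointwise convergence by testing against $\mathbf{1}_{(0,x]}$, and conclude $L^2_V$-convergence by dominated convergence using the uniform bound $\sqrt{W(1)}\sup_n\|F_n-F\|_W$. The closing observation that the map $F\mapsto \int_{(0,\cdot]}F\,dW$ is Hilbert--Schmidt from $L^2_{W,0}(\mathbb{T})$ to $L^2_V(\mathbb{T})$ is a correct and slightly slicker alternative packaging, but the substance of the proof coincides with the paper's.
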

	\begin{proof}
		Let $u_{n}(x)=c_{n}+\int_{(0,x]}U_{n}dW$ be a bounded sequence in $H_{W,V}(\mathbb{T})$, where $U_{n}\in L^{2}_{V,0}(\mathbb{T})$, $n\in\mathbb{N}$. By the defnition of the energetic norm, we have that $\|u_{n}\|^{2}_{1,2}=\|U_{n}\|_{W}^{2}+\|u_{n}\|_{V}\geq\|U_{n}\|_{W}^{2}$. Therefore, $U_{n}$ is bounded in $L^{2}_{W}(\mathbb{T})$. We can then use Cauchy-Swartz's inequality to obtain that $\int_{(0,x]}U_{n}dW$ is bounded in $L^{2}_{V}(\mathbb{T})$. Therefore 
		$$c_{n}=u_{n}(x)-\int_{(0,x]}U_{n}dW$$
		 is a bounded, since the right-hand side of this expression is bounded in $L^{2}_{V}(\mathbb{T})$. Further, since $L^2_W(\mathbb{T})$ is a separable Hilbert space and the sequence $(U_{n})_{n}$ is bounded in $L^{2}_{W}(\mathbb{T})$, there is a subsequence $(U_{n_{k}})_{k\in\mathbb{N}}$ such that $$U_{n_{k}} \rightharpoonup U$$ for some $U \in L^{2}_{W}(\mathbb{T})$. This shows that $c_{n_{k}}\to c$ for some $c\in \mathbb{R}$. Moreover, for all $x\in\mathbb{T}$, we have $\textbf{1}_{(0,x]}\in L^{2}_{W}(\mathbb{T})$, and thus
		$$\lim_{n\to\infty}u_{n_{k}}(x)=\lim_{n\to\infty}\left\{c_{n}+\int_{(0,x]}U_{k}dW\right\}=c+\int_{(0,x]}UdW.$$
		In addiction, $\int_{\mathbb{T}}UdW=0$. Finally, $|u_{n_{k}}(x)|^2\le 2c_{n_{k}}^2+2[W(1)]\|U_{n_{k}}\|_{W}^2$. Therefore, we can use the
		 dominated convergence theorem to conclude that $u_{n_{k}}$ converges to $c+\int_{(0,x]}UdW$ in $L^{2}_{W}(\mathbb
		{T})$.
	\end{proof}
	
	\paragraph{}Our goal now is to define a ($W$,$V$)-generalized laplacian. To this end, we will provide some auxiliary definitions.
	
	\begin{definition}
		Let $\mathcal{A}: \mathcal{D}_{W,V}(\mathbb{T})\subseteq L^{2}_{V}(\mathbb{T})\to L^{2}_{V}(\mathbb{T})$ be the Friedrichs Extension of the operator $I-D^{+}_{V}D^{-}_{W}$ (we refer the reader to Zeidler \cite[Section 5.5]{zeidler} for further details on Friedrichs extensions). 
		We can characterize the domain $\mathcal{D}_{W,V}(\mathbb{T})$ as the set of functions $f\in L^{2}_{W}(\mathbb{T})$ such that there is $\mathfrak{f}\in L^{2}_{V,0}(\mathbb{T})$ satisfying 
		\begin{equation}\label{c4}
			f(x)=a+W(x)b+\int_{(0,x]}\int_{[0,y)} \mathfrak{f}(s) dV(s)dW(y),
		\end{equation}
		where $b$ is satisfies the relation
		$$bW(1)+\int_{(0,1]}\int_{[0,y)} \mathfrak{f}(s) dV(s)dW(y)=0.$$
		Moreover,
		\begin{equation}\label{c5}
			-\int_{\mathbb{T}}D^{-}_{W}f D^{-}_{W}g dW= \langle\mathfrak{f},g\rangle
		\end{equation}
		for all g in $H_{W,V}(\mathbb{T}).$ 
	\end{definition}

\begin{remark}
	Expression \eqref{c5} follows from a straightforward adaptation of the arguments found in \cite[Lemma 4]{franco}.
\end{remark}

	Hence, by \cite[Theorem 5.5.c]{zeidler} and Theorem \ref{compactemb} above, the resolvent of the Friedrichs extension, $\mathcal{A}^{-1}$, is compact. Thus, there exists a complete ortonormal system of functions $(\nu_n)_{n\in\mathbb{N}}$ in $L^{2}_{V}(\mathbb{T})$ such that $\nu_{n}\in H_{W,V}(\mathbb{T})$ for all $n$, and $\nu_{n}$ solves the equation $\mathcal{A}\nu_n=\gamma_n \nu_n,$ for some $\{\gamma_n\}_{n\in\mathbb{N}}\subset\mathbb{R}.$ Furthermore,
	$$1\le \gamma_{1}\le\gamma_{2}\cdots\to\infty$$ 
	as $n\to\infty$. 
	
	\begin{definition}\label{laplacianWV}
		We define the $W$-$V$-Laplacian as $\Delta_{W,V}=I-\mathcal{A} : \mathcal{D}_{W,V}(\mathbb{T})\subseteq L^{2}_{V}(\mathbb{T})\to L^{2}_{V}(\mathbb{T})$.
	\end{definition}
	
We have the following integration by parts formula with respect to the ($W$,$V$)-Laplacian:
	
	\begin{proposition}\label{intbypartsprop}
		(Integration by parts formula) For every $f\in\mathcal{D}_{W,V}(\mathbb{T})$ and $g\in H_{W,V}(\mathbb{T})$, the following expression holds:
		\begin{equation}\label{intbypartsVW}
			\langle -\Delta_{W,V}f,g\rangle_{V} = \int_{\mathbb{T}}D^{-}_{W}fD^{-}_{W}gdW
		\end{equation}
	\end{proposition}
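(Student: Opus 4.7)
The plan is to unwind the definitions and apply equation \eqref{c5} directly. First, I would recall the chain of definitions: $\Delta_{W,V}=I-\mathcal{A}$ and $\mathcal{A}$ is the Friedrichs extension of $I-D^{+}_{V}D^{-}_{W}$, so on its domain $\mathcal{D}_{W,V}(\mathbb{T})$ we have $-\Delta_{W,V}=\mathcal{A}-I$. Given $f\in\mathcal{D}_{W,V}(\mathbb{T})$ with representation \eqref{c4}, the role of $\mathfrak{f}$ is precisely that of the generalized $D^{+}_{V}D^{-}_{W}f$: indeed, differentiating \eqref{c4} formally once with respect to $W$ yields $D^{-}_{W}f(x)=b+\int_{[0,x)}\mathfrak{f}(s)\,dV(s)$, and differentiating again with respect to $V$ yields $\mathfrak{f}$. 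Consequently $\mathcal{A}f=f-\mathfrak{f}$, which extends the formula $\mathcal{A}f=(I-D^{+}_{V}D^{-}_{W})f$ on $\mathfrak{D}_{W,V}(\mathbb{T})$, and therefore $-\Delta_{W,V}f=\mathcal{A}f-f=-\mathfrak{f}$.

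With that identification in hand, the integration-by-parts formula becomes a direct restatement of \eqref{c5}. Namely, for any $g\in H_{W,V}(\mathbb{T})$,
$$
\langle -\Delta_{W,V}f,g\rangle_{V} \;=\; \langle -\mathfrak{f},g\rangle_{V} \;=\; -\langle \mathfrak{f},g\rangle_{V} \;=\; \int_{\mathbb{T}}D^{-}_{W}f\,D^{-}_{W}g\,dW,
$$
where the last equality is exactly \eqref{c5}.

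The only subtlety worth addressing is interpreting $D^{-}_{W}g$ on the right-hand side, since a priori a function $g\in H_{W,V}(\mathbb{T})$ need not be pointwise $W$-left differentiable. This is resolved by Theorem \ref{sobchar}: every $g\in H_{W,V}(\mathbb{T})$ admits the representation $g(x)=c+\int_{(0,x]}F(s)\,dW(s)$ for a unique $F\in L^{2}_{W,0}(\mathbb{T})$, and $D^{-}_{W}g$ is understood as this $F$ in the $L^{2}_{W}$-sense. With this convention the integral $\int_{\mathbb{T}}D^{-}_{W}f\,D^{-}_{W}g\,dW$ is well defined and the identity \eqref{c5} applies for all admissible $g$. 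The main (and only real) obstacle is thus verifying that the identification $-\Delta_{W,V}f=-\mathfrak{f}$ is valid on the full Friedrichs domain $\mathcal{D}_{W,V}(\mathbb{T})$; once this is established, the conclusion follows in one line from \eqref{c5}.
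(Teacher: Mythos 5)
Your proposal is correct and follows essentially the same route as the paper: identify $\Delta_{W,V}f=\mathfrak{f}$ from the representation \eqref{c4} of the Friedrichs domain, then conclude directly from \eqref{c5}. The extra remark interpreting $D^{-}_{W}g$ via Theorem \ref{sobchar} is a sensible clarification but not a departure from the paper's argument.
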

	\begin{proof}
		We have by (\ref{c4}) that $\Delta_{W,V}f=\mathfrak{f}$. The result thus follows by (\ref{c5}).
	\end{proof} 

Note that in expression \eqref{intbypartsVW}, the spaces change (more precisely, the measures in which the integrals are being done change). On the left-hand side of expression \eqref{intbypartsVW}, the integration is being done on $L^2_V(\mathbb{R})$, with respect to the measure induced by $V$, whereas in the right-hand side of \eqref{intbypartsVW}, the integration is being done on $L^2_W(\mathbb{T})$, with respect to the measure induced by $W$.
	
	\section{The Space $C^{\infty}_{W,V}(\mathbb{T})$}\label{sect4}
	
	Our goal in this section is to provide a ``nice'' space for functions in which the lateral derivatives exist pointwisely. This means, for example, that whenever we have a function on such spaces, say $f$, the Laplacian $\Delta_{W,V}f$ will agree with $D_V^+D_W^-f$, whereas this last expression exists for every point in $\mathbb{T}$. 
	This was a problem with the previous $W$-Sobolev spaces \cite{wsimas,simasvalentim2,simasvalentimjde}, and also when people dealt with such operators such as in \cite{franco, valentim, frag, farfansimasvalentim}. All these cases considered the operator $D_xD_W$ (without the lateral derivatives). The problem is that the eigenfunctions for that operator were obtained in $L^2(\mathbb{T})$ and there were no satisfactory regular spaces for such functions. We will now explain the reason for that. Let $f$ be an eigenfunction for $D_xD_W$. We know that if $f$ admits the $W$-derivative, then $f$ is c\`adl\`ag and typically discontinuous, with its discontinuity points being cointained in the set of discontinuity points of $W$. When considering the operator $D_xD_W$, this means that $D_W f$ is differentiable. Now, notice that $D_xD_W f = \alpha f$, for some $\alpha$, which is c\`adl\`ag, since $f$ is c\`adl\`ag. This means that we have a c\`adl\`ag function that is the derivative of some function. We can now use Darboux's theorem from real analysis to conclude that $D_xD_W f$ must be continuous, which in turn, implies that $f$ must be continuous. This is not expected to be true. Indeed, one can consider the case in which the set of discontinuity points of $W$ is dense. This explains why, up until the moment, no pointwise higher-order regularity results have been found for such operator. Indeed, in \cite{simasvalentimjde}, to be able to do point evaluation on ``regular'' functions they needed to define a new space of test functions and, also, prove several results to circumvent the fact that the ``natural'' space spanned by the eigenfunctions was seen as a subspace of $L^2(\mathbb{T})$, and thus, the point evaluation was not defined.
	
	We will now define our space of regular functions in which we can apply the operators pointwisely. We will also show that the eigenfunctions belong to this space.
	
To this end, begin by noting that $\nu\in L^2_V(\mathbb{T})$ is an eigenvector of $\Delta_{W,V}$ with eigenvalue $\lambda$ if, and only if, $\nu$ is an eigenvector of $\mathcal{A}$ with eigenvalue $\gamma=1+\lambda$. We, then, have that $\{\nu_{n},\lambda_{n}\}_{n\in \mathbb{N}}$ forms a complete orthonormal system of $L^{2}_{V}(\mathbb{T})$, where $\lambda_n = \gamma_n - 1$. Furthermore, $\nu_{n}\in H_{W,V}(\mathbb{T})$ for all $n$. Moreover, we have that
	$$0=\lambda_{0}\le \lambda_{1}\le\lambda_{2}\cdots\to\infty,$$
	as $n\to\infty$, where for each $k\in\mathbb{N} \setminus \{0\}$, $\lambda_k$ satisfies
	
	\begin{equation}\label{autovt}
		-\Delta_{W,V}\nu_{k}=\lambda_{k} \nu_{k}.
	\end{equation} 
	
To state the our next regularity result we need to define the following sets : 
	$$C_{0}(\mathbb{T}):=\left\{f:\mathbb{T}\to\mathbb{R}; f\;\;\mbox{is càdlàg},\;\int_{\mathbb{T}}fdV=0\right\},$$
	$$C_{1}(\mathbb{T}):=\left\{f:\mathbb{T}\to\mathbb{R}; f\;\;\mbox{is càglàd}\int_{\mathbb{T}}fdW=0\right\},$$
	and
	$$C^{n}_{W,V,0}(\mathbb{T}):=\left\{f\in C_{0}(\mathbb{T}); D^{(k)}_{W,V}f\hbox{ exists and }D^{(k)}_{W,V}f\in C_{\sigma(k)}(\mathbb{T}),\,\forall k\le n\right\},$$
	where 
	\[D^{(n)}_{W,V}:=\left\{ \begin{array}{ll}
		\underbrace{D^{-}_{W}D^{+}_{V}...D^{-}_{W}}_{n-factors},\;\mbox{if}\; n\; \mbox{is odd}; \\
		\underbrace{D^{-}_{V}D^{+}_{W}...D^{-}_{W}}_{n-factors},\;\mbox{if}\; n\; \mbox{is even},\end{array} \right. \] 
	and 
	\[\sigma(n):=\left\{ \begin{array}{ll}
		1,\;\mbox{if}\; n\; \mbox{is odd}; \\
		0,\;\mbox{if}\; n\; \mbox{is even}.\end{array} \right. \] 
	
	We are now in a position to state and prove the regularity result. Note also that we will also define our first space of regular functions, namely, $C^{\infty}_{W,V,0}(\mathbb{T})$. Later we will increase it by adding (in the sum of spaces sense) the constant functions.
	
	\begin{theorem}[Regularity of eigenfunctions]\label{eigenreg} The solutions of 
		\[\left\{ \begin{array}{ll}
			-\Delta_{W,V}\psi=\lambda \psi,\; \lambda\in\mathbb{R}\setminus\{0\};\\
			u\in \mathcal{D}_{W,V}(\mathbb{T})\end{array} \right. \] 
		belong to \begin{equation}\label{cinfinity}
			C^{\infty}_{W,V,0}(\mathbb{T}):=\bigcap_{n\in\mathbb{N}}C^{n}_{W,V,0}(\mathbb{T}).
		\end{equation}
	\end{theorem}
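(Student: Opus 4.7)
My plan is to proceed by induction on $n$, using the eigenvalue identity to fold each pair of one-sided derivatives back into $\psi$. First, invoking the characterization of $\mathcal{D}_{W,V}(\mathbb{T})$ from \eqref{c4}, the eigenfunction satisfies
\[
\psi(x) \;=\; a + W(x)\,b + \int_{(0,x]}\int_{[0,y)} \mathfrak{f}(s)\,dV(s)\,dW(y),
\]
with $\mathfrak{f} := \Delta_{W,V}\psi = -\lambda\,\psi$. The iterated Stieltjes integral is càdlàg in $x$, so $\psi$ is càdlàg; and since $\mathfrak{f}\in L^{2}_{V,0}(\mathbb{T})$, we get $\int_{\mathbb{T}}\psi\,dV = -\lambda^{-1}\int_{\mathbb{T}}\mathfrak{f}\,dV = 0$. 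Hence $\psi\in C_0(\mathbb{T})$, handling $n=0$.

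Next, I would differentiate \eqref{c4} once in $W$ from the left. Setting $G(y):=\int_{[0,y)}\mathfrak{f}(s)\,dV(s)$, a direct computation of the left-limit Newton quotient of the outer $dW$-integral collapses to the left-limit of its integrand; since $G$ is càglàd (as an integral over the left-closed, right-open window $[0,y)$), this yields $D^{-}_{W}\psi(x) = b + G(x)$, a càglàd function. The compatibility condition on $b$ built into the definition of $\mathcal{D}_{W,V}(\mathbb{T})$ reads exactly $\int_{\mathbb{T}} D^{-}_{W}\psi\,dW = 0$, so $D^{(1)}_{W,V}\psi\in C_1(\mathbb{T})$. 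Applying $D^{+}_{V}$ to $b + G(x)$ and using that $\mathfrak{f}=-\lambda\psi$ is càdlàg, I obtain the key identity
\[
D^{(2)}_{W,V}\psi(x) \;=\; D^{+}_{V}D^{-}_{W}\psi(x) \;=\; \mathfrak{f}(x) \;=\; -\lambda\,\psi(x),
\]
which already lies in $C_0(\mathbb{T})$.

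The identity $D^{(2)}_{W,V}\psi = -\lambda\,\psi$ is the engine of the induction: it says the second-order $(W,V)$-derivative of an eigenfunction is (a constant multiple of) the eigenfunction itself. Consequently $D^{(2)}_{W,V}\psi$ belongs to $\mathcal{D}_{W,V}(\mathbb{T})$ and solves the same eigenvalue problem, so the two previous steps apply verbatim to it, giving $D^{(3)}_{W,V}\psi = -\lambda\,D^{-}_{W}\psi\in C_1(\mathbb{T})$ and $D^{(4)}_{W,V}\psi = \lambda^{2}\psi\in C_0(\mathbb{T})$. A routine induction on $k$ then yields
\[
D^{(2k)}_{W,V}\psi = (-\lambda)^{k}\psi, \qquad D^{(2k+1)}_{W,V}\psi = (-\lambda)^{k} D^{-}_{W}\psi,
\]
each with the appropriate càdlàg/càglàd regularity and mean-zero property. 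Therefore $\psi\in C^{n}_{W,V,0}(\mathbb{T})$ for every $n$, so $\psi\in C^{\infty}_{W,V,0}(\mathbb{T})$ by \eqref{cinfinity}.

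The only delicate point I foresee is the differentiation step: rigorously verifying that the left-limit Newton quotient of the outer $dW$-integral in \eqref{c4} equals $b + G(x)$, and then that the right-limit Newton quotient of $b+G$ equals the càdlàg value $\mathfrak{f}(x)$. This is a matter of matching the half-open conventions $(x,x+h]$ versus $[x,x+h)$ against the càdlàg/càglàd regularity of the integrands, as fixed in Definition \ref{defgendif}; once these pointwise identities are established by standard Lebesgue--Stieltjes manipulations, the rest of the proof is purely algebraic iteration of the eigenvalue relation.
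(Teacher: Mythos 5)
Your proof is correct and follows essentially the same route as the paper: it uses the representation \eqref{c4} with $\mathfrak{f}=-\lambda\psi$ to get $\psi\in C_0(\mathbb{T})$, computes $D_W^-\psi$ and $D_V^+D_W^-\psi$ pointwise with the mean-zero conditions coming from the compatibility relations, and then iterates the identity $D_V^+D_W^-\psi=-\lambda\psi$ by induction. Your explicit formulas $D^{(2k)}_{W,V}\psi=(-\lambda)^k\psi$ and $D^{(2k+1)}_{W,V}\psi=(-\lambda)^kD_W^-\psi$ just make the paper's closing induction step more concrete.
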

	\begin{proof}
		Begin by observing that we have, by $(\ref{c4})$, that
		\begin{equation}\label{eq:n}
			\psi(x)=a+bW(x)-\lambda\int_{(0,x]}\int_{[0,y)} \psi(s) dV(s)dW(y)
		\end{equation}
		V-$a.e$ with $a$ and $b$ determined by the relations
		\begin{equation}\label{eq:n+3}
			bW(1)-\lambda\int_{\mathbb{T}}\int_{[0,y)} \psi(s) dV(s)dW(y)=0\,;\,\,\,\int_{\mathbb{T}}\psi dV=0.
		\end{equation}
	This shows that $\varphi\in C_0(\mathbb{T})$. This also shows that to conclude the result, it is enough to show that $D_W^-\varphi\in C_1(\mathbb{T})$, $D_V^+(D^-_W\varphi) \in C_0(\mathbb{T})$ and that $D_V^+(D^-_W\varphi) = \varphi$, $V$-a.e. Since, then, the remaining orders follow directly by induction.
	Therefore, the next step is to prove that $D_W^-\varphi\in C_1(\mathbb{T})$. Indeed, note that $\int_{[0,y)}\psi(s)dV(s)$ is a càglàd function and
		\begin{equation}\label{eq:n+2}
			D_{W}^{-}\left(a+bW-\lambda\int_{(0,\cdot]}\int_{[0,\alpha)} \psi(\beta) dV(\beta)dW(\alpha)\right)(y)=b-\lambda\int_{[0,y)}\psi(s)dV(s)
		\end{equation}
		$W$-a.e. Note that $(\ref{eq:n+3})$ implies the mean zero conditions over $(\ref{eq:n+2})$. This shows  $D_W^-\varphi\in C_1(\mathbb{T})$. Now, observe that  $(\ref{eq:n})$ implies
		\begin{equation}\label{eq:n+1}
			b-\lambda\int_{[0,y)}\psi(s)dV(s)=b-\lambda\int_{[0,y)}\left[a+\int_{(0,s]}\left(b-\lambda\int_{[0,\alpha)} \psi(\beta) dV(\beta)\right)dW(\alpha)\right]dV(s).
		\end{equation}
		Since $a+bW(x)-\lambda\int_{(0,x]}\int_{[0,y)} \psi(s) dV(s)dW(y)$ is càdlàg, it follows by $(\ref{eq:n+1})$ that
		\begin{equation}\label{eq:n+4}
			D^{+}_{V}\left(b-\lambda\int_{[0,\cdot)}\psi(\beta)dV(\beta)\right)(s)=a+\int_{(0,s]}\left(b-\lambda\int_{[0,\alpha)} \psi(\beta) dV(\beta)\right)dW(\alpha).
		\end{equation}
		$V$-a.e. Since $(\ref{eq:n+3})$ also implies the mean zero conditions over $(\ref{eq:n+4})$, this shows that $D_V^+(D^-_W\varphi) \in C_0(\mathbb{T})$ and that $D_V^+(D^-_W\varphi) = \varphi$, $V$-a.e. 
	\end{proof}

Note that the notion of regularity provided by Theorem \ref{eigenreg} is a true notion of regularity, in the sense that we can evaluate any eigenfunction as well as their lateral derivatives of any order at every point of the domain. This is a major contribution to the regularity theory related to such operators. 

We can now increase our space by ``adding'' the constant functions.

\begin{definition}
	Let
	$$C^{\infty}_{V,W}(\mathbb{T}):=\langle 1\rangle \oplus C^{\infty}_{V,W,0}(\mathbb{T}),$$
	where $C^{\infty}_{V,W,0}(\mathbb{T})$ was defined in \eqref{cinfinity}.
\end{definition}

Since the eigenvectors of $\Delta_{V,W}$ are dense in $L^2_V(\mathbb{T})$, we can apply Theorem \ref{eigenreg} to conclude that 

\begin{proposition}\label{densenesscinfinity}
	The space $C^{\infty}_{V,W}(\mathbb{T})$ is dense in $L^2_V(\mathbb{T})$. Furthermore, the set $\left\{D^{+}_{V}g;g\in C^{\infty}_{V,W}(\mathbb{T})\right\}$ is dense in $L^2_{V,0}(\mathbb{T})$.
\end{proposition}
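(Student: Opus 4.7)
My plan is to derive both assertions from Theorem \ref{eigenreg} combined with the spectral decomposition of the dual Friedrichs extension $\Delta_{V,W}$.

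For the first claim, I would apply Theorem \ref{eigenreg} in the dual $V,W$-structure (see Remark \ref{orderVW}): every eigenvector of $-\Delta_{V,W}$ associated to a nonzero eigenvalue then lies in $C^{\infty}_{V,W,0}(\mathbb{T})$, while the kernel is spanned by the constants, which lie in $\langle 1\rangle$. Hence every eigenvector lies in $C^{\infty}_{V,W}(\mathbb{T}) = \langle 1\rangle \oplus C^{\infty}_{V,W,0}(\mathbb{T})$, and since this family is dense in $L^2_V(\mathbb{T})$ (as recorded in the sentence preceding the proposition), so is $C^{\infty}_{V,W}(\mathbb{T})$.

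For the second claim, my strategy is to realize an arbitrary $h\in L^2_{V,0}(\mathbb{T})$ as the $V$-right derivative of an element of the dual energetic space, and then approximate that element by truncated spectral expansions. Explicitly, I would set
\[
H(x) := \int_{[0,x)} h(s)\, dV(s),
\]
which is c\`agl\`ad and, because $\int_{\mathbb{T}} h\, dV = 0$, satisfies $H(0)=H(1)$ so that it descends to $\mathbb{T}$; one also checks that $D^+_V H = h$. The dual analogue of Theorem \ref{sobchar} then identifies $H$ as an element of $H_{V,W}(\mathbb{T})$.

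The remaining step is to expand $H = \sum_n a_n \nu_n$ in the orthogonal basis of eigenvectors of $-\Delta_{V,W}$ and consider the partial sums $H_N := \sum_{n\le N} a_n \nu_n$, which lie in $C^{\infty}_{V,W}(\mathbb{T})$ by the first claim. The main obstacle I anticipate is showing that $H_N \to H$ not merely in the ambient $L^2$-sense but in the energetic norm of $H_{V,W}(\mathbb{T})$; this reduces to the standard spectral characterization of the energetic space, where the $\{\nu_n\}$ diagonalize both the ambient and the energetic inner products so that $\|H-H_N\|_{1,2}^2$ is the tail of a convergent series. Once this energetic convergence is in hand, it controls $\|D^+_V(H-H_N)\|_V$, yielding $D^+_V H_N \to h$ in $L^2_V(\mathbb{T})$ and thus the desired density.
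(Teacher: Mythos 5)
Your proof is correct, but for the second assertion it takes a genuinely different route from the paper. For the first claim you do what the paper does: invoke the regularity theorem in the dual $V,W$-structure and the density of the eigenvectors of $\Delta_{V,W}$ recorded just before the proposition, so there is nothing to compare there. For the second claim, the paper never integrates a general $h$: it observes that $D^{+}_{V}$ intertwines the eigenspaces of $\Delta_{V,W}$ and $\Delta_{W,V}$ (every nonzero eigenvalue is common to both, with eigenspaces of matching dimension), so that each eigenvector $\nu_i$ of $\Delta_{W,V}$ with $\lambda_i\neq 0$ is itself of the form $D^{+}_{V}g$ with $g\in C^{\infty}_{V,W}(\mathbb{T})$ (essentially $g=\lambda_i^{-1}D^{-}_{W}\nu_i$), and then uses that the span of these $\nu_i$ is dense in $L^{2}_{V,0}(\mathbb{T})$. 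Your argument instead starts from an arbitrary $h\in L^{2}_{V,0}(\mathbb{T})$, builds the $V$-antiderivative $H\in H_{V,W}(\mathbb{T})$ via the dual of Theorem \ref{sobchar}, and approximates $H$ in the energetic norm by partial sums of its eigenfunction expansion, pushing the convergence through $D^{+}_{V}$. This works, but it imports the dual of the Fourier characterization of the energetic space (Theorem \ref{thm:6}), a result the paper proves only after this proposition; fortunately its proof does not use the proposition, so your argument is not circular, just heavier. Two small cautions: the identity $D^{+}_{V}H=h$ holds only in the weak/energetic sense (pointwise lateral limits need not exist for general $h\in L^2$), which you correctly resolve by appealing to the dual of Theorem \ref{sobchar}; and your expansion of $H$ should include the constant (zero-eigenvalue) term, which is harmless since $D^{+}_{V}$ annihilates it. The paper's argument buys brevity and stays entirely at the level of eigenvectors; yours buys an explicit approximation scheme for an arbitrary mean-zero $h$ at the cost of the spectral machinery.
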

\begin{proof}
	The density of $C^{\infty}_{V,W}(\mathbb{T})$ in $L^2_V(\mathbb{T})$ follows from the comments before the proposition. On the other side, the density of $A=\left\{D^{+}_{V}g;g\in C^{\infty}_{V,W}(\mathbb{T})\right\}$ in $L^{2}_{V,0}(\mathbb{T})$ is due to the fact that every non null eigenvalue of $\Delta_{W,V}$ is an eigenvalue of $\Delta_{V,W}$ and each invariant subspace (that is, each eigenspace) have a fixed dimension. This implies that for each nonzero eigenvalue, we can find the set of the corresponding eigenvectors of $\Delta_{W,V}$  in $A$. Now, we can use the fact that the space generated set of eigenvectors associated to nonzero eigenvalues is dense in $L^{2}_{V,0}(\mathbb{T})$. This proves the result.
\end{proof}

\section{$W$-$V$-Sobolev spaces and weak derivatives}

Our goal now is to define the notion of $W$ and $V$ lateral weak derivatives and show that the our $W$-$V$-Sobolev space can be viewed as a space of functions that have lateral weak derivative with respect to $W$.  Thus, let us define the notion of lateral weak derivative:
	
	\begin{definition}
		A function $f\in L^{2}_{V}(\mathbb{T})$ has a $W$-left \textit{weak} derivative if, and only if, for every $g\in  C^{\infty}_{V,W}(\mathbb{T})$, there exists $F\in L^{2}_{W}(\mathbb{T})$ such that 
		\begin{equation}\label{defWd}
			\int_{\mathbb{T}}fD_{V}^{+}gdV=-\int_{\mathbb{T}}FgdW.
		\end{equation}
	\end{definition}
	\begin{remark}
		Note that since the eigenvectors of $\Delta_{V,W}$ belong to $C^{\infty}_{V,W}(\mathbb{T})$, it follows that $C^{\infty}_{V,W}(\mathbb{T})$ is dense in $L^2_V(\mathbb{T})$.  This implies the uniqueness of the $W$-left weak derivative defined by \eqref{defWd}. We will denote the $W$-left weak derivative of $f$ by $\partial^{-}_{W}f.$ 
	\end{remark}
	\begin{remark}
		If there exists $F$ satistying (\ref{defWd}) for all $g\in C^{\infty}_{V,W}(\mathbb{T})$, then by taking $g\equiv 1$, we have that $\int_{\mathbb{T}}F dW =0$, i.e., $F\in L^{2}_{W,0}(\mathbb{T}).$\end{remark}
	
	\paragraph{}Now, define the ($W,V$)-Sobolev space $$\widetilde{H}_{W,V}(\mathbb{T})=\{f\in L^{2}_{V}(\mathbb{T});  \partial _{W}^{-}f\in L^2_{W,0}(\mathbb{T})\hbox{ exists} \}.$$ 
	
	One can readily prove that $\widetilde{H}_{W,V}(\mathbb{T})$ is a Hilbert space with the energetic norm $\|f\|^{2}_{W,V}=\|f\|^{2}_{V}+\|\partial_{W}^{-}f\|^{2}_{W}$. But actually we have more:
	
	\begin{theorem}\label{sobequal}
		We have the following equality of sets $\widetilde{H}_{W,V}(\mathbb{T})=H_{W,V}(\mathbb{T})$
	\end{theorem}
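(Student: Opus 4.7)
The plan is to prove the equality by a double inclusion, with the representation theorem for $H_{W,V}(\mathbb{T})$ (Theorem \ref{sobchar}) providing the bridge between the two characterizations.

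First I would show $H_{W,V}(\mathbb{T}) \subseteq \widetilde{H}_{W,V}(\mathbb{T})$. Take $f \in H_{W,V}(\mathbb{T})$. By Theorem \ref{sobchar}, there exist a constant $c$ and $F \in L^2_{W,0}(\mathbb{T})$ with $f(x) = c + \int_{(0,x]} F(s)\,dW(s)$ $V$-a.e. The candidate for the $W$-left weak derivative is $F$. Fix any test function $g \in C^{\infty}_{V,W}(\mathbb{T})$; then $g$ is c\`agl\`ad, $V$-right differentiable, and $D^+_V g$ is c\`adl\`ag with $V$-mean zero. Plug the representation of $f$ into $\int_{\mathbb{T}} f\, D^+_V g\, dV$: the constant term contributes $c\int_{\mathbb{T}} D^+_V g\, dV = 0$, and applying Fubini to the double integral together with the identity $\int_{[s,1)} D^+_V g\, dV = g(1) - g(s)$ (an analogue of \eqref{e3}, or equivalently the counterpart of Lemma \ref{lm:2} for the $V,W$-structure) yields
\begin{equation*}
\int_{\mathbb{T}} f\, D^+_V g\, dV = \int_{\mathbb{T}} F(s)\,[g(1)-g(s)]\,dW(s) = -\int_{\mathbb{T}} F(s)g(s)\,dW(s),
\end{equation*}
where the term $g(1)\int_{\mathbb{T}} F\,dW$ vanishes by periodicity $g(1)=g(0)$ and the mean-zero condition on $F$. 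Hence $\partial^-_W f = F$, so $f \in \widetilde{H}_{W,V}(\mathbb{T})$.

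Next I would show $\widetilde{H}_{W,V}(\mathbb{T}) \subseteq H_{W,V}(\mathbb{T})$. Take $f \in \widetilde{H}_{W,V}(\mathbb{T})$ with weak derivative $F := \partial^-_W f \in L^2_{W,0}(\mathbb{T})$. Define $\tilde{f}(x) := \int_{(0,x]} F(s)\,dW(s)$, which already lies in $H_{W,V}(\mathbb{T})$ by Theorem \ref{sobchar}; by the first half of the proof, $\tilde{f}$ also belongs to $\widetilde{H}_{W,V}(\mathbb{T})$ with $\partial^-_W \tilde{f} = F$. By linearity of the weak derivative, $h := f - \tilde{f}$ satisfies
\begin{equation*}
\int_{\mathbb{T}} h\, D^+_V g\, dV = 0 \quad \text{for every } g \in C^{\infty}_{V,W}(\mathbb{T}).
\end{equation*}

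Finally I would invoke Proposition \ref{densenesscinfinity}, which gives that $\{D^+_V g : g \in C^{\infty}_{V,W}(\mathbb{T})\}$ is dense in $L^2_{V,0}(\mathbb{T})$. Consequently $h$ is $L^2_V$-orthogonal to every mean-zero element of $L^2_V(\mathbb{T})$, forcing $h$ to be constant $V$-a.e. Therefore $f = c + \tilde{f}$ $V$-a.e. for some constant $c$, and Theorem \ref{sobchar} yields $f \in H_{W,V}(\mathbb{T})$. The main technical delicacy is the Fubini step in the first inclusion: one must carefully handle the torus boundary, using periodicity together with the mean-zero condition on $F$ to cancel the boundary contribution $g(1)\int_{\mathbb{T}} F\,dW$, and exploit that the test functions $g$ live in the c\`agl\`ad (rather than c\`adl\`ag) class so that $D^+_V g$ naturally pairs with the c\`adl\`ag integrator in $f$.
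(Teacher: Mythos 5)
Your proof is correct, but it takes a genuinely different route from the paper's. For the inclusion $H_{W,V}(\mathbb{T})\subseteq\widetilde{H}_{W,V}(\mathbb{T})$ the paper never invokes the representation of Theorem \ref{sobchar}: it takes an admissible sequence $f_n\in\mathfrak{D}_{W,V}(\mathbb{T})$, applies the integration-by-parts identity $\int_{\mathbb{T}} f_n D^+_V g\, dV=-\int_{\mathbb{T}} D^-_W f_n\, g\, dW$ at the level of strong derivatives, and passes to the limit using the $L^2_V$-convergence of $f_n$ and the $L^2_W$-convergence of $D^-_W f_n$; you instead substitute $f=c+\int_{(0,\cdot]}F\,dW$ into the pairing and perform a single Fubini computation, using $\int_{[s,1)}D^+_V g\,dV=g(1)-g(s)$ (the dual, $(V,W)$-version of Lemma \ref{lm:2}) together with the mean-zero conditions on $F$ and on $D^+_V g$ to cancel the constant and boundary terms (note the boundary term dies from $\int_{\mathbb{T}}F\,dW=0$ alone; periodicity of $g$ is not needed there). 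For the reverse inclusion, the paper again approximates, choosing $f_n\in\mathfrak{D}_{W,V}(\mathbb{T})$ with $D^-_W f_n\to\partial^-_W f$ in $L^2_{W,0}(\mathbb{T})$ and passing to the limit in the primitives before applying the density of $\left\{D^+_V g\right\}$ in $L^2_{V,0}(\mathbb{T})$; you shortcut this by subtracting the primitive $\tilde f=\int_{(0,\cdot]}\partial^-_W f\,dW$, which lies in $H_{W,V}(\mathbb{T})$ by Theorem \ref{sobchar} and has weak derivative $\partial^-_W f$ by your first step, so that $h=f-\tilde f$ pairs to zero with all $D^+_V g$ and, by Proposition \ref{densenesscinfinity} and the fact that the orthogonal complement of $L^2_{V,0}(\mathbb{T})$ in $L^2_V(\mathbb{T})$ consists of the constants, is constant $V$-a.e. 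Your route buys economy — no approximating sequences and no limit-passing — at the price of leaning on Theorem \ref{sobchar} and on the dual representation of the test functions; the paper's route works directly from the energetic-space definition via admissible sequences and, as a byproduct, identifies $\partial^-_W f$ with the $L^2_W$-limit of the strong derivatives $D^-_W f_n$. Both proofs converge on the same final ingredient, the density statement of Proposition \ref{densenesscinfinity}.
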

	\begin{proof}
		Let $f\in H_{W,V}(\mathbb{T})$ and $(f_{n})_{n}$ is admissible sequence for $f$. Then, by the integration by parts formula (Proposition \ref{intbypartsprop}), we have that, for all $g\in C^{\infty}_{V,W}(\mathbb{T}),$
		\begin{equation}\label{x1}
			\int_{\mathbb{T}}f_{n}D^{+}_{V}gdV=-\int_{\mathbb{T}}D^{-}_{W}f_{n}gdW.
		\end{equation}
		We can take the limit as $n\to\infty$ to conclude that, for all $g\in C^{\infty}_{V,W}(\mathbb{T})$,
		$$\int_{\mathbb{T}}fD^{+}_{V}gdV=-\int_{\mathbb{T}}D^{-}_{W}fgdW,$$
		i.e. $D^{-}_{W}f = \partial^{-}_{W}f$. This implies that ${H}_{W,V}(\mathbb{T})\subseteq \widetilde{H}_{W,V}(\mathbb{T}).$
		For the reverse inclusion, take $f\in \widetilde{H}_{W,V}(\mathbb{T}).$ Choose a sequence $(f_{n})_{n\in\mathbb{N}}$ with $f_{n}\in \mathfrak{D}_{W,V}$, such that $D^{-}_{W}f_{n}\to \partial^{-}_{W}f$ in $L^{2}_{W,0}(\mathbb{T})$. This implies
		\begin{equation}\label{15}
			f_{n}(x)-f_{n}(0)=\int_{(0,x]}D^{-}_{W}f_{n}dW\to\int_{(0,x]}\partial^{-}_{W}fdW.
		\end{equation}
		On the other hand, for all $g\in C^{\infty}_{V,W}(\mathbb{T})$, we have 
		\begin{equation*}\label{16}
			\int_{\mathbb{T}}\left(f_{n}-f_{n}(0)\right)D^{+}_{V}g dV = \int_{\mathbb{T}} f_{n}D^{+}_{V}g dV = -\int_{\mathbb{T}} D^{-}_{W}f_{n}g dW
		\end{equation*}
		as $n\to \infty$. Now, we have, by (\ref{15}), that 
		$$\int_{\mathbb{T}}\left(\int_{(0,x]}\partial^{-}_{W}fdW\right)D^{+}_{V}gdV=-\int_{\mathbb{T}}\partial^{-}_{W}f g dW=\int_{\mathbb{T}}fD^{+}_{V}g dV,$$
		i.e.,
		\begin{equation}\label{3k}
			\int_{\mathbb{T}}\left(c+\int_{(0,x]}\partial^{-}_{W}fdW-f\right)D^{+}_{V}g dV=0.
		\end{equation}
		where $c$ is such that $c+\int_{(0,x]}\partial^{-}_{W}fdW-f\in L^{2}_{V,0}(\mathbb{T})$. This shows that (\ref{3k}) is true for all $g\in C^{\infty}_{V,W}(\mathbb{T})$. By the density of $\left\{D^{+}_{V}g;g\in C^{\infty}_{V,W}(\mathbb{T})\right\}$ in $L^{2}_{V,0}(\mathbb{T})$ (see Proposition \ref{densenesscinfinity}), we have that
		$$f=c+\int_{(0,x]}\partial^{-}_{W}fdW,$$
		i.e., $f\in H_{V,W}(\mathbb{T}).$
		
	\end{proof}

	\begin{remark}
		By the last theorem actually we have that $\partial^{-}_{W}f=D^{-}_{W}f$ when it exists.
	\end{remark}

\begin{remark}
	It is important to note that the proof of Theorem \ref{sobequal} corrects a mistake in a previous proof of a similar result \cite[Proposition 2.5]{wsimas}. Indeed, in that proof, Banach-Steinhaus theorem was incorrectly applied. Thus, its consequences are incorrect.
\end{remark}

	\begin{theorem}[Characterizarion of the Sobolev spaces in terms of Fourier coefficients]\label{thm:6}
		The following characterization of $H_{W,V}(\mathbb{T})$ is true 
		$$H_{W,V}(\mathbb{T})=\left\{f\in L^{2}_{V}(\mathbb{T}); f=\alpha_{0}+\sum_{i=1}^{\infty}\alpha_{i}\nu_{i}; \sum_{i=1}^{\infty}\lambda_{i}\alpha_{i}^2<\infty\right\}.$$
	\end{theorem}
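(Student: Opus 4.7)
The plan is to characterize $H_{W,V}(\mathbb{T})$ by translating its energetic norm into a series through the spectral decomposition of the Friedrichs extension $\mathcal{A}$. The pivotal identity to establish first is that for every $f\in\mathcal{D}_{W,V}(\mathbb{T})$, if $\hat\alpha_i=\langle f,\nu_i\rangle_V$, then
\[
\|f\|_{1,2}^{2}=\langle \mathcal{A}f,f\rangle_{V}=\sum_{i\geq 0}(1+\lambda_i)\hat\alpha_i^{2}.
\]
This is a direct consequence of the spectral theorem applied to the self-adjoint operator $\mathcal{A}$, whose compact resolvent (granted by Theorem \ref{compactemb}) provides the orthonormal basis $(\nu_i)$ and the eigenvalue identity $\mathcal{A}f=\sum_i(1+\lambda_i)\hat\alpha_i\nu_i$ in $L^{2}_{V}(\mathbb{T})$ for $f\in\mathcal{D}_{W,V}(\mathbb{T})$. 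Verifying this identity rigorously, together with the orthogonality of the $\nu_i$ in the energetic inner product, is the technical heart of the proof; the two inclusions of the theorem will then follow from standard Hilbert space arguments.

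For the inclusion $\supseteq$, I would take $f=\alpha_0+\sum_{i\geq 1}\alpha_i\nu_i$ with $\sum\lambda_i\alpha_i^2<\infty$ and consider the partial sums $f_N=\alpha_0+\sum_{i=1}^{N}\alpha_i\nu_i$. Each $f_N$ is a finite linear combination of eigenvectors and the constant function, hence lies in $\mathcal{D}_{W,V}(\mathbb{T})\subset H_{W,V}(\mathbb{T})$ (the latter inclusion being a standard property of Friedrichs extensions). The key identity yields $\|f_N-f_M\|_{1,2}^{2}=\sum_{i=M+1}^{N}(1+\lambda_i)\alpha_i^2\to 0$, so $(f_N)$ is Cauchy in the complete Hilbert space $(H_{W,V}(\mathbb{T}),\|\cdot\|_{1,2})$, hence converges to some $\tilde f\in H_{W,V}(\mathbb{T})$. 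Since $f_N\to f$ also in $L^{2}_{V}(\mathbb{T})$, we conclude $\tilde f=f$ and thus $f\in H_{W,V}(\mathbb{T})$.

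For the inclusion $\subseteq$, given $f\in H_{W,V}(\mathbb{T})$, pick an admissible sequence $(f_n)\subset\mathfrak{D}_{W,V}(\mathbb{T})\subset\mathcal{D}_{W,V}(\mathbb{T})$, which satisfies $f_n\to f$ in $L^{2}_{V}(\mathbb{T})$ and $M:=\sup_n\|f_n\|_{1,2}^{2}<\infty$. Writing $\alpha_i^{(n)}=\langle f_n,\nu_i\rangle_V$ and $\alpha_i=\langle f,\nu_i\rangle_V$, the $L^{2}_{V}$ convergence gives $\alpha_i^{(n)}\to\alpha_i$ for each fixed $i$. Applying the key identity to $f_n$ and Fatou's lemma in the counting measure,
\[
\sum_{i\geq 0}(1+\lambda_i)\alpha_i^{2}\leq \liminf_{n\to\infty}\sum_{i\geq 0}(1+\lambda_i)(\alpha_i^{(n)})^{2}=\liminf_{n\to\infty}\|f_n\|_{1,2}^{2}\leq M,
\]
and since $f\in L^{2}_{V}(\mathbb{T})$ already forces $\sum\alpha_i^2<\infty$, this is equivalent to $\sum\lambda_i\alpha_i^2<\infty$, which is exactly the desired conclusion.
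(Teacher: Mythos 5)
Your proof is correct, but it follows a genuinely different route from the paper's. The paper proceeds through the space of derivatives: it introduces $\mathcal{W}=\{D_W^-f;\,f\in H_{W,V}(\mathbb{T})\}$, proves it is closed in $L^2_{W,0}(\mathbb{T})$, shows that $\{\lambda_i^{-1/2}D_W^-\nu_i\}_{i\geq 1}$ is a complete orthonormal system of $\mathcal{W}$, and identifies the coefficients of $D_W^-f$ in that basis as $\sqrt{\lambda_i}\alpha_i$; the forward inclusion is then Parseval in $\mathcal{W}$ ($\|D_W^-f\|_W^2=\sum_i\lambda_i\alpha_i^2$), and the reverse inclusion is obtained by showing the partial sums form an admissible sequence. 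You instead stay entirely at the level of the quadratic form of the Friedrichs extension: the identity $\|f\|_{1,2}^2=\langle\mathcal{A}f,f\rangle_V=\sum_i\gamma_i\hat\alpha_i^2$ on $\mathcal{D}_{W,V}(\mathbb{T})$, Fatou's lemma plus boundedness of an admissible sequence for $\subseteq$, and completeness of the energetic space for $\supseteq$. Your route needs fewer auxiliary lemmas and, in the reverse inclusion, avoids the paper's implicit reliance on eigenfunction regularity to place the partial sums in $\mathfrak{D}_{W,V}(\mathbb{T})$ (you only need $\mathcal{D}_{W,V}(\mathbb{T})\subset H_{W,V}(\mathbb{T})$ and completeness); the paper's route, on the other hand, additionally delivers the formula $D_W^-f=\sum_i\alpha_iD_W^-\nu_i$ and the exact identity $\|D_W^-f\|_W^2=\sum_i\lambda_i\alpha_i^2$, which are reused later. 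One point you should make explicit: the ``key identity'' is not a consequence of the spectral theorem alone, since the energetic norm on $\mathcal{D}_{W,V}(\mathbb{T})$ is defined by extension; it is justified by \eqref{c5} together with Proposition \ref{intbypartsprop} (equivalently, the standard Friedrichs-extension facts in Zeidler), which give $\langle\mathcal{A}f,f\rangle_V=\|f\|_V^2+\int_{\mathbb{T}}(D_W^-f)^2\,dW=\|f\|_{1,2}^2$ for $f\in\mathcal{D}_{W,V}(\mathbb{T})$; with that supplied, both inclusions go through as you wrote them.
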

	
To prove the above theorem, we need to prove some auxiliary results. First, define $$\mathcal{W}=\left\{ D^{-}_{W}f; f\in H_{W,V}(\mathbb{T}) \right\}\subset L^{2}_{W,0}(\mathbb{T}).$$ 
	
	\begin{lemma}
		$\mathcal{W}$ is a closed subspace of $L^{2}_{W,0}(\mathbb{T})$
	\end{lemma}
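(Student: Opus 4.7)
The plan is to identify $\mathcal{W}$ explicitly using the Sobolev characterization already established (Theorem \ref{sobchar}), and then deduce that it is closed because it coincides with an already-closed subspace of $L^2_W(\mathbb{T})$.

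First, I would spell out the two inclusions. One direction is immediate: if $f \in H_{W,V}(\mathbb{T})$, then by Theorem \ref{sobequal} the weak derivative $\partial_W^- f$ exists and lies in $L^2_{W,0}(\mathbb{T})$, and by the remark following that theorem it coincides with $D_W^- f$ whenever this is interpreted via an admissible sequence (which is the sense that makes the definition of $\mathcal{W}$ meaningful on all of $H_{W,V}(\mathbb{T})$). Hence $\mathcal{W} \subseteq L^2_{W,0}(\mathbb{T})$. For the reverse inclusion, given $F \in L^2_{W,0}(\mathbb{T})$, I would set $f(x) := \int_{(0,x]} F(s)\,dW(s)$. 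Theorem \ref{sobchar} tells us $f \in H_{W,V}(\mathbb{T})$ (with constant $c = 0$), and moreover the recipe used in the proof of that theorem makes $F$ precisely the limit in $L^2_W(\mathbb{T})$ of $D_W^- f_n$ for an admissible sequence $f_n$ approximating $f$. Thus $F = D_W^- f \in \mathcal{W}$.

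Combining the two inclusions, $\mathcal{W} = L^2_{W,0}(\mathbb{T})$. This space is closed in $L^2_W(\mathbb{T})$ since it is the kernel of the bounded linear functional $H \mapsto \int_{\mathbb{T}} H\,dW$, and hence it is closed in itself. Therefore $\mathcal{W}$ is a closed subspace of $L^2_{W,0}(\mathbb{T})$, as required.

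The only subtle point, and the one I would take care to phrase precisely, is the interpretation of $D_W^-$ on a generic element of $H_{W,V}(\mathbb{T})$: the strong lateral limit need not exist pointwise for arbitrary $f \in H_{W,V}(\mathbb{T})$, and the symbol $D_W^- f$ must be read through the admissible-sequence construction (equivalently, as the weak derivative $\partial_W^- f$ identified in Theorem \ref{sobequal}). Once this identification is made, no additional work is needed, and the result follows directly from Theorem \ref{sobchar}.
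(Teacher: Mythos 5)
Your proof is correct, but it takes a genuinely different route from the paper's. You identify $\mathcal{W}$ outright: by Theorem \ref{sobchar}, every $F\in L^{2}_{W,0}(\mathbb{T})$ is the derivative (in the admissible-sequence, equivalently weak, sense of Theorem \ref{sobequal}) of $f(x)=\int_{(0,x]}F\,dW$, and conversely every such derivative has zero $W$-mean, so $\mathcal{W}=L^{2}_{W,0}(\mathbb{T})$ and closedness is immediate; the zero-mean hypothesis on $F$ is exactly what makes $f$ well defined on the torus, so nothing is hidden there, and your caveat about reading $D^{-}_{W}$ on $H_{W,V}(\mathbb{T})$ as the weak/energetic derivative is the right one. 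The paper instead argues closedness directly: given $D^{-}_{W}f_{n}\to g$ in $L^{2}_{W}(\mathbb{T})$, it normalizes $f_{n}$ to have zero $V$-mean, uses the Poincaré--Friedrichs inequality \eqref{i1} to conclude that $f_{n}$ is Cauchy in $L^{2}_{V}(\mathbb{T})$ and hence in the energetic norm, and then invokes completeness of $H_{W,V}(\mathbb{T})$ to produce $f$ with $g=D^{-}_{W}f$. Your approach buys a stronger conclusion---the exact identification $\mathcal{W}=L^{2}_{W,0}(\mathbb{T})$, which the paper only gets implicitly by combining this lemma with the density statement of Corollary \ref{cr:1}---at the price of leaning on the full characterization theorem; the paper's sequential argument is more self-contained, using only the Poincaré inequality and completeness of the energetic space, which is why it is phrased as a closedness statement rather than an identification. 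Both are valid.
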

	
	\begin{proof}Let us assume, without loss of generality, that $D^{-}_{W}f$ with $f\in L^{2}_{V,0}(\mathbb{T})$, otherwise if $f$ belongs to $L^{2}_{V}(\mathbb{T})$, we replace it by $f-\int f dV$, and its $W$-left derivative does not change. Now, let $D^{-}_{W}f_{n}$ be a sequence in $\mathcal{W}$  converging to $g$ in $L^{2}_{W}(\mathbb{T})$. In particular, $D^{-}_{W}f_{n}$ is Cauchy in $L^{2}_{W}(\mathbb{T})$. Thus, by (\ref{i1}), $f_{n}$ is cauchy in $L^{2}_{V}(\mathbb{T})$ and this implies that $f_{n}$ is cauchy in $H_{W,V}(\mathbb{T})$ in the energetic norm. Since $H_{W,V}(\mathbb{T})$ is complete in the energetic norm, there exists $f\in H_{W,V}(\mathbb{T})$ such that $f_{n}\to f$ in energetic norm. That is, $f_{n}\to f $ in  $L^{2}_{V}(\mathbb{T})$, and $D^{-}_{W}f_{n}\to D^{-}_{W}f$ in $L^{2}_{W}(\mathbb{T})$. By the uniqueness of limit, we obtain $g=D^{-}_{W}f$.
		
	\end{proof}
	
The above lemma tells us that $\mathcal{W}$ is a Hilbert space with respect to the  $L^{2}_{W}(\mathbb{T})$ norm. Let us prove another auxiliary result.
	\begin{lemma}
		The set $\left\{\frac{1}{\sqrt{\lambda_{i}}}D^{-}_{W}\nu_{i}\right\}_{i=1}^{\infty}$, where $\nu_{k}$ is given satisfying (\ref{autovt}), is a complete orthonormal set in $\mathcal{W}$.
		
	\end{lemma}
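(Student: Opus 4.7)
The plan is to prove this by first verifying orthonormality through the integration-by-parts formula and the eigenvalue equation, and then establishing completeness by a direct perpendicularity argument.

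For orthonormality, I would fix indices $i,j \geq 1$ and invoke Proposition \ref{intbypartsprop} with $\nu_i \in \mathcal{D}_{W,V}(\mathbb{T})$ and $\nu_j \in H_{W,V}(\mathbb{T})$, combining with the eigenvalue equation $-\Delta_{W,V}\nu_i = \lambda_i \nu_i$ and the $L^2_V$-orthonormality of $\{\nu_k\}_{k\geq 0}$ to obtain
\begin{equation*}
\int_{\mathbb{T}} D^{-}_W \nu_i \, D^{-}_W \nu_j \, dW \;=\; \langle -\Delta_{W,V}\nu_i,\nu_j\rangle_V \;=\; \lambda_i \langle \nu_i,\nu_j\rangle_V \;=\; \lambda_i \delta_{ij}.
\end{equation*}
Dividing by $\sqrt{\lambda_i \lambda_j}$ gives the desired orthonormality in $L^2_W(\mathbb{T})$ (and hence in $\mathcal{W}$, which is closed in $L^2_{W,0}(\mathbb{T})$ by the previous lemma).

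For completeness, I would take an arbitrary $g = D^{-}_W f \in \mathcal{W}$ orthogonal in $L^2_W$ to every $\tfrac{1}{\sqrt{\lambda_i}} D^{-}_W \nu_i$, $i\geq 1$, and show $g=0$. Applying Proposition \ref{intbypartsprop} once more with $f \in H_{W,V}(\mathbb{T})$ and $\nu_i \in \mathcal{D}_{W,V}(\mathbb{T})$, the orthogonality assumption yields
\begin{equation*}
0 \;=\; \int_{\mathbb{T}} D^{-}_W f \, D^{-}_W \nu_i \, dW \;=\; \langle -\Delta_{W,V}\nu_i,f\rangle_V \;=\; \lambda_i \langle f,\nu_i\rangle_V
\end{equation*}
for every $i \geq 1$. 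Since $\lambda_i > 0$ for $i \geq 1$, this forces $\langle f,\nu_i\rangle_V = 0$ for all $i \geq 1$, and since $\{\nu_i\}_{i\geq 0}$ is a complete orthonormal system in $L^2_V(\mathbb{T})$, $f$ must be a scalar multiple of the constant eigenfunction $\nu_0$. Therefore $f$ is $V$-a.e.\ constant, and $g = D^{-}_W f = 0$.

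The only subtle point is to justify that $\lambda_i > 0$ strictly for $i \geq 1$, so that no essential information is lost when one removes the $i=0$ mode. This follows from the integration-by-parts formula applied to a potential eigenfunction with eigenvalue zero: such a $\nu$ satisfies $\int_{\mathbb{T}}(D^{-}_W\nu)^2 dW = 0$, so $D^{-}_W\nu \equiv 0$ $W$-a.e., whence Lemma \ref{lm:1} (together with \eqref{c4}) forces $\nu$ to be constant, showing the kernel is one-dimensional. The rest of the argument is largely bookkeeping; the main conceptual content lies in the repeated use of the integration-by-parts identity to transfer the $L^2_W$ inner product on derivatives to the $L^2_V$ inner product on eigenfunctions.
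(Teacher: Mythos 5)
Your proof is correct and follows essentially the same route as the paper's: orthonormality via the integration-by-parts formula combined with the eigenvalue equation, and completeness by showing that orthogonality to all $D^{-}_{W}\nu_{i}$ forces the function to be orthogonal to every nonconstant eigenvector, hence constant, hence with vanishing $W$-derivative. Your extra remark justifying $\lambda_i>0$ for $i\geq 1$ is a harmless addition that the paper leaves implicit.
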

	
	\begin{proof} First, note that
		
		\begin{align*}
			\left \langle \dfrac{1}{\sqrt{\lambda_{i}}}D^{-}_{W}\nu_{i},\dfrac{1}{\sqrt{\lambda_{i}}}D^{-}_{W}\nu_{i} \right\rangle_{W} &= \dfrac{1}{\sqrt{\lambda_{i}\lambda_{j}}}\int_{\mathbb{T}}D^{-}_{W}\nu_{i}D^{-}_{W}\nu_{j}dW\\ &= \dfrac{1}{\sqrt{\lambda_{i}\lambda_{j}}}\int_{\mathbb{T}}\nu_{i}D^{+}_{V}D^{-}_{W}\nu_{j}dV \\ &= \dfrac{\sqrt{\lambda_{i}}}{\sqrt{\lambda_{j}}}\delta_{i,j}
		\end{align*} 
		where $\delta_{i,j}$ stands for the Kronecker's delta. Let us now prove the completeness of the system. If
		$D^{-}_{W}g\perp D^{-}_{W}\nu_{i}$ for all $i=1,2,3,\dots$, then
		$$0=\int_{\mathbb{T}}D^{-}_{W}gD^{-}_{W}\nu_{j}dW = \int_{\mathbb{T}}gD^{+}_{V}D^{-}_{W}\nu_{j}dV \Rightarrow \int_{\mathbb{T}}g\nu_{i}dV=0.$$
		This means that $g$ is constant and, thus, $D^{-}_{W}g=0.$
	\end{proof}
	
	The above lemma helps us in relating the Fourier coefficients of functions in $H_{W,V}(\mathbb{T})$ with the eigenvalues of $\Delta_{V,W}$. Indeed, let $f\in H_{W,V}(\mathbb{T})$. In particular $f\in L^{2}_{V}(\mathbb{T})$. This implies that there are $\{\alpha_{i}\}_{i=0}^{\infty}$ such that
	$$f=\alpha_{0}+\sum_{i=1}^{\infty}\alpha_{i}\nu_{i},$$
	where $$\alpha_{0}=\int_{\mathbb{T}}fdV,\;\;\alpha_{i}=\int_{\mathbb{T}}f\nu_{i}dV.$$ 
	We also have $$D^{-}_{W}f=\sum_{i=1}^{\infty}\alpha_{i}D^{-}_{W}\nu_{i}.$$
	In fact, by the previous lemma
	$$D^{-}_{W}f=\sum_{i=1}^{\infty}\beta_{i}\dfrac{D^{-}_{W}\nu_{i}}{\sqrt{\lambda_{i}}},$$
	where $$\beta_{i}=\dfrac{1}{\sqrt{\lambda_{i}}}\int_{\mathbb{T}}D^{-}_{W}fD^{-}_{W}\nu_{i}dW=\sqrt{\lambda_{i}}\int_{\mathbb{T}}f\nu_{i} dV= \sqrt{\lambda_{i}}\alpha_{i}.$$ We are now in a position to prove Theorem \ref{thm:6} 
	\begin{proof} [Proof of Theorem \ref{thm:6} ] Let $f\in H_{W,V}(\mathbb{T})$. By the above remarks, we have that
		$$D^{-}_{W}f = \sum_{i=1}^{\infty} \alpha_{i}D^{-}_{W}\nu_{i}=\sum_{i=1}^{\infty}\sqrt{\lambda_{i}}\alpha_{i}\dfrac{D^{-}_{W}\nu_{i}}{\sqrt{\lambda_{i}}}.$$
		Since $\left\{\frac{1}{\sqrt{\lambda_{i}}}D^{-}_{W}\nu_{i}\right\}_{i=1}^{\infty}$ is a complete orthonormal set we have
		$$\|D^{-}_{W}f\|_{W}^{2}=\sum_{i=1}^{\infty}\lambda_{i}\alpha_{i}^{2}<\infty.$$
		To prove the reverse inclusion, note that if $f=\alpha_{0}+\sum_{i=1}^{\infty}\alpha_{i}\nu_{i}$ satisfies $$\sum_{i=1}^{\infty}\lambda_{i}\alpha_{i}^{2}<\infty,$$ then the sequence $f_{n}=\alpha_{0}+\sum_{i=1}^{n}\alpha_{i}\nu_{i}$ is cauchy in $H_{W,V}(\mathbb{T})$. This implies that it converges to $f$ in $L^{2}_{V}(\mathbb{T})$. That is, $f_{n}$ is admissible for $f$, because $f_{n}\in \mathfrak{D}_{W,V}$, and by definition
		$$D^{-}_{W}f=\lim_{n\to\infty}D^{-}_{W}f_{n}=\sum_{i=1}^{\infty}\alpha_{i}D^{-}_{W}\nu_{i}.$$
		
	\end{proof}
	\begin{corollary}\label{coro1}
		We have the following results regarding approximation of functions in the $H_{W,V}(\mathbb{T})$ by smooth functions:
		$$\overline{ {C^{\infty}_{W,V}}(\mathbb{T})}^{\|.\|_{1,2}}=H_{W,V}(\mathbb{T}).$$ 
		Moreover, to compute the $W$-left weak derivative of $h\in H^{1}_{W,V}(\mathbb{T})$, we only need to verify that
		$$\int_{\mathbb{T}}\left(h-\int_{\mathbb{T}}hdV\right)D^{+}_{V}g\;dV=\int_{\mathbb{T}}g\partial_{W}^{-}h\;dW$$
		for all $g\in C^{\infty}_{V,W,0}(\mathbb{T}).$
		
	\end{corollary}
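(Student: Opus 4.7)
The corollary splits into a density assertion and a convenient test-function criterion for the weak derivative; both reduce to the Fourier machinery of Theorem~\ref{thm:6} together with the eigenfunction regularity of Theorem~\ref{eigenreg}.

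For the density $\overline{C^{\infty}_{W,V}(\mathbb{T})}^{\|\cdot\|_{1,2}}=H_{W,V}(\mathbb{T})$, the plan is to take $f\in H_{W,V}(\mathbb{T})$ and use Theorem~\ref{thm:6} to expand $f=\alpha_{0}+\sum_{i=1}^{\infty}\alpha_{i}\nu_{i}$ with $\sum_{i\ge 1}\lambda_{i}\alpha_{i}^{2}<\infty$. Theorem~\ref{eigenreg} places every $\nu_{i}$ inside $C^{\infty}_{W,V,0}(\mathbb{T})\subseteq C^{\infty}_{W,V}(\mathbb{T})$, and since $C^{\infty}_{W,V}(\mathbb{T})$ also contains the constants, the partial sums $f_{n}=\alpha_{0}+\sum_{i=1}^{n}\alpha_{i}\nu_{i}$ belong to $C^{\infty}_{W,V}(\mathbb{T})$. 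Orthonormality of $\{\nu_{i}\}$ in $L^{2}_{V}(\mathbb{T})$ together with the identity $\|D^{-}_{W}\nu_{i}\|_{W}^{2}=\lambda_{i}$ used inside Theorem~\ref{thm:6} yields
\begin{equation*}
\|f-f_{n}\|_{1,2}^{2}=\sum_{i>n}(1+\lambda_{i})\alpha_{i}^{2}\longrightarrow 0
\end{equation*}
as $n\to\infty$. The reverse inclusion is immediate: constants are eigenfunctions with $\lambda=0$ and hence lie in $H_{W,V}(\mathbb{T})$, while every element of $C^{\infty}_{W,V,0}(\mathbb{T})$ sits inside $\mathfrak{D}_{W,V}(\mathbb{T})\subseteq H_{W,V}(\mathbb{T})$ by Definition~\ref{strongdomain}.

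For the second part I would exploit the direct-sum decomposition $C^{\infty}_{V,W}(\mathbb{T})=\langle 1\rangle\oplus C^{\infty}_{V,W,0}(\mathbb{T})$. Writing any test function $g\in C^{\infty}_{V,W}(\mathbb{T})$ as $g=c+g_{0}$ with $g_{0}\in C^{\infty}_{V,W,0}(\mathbb{T})$, one has $D^{+}_{V}g=D^{+}_{V}g_{0}$, so the left-hand side of the defining equation for $\partial^{-}_{W}h$ depends only on $g_{0}$; and because $\partial^{-}_{W}h\in L^{2}_{W,0}(\mathbb{T})$, the constant $c$ integrates to zero on the right-hand side. Hence testing against $g_{0}\in C^{\infty}_{V,W,0}(\mathbb{T})$ already determines $\partial^{-}_{W}h$. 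Unwinding the dual of the definition of $C^{\infty}_{W,V,0}(\mathbb{T})$ (cf.\ Remark~\ref{orderVW}), a function $g\in C^{\infty}_{V,W,0}(\mathbb{T})$ is càglàd with $\int_{\mathbb{T}}g\,dW=0$ and its first derivative $D^{+}_{V}g$ is a càdlàg function with $\int_{\mathbb{T}}D^{+}_{V}g\,dV=0$. Therefore adding any constant to $h$ leaves $\int_{\mathbb{T}}h\,D^{+}_{V}g\,dV$ unchanged, so subtracting $\int_{\mathbb{T}}h\,dV$ is cosmetic and puts the identity in the stated form.

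The step requiring the most care is precisely this mean-zero bookkeeping in the dual space $C^{\infty}_{V,W,0}(\mathbb{T})$: one must confirm that $D^{+}_{V}g$ really has $dV$-mean zero there, since this single fact simultaneously legitimizes the reduction to zero-mean test functions and the translation $h\mapsto h-\int_{\mathbb{T}}h\,dV$. Once the indexing conventions alluded to in Remark~\ref{orderVW} are pinned down, the remainder of the argument is linear bookkeeping over Theorem~\ref{thm:6}, Theorem~\ref{eigenreg}, and the weak-derivative definition.
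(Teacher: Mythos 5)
Your proof is correct and follows essentially the same route the paper intends: the corollary is stated without a separate proof because it is an immediate consequence of Theorem \ref{thm:6} (whose proof already exhibits the eigenfunction partial sums, smooth by Theorem \ref{eigenreg}, as admissible approximating sequences) together with the decomposition $C^{\infty}_{V,W}(\mathbb{T})=\langle 1\rangle\oplus C^{\infty}_{V,W,0}(\mathbb{T})$, which is exactly what you use. One small flag: carrying out your reduction from Definition \eqref{defWd} yields $\int_{\mathbb{T}}\bigl(h-\int_{\mathbb{T}}h\,dV\bigr)D^{+}_{V}g\,dV=-\int_{\mathbb{T}}g\,\partial^{-}_{W}h\,dW$, so the plus sign in the corollary's display is a typo in the statement rather than something your argument should (or does) reproduce.
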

	
	\paragraph{}We end this section with a characterization of the dual of $H_{W,V}(\mathbb{T})$:
	
	\begin{proposition}\label{dual_sobolev}
		Let $H^{-1}_{W,V}(\mathbb{T})$ be the dual of $H_{W,V}(\mathbb{T})$. We have that $f\in H^{-1}_{W,V}(\mathbb{T})$ if, and only if, there exist $f_0\in L^2_V(\mathbb{T})$ and $f_1\in L^2_W(\mathbb{T})$ such that for every $g\in H_{W,V}(\mathbb{T})$
		$$(f, g) = \int_{\mathbb{T}} f_0(\xi) g(\xi) dV(\xi) + \int_{\mathbb{T}} f_1(\xi) D_W^-g(\xi)dW(\xi).$$
	\end{proposition}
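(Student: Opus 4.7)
The plan is to use the standard Riesz representation / Hahn--Banach trick for dual Sobolev spaces, adapted to the product structure $L^2_V(\mathbb{T})\times L^2_W(\mathbb{T})$.

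For the easy direction, suppose $f_0\in L^2_V(\mathbb{T})$ and $f_1\in L^2_W(\mathbb{T})$ are given. For any $g\in H_{W,V}(\mathbb{T})$, two applications of Cauchy--Schwarz yield
\begin{equation*}
\left|\int_{\mathbb{T}} f_0 g\,dV + \int_{\mathbb{T}} f_1 D_W^- g\,dW\right|
\le \|f_0\|_V\|g\|_V + \|f_1\|_W\|D_W^- g\|_W
\le \bigl(\|f_0\|_V^2+\|f_1\|_W^2\bigr)^{1/2}\|g\|_{1,2},
\end{equation*}
where the last inequality uses the definition of the energetic norm $\|g\|_{1,2}^2 = \|g\|_V^2+\|D_W^- g\|_W^2$. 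Hence such an expression defines an element of $H^{-1}_{W,V}(\mathbb{T})$.

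For the harder direction, I would introduce the product Hilbert space $\mathcal{H}:=L^2_V(\mathbb{T})\oplus L^2_W(\mathbb{T})$ equipped with the inner product $\langle (u_0,u_1),(v_0,v_1)\rangle_{\mathcal{H}}=\langle u_0,v_0\rangle_V+\langle u_1,v_1\rangle_W$, and the embedding
\begin{equation*}
T:H_{W,V}(\mathbb{T})\longrightarrow \mathcal{H},\qquad T(g)=(g,D_W^-g).
\end{equation*}
By Theorem \ref{sobequal}, $D_W^-g$ exists for every $g\in H_{W,V}(\mathbb{T})$, and by the very definition of the energetic norm, $T$ is an isometry; its image $M:=T(H_{W,V}(\mathbb{T}))$ is therefore a closed subspace of $\mathcal{H}$. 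Given $f\in H^{-1}_{W,V}(\mathbb{T})$, the formula $\widetilde{f}(T(g)):=(f,g)$ defines a bounded linear functional on $M$ of norm $\|f\|_{-1}$.

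To conclude, I would extend $\widetilde{f}$ to a bounded linear functional on the whole Hilbert space $\mathcal{H}$ by Hahn--Banach (equivalently, one can just take the orthogonal projection onto $M$ and compose), and then apply the Riesz representation theorem on $\mathcal{H}$ to produce $(f_0,f_1)\in L^2_V(\mathbb{T})\times L^2_W(\mathbb{T})$ with
\begin{equation*}
\widetilde{f}(h_0,h_1)=\int_{\mathbb{T}} f_0 h_0\,dV + \int_{\mathbb{T}} f_1 h_1\,dW,\qquad (h_0,h_1)\in\mathcal{H}.
\end{equation*}
Restricting to $(h_0,h_1)=T(g)=(g,D_W^-g)$ yields the desired representation. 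The whole argument is essentially formal once $T$ is recognised as an isometry; the only point that required genuine input from the earlier theory is precisely that every $g\in H_{W,V}(\mathbb{T})$ admits a $W$-left weak derivative in $L^2_W(\mathbb{T})$, so that $T$ is well defined, which is Theorem \ref{sobequal}. There is no real obstacle beyond keeping the two measures and the two function spaces cleanly separated.
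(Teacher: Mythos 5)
Your proof is correct, but it follows a different route from the paper. The paper's argument is a one-step application of the Riesz representation theorem directly on the Hilbert space $H_{W,V}(\mathbb{T})$ with its energetic inner product: given $f\in H^{-1}_{W,V}(\mathbb{T})$, Riesz produces a single $f_0\in H_{W,V}(\mathbb{T})$ with $(f,g)=\langle f_0,g\rangle_V+\langle D_W^-f_0,D_W^-g\rangle_W$, so one may simply take $f_1=D_W^-f_0$, and the converse is H\"older's inequality. You instead use the classical Evans-style argument: isometrically embed $H_{W,V}(\mathbb{T})$ into the product space $L^2_V(\mathbb{T})\oplus L^2_W(\mathbb{T})$ via $T(g)=(g,D_W^-g)$ (legitimate by Theorem \ref{sobequal}, which guarantees the weak derivative exists, and the image is closed since $T$ is an isometry on a complete space), extend the induced functional off the image by orthogonal projection or Hahn--Banach, and apply Riesz on the product space. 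Both arguments are sound and of comparable length; the paper's version buys the extra structural information that the representing pair can always be taken of the coupled form $(f_0, D_W^-f_0)$ with $f_0\in H_{W,V}(\mathbb{T})$, while yours is more flexible in that it does not rely on the dual pairing being realized through the energetic inner product itself and produces (non-unique) pairs $(f_0,f_1)$ with no relation imposed between them, which is the form actually stated in the proposition. Your easy direction coincides with the paper's.
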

	\begin{proof}
	Since $H_{W,V}(\mathbb{T})$ is a Hilbert Space, we can use Riesz's representation theorem on $f\in H^{-1}_{W,V}(\mathbb{T})$. This means that there is $f_{0}\in H_{W,V}(\mathbb{T})$ such that for every $g\in H_{W,V}(\mathbb{T})$
		$$(f, g) = \int_{\mathbb{T}} f_0(\xi) g(\xi) dV(\xi) + \int_{\mathbb{T}} D^{-}_{W}f_{0}(\xi) D_W^-g(\xi)dW(\xi).$$
		Clearly $D^{-}_{W}f_{0}\in L_{W}^{2}(\mathbb{T}).$ The converse follows immediately from H\"older's inequality.
	\end{proof}
	
	\section{One-sided second-order elliptic operators}
	
Before introducing the one-sided second order elliptic differential equation let us first discuss some of the preceding models. As discussed in \cite{pouso} if $V$ is a càdlàg nondecreascing function, the solutions (in the sense of Carathéodory) of 
	\begin{equation}\label{apl:1}
		x'_{V}(t)=f(t,x(t))
	\end{equation}
	can be viewed as solutions of differential equations with impulses or a dynamic equation on time scaling depending on the function $V$. The author developed the necessary machinery to study (\ref{apl:1}) in the context of Carathéodory, by introducing the concept of $V$-absolutely continuous functions.
On the other side, in \cite{franco} they studied the solutions of 
	\begin{equation}\label{apl:2}
		\partial_{t}\rho=D_{x}D_{W}\Phi(\rho),
	\end{equation}
	which was naturally obtained as a hydrodynamic limit of interacting particle systems in inhomogeneous
	media, which was interpreted as existence of "membranes". More precisely, 
	they can interpret the model as hydrodynamic limit of diffusions with permeable membranes, in which the discontinuity points of $W$ tend to reflect particles, creating space discontinuities in the solutions. Therefore, based on \cite{pouso, franco} and \cite{wsimas} we purpose the study of the one-sided second order elliptic differential operator 
	\begin{equation}
		L_{W,V}(u):=-D^{+}_{V}AD_{W}^{-}u+\kappa^2u
	\end{equation}
	where $A,\kappa :\mathbb{T}\to\mathbb{R}$ are suitable functions. Indeed, motivated by (\ref{apl:1}) and (\ref{apl:2}) we understand that the existence of jumps on the functions that induce the differential operators, in our case the functions $V$ and $W$ that appear on $L_{W,V}$, is eventually related to models considering impulses, reflections or more precisely changes of the momentum associated to a physical system which are generally induced by the environment.
	
In what follows let $A:\mathbb{T} \to \mathbb{R}$ be a positive bounded function, that is, there exists $K\geq 0$ such that for every $x\in\mathbb{T}$, $|A(x)|\leq K$ and bounded away from zero, that is, there exists $A_0>0$ such that for every $x\in \mathbb{T}$, $A(x)\geq A_0$. Let, $\kappa: \mathbb{T}\to \mathbb{R}$ be a bounded function. Sometimes we will suppose that $\kappa$ is bounded away from zero, meaning that there exists some constant $\kappa_0$ such that for all $x$, $\kappa(x)>\kappa_0$. Finally consider $B_{W,V}:H_{W,V}(\mathbb{T})\times H_{W,V}(\mathbb{T}) \to \mathbb{R}$, a bilinear and symmetric map, given by
	\begin{equation}\label{bl:1}
		B_{W,V}[u.v]=\int_{\mathbb{T}}AD^{-}_{W}uD^{-}_{W}vdW+\int_{\mathbb{T}}\kappa^{2}uvdV.
	\end{equation}
	
The rest of this section is based on classical results (see, for instance \cite{evans}).
	
	\begin{definition} Let $f\in L^{2}_{V}(\mathbb{T})$. We say that $u\in H_{W,V}(\mathbb{T})$ is a weak solution of the problem
		\begin{equation}\label{elp:1}
			L_{W,V}u=f\,\, \mbox{in}\,\, \mathbb{T}.
		\end{equation}
		if
		$$B[u.v]=(f,v)$$
		for all $v\in H_{W,V}(\mathbb{T}).$
	\end{definition}

As is standard in nonfractional elliptic partial differential equations, we will apply Lax-Milgram's theorem as a tool to find weak solution(s) of the problem (\ref{elp:1}). Therefore, we first need energy estimates.
	\begin{proposition}[Energy Estimates]\label{energyest} If $B_{W,V}$ is defined as above, there are $\alpha>0$ and $\beta>0$ such that for all $u,v\in H_{W,V}(\mathbb{T})$
		$$
		\left|B_{W,V}[u,v]\right|\le \alpha \|u\|_{W,V}\|v\|_{W,V}$$
		and for $u\in H_{W,V,0}(\mathbb{T}):= L^{2}_{V,0}(\mathbb{T})\cap H_{W,V}(\mathbb{T})$, we have
		\begin{equation}\label{eng:2}
			B_{W,V}[u,u]\geq \beta\|u\|^{2}_{W,V}.
		\end{equation}

	\end{proposition}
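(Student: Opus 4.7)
The plan is to establish the two estimates separately, with the continuity bound being a direct Cauchy--Schwarz estimate and the coercivity bound resting on the Poincar\'e--Friedrichs inequality of Theorem \ref{thm:1}(c).

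For continuity, I would apply the Cauchy--Schwarz inequality to each of the two integrals defining $B_{W,V}[u,v]$. Since $|A(\xi)|\le K$ for every $\xi\in\mathbb{T}$, the first term is bounded by $K\,\|D^{-}_{W}u\|_{W}\|D^{-}_{W}v\|_{W}$, and since $\kappa$ is bounded, the second term is bounded by $\|\kappa\|_{\infty}^{2}\|u\|_{V}\|v\|_{V}$. Setting $\alpha=\max\{K,\|\kappa\|_{\infty}^{2}\}$ and using the elementary inequality $ab+cd\le \sqrt{a^{2}+c^{2}}\sqrt{b^{2}+d^{2}}$, these two contributions combine to yield $|B_{W,V}[u,v]|\le \alpha\|u\|_{W,V}\|v\|_{W,V}$.

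For coercivity on $H_{W,V,0}(\mathbb{T})$, note first that the hypotheses $A\ge A_{0}>0$ and $\kappa^{2}\ge 0$ yield the lower bound
\begin{equation*}
B_{W,V}[u,u]\;\ge\; A_{0}\int_{\mathbb{T}}(D^{-}_{W}u)^{2}\,dW \;=\; A_{0}\|D^{-}_{W}u\|_{W}^{2}.
\end{equation*}
The key point is now to control $\|u\|_{V}^{2}$ by $\|D^{-}_{W}u\|_{W}^{2}$, which is exactly the situation in which Theorem \ref{thm:1}(c) applies, provided we use that $u\in H_{W,V,0}(\mathbb{T})$, so that the mean $u_{\mathbb{T}}$ vanishes. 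That theorem is stated for c\`adl\`ag functions with $D^{-}_{W}u$ c\`agl\`ad, so to apply it on $H_{W,V}(\mathbb{T})$ I would pick an admissible sequence $u_{n}\in\mathfrak{D}_{W,V}(\mathbb{T})$ approximating $u$ (shifting by the means $\int u_{n}dV$ to keep zero mean in the limit), apply Theorem \ref{thm:1}(c) to each $u_{n}$, and pass to the limit using $u_{n}\to u$ in $L^{2}_{V}(\mathbb{T})$ and $D^{-}_{W}u_{n}\to D^{-}_{W}u$ in $L^{2}_{W}(\mathbb{T})$. This yields
\begin{equation*}
\|u\|_{V}^{2}\;\le\; W(1)V(1)\,\|D^{-}_{W}u\|_{W}^{2}.
\end{equation*}

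Combining the two estimates by writing $A_{0}\|D^{-}_{W}u\|_{W}^{2}=\tfrac{A_{0}}{2}\|D^{-}_{W}u\|_{W}^{2}+\tfrac{A_{0}}{2}\|D^{-}_{W}u\|_{W}^{2}$ and using Poincar\'e--Friedrichs on the second half gives
\begin{equation*}
B_{W,V}[u,u]\;\ge\; \frac{A_{0}}{2}\|D^{-}_{W}u\|_{W}^{2}+\frac{A_{0}}{2W(1)V(1)}\|u\|_{V}^{2}\;\ge\;\beta\|u\|_{W,V}^{2},
\end{equation*}
with $\beta=\min\{A_{0}/2,\,A_{0}/(2W(1)V(1))\}$. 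The only nontrivial step is the justification of Poincar\'e--Friedrichs on $H_{W,V}(\mathbb{T})$ rather than on $\mathfrak{D}_{W,V}(\mathbb{T})$; all other steps reduce to Cauchy--Schwarz together with the definition of the energetic norm.
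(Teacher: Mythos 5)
Your proposal is correct and follows essentially the same route as the paper: Cauchy--Schwarz/H\"older with the bounds on $A$ and $\kappa$ for continuity, and the Poincar\'e--Friedrichs inequality of Theorem \ref{thm:1}(c) for coercivity on the zero-mean subspace, arriving at the same constant $\beta=\tfrac{A_0}{2}\min\{1,1/(W(1)V(1))\}$. The only addition is your explicit density/limiting argument to transfer Poincar\'e--Friedrichs from $\mathfrak{D}_{W,V}(\mathbb{T})$ to $H_{W,V,0}(\mathbb{T})$, a step the paper applies implicitly; this is a sound and welcome clarification, not a different method.
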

	\begin{proof}
		By (\ref{bl:1}), the assumptions on $A$ and $\kappa$, the triangle inequality and H\"older's inequality, we have that
		$$|B_{W,V}[u,v]|\le L_{0}\|D^{-}_{W}u\|_{W}\|D^{-}_{W}v\|_{W}+L_{1}^2\|u\|_{V}\|v\|_{V},$$
		where $L_{0}:=\sup_{\mathbb{T}}|A|$ and $L_{1}:=\sup_{\mathbb{T}}|\kappa|$. Now, by using $\|D^{-}_{W}u\|_{W}\le \|u\|_{W,V}$ and $\|u\|_{V}\le \|u\|_{W,V}$, we have
		$$B_{W,V}[u,v]\le (L_{0}+L_{1}^2)\|u\|_{W,V}\|v\|_{W,V}.$$
		setting $\alpha=L_{0}+L_{1}^2$, the first part of the statement is proved. For the second assertion note that by (\ref{i1})
		\begin{equation}
			\dfrac{A_{0}}{W(1)V(1)}\|u\|_{V}^{2}\le A_{0}\|D^{-}_{W}u\|^{2}_{W}\le B_{W,V}[u,u].
		\end{equation}
		Therefore
		\begin{equation}
			\dfrac{A_{0}}{2}\min\left\{\dfrac{1}{W(1)V(1)},1\right\}\|u\|_{W,V}^2\le B_{W,V}[u,u]
		\end{equation}
		
	\end{proof}
	
	\begin{remark}\label{eng:1}
		Observe that if we consider the new operator $$L_{W,V,\lambda}u:=L_{W,V}u+\lambda u,$$
		then the energy estimates still hold for  
		$$B_{W,V,\lambda}[u,v]:=B_{W,V}[u,v]+\lambda(u,v)_{V}.$$
		In this case, the second assertion is true for all $u\in H_{W,V}(\mathbb{T})$ and $\lambda>0$. Moreover $\alpha$ and $\beta$ depends on $\lambda.$ Finally, if in addiction $\kappa$ is bounded away from zero we get (\ref{eng:2}) for $\lambda>-\kappa_{0}$.
	\end{remark}
	
	\begin{proposition}[First existence result of weak solutions]\label{prop:40}
		Given $f\in L^2_{V}(\mathbb{T}),$ there exists a unique $u\in H_{W,V,0}(\mathbb{T})$ that is a weak solution of 
		\begin{equation}\label{slc:1}
			L_{W,V}u=f\,\,\mbox{in}\,\,\mathbb{T}.
		\end{equation}
		In addiction by Remark \ref{eng:1}, for each $\lambda>0$, there is a unique $u\in H_{W,V}(\mathbb{T})$ such that
		\begin{equation}\label{atv:1}
			L_{W,V}u+\lambda u = f,\,\,\mbox{in}\,\,\mathbb{T}.
		\end{equation}
		If, additionally, $\kappa$ is bounded away from zero, then there exists $u\in H_{W,V}(\mathbb{T})$ that is a weak solution of (\ref{slc:1}) and (\ref{atv:1}) can be weakly solved for $\lambda>-\kappa_{0}.$ Furthermore, the solutions of (\ref{slc:1}) and (\ref{atv:1}) enjoy the following bounds 
		\begin{equation}\label{ine:10}
			\|u\|_{W,V}\le C\|f\|_{V}    
		\end{equation}
		for some constant $C>0$ independent of $f$ and
		$$\|u\|_{V}\le \lambda^{-1}\|f\|_{V}$$
		for $\lambda>0.$ For $\kappa$ bounded away from zero we have that 
		\begin{equation}\label{atv:2}
			\|u\|_{V}\le (\kappa_{0}+\lambda)^{-1}\|f\|_{V}.
		\end{equation}
		for $\lambda>-\kappa_{0}.$
	\end{proposition}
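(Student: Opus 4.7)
The approach I would take is to apply Lax--Milgram directly, since Proposition \ref{energyest} and Remark \ref{eng:1} already supply exactly the boundedness and coercivity hypotheses the theorem requires. The work is therefore mostly a matter of choosing the correct Hilbert space in each case and reading off the a priori bounds by testing against $v = u$.

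For (\ref{slc:1}) without any additional assumption on $\kappa$, I would work on the closed subspace $H_{W,V,0}(\mathbb{T})$ of $H_{W,V}(\mathbb{T})$. By Proposition \ref{energyest}, $B_{W,V}$ is bounded by $\alpha$ and coercive on this subspace with constant $\beta$; the functional $v \mapsto \langle f, v\rangle_V$ is bounded on $H_{W,V,0}(\mathbb{T})$ via $|\langle f,v\rangle_V| \le \|f\|_V \|v\|_V \le \|f\|_V \|v\|_{W,V}$. Lax--Milgram then produces a unique $u \in H_{W,V,0}(\mathbb{T})$ satisfying $B_{W,V}[u,v] = \langle f, v\rangle_V$ for every $v$ in this space. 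For (\ref{atv:1}) with $\lambda > 0$, Remark \ref{eng:1} makes the shifted form $B_{W,V,\lambda}$ coercive on the whole of $H_{W,V}(\mathbb{T})$, so Lax--Milgram applied directly on the full space yields existence and uniqueness. When $\kappa$ is additionally bounded away from zero by $\kappa_0$, the coercivity on the full space holds also for (\ref{slc:1}), and for (\ref{atv:1}) with any $\lambda > -\kappa_0$, so the same Lax--Milgram argument applies there.

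The a priori estimates all come from testing against $v = u$ in the corresponding weak formulation. For (\ref{ine:10}) coercivity gives
\[
\beta \|u\|_{W,V}^2 \le B_{W,V}[u,u] = \langle f, u\rangle_V \le \|f\|_V \|u\|_V \le \|f\|_V \|u\|_{W,V},
\]
so one can take $C = \beta^{-1}$. For the bound $\|u\|_V \le \lambda^{-1}\|f\|_V$, I would drop the two nonnegative terms $\int A (D_W^- u)^2\, dW$ and $\int \kappa^2 u^2\, dV$ in $B_{W,V,\lambda}[u,u]$, leaving $\lambda \|u\|_V^2 \le \langle f,u\rangle_V \le \|f\|_V \|u\|_V$. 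Estimate (\ref{atv:2}) follows from the same calculation using $\kappa^2 \ge \kappa_0^2$ and combining the lower bound with $\lambda$, adjusting the squaring convention on $\kappa_0$ as dictated by the definitions.

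I do not expect a serious obstacle here; rather, the only delicate point is a bookkeeping one. When $\kappa$ is allowed to vanish, constants may lie in the kernel of $L_{W,V}$, which is precisely why the first part of the proposition restricts the solution to the mean-zero subspace $H_{W,V,0}(\mathbb{T})$. The Poincar\'e--Friedrichs inequality (\ref{i1}) is exactly what makes $B_{W,V}$ coercive on this subspace and is implicitly what produces the constant $C$ in (\ref{ine:10}). Once the functional-analytic setup is fixed in this way, the rest of the proof is a routine application of the tools already assembled in the previous sections.
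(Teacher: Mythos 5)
Your proposal is correct and follows essentially the same route as the paper: Lax--Milgram via the energy estimates of Proposition \ref{energyest} and Remark \ref{eng:1} (on $H_{W,V,0}(\mathbb{T})$ or $H_{W,V}(\mathbb{T})$ as appropriate), with all bounds obtained by testing the weak formulation against $v=u$ and using coercivity, exactly as in the paper's argument with $C=\beta^{-1}$.
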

	\begin{proof}
		By the energy estimates (Proposition \ref{energyest}) and Remark \ref{eng:1}, the  existence and uniqueness follow from Lax-Milgram's theorem. For the bounds note that
		$$\beta\|u\|_{W,V}^2\le B_{W,V}[u,u] = (f,u)_{V} \le \|f\|_{V}\|u\|_{V} \le \|f\|_{V}\|u\|_{W,V}$$
		that is, $\|u\|_{W,V}\le C\|f\|_{V},$ for $C=\beta^{-1}.$
		Analogously,
		$$\lambda\|u\|^2_{V}\le A_{0}\|D^{-}_{W}u\|^2_{V}+\lambda\|u\|^2_{V}\le B_{W,V,\lambda}[u,u] = (f,u) \le \|f\|_{V}\|u\|_{V}.$$
		Finally, (\ref{atv:2}) can be obtained similarly.
	\end{proof}
	
	\begin{proposition}[Second result for weak solutions]
		Precisely one of the following assertions are true: 
		\begin{enumerate}
			\item For $f\in L^2_{V}(\mathbb{T})$, there exists a unique solution of 
			\begin{equation}\label{fa:1}
				L_{W,V}u=f,
			\end{equation} 
			or else
			\item There is a weak solution of $u\not\equiv 0$ of the homogeneous problem
			$$L_{W,V}u=0.$$
		\end{enumerate}
		Moreover, if 2. is true, we have $\dim \ker L_{W,V}<\infty$ and (\ref{fa:1}) has a weak solution if and only if 
		$$\int_{\mathbb{T}}vfdV=0$$
		for every $v\in\ker L_{W,V}.$
	\end{proposition}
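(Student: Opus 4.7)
The plan is to reduce the equation $L_{W,V}u = f$ to an equation of the form $(I-K)u = h$ with $K$ compact and self-adjoint on $L^{2}_V(\mathbb{T})$, and then to invoke the classical Fredholm alternative for compact operators on a Hilbert space.

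\textbf{Step 1 (Compact self-adjoint resolvent).} By Proposition~\ref{prop:40} together with Remark~\ref{eng:1}, for any fixed $\lambda_0 > 0$ the shifted operator $L_{W,V,\lambda_0} = L_{W,V} + \lambda_0 I$ admits a bounded weak inverse $T : L^{2}_V(\mathbb{T}) \to H_{W,V}(\mathbb{T})$, with $\|Tg\|_{W,V} \le C\|g\|_V$. Composing $T$ with the inclusion $H_{W,V}(\mathbb{T}) \hookrightarrow L^{2}_V(\mathbb{T})$, which is compact by Theorem~\ref{compactemb}, yields a compact operator $K := \lambda_0\, T$ on $L^{2}_V(\mathbb{T})$. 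Moreover, symmetry of the bilinear form $B_{W,V,\lambda_0}$ gives $\langle g, Th\rangle_V = B_{W,V,\lambda_0}[Tg, Th] = B_{W,V,\lambda_0}[Th, Tg] = \langle h, Tg\rangle_V$, so both $T$ and $K$ are self-adjoint on $L^{2}_V(\mathbb{T})$.

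\textbf{Step 2 (Reformulation).} I would next observe that $u \in H_{W,V}(\mathbb{T})$ is a weak solution of $L_{W,V} u = f$ if, and only if, $L_{W,V,\lambda_0}u = f + \lambda_0 u$, which is equivalent to $u - K u = T f$ in $L^{2}_V(\mathbb{T})$. In particular, $v = Kv = \lambda_0 Tv$ is equivalent to $L_{W,V} v = 0$, so $\ker(I-K) = \ker L_{W,V}$.

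\textbf{Step 3 (Fredholm alternative and solvability).} Applying the Fredholm alternative to the compact operator $I - K$, exactly one of the following holds: either $I-K$ is bijective on $L^{2}_V(\mathbb{T})$, in which case $(I-K)u = Tf$, and therefore $L_{W,V}u = f$, is uniquely solvable for every $f \in L^{2}_V(\mathbb{T})$, proving assertion (1); or $\ker(I-K)$ is a nontrivial finite-dimensional subspace of $L^{2}_V(\mathbb{T})$, which by the identification above proves assertion (2) and simultaneously yields $\dim\ker L_{W,V} < \infty$. In the second case, solvability of $(I-K)u = Tf$ is equivalent to $Tf \perp \ker(I-K^*) = \ker(I-K)$ in $L^{2}_V(\mathbb{T})$; using the self-adjointness of $T$ together with the identity $Tv = \lambda_0^{-1} v$ valid on $\ker L_{W,V}$, this orthogonality reduces to $\int_{\mathbb{T}} f v\, dV = 0$ for every $v \in \ker L_{W,V}$, which is the stated compatibility condition.

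The main obstacle I anticipate is verifying the self-adjointness of $T$ cleanly and transferring the orthogonality condition from $Tf \perp \ker(I-K)$ in $L^{2}_V(\mathbb{T})$ back to $f \perp \ker L_{W,V}$; everything else is a direct application of Lax--Milgram (to define $T$), the compact embedding Theorem~\ref{compactemb} (to obtain compactness of $K$), and the standard Fredholm alternative for compact self-adjoint operators.
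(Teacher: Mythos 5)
Your proposal is correct and follows essentially the same route as the paper: shift by $\lambda_0>0$, use Proposition \ref{prop:40} (Lax--Milgram) to define the weak inverse, compose with the compact embedding of Theorem \ref{compactemb} to get a compact self-adjoint operator $K$, rewrite $L_{W,V}u=f$ as $u-Ku=Tf$, and conclude by the Fredholm alternative. Your Step 3 in fact spells out the reduction of the orthogonality condition to $\int_{\mathbb{T}}fv\,dV=0$ more explicitly than the paper does.
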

	\begin{proof}
		Fix $\lambda>0$. From Proposition \ref{prop:40}, given $g\in L^{2}_{V}(\mathbb{T})$ there exists a unique $u\in H_{W,V}(\mathbb{T})$ such that 
		$$B_{W,V,\lambda}[u,v]=(g,v)_{V}.$$
		Therefore, by existence and uniqueness, we can invert the operator on $g$:
		$$L_{W,V,\lambda}^{-1}g:=u.$$ Now note that $u\in H_{W,V}(\mathbb{T})$ satisfy $(\ref{fa:1})$ if, and only if, 		for all $v\in H_{W,V}(\mathbb{T})$, we have
		$$B_{W,V,\lambda}[u,v]=(\lambda u+f,v)_{V}$$
 or equivalently, if, and only if,
		$$L_{W,V,\lambda}^{-1}(\lambda u+f)=u$$
		or better if and only if
		\begin{equation}\label{comp:1}
			u-Ku=h    ,
		\end{equation}
		where $h:=L_{W,V,\lambda}^{-1}f$ and $Ku:=\lambda L^{-1}_{W,V,\lambda}u$. We can now use bounds obtained on Proposition \ref{prop:40} and Theorem \ref{compactemb} to conclude that the operator $K:L_{V}^{2}(\mathbb{T})\to H_{W,V}(\mathbb{T})\subset L^{2}_{V}(\mathbb{T})$ is compact and self-adjoint, this last is due to the symmetry of $B_{W,V}$. The result now follows from Fredholm's alternative.
	\end{proof}
	
The previous result shows that $\ker L_{W,V}$ plays a key role in the study of weak solutions of $(\ref{fa:1})$. Indeed, more explicitly
	$$\ker L_{W,V}=\left\{w\in H_{W,V}(\mathbb{T}); \int_{\mathbb{T}}AD^{-}_{W}wD^{-}_{W}vdW+\int_{\mathbb{T}}\kappa^2 wvdV=0, \forall v\in H_{W,V}(\mathbb{T})\right\}.$$
	In particular, for $v=w\in \ker L_{W,V}$ we have that
	$$\dfrac{A_{0}}{W(1)V(1)}\left\|w-\dfrac{1}{V(1)}\int_{\mathbb{T}}wdV\right\|^2_{V}\le A_{0}\|D^{-}_{W}w\|^2\le B_{W,V}[w,w]\Longrightarrow w\equiv \dfrac{1}{V(1)}\int_{\mathbb{T}}wdV.$$ 
	Thus, if there exists $w\in \ker L_{W,V}$ such that $\int_{\mathbb{T}}wdV\big/V(1)\not\equiv 0$, then $\kappa\equiv 0.$ In this case, there exists $u\in H_{W,V}(\mathbb{T})$ that is a weak solution of $$D^{+}_{V}AD^{-}_{W}u=f$$
	if and only if 
	$$\int_{\mathbb{T}}fdV=0.$$
	Furthermore, the solution is unique in $H_{W,V}(\mathbb{T})$ up to a constant. On the other hand, if $\ker L_{W,V}=0$, then
	$L_{W,V}$ define a bijection between $H_{W,V}(\mathbb{T})$ and $L^{2}_{V}(\mathbb{T})$. This happens, for instance, when $\kappa$ is bounded away from zero. 
	
For the next statement fix $\kappa$ bounded away from zero
	
	\begin{proposition}The following assertions are true
		\begin{enumerate}
			\item The eigenvalues of $L_{W,V}$ are real, countable, and we enumerate them, according their multiplicity, in such a way that
			\begin{equation}\label{eig:1}
				0<\lambda_{1}\le \lambda_2 \le \ldots \to \infty.
			\end{equation}
			\item There exists an orthonormal basis $\{\phi_{k}\}_{i\in\mathbb{N}}$ of $L^{2}_{V}(\mathbb{T})$ where $\phi_{k}\in H_{W,V}(\mathbb{T})$ is a eigenvector associated to $\lambda_{k}$, i.e.
			$$L_{W,V}\phi_{k}=\lambda_{k}\phi_{k}\,\,in\,\,\mathbb{T}.$$
		\end{enumerate}
		
	\end{proposition}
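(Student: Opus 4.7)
The plan is to deduce the spectral decomposition from the standard theory for compact self-adjoint positive operators applied to the resolvent $L_{W,V}^{-1}$. Since $\kappa$ is bounded away from zero, Proposition \ref{prop:40} together with Remark \ref{eng:1} tells us that $L_{W,V} : H_{W,V}(\mathbb{T}) \to L^2_V(\mathbb{T})$ is a bijection with the bound $\|L_{W,V}^{-1}f\|_{W,V} \le C\|f\|_V$. Composing with the inclusion $H_{W,V}(\mathbb{T}) \hookrightarrow L^2_V(\mathbb{T})$, which is compact by Theorem \ref{compactemb}, we obtain that $T := L_{W,V}^{-1} : L^2_V(\mathbb{T}) \to L^2_V(\mathbb{T})$ is a bounded compact operator.

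The next step is to verify that $T$ is self-adjoint and strictly positive. For self-adjointness, given $f, g \in L^2_V(\mathbb{T})$, set $u = Tf$ and $v = Tg$; using that $u,v$ are weak solutions and the symmetry of $B_{W,V}$ (see \eqref{bl:1}), I would compute
$$ (Tf, g)_V = (u, g)_V = B_{W,V}[v, u] = B_{W,V}[u, v] = (f, v)_V = (f, Tg)_V. $$
For positivity, the coercivity estimate \eqref{eng:2} (which holds on all of $H_{W,V}(\mathbb{T})$ because $\kappa$ is bounded away from zero, by Remark \ref{eng:1}) gives
$$ (Tf, f)_V = B_{W,V}[u, u] \ge \beta\|u\|_{W,V}^2 \ge 0, $$
with equality if and only if $u = 0$, i.e.\ $f = 0$.

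Now I would invoke the Hilbert--Schmidt spectral theorem for compact self-adjoint operators: there exists an orthonormal basis $\{\phi_k\}_{k\in\mathbb{N}}$ of $L^2_V(\mathbb{T})$ consisting of eigenvectors of $T$, with associated eigenvalues $\mu_k$ that are real and accumulate only at $0$. By the strict positivity proved above, each $\mu_k > 0$, so we can set $\lambda_k = \mu_k^{-1}$; these are real, accumulate only at $+\infty$, and can be ordered to satisfy \eqref{eig:1}. Finally, since $\phi_k = T(\lambda_k \phi_k) \in \mathrm{ran}(T) \subset H_{W,V}(\mathbb{T})$, and $T\phi_k = \mu_k \phi_k$ translates into $L_{W,V}\phi_k = \lambda_k \phi_k$, both assertions follow. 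No step in this proof poses a real obstacle, as all the hard analytic work (existence/uniqueness of weak solutions, the energy bounds, and the compactness of the embedding) has already been carried out; the argument is essentially a bookkeeping reduction to a well-known abstract theorem.
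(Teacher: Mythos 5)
Your proposal is correct and follows essentially the same route as the paper: invert $L_{W,V}$ via the Lax–Milgram existence result and the bound \eqref{ine:10}, use the compact embedding of Theorem \ref{compactemb} to get compactness of $L_{W,V}^{-1}$, symmetry of $B_{W,V}$ for self-adjointness, the coercivity/lower bound from $\kappa_0$ for strict positivity, and then the spectral theorem for compact self-adjoint operators. The only cosmetic difference is that the paper bounds the eigenvalues below directly by $\kappa_0$ while you argue through strict positivity of the resolvent; both yield \eqref{eig:1}.
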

	\begin{proof} 
		
		We will prove 1. and 2. simultaneously. First of all, the linear operator $L_{W,V}:H_{W,V}(\mathbb{T})\to L^{2}_{V}(\mathbb{T})$ defined by
		$$L_{W,V}u=f \Leftrightarrow B_{W,V}[u,v]=(f,v), \forall v\in H_{W,V}(\mathbb{T})$$
		is well defined and is a bijection. Indeed, $L^{-1}_{W,V}: L^{2}_{V}(\mathbb{T})\to L^{2}_{V}(\mathbb{T})$ is linear operator, which is compact and symmetric. Indeed, by $(\ref{ine:10})$ if $L_{W,V}u=f$, there exists $C>0$ such that
		$$\|u\|_{V}\le\|u\|_{W,V}\le C \|f\|_{V} \Rightarrow \|L^{-1}_{W,V}f\|_{V}\le \|f\|_{V}.$$
		As we know, the immersion of $H_{W,V}(\mathbb{T})$ in $L^{2}(\mathbb{T})$ is compact, therefore $L^{-1}_{W,V}$ is compact. The symmetry easily follows from the symmetry of $B_{W,V}$. Clearly $0$ is not an eigenvalue of $L_{W,V}$ and, if $\mu\neq 0$ and $u\neq 0$ we have $$L_{W,V}u=\mu u \Leftrightarrow L^{-1}_{W,V}u=\frac{1}{\mu}u.$$ Therefore, since the eigenvectors of a symmetric operator belongs to $\mathbb{R}$, the eigenvectors of $L_{W,V}$ belongs to $\mathbb{R}.$ If $u$ and $\mu$ are taken as above, then the inequality
		$$\kappa_{0}\|u\|^{2}\le B_{W,V}[u,u]=\mu\|u\|^{2}$$
		implies that $0<\kappa_{0}\le\mu$. Hence, we can apply the spectral theorem for self-adjoint and compact operators on a separable Hilbert space to the operator $L_{W,V}^{-1}$. Therefore, there exists a complete orthonormal set $\{\phi_{k}\}_{k\in\mathbb{N}}$ of $L^{2}_{V}(\mathbb{T})$ constituted by eigenvectors of $L^{-1}_{W,V}$ where, each $\phi_{k}$ is associated to $\frac{1}{\lambda_{k}}$ with \begin{equation*}
			\frac{1}{\lambda_{1}}\geq \frac{1}{\lambda_{2}}\geq \ldots \geq \frac{1}{\lambda_{k}} \to 0.
		\end{equation*}
		this last fact is equivalent to $(\ref{eig:1}).$
		
	\end{proof}

\section{$W$-Brownian motion and its associated Cameron-Martin space}

In this section we will consider a generalization of the Brownian motion, which we will denote by $B_W(t)$, such that has the following \emph{simultaneous} interesting features:

\begin{enumerate}
	\item The finite-dimensional distributions of $B_W$ are Gaussian;
	\item The sample paths of $B_W$ are c\`adl\`ag and may have jumps.
\end{enumerate}

We have a well-known family of processes that generalize the standard Brownian motion and satisfies condition 1, indeed, the fractional Brownian motion is such an example. However, the sample paths of the fractional Brownian motion are continuous, in fact, $\gamma$-H\"older continuous, for $\gamma<H$, where $H$ is the Hurst parameter of the fractional Brownian motion. We also have a well-known family of stochastic processes that
generalize the Brownian motion and that satisfies 2, namely the L\'evy processes. However, by Lévy–Khintchine characterization, its finite-dimensional distributions are Gaussian if, and only if, it is the Brownian motion.

Therefore, it is unusual to have a process that generalizes the Brownian motion having Gaussian finite-dimensional distributions and that has jumps. 

We will, then, obtain the Cameron-Martin space of $B_W$ when we see its sample paths living on the space $L^2_V(\mathbb{T}))$. In this case, the Cameron-Martin space is given by $H_{W,V,\mathcal{D}}(\mathbb{T}) = \{f\in H_{W,V}(\mathbb{T}): f(0) = 0\}$, where $0\in\mathbb{T}$ is our "tagged" zero in $\mathbb{T}$. The subscript $\mathcal{D}$ refers to the Dirichlet boundary condition imposed by this space. 

Let us then introduce the generalized Brownian motion $B_W(t)$ on $\mathbb{T}$ and the pathwise white noise on $H_{W,V}(\mathbb{T})$ as a functional on the dual $H^{-1}_{W,V}(\mathbb{T})$.
To such an end, we will "tag" a point at $\mathbb{T}$ and identify it as $zero$, then do the standard identification between the torus and the interval $[0,1)$. Thus, we will consider the process on the interval $[0,1)$, and the process
will, almost surely, have a jump at $0$, which is identified with $1$ (that is, the limit $\lim_{x\to 0-} B_W(t)$, almost surely will not be zero).

\begin{definition}\label{W-brownian}
	We say that $B_W(t)$ is a $W$-Brownian motion if it satisfies the following conditions:
	\begin{enumerate}
		\item $B_W(0) = 0$ almost surely;
		\item If $t>s$, then $B_W(t)-B_W(s)$ is independent of $\sigma(B_W(u); u\leq s)$;
		\item If $t>s$, then $B_W(t)-B_W(s)$ has $N(0,W(t)-W(s))$ distribution;
		\item $B_W(s)$ has c\`adl\`ag sample paths.
	\end{enumerate}
\end{definition}

\begin{definition}\label{W-brownian-in-law}
	We say that $B_W$ is a $W$-Brownian motion \emph{in law} if conditions 1-3 of Definition \ref{W-brownian} hold.
\end{definition}

\begin{figure}[h]
	\label{W-brownian-fig}
	\begin{center}
	\includegraphics[scale=0.5]{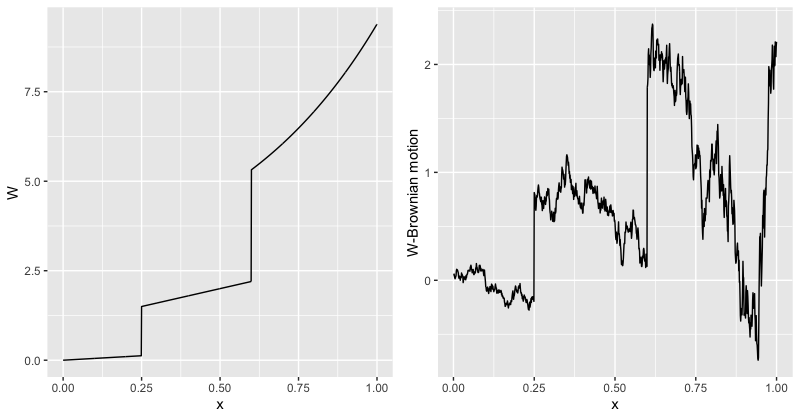}
	\end{center}
	\caption{An example of a $W$-Brownian motion for a function $W$.}
\end{figure}

We see, on Figure \ref{W-brownian-fig}, an example of a realization of a $W$-Brownian motion for a function $W$. The plot of $W$ is on the left-hand side and the realization of the $W$-Brownian motion is on the right-hand side. We can see that the $W$-Brownian motions has two jumps, one at each discontinuity of $W$.

\begin{proposition}
	The $W$-Brownian motion in law exists.
\end{proposition}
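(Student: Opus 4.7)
The plan is to construct $B_W$ explicitly as a deterministic time change of a standard Brownian motion. Since the statement requires only conditions 1--3 of Definition \ref{W-brownian} (there is no constraint on sample path regularity at this stage), it suffices to produce any stochastic process whose finite-dimensional distributions match those prescribed.

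Starting from a standard Brownian motion $(B(u))_{u\ge 0}$ on some probability space $(\Omega,\mathcal{F},\mathbb{P})$, I would set $B_W(t):=B(W(t))$ for $t\in[0,1)$. Since $W$ is nondecreasing with $W(0)=0$, this is a legitimate deterministic time change into $[0,W(1)]$. The three conditions then follow almost mechanically. First, $B_W(0)=B(W(0))=B(0)=0$ almost surely. Second, for $s<t$ the increment $B_W(t)-B_W(s)=B(W(t))-B(W(s))$ is a standard Brownian increment over the (possibly degenerate) interval $[W(s),W(t)]$, hence Gaussian with mean zero and variance $W(t)-W(s)$, giving condition 3. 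Third, since $W$ is nondecreasing, one has $\sigma(B_W(u);u\le s)=\sigma(B(W(u));u\le s)\subseteq \sigma(B(v);v\le W(s))$; the increment $B(W(t))-B(W(s))$ is independent of the latter, larger $\sigma$-algebra by the standard Brownian motion property, and therefore also of the former, which is condition 2.

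An equivalent alternative would be to specify the finite-dimensional distributions directly as zero-mean Gaussians with covariance $\mathbb{E}[B_W(s)B_W(t)]=W(s\wedge t)$, check that the matrices $\bigl(W(t_i\wedge t_j)\bigr)_{i,j}$ are positive semidefinite (which they are, being covariance matrices of $(B(W(t_i)))_i$), and then invoke Kolmogorov's extension theorem after verifying the standard consistency relations under permutation and marginalization. Either route produces existence in law with essentially no technical obstacle. The genuinely nontrivial issue, which must be handled in the subsequent result, is upgrading this in-law construction to a process with c\`adl\`ag sample paths satisfying the full Definition \ref{W-brownian}; there, the jump times of $W$ require careful treatment to ensure both the c\`adl\`ag property and that the jump of $B_W$ at a discontinuity point of $W$ has the correct $N(0,W(t)-W(t-))$ distribution.
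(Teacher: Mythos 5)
Your proof is correct, but it takes a genuinely different route from the paper. The paper's proof is a one-liner: it specifies the finite-dimensional distributions and invokes Kolmogorov's extension theorem — essentially your second, alternative route. Your primary argument instead constructs $B_W$ explicitly as the deterministic time change $B_W(t)=B(W(t))$ of a standard Brownian motion, and your verification of conditions 1--3 (using $W(0)=0$, Gaussianity of Brownian increments over $[W(s),W(t)]$, and the inclusion $\sigma(B_W(u);u\le s)\subseteq\sigma(B(v);v\le W(s))$) is sound, including the degenerate case $W(t)=W(s)$. What the explicit construction buys is more than you claim: since the sample paths of $B$ are continuous and $W$ is c\`adl\`ag, the composition $t\mapsto B(W(t))$ is automatically c\`adl\`ag with left limits $B(W(t-))$, so your construction in fact produces a $W$-Brownian motion in the full sense of Definition \ref{W-brownian}, not merely in law. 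In other words, the "genuinely nontrivial issue" you defer — upgrading to c\`adl\`ag paths — is already settled by your time change, and it would give Proposition \ref{existencebrownian} directly, bypassing the quasimartingale/regularization machinery from \cite{brooks} that the paper uses for that step. The paper's abstract route, on the other hand, is shorter for the in-law statement and does not require placing the process on a space carrying a pre-existing Brownian motion.
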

\begin{proof}
	It is an immediate consequence of Kolmogorov's extension theorem.
\end{proof}

\begin{remark}
	Notice that if $t^\ast$ is a discontinuity point of $W$, then there exists some $\varepsilon>0$ such that for every $h>0$,
	$$P(|B_W(t^\ast-h) - B_W(t^\ast)|>0) \geq \varepsilon.$$
	Indeed, it is enough to take any $\varepsilon>0$ such that $P(|N(0,\Delta W(t^\ast)|>0) > \varepsilon$, where $\Delta W(t^\ast) = W(t^\ast) - W(t^\ast-)$.
\end{remark}

\begin{remark}
	The previous remark then implies that $B_W$ is not continuous in probability so, in particular, $B_W$ is not an additive process (see, for instance, \cite{sato} for a definition of additive processes).
\end{remark}

\begin{proposition}
	If $B_W(\cdot)$ is a $W$-Brownian motion in law, then it is a martingale.
\end{proposition}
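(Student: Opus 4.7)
The plan is to verify the three defining conditions of a martingale (adaptedness, integrability, and the conditional expectation identity) with respect to the natural filtration $\mathcal{F}_s := \sigma(B_W(u);\, u \leq s)$, using only properties 1--3 of Definition \ref{W-brownian-in-law}. Adaptedness is automatic from the choice of filtration.

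For integrability, I would note that by condition 1 and condition 3 applied with $s=0$, the random variable $B_W(t) = B_W(t) - B_W(0)$ has distribution $N(0, W(t) - W(0)) = N(0, W(t))$ since we assumed $W(0) = 0$. Hence $E[|B_W(t)|] < \infty$ (in fact $E[B_W(t)^2] = W(t) < \infty$ because $W$ is a finite-valued increasing function).

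For the martingale identity, fix $s < t$ and decompose
\begin{equation*}
E[B_W(t) \mid \mathcal{F}_s] = E[B_W(t) - B_W(s) \mid \mathcal{F}_s] + E[B_W(s) \mid \mathcal{F}_s].
\end{equation*}
The second term equals $B_W(s)$ by $\mathcal{F}_s$-measurability. For the first term, property 2 of Definition \ref{W-brownian-in-law} states that $B_W(t) - B_W(s)$ is independent of $\mathcal{F}_s$, hence
\begin{equation*}
E[B_W(t) - B_W(s) \mid \mathcal{F}_s] = E[B_W(t) - B_W(s)] = 0,
\end{equation*}
where the last equality uses property 3, which says that $B_W(t) - B_W(s) \sim N(0, W(t) - W(s))$ and so has mean zero. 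Combining both, we get $E[B_W(t) \mid \mathcal{F}_s] = B_W(s)$, which is exactly the martingale property.

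There is no real obstacle here: the argument is the textbook verification that a process with independent mean-zero increments and integrable marginals is a martingale, and it uses none of the c\`adl\`ag structure from condition 4 of Definition \ref{W-brownian}. That is why the statement is formulated for the "in law" version, which is the weakest setting in which the identity makes sense.
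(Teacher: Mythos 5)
Your proof is correct and follows essentially the same route as the paper: decompose $E[B_W(t)\mid\mathcal{F}_s]$ into the increment plus $B_W(s)$, use independence (condition 2) to drop the conditioning, and use the mean-zero Gaussian distribution of the increment (condition 3). The only difference is that you also verify integrability explicitly, which the paper omits but which is a routine addition.
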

\begin{proof}
	By Condition 3 of Definition \ref{W-brownian}, $E(B_W(t) - B_W(s)) = 0$, and by Condition 2 of \ref{W-brownian}, we have that, for $t>s$,
	\begin{eqnarray*}
		E(B_W(t)|\sigma(B_W(u), u\leq s)) &=&  E(B_W(t)- B_W(s)|\sigma(B_W(u), u\leq s)) + B_W(s)\\
		&=& E(B_W(t)- B_W(s)) + B_W(s)\\
		&=& B_W(s).
	\end{eqnarray*}
\end{proof}

\begin{proposition}\label{existencebrownian}
	The $W$-Brownian motion exists.
\end{proposition}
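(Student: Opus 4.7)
The plan is to construct $B_W$ directly via a time change of a standard Brownian motion. Let $B(t)$ be a standard Brownian motion on $[0,\infty)$, defined on some probability space with continuous sample paths. I would define
$$B_W(t) := B(W(t)), \qquad t\in [0,1).$$
Here we view $\mathbb{T}$ through its "tagged zero" identification with $[0,1)$, and since $W(1)<\infty$ the composition is well-defined.

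The next step is to verify each of the four conditions of Definition \ref{W-brownian}. Condition 1 is immediate from $W(0)=0$. For Conditions 2 and 3, observe that for $t>s$ one has $W(s)\le W(t)$ and $B_W(t)-B_W(s) = B(W(t))-B(W(s))$, so by the independent and stationary Gaussian increments of the standard Brownian motion, this difference is $N(0,W(t)-W(s))$ and is independent of $\sigma(B(v); v\le W(s))$, which contains $\sigma(B_W(u); u\le s)$. This gives both (2) and (3). Condition 4 follows because $B$ has continuous paths almost surely and $W$ is c\`adl\`ag: for $t_n\downarrow t$ we have $W(t_n)\to W(t)$ and hence $B(W(t_n))\to B(W(t))$, while for $t_n\uparrow t$ we have $W(t_n)\to W(t-)$, so the left limits exist and equal $B(W(t-))$. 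In particular, at each discontinuity point $t^\ast$ of $W$ the process jumps by a random variable distributed as $N(0,\Delta W(t^\ast))$, matching the behavior indicated in the Remark preceding the proposition.

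There is no real obstacle here; the subtle point is simply recognizing that one should not look for a c\`adl\`ag \emph{modification} of the process produced by Kolmogorov's extension theorem via a Kolmogorov--Chentsov type argument (this would, at best, yield a continuous modification in regions where $W$ is continuous, but the required jumps at atoms of $W$ make a single global regularity argument awkward). The time-change construction sidesteps this entirely by transferring the c\`adl\`ag structure from $W$ itself, while the continuity of $B$ ensures that no spurious irregularities are introduced. Finally, one may observe that the resulting law on the Skorohod space $D([0,1),\mathbb{R})$ is unique, since the finite-dimensional distributions coincide with those of the $W$-Brownian motion in law, which determines the law on $D$ once the c\`adl\`ag regularity is known.
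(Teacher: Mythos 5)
Your construction is correct, and it takes a genuinely different route from the paper. You build the process explicitly as a time change $B_W(t)=B(W(t))$ of a standard Brownian motion, so that Conditions 1--3 of Definition \ref{W-brownian} follow from the independent Gaussian increments of $B$ together with the monotonicity of $W$ and $\sigma(B_W(u);u\le s)\subset\sigma(B(v);v\le W(s))$, and Condition 4 is inherited pathwise: the composition of a continuous function with the c\`adl\`ag function $W$ is c\`adl\`ag, with jumps exactly at the atoms of $dW$. The paper instead starts from a $W$-Brownian motion \emph{in law} (Kolmogorov's extension theorem), notes it is a martingale and hence a quasimartingale, checks right-continuity in probability via Chebyshev's inequality and the right-continuity of $W$, and then invokes the regularization theorem of Brooks to produce a c\`adl\`ag modification. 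Your approach is more elementary and explicit, and it transfers the regularity directly from $W$ without any modification theorem; the paper's approach proves something slightly different in flavour, namely that \emph{any} process with the prescribed finite-dimensional distributions admits a c\`adl\`ag version. One small caveat: your parenthetical dismissal of the modification route is not quite fair to the paper, since the paper does not attempt a Kolmogorov--Chentsov argument there (that is used only in Proposition \ref{kolmchen} under H\"older hypotheses); the quasimartingale regularization argument handles the jumps without difficulty. This does not affect the validity of your proof.
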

\begin{proof}
	Let $B_W(\cdot)$ be a $W$-Brownian motion in law. We need to show that there exists a c\`adl\`ag modification of $B_W(\cdot)$. To this end, begin by noticing that since $B_W(\cdot)$ is a martingale, it is a quasimartingale in the sense of \cite{brooks}. Indeed, see, for instance, claim A.3.2 in \cite{brooks}. 
	
	Notice that since the $W$-Brownian motion is a $\mathbb{R}$-valued quasimartingale, the assumptions of Theorem 2.1 in \cite{brooks} are satisfied. Thus, in order to apply Theorem 2.1 of \cite{brooks} to conclude that $B_W(\cdot)$ has a c\`adl\`ag modification, it is enough to show that $B_W(\cdot)$ is right-continuous in probability.
	
	To such an end, notice that, for $t>s$,
	\begin{eqnarray*}
		P(|B_W(t)-B_W(s)| > \varepsilon) &\leq& \frac{E\left(\left|B_W(t)-B_W(s)\right|^2\right)}{\varepsilon^2}\\
		&=& \frac{W(t)-W(s)}{\varepsilon^2}.
	\end{eqnarray*}
	
	Therefore, since $W$ is right-continuous, $B_W(\cdot)$ is right-continuous in probability. The result now follows from applying Theorem 2.1 from \cite{brooks}.
\end{proof}

If $W$ has finitely many discontinuity points and $W$ is H\"older continuous on each continuous subinterval of the form $[d_i, d_{i+1})$, where $\{d_i; i=1,\ldots, N\}$ is the set of discontinuity points, where $N\in\mathbb{N}$, then the set of discontinuity points of the sample paths will be contained in the set of discontinuity points of $W$:
\begin{proposition}\label{kolmchen}
	If the set of discontinuity points of $W$ is finite, say $0\leq d_1 < d_2 <\ldots < d_N$, for some $n\in\mathbb{N}$, and if we let $I_1 = [0,d_1), I_2 = [d_1,d_2), \ldots, I_{N} = [d_{N-1},d_N), I_{N+1} = [d_N,1)$, and for each interval $I_k$, there exists some $\gamma_k\in (0,1]$, such that the restriction of $W$ to $I_k$ is $\gamma_k$-H\"older continuous, then $B_W$ has a modification whose sample paths are continuous on each interval $I_k, k=1,\ldots, N+1$.
\end{proposition}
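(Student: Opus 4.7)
The plan is to apply the Kolmogorov–Chentsov continuity theorem on each interval $I_k$ separately, then stitch the resulting continuous modifications onto the càdlàg modification produced in Proposition \ref{existencebrownian}.

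First, recall from Proposition \ref{existencebrownian} that we already have a càdlàg modification $\widetilde{B}_W$ of $B_W$. The goal is to produce a further modification that, in addition, is continuous on each half-open interval $I_k=[d_{k-1},d_k)$ (with the natural convention at the boundary). Fix $k\in\{1,\ldots,N+1\}$ and $\delta>0$ small enough that $[d_{k-1},d_k-\delta]\subset I_k$. By hypothesis, the restriction of $W$ to $I_k$ is $\gamma_k$-Hölder continuous with some constant $L_k$, so for $s,t\in[d_{k-1},d_k-\delta]$,
$$W(t)-W(s)\le L_k\,|t-s|^{\gamma_k}.$$

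Next, use that increments of $B_W$ are Gaussian: $B_W(t)-B_W(s)\sim N(0,W(t)-W(s))$ for $s<t$. Therefore, for every $p\in\mathbb{N}$ we have the moment bound
$$E\bigl[|B_W(t)-B_W(s)|^{2p}\bigr]=\frac{(2p)!}{2^{p}\,p!}\,(W(t)-W(s))^{p}\le C_{p}\,L_{k}^{p}\,|t-s|^{\,p\gamma_k},$$
for $s,t\in[d_{k-1},d_k-\delta]$. Choosing $p>1/\gamma_k$ ensures $p\gamma_k>1$, so the hypotheses of the Kolmogorov–Chentsov continuity theorem are met on the compact interval $[d_{k-1},d_k-\delta]$; the theorem then yields a continuous modification of $B_W$ on $[d_{k-1},d_k-\delta]$. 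Letting $\delta\downarrow 0$ along a countable sequence and using uniqueness of modifications up to indistinguishability on each subinterval produces a continuous modification on the whole half-open interval $I_k$.

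Finally, I would combine these continuous modifications on each $I_k$ with the càdlàg modification $\widetilde{B}_W$ by redefining $B_W$, for each $k$, to equal its continuous version on $I_k$ while keeping the càdlàg values at the points $d_k$. Since on every $I_k$ the two versions agree almost surely at every fixed time, the result is still a modification of $B_W$, and by construction it is continuous on each $I_k$ and càdlàg on $[0,1)$.

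The main obstacle is the bookkeeping at the endpoints $d_k$: one has to verify that the stitching is consistent with the càdlàg property at each $d_k$ (i.e., the left limit, coming from continuity on $I_k$, matches with the jump behavior inherited from $\widetilde{B}_W$), and that the resulting process is jointly measurable. This is essentially a routine argument once the per-interval Kolmogorov–Chentsov estimate above is in place, since each $d_k$ is one of only finitely many exceptional points.
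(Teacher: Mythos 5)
Your proposal is correct and follows essentially the same route as the paper: apply Kolmogorov--Chentsov on each interval $I_k$ (the paper does this directly, leaving the Gaussian moment bound and the exhaustion of the half-open interval implicit) and then paste the continuous modifications together, which is unproblematic since the $I_k$ are disjoint. Your extra bookkeeping with the c\`adl\`ag modification from Proposition \ref{existencebrownian} is harmless but not needed, as the statement only asks for continuity on each $I_k$.
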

\begin{proof}
	In this case, we can directly apply Kolmogorov-Chentsov's continuity theorem to the restriction of $B_W$ to each interval $I_k$ to obtain that the restriction of $B_W$ to each $I_k$ has a modification, say $B_{W,k}$, which has continuous sample paths, for $k=1,\ldots, N+1$. Notice that from the very construction, the modifications belong to the same probability space. Therefore, we can simply define the modification $\widetilde{B}_W$ on $\mathbb{T}$ as $\widetilde{B}_W(s) = B_{W,k}(s)$ is $s\in I_k$, $k=1,\ldots, N+1$, and since the intervals $I_k$ are disjoint, there are no overlaps.
\end{proof}

\begin{remark}
	It is important to notice that if we only assume $W$ to have finitely many discontinuity points, and impose no assumptions regarding H\"older continuity, we would only have that the restriction of $B_W$ to each continuity subinterval is continuous in probability, which implies that the restriction of $B_W$ to each subinterval is an additive process in law, and thus admits a version which is right-continuous, but is not strong enough to ensure the existence of a modification with continuous sample paths. Thus, we would not be able to conclude that $B_W$ would be continuous in each interval such that $W$ is continuous. 
\end{remark}

Notice that, almost surely, the sample paths of $B_W(\cdot)$ are bounded on $[0,1)$, since it is c\`adl\`ag on $[0,1)$. Therefore,
it is an $L_V^2(\mathbb{T})$-process (and also an $L^2_W(\mathbb{T})$-process). Furthermore, $B_W(\cdot)$ has orthogonal increments and is $L^2$-right-continuous, that is $$\lim_{t\to s+} \mathbb{E}((B_W(t)-B_W(s))^2) =0.$$ Therefore, the definition of the stochastic integral of any function $f\in L^2_V(\mathbb{T})$ with respect to $B_W(\cdot)$ follows from the standard theory on $L^2$-processes (see \cite{ash}). Namely, the stochastic integral is linear, 
\begin{equation}
	\int_{[0,1)} \textbf{1}_{(a,b]} dB_W = B_W(b) - B_W(a),
	\label{intsimplefunctions}
\end{equation}
and, for any $f\in L^2_W(\mathbb{T})$, we have the following isometry:
\begin{equation}
	\mathbb{E}\left[ \left( \int_{[0,1)} f dB_W\right)^2\right] = \int_{[0,1)} f^2 dV.
	\label{isometry}
\end{equation}
Now, observe that, by \eqref{intsimplefunctions}, the integral of each simple function follows a normal distribution. By \eqref{isometry}, the stochastic integral with respect to $B_W$ is an $L^2$-limit of Gaussian random variables, and thus, it is a Gaussian random variable (see \cite[Theorem 1.4.2 ]{ash}). 

Since, for each simple function $\varphi$, we have $\mathbb{E}\left( \int_{[0,1)} \varphi dB_W \right) =0$, it follows from \eqref{isometry} that the stochastic integral with respect to any deterministic function $f\in L^2_V(\mathbb{T})$ is zero. Furthermore, since the stochastic integral with respect to deterministic functions is a mean zero Gaussian random variable, it follows from \eqref{isometry}, again, that for any $f\in L^2_V(\mathbb{T})$,
\begin{equation}
	\int_{[0,1)} f dB_W \sim N\left(0, \int_{[0,1)} f^2 dV\right).
	\label{diststochint}
\end{equation}

We can now show that the integration by parts formula can be applied for any function in $H_{W,V}(\mathbb{T})$ and with the derivative, being the weak derivative $D_W^-$:

\begin{proposition}\label{pathintegral}
	Let $B_W(\cdot)$ be a $W$-Brownian motion. For any $g\in H_{W,V}(\mathbb{T})$ we have the following integration by parts formula:
	\begin{equation}\label{stochintpath}
		\int_{(0,t]} g dB_W = B_W(t)g(t) - \int_{(0,t]} B_W(s-) D_W^-{g}(s) dW(s).
	\end{equation}
\end{proposition}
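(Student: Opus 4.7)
The plan is to establish the identity first on the dense subclass $\mathfrak{D}_{W,V}(\mathbb{T})$ via a Riemann--Stieltjes approximation and Abel summation, and then extend to all $g \in H_{W,V}(\mathbb{T})$ by density, relying on the canonical c\`adl\`ag representative provided by Theorem \ref{sobchar}.

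For $g \in \mathfrak{D}_{W,V}(\mathbb{T})$, I would fix a sequence of refining partitions $0 = t_0^n < t_1^n < \cdots < t_n^n = t$ with mesh tending to zero and approximate $g \mathbf{1}_{(0,t]}$ by the step functions $g_n(s) = \sum_{k=1}^n g(t_k^n) \mathbf{1}_{(t_{k-1}^n, t_k^n]}(s)$. Since $g$ is c\`adl\`ag and bounded, right-continuity and dominated convergence yield $g_n \to g\mathbf{1}_{(0,t]}$ in $L^2_V(\mathbb{T})$; by the isometry \eqref{isometry}, $\int g_n \, dB_W \to \int_{(0,t]} g\, dB_W$ in $L^2(\Omega)$. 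Abel summation combined with $B_W(0)=0$ then gives
\begin{equation*}
\int g_n\, dB_W = \sum_{k=1}^n g(t_k^n)\bigl(B_W(t_k^n) - B_W(t_{k-1}^n)\bigr) = g(t) B_W(t) - \sum_{k=1}^{n-1} \bigl(g(t_{k+1}^n) - g(t_k^n)\bigr) B_W(t_k^n).
\end{equation*}
Using Theorem \ref{sobchar} to rewrite $g(t_{k+1}^n) - g(t_k^n) = \int_{(t_k^n, t_{k+1}^n]} D_W^- g\, dW$, the telescoped sum becomes $\int_{(0,t]} \beta_n(s)\, D_W^- g(s)\, dW(s)$, where $\beta_n$ is piecewise constant with $\beta_n(s) = B_W(t_k^n)$ on $(t_k^n, t_{k+1}^n]$. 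Almost surely, $B_W$ is bounded on $[0,t]$ and $\beta_n(s) \to B_W(s-)$ pointwise, so dominated convergence for Lebesgue--Stieltjes integrals yields the a.s.\ convergence of this sum to $\int_{(0,t]} B_W(s-) D_W^- g(s)\, dW(s)$. Passing to a subsequence on the stochastic side upgrades $L^2(\Omega)$-convergence to a.s.\ convergence, and the identity follows for every $g \in \mathfrak{D}_{W,V}(\mathbb{T})$.

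For arbitrary $g \in H_{W,V}(\mathbb{T})$, choose $g_m \in \mathfrak{D}_{W,V}(\mathbb{T})$ with $g_m \to g$ in the energetic norm. Then $\int g_m\, dB_W \to \int g\, dB_W$ in $L^2(\Omega)$ by the isometry. The representation from Theorem \ref{sobchar} and Cauchy--Schwarz imply $|g_m(t)-g(t)| \le |c_m - c| + \sqrt{W(t)}\,\|D_W^- g_m - D_W^- g\|_W \to 0$, so $g_m(t) B_W(t) \to g(t) B_W(t)$ almost surely. On the full-measure event where $B_W(\cdot-)$ is bounded on $[0,t]$, Cauchy--Schwarz again yields
\begin{equation*}
\biggl|\int_{(0,t]} B_W(s-)\bigl(D_W^- g_m - D_W^- g\bigr) dW\biggr| \le \|B_W(\cdot-)\|_{\infty}\sqrt{W(t)}\, \|D_W^- g_m - D_W^- g\|_W \to 0,
\end{equation*}
and matching the limits on both sides gives the formula for every $g \in H_{W,V}(\mathbb{T})$.

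The main obstacle I anticipate is coordinating the different modes of convergence: the stochastic integral on the left-hand side is naturally controlled only in $L^2(\Omega)$, while the Abel-summation identity and the Lebesgue--Stieltjes integral on the right-hand side require pathwise, almost sure, control. The usual remedy is to pass to a common subsequence that converges both ways, but one must also carefully invoke the canonical c\`adl\`ag representative of elements of $H_{W,V}(\mathbb{T})$ so that the pointwise term $g(t)$ and the pathwise integral $\int_{(0,t]} B_W(s-) D_W^- g(s)\, dW(s)$ are unambiguously defined for every $g$ in the Sobolev space.
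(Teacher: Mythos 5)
Your proof is correct, but it takes a genuinely different route from the paper. The paper proves \eqref{stochintpath} in one stroke for all $g\in H_{W,V}(\mathbb{T})$: it plugs the representation $g(s)=g(0)+\int_{(0,s]}D_W^-g\,dW$ from Theorem \ref{sobchar} into $\int_{(0,t]}g\,dB_W$ and invokes the stochastic Fubini theorem (Protter) to interchange the $dB_W$- and $dW$-integrations, which immediately produces $g(0)B_W(t)+\int_{(0,t]}D_W^-g(u)\bigl(B_W(t)-B_W(u-)\bigr)dW(u)$ and hence the formula. You instead discretize: step-function approximation, the $L^2$ isometry, Abel summation, telescoping via Theorem \ref{sobchar}, pathwise dominated convergence to produce $B_W(s-)$, and a subsequence argument to reconcile $L^2(\Omega)$- and a.s.-limits, followed by a density step from $\mathfrak{D}_{W,V}(\mathbb{T})$ to $H_{W,V}(\mathbb{T})$. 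Your route is more elementary and self-contained (it needs only the $L^2$-theory of integration against orthogonal-increment processes already set up in the paper, not a semimartingale Fubini theorem), at the price of more bookkeeping; the paper's route is shorter but leans on an external theorem. Two small points: your final density step silently uses $c_m=g_m(0)\to g(0)$, which does hold but deserves the one-line argument $|c_m-c|\sqrt{V(1)}\le\|g_m-g\|_V+\sqrt{V(1)W(1)}\,\|D_W^-g_m-D_W^-g\|_W$ via the canonical representatives (in fact, once you note this, the Abel-summation argument applies verbatim to the canonical representative of any $g\in H_{W,V}(\mathbb{T})$, so the two-step density structure is dispensable); and you convert $L^2_V$-convergence of $g_n$ into $L^2(\Omega)$-convergence of the stochastic integrals via \eqref{isometry} as stated in the paper, whereas the natural isometry for $B_W$ is with respect to $dW$ — harmless here, since your approximants converge boundedly pointwise and hence in $L^2_W(\mathbb{T})$ as well, but worth flagging since the discrepancy originates in the paper's own statement of \eqref{isometry}.
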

\begin{proof}
	Let $g\in H_{W,V}(\mathbb{T})$. From Theorem \ref{sobchar} and stochastic Fubini's theorem (see \cite[Theorem 64, p.210]{protter}), we have
	\begin{align*}
		\int_{(0,t]} g dB_W &= \int_{(0,t]} \left( g(0) + \int_{(0,s]} D_W^-g(u)dW(u)\right) dB_W(s)\\
		&= g(0) B_W(t) + \int_{(0,t]} D_W^-g(u)\left( \int_{[u,t]} dB_W(s)\right) dW(u)\\
		&= g(0) B_W(t) + \int_{(0,t]} D_W^-g(u) (B_W(t) - B_W(u-)) dW(u)\\
		&= g(0) B_W(t) + B_W(t) (g(t) - g(0)) - \int_{(0,t]} D_W^-g(u) B_W(u-)dW(u)\\
		&= B_W(t)g(t) - \int_{(0,t]} D_W^-g(u) B_W(u-)dW(u).
	\end{align*}
\end{proof}

Notice that since $B_W(\cdot)$ has c\`adl\`ag sample paths, $B_W(\cdot)$ is almost surely bounded on $\mathbb{T}$, so the right-hand side of \eqref{stochintpath} is well-defined.

We will now obtain the Cameron-Martin space of the $W$-Brownian motion. The Cameron-Martin
spaces appear naturally on Malliavin calculus as the set of "directions" in which one can differentiate (with respect to the Malliavin derivative). Recently Bolin and Kirchner \cite{bolinkirchner} showed that the Cameron-Martin
spaces play a key role in find the best linear predictor with the "wrong" covariance structure. More generally, the Cameron-Martin space uniquely determines the Gaussian measure (see \cite[Theorem 2.9]{daprato}) on Banach spaces. Furthermore, the Cameron-Martin space is, in a sense, independent of the Banach space in which the Gaussian measure is defined (see \cite[Proposition 2.10]{daprato}). This means that one can specify a Gaussian measure by specifing the Cameron-Martin space. This means that, by computing the Cameron-Martin space of $B_W$, we are showing that the distribution of $W$-Brownian motion in $L^2_V(\mathbb{T})$ is the unique Gaussian measure  associated to the Sobolev space $H_{W,V,\mathcal{D}}(\mathbb{T})$.

We begin by noting that from Example 2.3.16 in Bogachev \cite{bogachev}, since the $W$-Brownian motion has c\`adl\`ag sample paths, and
$$\int_{[0,1)} W(t) dV(t) \leq W(1)V(1) <\infty,$$
we have that the distribution of $B_W$ on $L^2_V(\mathbb{T})$, i.e., the measure given by
$$\gamma_W(B) = \mathbb{P}\left( \omega: B_W(\cdot,\omega) \in B\right),$$
where $B$ is a borel set in $L^2_V(\mathbb{T})$ (which is a separable Hilbert space), defines
a Gaussian measure in $L^2_V(\mathbb{T})$. 

Let $K:L^2_V(\mathbb{T})\to L^2_V(\mathbb{T})$ be the covariance operator of $B_W(\cdot)$, i.e., of
the Gaussian measure induced by $B_W(\cdot)$ in $L^2_V(\mathbb{T})$.  Since $L^2_V(\mathbb{T})$ is a Hilbert space,
we can apply Theorem 2.3.1 in Bogachev \cite{bogachev} to obtain that the Fourier transform of $\gamma_W$ satisfies, for every $g\in L^2_V(\mathbb{T})$,
\begin{equation}
\widehat{\gamma}_W(g) = \int_{L^2_V(\mathbb{T})} e^{i\langle g, f\rangle_{L^2_V(\mathbb{T})} } d\gamma_W(f) = \exp\left\{ -\frac{1}{2} \langle Kg,g\rangle_{L^2_V(\mathbb{T})}\right\}.
\label{fouriertransf}
\end{equation}

We will then compute the Fourier transform \eqref{fouriertransf} explicitly to show that $K$ is a kernel operator and its kernel is, indeed, the covariance function of the $W$-Brownian motion. To such an end, by standard limiting arguments, it is enough to show the result for simple functions. We will provide the details for the case of indicator functions of intervals of the form $(a,b]$. The case of linear combination of indicators functions can be carried out similarly. Let us consider then, the function $\textbf{1}_{(a,b]}$. By using stochastic Fubini's theorem (see \cite[Theorem 64, p.210]{protter}), we have
\begin{align*}
	\widehat{\gamma}_W(\textbf{1}_{(a,b]}) &= \int_{L^2_V(\mathbb{T})} \exp\left\{i \langle \textbf{1}_{(a,b]}, f\rangle_{L^2_V(\mathbb{T})}\right\}= \mathbb{E}\left[ \exp\left\{ i\int_{(a,b]}B_W(s)dV(s)\right\} \right]\\
	&= \mathbb{E}\left[ \exp\left\{ i\int_{(a,b]}\int_{(0,s]}dB_W(u)dV(s)\right\} \right]\\
	&= \mathbb{E}\left[ \exp\left\{ i\int_{(0,a]}\int_{(a,b]}dV(s)dB_W(u) + i\int_{(a,b]} \int_{[u,b]} dV(s)dB_W(u)\right\} \right] \\
	&=  \mathbb{E}\left[ \exp\left\{ i(V(b+)-V(a+))B_W(a)+i\int_{(a,b]}(V(b+)-V(u))dB_W(u)\right\} \right] \\
	&= \exp\left\{-\frac{1}{2}\left[W(a)(V(b+)-V(a+))^2 + \int_{(a,b]}(V(b+)-V(u))^2dW \right]\right\},
\end{align*}
where in the last expression we used the fact that the increments are independent, the characteristic function of a normal distribution and the distribution of the stochastic integrals \eqref{diststochint}. In order to show that $K$ is a kernel operator with kernel $k(s,t) = W(t\land s)$, we need to show that if we take $\widehat{K}:L^2_V(\mathbb{T})\to L^2_V(\mathbb{T})$, defined by
\begin{equation}
	\widehat{K}f(t) = \int_{[0,1)} W(t\land s) f(s) dV(s).
	\label{covarianceoper}
\end{equation}
Then,
\begin{equation}
\langle \widehat{K} \textbf{1}_{(a,b]}, \textbf{1}_{(a,b]}\rangle_{L^2_V(\mathbb{T})} =  W(a)(V(b+)-V(a+))^2 + \int_{(a,b]}(V(b+)-V(u))^2dW.
\label{koperator}
\end{equation}

Let us then show that the above expression is true. We have,

\begin{align*}
	\int_{[0,1)} \int_{[0,1)} W(t\land s) \textbf{1}_{(a,b]}(t) \textbf{1}_{(a,b]}(s) dV(t) dV(s) &= \int_{(a,b]} \int_{(a,b]} W(t\land s) dV(t) dV(s)\\
	&= \int_{(a,b]}\int_{(a,s]} W(t)dV(t)dV(s)+\\
	&+\int_{(a,b]}\int_{(s,b]}W(s)dV(t) dV(s).
\end{align*}
The last expression is equal to
$$
\int_{(a,b]}\left( \int_{(a,s]} \int_{(0,t]}dW(u)dV(t) + \int_{(s,b]}\int_{(0,s]}dW(u)dV(t) \right) dV(s),
$$
which, by Fubini's theorem, can be written as
$$\int_{(0,b]}\left(\int_{(a,b]} \int_{(a,b]} \left(\textbf{1}_{(0,t]}(u) \textbf{1}_{(a,s]}(t) \textbf{1}_{(a,b]}(s) + \textbf{1}_{(0,b]}(u) \textbf{1}_{(s,b]}(t) \textbf{1}_{(a,b]}(s) \right)dV(t)dV(s) \right) dW(u).$$
Let us split the integral with respect to $dW(u)$ into two parts, one for $u\leq a$, and the other when $u>a$. The integral over of $dW(u)$ over $(0,a]$ is equal to
$$\int_{(0,a]} \left( (\textbf{1}_{(a,s]}(t) \textbf{1}_{(a,b]}(s)  + \textbf{1}_{(s,b]}(t) \textbf{1}_{(a,b]}(s) )  dV(t)dV(s) \right) dW(u),$$
and thus, equal to
$$ \int_{(0,a]} \int_{(a,b]}\int_{(a,b]} \textbf{1}_{(a,b]}(t) \textbf{1}_{(a,b]}(s) dV(t)dV(s)dW(u)= W(a)(V(b+)-V(a+))^2.$$
This is the first part of \eqref{koperator}. Let us now compute the integral with respect to $dW(u)$ over $(a,b]$:
$$\int_{(a,b]}\int_{(a,b]} \int_{(a,b]} \left(\textbf{1}_{(a,t]}(u) \textbf{1}_{(a,s]}(t) \textbf{1}_{(a,b]}(s) + \textbf{1}_{(a,b]}(u) \textbf{1}_{(s,b]}(t) \textbf{1}_{(a,b]}(s) \right)dV(t)dV(s)dW(u).$$
To such an end, observe that:
$$\textbf{1}_{(a,t]}(u) \textbf{1}_{(a,s]}(t) \textbf{1}_{(a,b]}(s) = \textbf{1}_{(a,b]}(u) \textbf{1}_{[u,b]}(s)\textbf{1}_{[u,s]}(t)$$
and
$$\textbf{1}_{(a,b]}(u) \textbf{1}_{(s,b]}(t) \textbf{1}_{(a,b]}(s) = \textbf{1}_{(a,b]}(u) \textbf{1}_{[u,b]}(s)\textbf{1}_{(s,b]}(t).$$
By summing both expressions above, we arrive at
$$\textbf{1}_{(a,t]}(u) \textbf{1}_{(a,s]}(t) \textbf{1}_{(a,b]}(s) + \textbf{1}_{(a,b]}(u) \textbf{1}_{(s,b]}(t) \textbf{1}_{(a,b]}(s) = \textbf{1}_{(a,b]}(u) \textbf{1}_{[u,b]}(s) \textbf{1}_{[u,b]}(t).$$
Therefore, the resulting integral is
$$\int_{(a,b]}\int_{(a,b]}\int_{(a,b]}  \textbf{1}_{(a,b]}(u) \textbf{1}_{[u,b]}(s) \textbf{1}_{[u,b]}(t) dV(t)dV(s)dW(u) = \int_{(a,b]} (V(b+)-V(u))^2dW(u).$$
This is the second expression in \eqref{koperator} and completes the proof for the case of indicator functions. As mentioned previously, the case of linear combinations of indicator functions follows similarly. So, the covariance operator of $B_W(\cdot)$ is, indeed, a kernel operator in $L^2_V(\mathbb{T})$ with kernel $k(t,s)=W(t\land s).$

We will now obtain the Cameron-Martin space associated to the $W$-Brownian motion. To such an end,
recall that the covariance operator of the $W$-Brownian motion was given in \eqref{covarianceoper}. Let us now obtain a nicer expression for $\widehat{K}f$ using Fubini's theorem:
\begin{align}\label{nicergreen}
	\nonumber \widehat{K}f(t) &= \int_{[0,1)} W(t\land s) f(s) dV(s) = \int_{[0,t)}W(s)f(s)dV(s) + \int_{[t,1)} W(t)f(s)dV(s)\\
	\nonumber&= \int_{[0,t)}\int_{(0,s]} dW(u) f(s)dV(s) + \int_{[t,1)}\int_{(0,t]} dW(u)f(s) dV(s)\\
	\nonumber&= \int_{(0,t]} \int_{[u,t)} f(s) dV(s) dW(u) + \int_{(0,t]} \int_{[t,1)} f(s)dV(s)dW(u)\\
	\nonumber&= \int_{(0,t]} \int_{[u,1)} f(s)dV(s)dW(u)\\ &=W(t)\int_{[0,1)}f(s)dV(s)-\int_{(0,t]} \int_{[0,u)} f(s)dV(s)dW(u).
\end{align}

Thus, $\widehat{K}(f)(t) =W(t)\int_{[0,1)}f(s)dV(s)-\int_{(0,t]} \int_{[0,u)} f(s)dV(s)dW(u)$. So, in particular, $\widehat{K}(f)(0) = 0$ and the $W$-left-derivative of $\widehat{K}(f)$ given by $$D_W^-(\widehat{K}f)(t) = \int_{[0,1)}f(s)dV(s)- \int_{[0,t)} f(s)dV(s)$$ satisfies $D_W^-(\widehat{K}f)(1)=0$. This motivates the definition of the subspace of $H_{W,V}(\mathbb{T})$ with a tagged zero, and Dirichlet boundary conditions at zero:
$$H_{W,V,\mathcal{D}}(\mathbb{T}) = \left\{f\in H_{W,V}(\mathbb{T})\cap L^{2}_{V,0}(\mathbb{T}): f(0) = 0\right\}.$$
Note that in this space, the following is an equivalent norm to the Sobolev norm (denote by $\langle\cdot,\cdot\rangle_{W,V,\mathcal{D}}$ the inner product associated to this norm):
$$\|f\|_{W,V,\mathcal{D}}^2 = \int_{\mathbb{T}} (D_W^- f)^2dW.$$
Indeed, on one hand we always have for any $f\in H_{W,V}(\mathbb{T})$, $\|f\|_{W,V,\mathcal{D}}\leq \|f\|_{W,V}$. On the other hand, from Theorem \ref{sobchar}, it follows from the elementary inequality $(a+b)^2\leq 2(a^2+b^2)$ and Jensen's inequality that there exists some constant $C>0$ such that
$$\|f\|^2_{L^2_V(\mathbb{T})} \leq C\left( (f(0))^2 + \|D_W^-f\|_{L^2_W(\mathbb{T})}^2\right).$$

Let us now show that the Cameron-Martin space of the $W$-Brownian motion on $L^2_V(\mathbb{T})$ is, indeed, $H_{W,V,\mathcal{D}}(\mathbb{T})$.  We know that
the Cameron-Martin space of the $W$-Brownian motion is given by $\mathcal{H}=\widehat{K}^{1/2}(L^2_V(\mathbb{T}))$, since $\widehat{K}$ is the covariance operator of the $W$-Brownian motion on $L^2_V(\mathbb{T})$. Equivalently, $\mathcal{H}$ is the completion of $\widehat{K}(L^{2}_{V}(\mathbb{T}))$ relative to the norm $\|\cdot\|_{\mathcal{H}}=\left\<\widehat{K}(\cdot),\cdot\right\>_{L^{2}_{W}(\mathbb{T})}$. Now, we will see that in the $\widehat{K}(L^2_{V}(\mathbb{T}))$ the norm $\|\cdot\|_{\widehat{K}}$ is well know. In fact, given $f,g \in \widehat{K}(L^2_{V}(\mathbb{T}))$ there exist functions $u,v\in L^2_V(\mathbb{T})$ such that $f = \widehat{K}u$ and $g = \widehat{K}v$. Hence, let $h(w) = \int_{[w,1)} u(t)dV(t)$, so, by Fubini's theorem and \ref{nicergreen},
\begin{align*}
	\langle f, g\rangle_{\mathcal{H}} &= \langle \widehat{K} u, v\rangle_{L^2_V(\mathbb{T})}\\
	&= \int_{[0,1)} \left( \int_{(0,s]} \int_{[w,1)} u(t)dV(t)dW(w)\right) v(s)dV(s)\\
	&= \int_{[0,1)} \int_{[0,s)} h(w) v(s)dW(w)dV(s) = \int_{[0,1)} h(w) \left(\int_{[w,1)}v(s)dV(s)\right) dW(w)\\
	&= \int_{[0,1)} \left( \int_{[w,1)}u(s)dV(s)\right) \left( \int_{[w,1)}v(s)dV(s)\right) dW(w)\\
	&= \langle D_W^-f, D_W^- g\rangle_{L^2_W(\mathbb{T})} = \langle f,g\rangle_{W,V,\mathcal{D}}.
\end{align*}

This shows that the Cameron-Martin norm is given by $\|\cdot\|_{W,V,\mathcal{D}}$. Finally, a straightforward adaptation of Theorem \ref{thm:1} shows that $\widehat{K}(L^2_V(\mathbb{T}))$, is dense in the space $H_{W,V,\mathcal{D}}(\mathbb{T})$. 
Therefore, since the Cameron-Martin space is the completion of $\widehat{K}(L^2_V(\mathbb{T}))\subset H_{W,V,\mathcal{D}}(\mathbb{T})$
with respect to the norm  $\|\cdot\|_{W,V,\mathcal{D}}$ we have that $\mathcal{H} = H_{W,V,\mathcal{D}}(\mathbb{T})$.

\begin{remark}
	It is interesting to observe that since we looked at the $W$-Brownian motion as an element of $L^2_V(\mathbb{T})$, we were able to show that its Cameron-Martin space is $H_{W,V}(\mathbb{T})$. It is clear that we can embed the $W$-Brownian motion into other $L_{\widetilde{V}}^2(\mathbb{T})$ spaces. Indeed, since $\mathbb{T}$ is compact and the sample paths of the $W$-Brownian motion are c\`adl\`ag, they are bounded, so belong to any $L_{\widetilde{V}}^2(\mathbb{T})$, where $\widetilde{V}:\mathbb{R}\to\mathbb{R}$ is increasing and periodic in the sense of \eqref{periodic}. This shows that the $V$ is not very important for the $W$-Brownian motion, what really matters is the $W$ and it is reflected at its Cameron-Martin space, $H_{W,V,\mathcal{D}}(\mathbb{T})$. In $H_{W,V,\mathcal{D}}(\mathbb{T})$, the weak derivative in $W$ is the true restriction, since by Theorem \ref{sobchar}, we have $H_{W,V,\mathcal{D}}(\mathbb{T})\subset L_{\widetilde{V}}^2(\mathbb{T})$. This is in consonance with the Cameron-Martin space uniquely determining the Gaussian measure. Indeed, the $V$ function does not have much impact on the $W$-Brownian motion, and also, does not have much impact on its Cameron-Martin space, in the sense that it is contained on any $L_{\widetilde{V}}^2(\mathbb{T})$.
\end{remark}

\section{Applications to stochastic partial differential equations}

We will now apply the theory to solve a class of stochastic partial differential
equations that generalize a non-fractional case of the well-known Matérn equations on a domain with periodic boundary conditions. 

For this section, we will work on $H_{W,V,\mathcal{D}}(\mathbb{T})$ with a tagged zero and, furthermore, recall that we assumed that $W$ is continuous at $x=0$. Therefore, from Theorem \ref{sobchar}, we have that for any function $g\in H_{W,V}(\mathbb{T})$,
$$\lim_{x\to 1-} g(x) = g(0) = \lim_{x\to 0+} g(x).$$
This means that if $g\in H_{W,V,\mathcal{D}}(\mathbb{T}),$
$$\lim_{x\to 1-} g(x) = 0 = \lim_{x\to 0+} g(x).$$
Therefore, it follows from the integration by parts formula, \eqref{stochintpath}, that under these assumptions, for any $g\in H_{W,V,\mathcal{D}}(\mathbb{T}),$ we
have
	$$\int_{\mathbb{T}} g(s) dB_W(s) = - \int_{\mathbb{T}} B_W(s-) D_W^-{g}(s) dW(s).$$
	So that
	\begin{eqnarray}
		\left|\int_{\mathbb{T}} g(s) dB_W(s)\right| &\leq& \int_{\mathbb{T}} |B_W(s) D_W^-{g}(s)| dW(s)\nonumber \\
		&\leq& \|B_W\|_{L^2_W(\mathbb{T})} \|D_W^{-}g\|_{L^2_W(\mathbb{T})}\nonumber\\
		&\leq& \left(\sup_{t\in\mathbb{T}}\|B_W(t)\|\right) W(1) \|g\|_{H_{W,V}(\mathbb{T})}.\label{functionalwhite}
	\end{eqnarray}

	So, the stochastic integral defines, almost surely, a bounded linear functional on $H_{W,V,\mathcal{D}}(\mathbb{T})$.
	We can now define the pathwise $W$-gaussian white noise on the Cameron-Martin space $H_{W,V,\mathcal{D}}(\mathbb{T})$.

\begin{definition}\label{pathwhitenoise}
	We define the pathwise $W$-gaussian white noise, $\dot{B}_W \in H^{-1}_{W,V,0}(\mathbb{T}):=(H_{W,V,\mathcal{D}}(\mathbb{T}))^\ast$, as the functional
	\begin{equation}\label{bponto}
		\dot{B}_W(g) = -\int_{\mathbb{T}} B_W(s) D_W^-g(s) dW(s).
	\end{equation}
\end{definition}

We will now study the existence and uniqueness of weak solutions of the following stochastic partial differential equation:
\begin{equation}\label{elliptic}
	\kappa^2 u - D_V^+ (H D_W^- u) = \dot{B}_W
\end{equation}
on the space $H_{W,V,\mathcal{D}}(\mathbb{T})$.

It is important to notice that equation \eqref{elliptic} can be seen as $W$-generalized counterpart of a special non-fractional case of the well-known Matérn stochastic partial differential equation.

Let us now, provide the definition of weak solution of the above equations:

\begin{definition}\label{weaksolmatern}
	We say that $u\in H_{W,V,\mathcal{D}}(\mathbb{T})$ is a weak solution of equation \eqref{elliptic} if for every $g\in H_{W,V,\mathcal{D}}(\mathbb{T})$, the following identity is true:
	$$\int_{\mathbb{T}} \kappa^2 u g dV + \int_{\mathbb{T}} HD_W^- u D_W^-g dW = \int_{\mathbb{T}} gdB_W,$$
	or, equivalently, in terms of functionals, if the following identity is true:
	$$B_{L_{W,V}}(u, g) = \dot{B}_W(g),$$
	where $B_{L_{W,V}}$ is the bilinear functional given by $$\int_{\mathbb{T}} \kappa^2 u g dV + \int_{\mathbb{T}} HD_W^- u D_W^-g dW$$
	for $u,g\in H_{W,V,\mathcal{D}}(\mathbb{T})$ and $\dot{B}_W$ is the pathwise $W$-Gaussian white noise defined in (\ref{bponto}).
\end{definition}

By comparing with the solutions of the standard Mat\'ern equation, we can readily see that equation \eqref{pathwhitenoise} can be suitable for modelling situations in which the process finds barriers, as well as, having eventually non-diffusive behavior throughout the domain (corresponding to situations in which the function $W$ is continuous, but not H\"older continuous, in some interval).

We, then, have the following theorem on existence and uniqueness of solutions of \eqref{elliptic}:

\begin{theorem}\label{exisuniqellipt}
	Let $\kappa$ and $H$ be bounded functions that are also bounded away from zero, where $H$ is positive. Then, equation \eqref{elliptic} has a unique solution with, almost surely, c\`adl\`ag sample paths.
\end{theorem}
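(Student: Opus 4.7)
The plan is to apply the Lax-Milgram theorem pathwise on the Hilbert space $H_{W,V,\mathcal{D}}(\mathbb{T})$. Fix $\omega$ outside a null set so that the sample path $B_W(\cdot,\omega)$ is bounded and c\`adl\`ag on $\mathbb{T}$ (which holds almost surely by Proposition \ref{existencebrownian}). In view of Definition \ref{weaksolmatern}, the task is to show that the symmetric bilinear form $B_{L_{W,V}}$ is continuous and coercive on $H_{W,V,\mathcal{D}}(\mathbb{T})$, and that $\dot{B}_W$ defines an element of the dual $H^{-1}_{W,V,0}(\mathbb{T})$.

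First I would check continuity: by H\"older's inequality and the boundedness of $\kappa$ and $H$,
\begin{equation*}
\bigl|B_{L_{W,V}}(u,g)\bigr| \leq \|\kappa\|_\infty^2\,\|u\|_V\,\|g\|_V + \|H\|_\infty\,\|D_W^-u\|_W\,\|D_W^-g\|_W \leq C_1\,\|u\|_{1,2}\,\|g\|_{1,2},
\end{equation*}
so $B_{L_{W,V}}$ is continuous on $H_{W,V,\mathcal{D}}(\mathbb{T})\times H_{W,V,\mathcal{D}}(\mathbb{T})$. For coercivity, using $\kappa\geq\kappa_0>0$ and $H\geq H_0>0$, together with the observation already made after the definition of $H_{W,V,\mathcal{D}}(\mathbb{T})$ that $\|\cdot\|_{W,V,\mathcal{D}}$ (i.e. the $L^2_W$-norm of $D_W^-$) is equivalent to $\|\cdot\|_{1,2}$ on this space, I get
\begin{equation*}
B_{L_{W,V}}(u,u) \geq \kappa_0^2\,\|u\|_V^2 + H_0\,\|D_W^-u\|_W^2 \geq C_2\,\|u\|_{1,2}^2,
\end{equation*}
with $C_2>0$ independent of $u$.

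Next I would verify that the right-hand side $\dot{B}_W$ is, almost surely, an element of $H^{-1}_{W,V,0}(\mathbb{T})$: this is exactly the content of estimate \eqref{functionalwhite} combined with Definition \ref{pathwhitenoise}, since the Dirichlet condition $g(0)=0=g(1-)$ for $g\in H_{W,V,\mathcal{D}}(\mathbb{T})$ makes the boundary term in Proposition \ref{pathintegral} vanish. With continuity, coercivity, and $\dot{B}_W\in H^{-1}_{W,V,0}(\mathbb{T})$ in hand, the Lax-Milgram theorem (or, since $B_{L_{W,V}}$ is symmetric, the Riesz representation theorem applied to the inner product $B_{L_{W,V}}$) yields, for each fixed $\omega$ in a set of full probability, a unique $u(\cdot,\omega)\in H_{W,V,\mathcal{D}}(\mathbb{T})$ satisfying the identity in Definition \ref{weaksolmatern}.

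Finally, to obtain c\`adl\`ag sample paths I would invoke Theorem \ref{sobchar}: since $u(\cdot,\omega)\in H_{W,V,\mathcal{D}}(\mathbb{T})$ with $u(0,\omega)=0$, there is $F(\cdot,\omega)\in L^2_{W,0}(\mathbb{T})$ such that $u(x,\omega)=\int_{(0,x]}F(s,\omega)\,dW(s)$ for $V$-a.e.\ $x$, and this representative is c\`adl\`ag. I would take this version of $u$ as the solution. The step I expect to require the most care is not any of the estimates above (which are straightforward given the earlier development) but rather the measurability of $\omega\mapsto u(\cdot,\omega)$ as an $H_{W,V,\mathcal{D}}(\mathbb{T})$-valued random element; this, however, follows from the fact that the Lax-Milgram solution map $\dot{B}_W\mapsto u$ is a continuous linear bijection $H^{-1}_{W,V,0}(\mathbb{T})\to H_{W,V,\mathcal{D}}(\mathbb{T})$, composed with the measurable map $\omega\mapsto \dot{B}_W(\cdot,\omega)$ inherited from the sample-path c\`adl\`ag structure of $B_W$.
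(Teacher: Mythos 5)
Your proposal is correct and follows essentially the same route as the paper: the paper's proof is exactly the combination of estimate \eqref{functionalwhite} (giving $\dot{B}_W\in H^{-1}_{W,V,0}(\mathbb{T})$ almost surely), a pathwise application of Lax--Milgram, and Theorem \ref{sobchar} for the c\`adl\`ag representative. You merely spell out the continuity/coercivity bounds (which in the paper are implicit via the earlier energy estimates) and add the measurability remark, both of which are fine.
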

\begin{proof}
	From \eqref{functionalwhite}, $\dot{B}_W$ is, almost surely, an element of $H^{-1}_{W,V,0}(\mathbb{T})$. Therefore, from Lax-Milgram theorem, \eqref{elliptic} has a unique solution. Finally, from Theorem \ref{sobchar}, $u$ has, almost surely, c\`adl\`ag sample paths.
\end{proof}
%

\chapter{Fractional one-sided measure theoretic Sobolev spaces, generalized Maclaurin expansion and applications to generalized stochastic partial differential equations}

The study of functional equations may lead to problems with solutions that are discontinuous. One natural question is regarding the regularity of such discontinuous functions in some sense. This motivated the definition of a differential operator, in which one differentiates a function with respect to another function that may have discontinuity points. Recently, these differential operators are drawing attention for some researchers. See, for instance,$\cite{frag, pouso,wsimas,trig,uta}$ and $\cite{franco}$. 
	
	Let us illustrate how the discontinuities of the function that induces the differential operator can afftect the solutions of differential equations driven by such operators. Consider, for instance, the equation
	$$\rho_{t}-D_{x}D_{W}\phi(\rho)=0,$$
	which was studied in \cite{franco}, where $D_x$ is the usual differential operator and $D_W$ is a differential operator with respect to some right-continuous and strictly increasing function. They showed that this equation can model diffusion on permeable membranes in which the discontinuity nature of the solution is associated to reflection of particles on the discontinuity points of $W$. On the other hand, if we turn our attention to impulsive differential equations, for instance, 
$$
		\begin{cases}
			D_{t}x(t)=f(x,t);\\
			\Delta x(t_{k})=I_{k}(t_{k}),
		\end{cases}
$$
	where $\Delta {x}(t):=x(t)-x(-t)$ and  $\{t_{k}\}$ is a finite set of real numbers. Then, the solutions of the above equations naturally present discontinuities due to the moments of impulse. For more details regarding this last equation we refer the reader to \cite{pouso} and \cite{marcia}. 
	
Based on these problems, \cite{keale} purpose the study of the one-sided differential equations with respect to the operator 
$$
		L_{W,V}:=\kappa^{2}-D^{+}_{V}HD^{-}_{W},
	$$
	where $D_V^+$ stands for the right-derivative with respect to $V$ and $D_W^-$ stands for left-derivative with respect to $W$. Both of these operators will be introduced rigorously in the next section. In \cite{keale}, they also introduced the so-called $W$-$V$-Sobolev space $H_{W,V}(\mathbb{T})$, which is a natural environment for the solutions of the above equation, under general conditions on $\kappa$ and $H$. Also in \cite{keale}, the authors proved the regularity of the eigenvectors associated to the problem
	\begin{equation}\label{eigeneq}
		\begin{cases}
			\Delta_{W,V}u=\lambda u;\\
			u\in \mathcal{D}_{W,V}(\mathbb{T}).
		\end{cases}
	\end{equation}

It is important to observe that there is a need for the study of the eigenvalues in the above problem. For instance, in \cite{farfansimasvalentim}, they introduced a space of test functions, which they denoted by $\mathcal{S}_W(\mathbb{T}^d)$, and proved that it is nuclear. However, due to the lack of properties regarding the eigenvalues of the differential operator $D_xD_W$, the definition of $\mathcal{S}_W(\mathbb{T}^d)$ became very abstract and artificial. 

Furthermore, given a differential operator $L$, it is crucial in the study of stochastic partial differential equations to know when the fractional operator $L^{-s}$, $s>0$, is trace class or Hilbert-Schmidt. These questions can only be answered by a detailed study on the eigenvalues of the operator $L$. 

Therefore, the main goal of this work is to provide the tools to study the eigenvalues of the generalized differential operator and, then, obtain sharp estimates of their growth. Let us now describe the remaining of the paper. We begin the paper by recalling some of the results on $W$-$V$-Sobolev spaces introduced by \cite{keale}. Then, we study the space of infinitely differential functions with respect to the operator $D_V^+D_W^-$, $C^{\infty}_{W,V}(\mathbb{T})$, and determine under what conditions a function $f$ in this space can be represented as
	$$f(x)=\sum_{k=1}^{\infty} D^{(n)}_{W,V}f(0)F_{k}(x),$$
	where the operator $D^{(n)}_{W,V}$ is obtained by recursively applying the operators $D_V^+$ and $D_W^-$. $F_{k}$ can be interpreted as a generalized polynomial. This last result provides us the tools to obtain a series expansion of the eigenvectors of the operator $D_V^+D_W^-$. By using the representation of the eigenvectors as series, we are able to use some results in complex analysis to obtain a sharp lower bound for the growth of the eigenvalues of the operator $D_V^+D_W^-$. We, then, introduce the fractional spaces $H^{s}_{W,V}(\mathbb{T})$ based on the Fourier characterization of $H_{W,V}(\mathbb{T})$ proved in \cite{keale}. At this point we are able to extend the definitions and results to dimension $d$ by taking tensor products. Finally, we apply the theory to prove existence and uniqueness of solutions of a fractional stochastic partial differential equation that generalizes the well-known Mat\'ern equation when the domain is the $d$-dimensional torus.
	
	\section{$W$-$V$-Sobolev spaces and the generalized Laplacian}
	
	In this section we will briefly recall some definitions and results obtained in \cite{keale} with respect to $W$-$V$-Sobolev spaces. First, we say that a function is c\`adl\`ag (from french, ``continue à droite, limite à gauche'') if the function is right-continuous and has limits from the left. Similarly, we say that a function is c\`agl\`ad, if the function is left-continuous and has limits from the right.  
	
	Fix two strictly increasing functions $W,V : \mathbb{T}\to\mathbb{T}$, with $W$ being càdlàg and $V$ being càglàd. We also assume they satisfy the following periodic conditions :
	\begin{equation}\label{eqsmedida}
		\forall x \in \mathbb{R},\quad 
		\begin{cases}
			W(x+1)-W(x)=W(1)-W(0);\\
			V(x+1)-V(x)=V(1)-V(0), 
		\end{cases}
	\end{equation}
	where, without loss of generality we will assume that both $W$ and $V$ are continuous at zero, that is, $W(0-)=W(0)=0$ and $V(0+)=V(0)=0$. Indeed, since they are increasing, they can only have countably many discontinuities points, so if they are not continuous at zero, we can simply choose another point in which both of them are continuous and translate the functions to make them continuous at zero.  Note that picking an compact interval $I$ in $\mathbb{R}$ with length $1$, we are able to define finite measures $dV$ and $dW$ on $\mathbb{T}$ satisfying $dW((a,b])=W(b)-W(a)$ and $dV([a,b))=V(b)-V(a)$. Thus $dW$ and $dV$ are defined on the families $\{(a,b]:a<b, a,b\in\mathbb{R}\}$ and $\{[a,b):a<b,a,b\in\mathbb{R}\}$, respectively, and both these classes generate the Borel $\sigma$-algebra of $\mathbb{T}_{I}\cong\mathbb{T}$, where $\mathbb{T}_{I}$ is the torus obtained by identifying the boundary points of $I$.
	
		Note that in the above definition we allow the measures induced by $W$ and $V$ to have atoms. This is a weaker assumption than some of the common assumptions found in literature, e.g., \cite{uta}, \cite{trig}, among others. 
	
	We will denote the $L^2$ space with the measure induced by $V$ by $L^2_V(\mathbb{T})$ and its norm (resp. inner product) by $\|\cdot\|_V$ (resp. $\langle \cdot,\cdot\rangle_V$). Similarly, we will denote the $L^2$ space with the measure induced by $W$ by $L^2_W(\mathbb{T})$ and its norm (resp. inner product) by $\|\cdot\|_W$ (resp. $\langle \cdot,\cdot\rangle_W$). The subspace of $L^2_V(\mathbb{T})$ (resp. $L^2_W(\mathbb{T})$) containing the functions
	in $L^2_V(\mathbb{T})$ (resp. $L^2_W(\mathbb{T})$)  such that $\int_{\mathbb{T}} fdV =0$ (resp. $\int_{\mathbb{T}} fdW=0$) will be denoted by $L^2_{V,0}(\mathbb{T})$ (resp. $L^2_{W,0}(\mathbb{T})$).
		
	\begin{definition}\label{defgendif2}
	We say that a function $f : \mathbb{T}\to\mathbb{R}$ is $W$-left differentiable if for every $x\in\mathbb{T}$ the limit 
	$$D^{-}_{W}f(x):=\lim_{h\to 0^{-}}\frac{f(x+h)-f(x)}{W(x+h)-W(x)}$$
	exists for all $x\in \mathbb{T}$. Similarly, we say that a function $g : \mathbb{T}\to\mathbb{R}$ is $V$-right differentiable if the limit 
	$$D^{+}_{V}g(x):=\lim_{h\to 0^{+}}\frac{g(x+h)-g(x)}{V(x+h)-V(x)}$$
	exists for all $x\in \mathbb{T}$.
\end{definition}
	
Define the sets 
	$$C_{0}(\mathbb{T}):=\left\{f:\mathbb{T}\to\mathbb{R}; f\;\;\mbox{is càdlàg},\;\int_{\mathbb{T}}fdV=0\right\},$$
$$C_{1}(\mathbb{T}):=\left\{f:\mathbb{T}\to\mathbb{R}; f\;\;\mbox{is càglàd}\int_{\mathbb{T}}fdW=0\right\},$$
and
$$C^{n}_{W,V,0}(\mathbb{T}):=\left\{f\in C_{\sigma(n+1)}(\mathbb{T}); D^{(n)}_{W,V}f\hbox{ exists and }D^{(n)}_{W,V}f\in C_{\sigma(n)}(\mathbb{T})\right\},$$
where 
\begin{equation}\label{nderiv}
D^{(n)}_{W,V}:=\left\{ \begin{array}{ll}
	\underbrace{D^{-}_{W}D^{+}_{V}...D^{-}_{W}}_{n-factors},\;\mbox{if}\; n\; \mbox{is odd}; \\
	\underbrace{D^{-}_{V}D^{+}_{W}...D^{-}_{W}}_{n-factors},\;\mbox{if}\; n\; \mbox{is even},\end{array} \right. 
\end{equation}
and 
\[\sigma(n):=\left\{ \begin{array}{ll}
	1,\;\mbox{if}\; n\; \mbox{is odd}; \\
	0,\;\mbox{if}\; n\; \mbox{is even}.\end{array} \right. \] 

	Finnaly, let us define our space of smooth functions. The elements of this space will work as test functions for our differential operators. More precisely, let
	$$C^{\infty}_{W,V}(\mathbb{T}):=\langle 1 \rangle\oplus\left(\bigcap_{n=1}^{\infty} C^{n}_{W,V,0}\right).$$
	
	These functions allow us to define weak derivatives:
	
		\begin{definition}\label{defWleft}
		A function $f\in L^{2}_{V}(\mathbb{T})$ has a $W$-left \textit{weak} derivative if, and only if, for every $g\in  C^{\infty}_{V,W}(\mathbb{T})$, there exists $F\in L^{2}_{W}(\mathbb{T})$ such that 
		\begin{equation}\label{defWd2}
			\int_{\mathbb{T}}fD_{V}^{+}gdV=-\int_{\mathbb{T}}FgdW.
		\end{equation}
	In such a case, the $W$-left weak derivative of $f$ will be denoted by $D_W^-f$. We use the same notation as the strong lateral derivative in view of Remark 9 of \cite{keale}.
	\end{definition}

We can now provide combine Definition 4 and Theorem 5 in \cite{keale} to arrive at the following definition of $W$-$V$-Sobolev spaces.

\begin{definition}
	The $W$-$V$-Sobolev space is the Hilbert space 
$${H}_{W,V}(\mathbb{T})=\{f\in L^{2}_{V}(\mathbb{T});  f\hbox{ has $W$-left weak derivative}\},$$ 
with norm
$$\|f\|^{2}_{W,V}=\|f\|^{2}_{V}+\|D_{W}^{-}f\|^{2}_{W}.$$
\end{definition}

It is noteworthy that our set of smooth functions, $C^{\infty}_{V,W}(\mathbb{T})$, is dense in $H_{W,V}(\mathbb{T})$. Indeed, the following is \cite[Proposition 2]{keale}:
\begin{proposition}\label{densenesscinfinity2}
	The space $C^{\infty}_{V,W}(\mathbb{T})$ is dense in $L^2_V(\mathbb{T})$. Furthermore, the set $\left\{D^{+}_{V}g;g\in C^{\infty}_{V,W}(\mathbb{T})\right\}$ is dense in $L^2_{V,0}(\mathbb{T})$.
\end{proposition}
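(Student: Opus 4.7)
The plan is to derive both statements from the spectral theory of the generalized Laplacian developed in the previous chapter, exploiting the correspondence between the eigenspaces of $\Delta_{W,V}$ and $\Delta_{V,W}$.

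I will first observe that, by the dual version of Theorem \ref{eigenreg} applied to the $V,W$-structure (see Remark \ref{orderVW}), every eigenvector of $\Delta_{V,W}$ associated with a nonzero eigenvalue belongs to $C^{\infty}_{V,W,0}(\mathbb{T}) \subset C^{\infty}_{V,W}(\mathbb{T})$, while the constant functions lie in $C^{\infty}_{V,W}(\mathbb{T})$ by construction. This identifies a large family of concrete smooth functions inside $C^{\infty}_{V,W}(\mathbb{T})$, which is the starting point for both density claims.

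For the density of $\{D^{+}_{V}g : g \in C^{\infty}_{V,W}(\mathbb{T})\}$ in $L^{2}_{V,0}(\mathbb{T})$, the key identity is the following. If $\mu$ is an eigenvector of $\Delta_{V,W}$ with nonzero eigenvalue $\lambda$, that is, $-D^{-}_{W}D^{+}_{V}\mu = \lambda\mu$, then applying $D^{+}_{V}$ to both sides gives
$$-D^{+}_{V}D^{-}_{W}(D^{+}_{V}\mu) = D^{+}_{V}(\lambda\mu) = \lambda\,(D^{+}_{V}\mu),$$
so $D^{+}_{V}\mu$ is an eigenvector of $\Delta_{W,V}$ with the same eigenvalue. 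Since $D^{+}_{V}$ acts, after normalization, as a linear bijection between the nonzero-eigenvalue eigenspaces of $\Delta_{V,W}$ and $\Delta_{W,V}$ (with inverse given, up to scalars, by $D^{-}_{W}$), the set $\{D^{+}_{V}g : g \in C^{\infty}_{V,W}(\mathbb{T})\}$ contains, up to scaling, every eigenvector $\nu_{n}$ of $\Delta_{W,V}$ with $\lambda_{n} \neq 0$. As these eigenvectors form a complete orthonormal system of $L^{2}_{V,0}(\mathbb{T})$ (the eigenvalue $0$ corresponding to the constant eigenvector in the full complete orthonormal system of $L^{2}_{V}(\mathbb{T})$), the density claim follows.

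For the density of $C^{\infty}_{V,W}(\mathbb{T})$ in $L^{2}_{V}(\mathbb{T})$, I will use the fact that the eigenvectors of $\Delta_{V,W}$ admit the explicit representation $\mu_{n} = D^{-}_{W}\nu_{n}/\sqrt{\lambda_{n}}$, where $\nu_{n}$ are the eigenvectors of $\Delta_{W,V}$ (complete in $L^{2}_{V}(\mathbb{T})$). Via the antiderivative representation of Theorem \ref{sobchar} applied in the dual structure, one produces for each $\nu_{n}$ an element of $C^{\infty}_{V,W}(\mathbb{T})$ whose span, together with the constants, approximates elements of $L^{2}_{V}(\mathbb{T})$. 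The main obstacle of the argument lies precisely here: the eigenvectors of $\Delta_{V,W}$ live most naturally in $L^{2}_{W}(\mathbb{T})$, so one must carefully transfer completeness from $L^{2}_{W}(\mathbb{T})$ to $L^{2}_{V}(\mathbb{T})$, using the boundedness of c\`agl\`ad functions on $\mathbb{T}$ and the Sobolev embedding $H_{W,V}(\mathbb{T}) \hookrightarrow L^{2}_{V}(\mathbb{T})$ (Theorem \ref{compactemb}) to close the argument.
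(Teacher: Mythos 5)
Your treatment of the second claim is essentially the paper's own argument: you show that $D^{+}_{V}$ carries an eigenvector of $\Delta_{V,W}$ with nonzero eigenvalue to an eigenvector of $\Delta_{W,V}$ with the same eigenvalue, conclude that, up to scaling, every $\nu_{n}$ with $\lambda_{n}\neq 0$ lies in $\left\{D^{+}_{V}g: g\in C^{\infty}_{V,W}(\mathbb{T})\right\}$, and finish by completeness of $\{\nu_{n}\}_{n\geq 1}$ in $L^{2}_{V,0}(\mathbb{T})$. This is exactly the eigenspace-correspondence step in the paper (phrased there as: every nonzero eigenvalue of $\Delta_{W,V}$ is an eigenvalue of $\Delta_{V,W}$ with eigenspaces of equal dimension), and you are in fact more explicit. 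The only soft spot is the asserted bijectivity: what the argument really needs is surjectivity onto the $\nu_{n}$, which is cleanest to get directly by noting that $g_{n}:=-\lambda_{n}^{-1}D^{-}_{W}\nu_{n}$ belongs to $C^{\infty}_{V,W,0}(\mathbb{T})$ by Theorem \ref{eigenreg} and satisfies $D^{+}_{V}g_{n}=\nu_{n}$; you state the correspondence without this verification, but it is a minor point.

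The genuine gap is in the first claim: you never actually prove that $C^{\infty}_{V,W}(\mathbb{T})$ is dense in $L^{2}_{V}(\mathbb{T})$. You correctly observe that the eigenvectors of $\Delta_{V,W}$ are naturally complete in $L^{2}_{W}(\mathbb{T})$, not in $L^{2}_{V}(\mathbb{T})$, and then say this completeness must be ``carefully transferred'' using the boundedness of c\`agl\`ad functions and the embedding of Theorem \ref{compactemb}. Neither tool can close that gap: approximation in $L^{2}_{W}(\mathbb{T})$ gives no control in $L^{2}_{V}(\mathbb{T})$ when $dW$ and $dV$ are mutually singular (for instance $dW$ purely atomic and $dV$ Lebesgue), and the embedding $H_{W,V}(\mathbb{T})\hookrightarrow L^{2}_{V}(\mathbb{T})$ concerns the c\`adl\`ag $W,V$-structure, not the c\`agl\`ad test functions at issue. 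The paper's proof of this half rests on the statement, made just before the proposition, that the eigenvectors of $\Delta_{V,W}$ are dense in $L^{2}_{V}(\mathbb{T})$, combined with the dual version of Theorem \ref{eigenreg}; your sketch stops exactly at the point where that input is required and supplies nothing in its place. To complete it you would have to establish the $L^{2}_{V}$-density of the span of the $\Delta_{V,W}$-eigenvectors (plus constants), or argue directly by uniform approximation of continuous functions by elements of $C^{\infty}_{V,W}(\mathbb{T})$ in the spirit of Theorem \ref{thm:1}, since uniform approximation controls the $L^{2}_{V}$-norm simultaneously.
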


We also have a characterization of the $W$-$V$-Sobolev spaces that can be seen as a counterpart to the result that says that any function in $H^1(\mathbb{T})$ is absolutely continuous. More precisely, the following is a direct consequence of \cite[Theorem 2]{keale}.
	\begin{theorem}\label{sobchar2}
	A function $f\in L^{2}_{V}(\mathbb{T})$ belongs to $H_{W,V}(\mathbb{T})$ if, and only if,
$$
		f(x)=f(0)+\int_{(0,x]}D_W^-(s)dW(s),
$$
	$V$-a.e.
\end{theorem}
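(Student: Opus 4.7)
The plan is to deduce this characterization directly from Theorem \ref{sobchar} by identifying the constant and the integrand appearing there with $f(0)$ and the weak derivative $D_W^-f$, respectively. Since Theorem \ref{sobchar} already produces \emph{some} constant $c$ and \emph{some} $F\in L^2_{W,0}(\mathbb{T})$ realizing the representation, the only content of the present statement is to recognize $c$ as a boundary value of $f$ and $F$ as the weak derivative.

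For the forward implication, suppose $f\in H_{W,V}(\mathbb{T})$. Theorem \ref{sobchar} produces $c\in\mathbb{R}$ and $F\in L^2_{W,0}(\mathbb{T})$ with
$$f(x) = c + \int_{(0,x]} F(s)\, dW(s)\quad V\text{-a.e.}$$
Replacing $f$ by its canonical c\`adl\`ag representative given by the right-hand side (which agrees with $f$ outside a $V$-null set), evaluation at $x=0$ yields $c = f(0)$. It remains to identify $F$ with the $W$-left weak derivative of $f$ in the sense of Definition \ref{defWleft}. Passing to the limit in the integration-by-parts identity \eqref{x1} along an admissible sequence $(f_n)\subset\mathfrak{D}_{W,V}(\mathbb{T})$ for $f$, exactly as in the proof of Theorem \ref{sobequal}, gives
$$\int_{\mathbb{T}} f\, D_V^+g\, dV = -\int_{\mathbb{T}} F\, g\, dW\quad\text{for every } g\in C^\infty_{V,W}(\mathbb{T}),$$
so $F = D_W^-f$ by the uniqueness of the weak derivative (which follows from the density of $C^\infty_{V,W}(\mathbb{T})$ in $L^2_V(\mathbb{T})$ from Proposition \ref{densenesscinfinity2}).

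For the converse, assume the stated representation holds $V$-a.e., so that in particular $D_W^-f\in L^2_W(\mathbb{T})$ exists as a $W$-left weak derivative. Plugging the constant test function $g\equiv 1\in C^\infty_{V,W}(\mathbb{T})$ into Definition \ref{defWleft} (and using $D_V^+1\equiv 0$) gives $\int_{\mathbb{T}} D_W^-f\, dW = 0$, so that $D_W^-f\in L^2_{W,0}(\mathbb{T})$. Then Theorem \ref{sobchar}, applied with $c=f(0)$ and $F=D_W^-f$, yields $f\in H_{W,V}(\mathbb{T})$. The only step requiring real work is the identification $F=D_W^-f$ in the forward direction; this is essentially already contained in the limit passage of Theorem \ref{sobequal}, so I expect no obstacle beyond carefully invoking that argument.
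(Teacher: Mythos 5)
Your proposal is correct and takes essentially the same route as the paper, which obtains this statement as a direct consequence of Theorem \ref{sobchar} (Theorem 2 of \cite{keale}), with the identification of the constant as $f(0)$ and of the integrand as the weak derivative being exactly the limit passage already contained in the proof of Theorem \ref{sobequal}. One cosmetic point: uniqueness of the $W$-left weak derivative rests on density of $C^{\infty}_{V,W}(\mathbb{T})$ in $L^{2}_{W}(\mathbb{T})$, since the identity to exploit is $\int_{\mathbb{T}}(F_{1}-F_{2})g\,dW=0$, rather than density in $L^{2}_{V}(\mathbb{T})$, though your wording here simply mirrors the paper's own remark.
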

	
Let us now introduce a generalization of the Laplacian. To this end, we will first introduce a new space of functions, to which the Laplacian will be a self-adjoint operator.

	\begin{definition}
	Let $\mathcal{D}_{W,V}(\mathbb{T})$ be the set of functions $f\in L^{2}_{W}(\mathbb{T})$ such that there exists $\mathfrak{f}\in L^{2}_{V,0}(\mathbb{T})$ satisfying 
	\begin{equation}\label{c422}
		f(x)=a+W(x)b+\int_{(0,x]}\int_{[0,y)} \mathfrak{f}(s) dV(s)dW(y),
	\end{equation}
	where $b$ is satisfies the relation
	$$bW(1)+\int_{(0,1]}\int_{[0,y)} \mathfrak{f}(s) dV(s)dW(y)=0.$$
	\end{definition}

We have the following integration by parts formula involving functions in $\mathcal{D}_{W,V}(\mathbb{T})$  and in ${H}_{W,V}(\mathbb{T})$. This formula is proved in \cite[Proposition 1]{keale}.

	\begin{proposition}\label{intbypartsprop2}
	(Integration by parts formula) For every $f\in\mathcal{D}_{W,V}(\mathbb{T})$ and $g\in H_{W,V}(\mathbb{T})$, the following expression holds:
	\begin{equation}\label{intbypartsVW2}
		\langle -\Delta_{W,V}f,g\rangle_{V} = \int_{\mathbb{T}}D^{-}_{W}fD^{-}_{W}gdW
	\end{equation}
\end{proposition}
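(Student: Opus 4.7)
My plan is to give a direct computation that combines the representation of $f\in\mathcal{D}_{W,V}(\mathbb{T})$ coming from $(\ref{c422})$ with the representation of $g\in H_{W,V}(\mathbb{T})$ coming from Theorem \ref{sobchar2}, and then to interchange the order of integration via Fubini's theorem. First, I would rewrite $(\ref{c422})$ as
$$f(x) = a + \int_{(0,x]}\Bigl(b + \int_{[0,y)}\mathfrak{f}(s)\,dV(s)\Bigr)\,dW(y),$$
which by Theorem \ref{sobchar2} identifies $f$ as an element of $H_{W,V}(\mathbb{T})$ whose $W$-left weak derivative is $D_W^-f(x) = b + \int_{[0,x)}\mathfrak{f}(s)\,dV(s)$. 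The compatibility condition on $b$ in the definition of $\mathcal{D}_{W,V}(\mathbb{T})$ together with $\mathfrak{f}\in L^2_{V,0}(\mathbb{T})$ forces $D_W^-f\in L^2_{W,0}(\mathbb{T})$. Carrying over the identification $\Delta_{W,V}=D_V^+D_W^-$ established via the Friedrichs extension in Chapter 2, one has $-\Delta_{W,V}f = -\mathfrak{f}$.

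Next, I would substitute the above expression for $D_W^-f$ into the right-hand side of \eqref{intbypartsVW2} and use $\int_{\mathbb{T}} D_W^-g\,dW=0$ to kill the contribution from the constant $b$, leaving
$$\int_{\mathbb{T}} D_W^-f\,D_W^-g\,dW = \int_{\mathbb{T}}\Bigl(\int_{[0,x)}\mathfrak{f}(s)\,dV(s)\Bigr)D_W^-g(x)\,dW(x).$$
Fubini's theorem then swaps the order of integration to yield $\int_{\mathbb{T}}\mathfrak{f}(s)\int_{(s,1)}D_W^-g(x)\,dW(x)\,dV(s)$. Invoking $\int_{\mathbb{T}}D_W^-g\,dW=0$ together with the continuity of $W$ at $0$ (so that $dW$ carries no atom at $0$), the inner integral equals $-\int_{(0,s]}D_W^-g\,dW = -\bigl(g(s)-g(0)\bigr)$ by Theorem \ref{sobchar2}. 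The $g(0)$ term then vanishes because $\mathfrak{f}$ has zero $V$-mean, so I arrive at $-\int_{\mathbb{T}}\mathfrak{f}\,g\,dV = \langle -\Delta_{W,V}f,g\rangle_V$, which is precisely \eqref{intbypartsVW2}.

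The only point requiring care beyond algebra is the applicability of Fubini's theorem, which amounts to checking $\int_{\mathbb{T}}\int_{\mathbb{T}}|\mathfrak{f}(s)\,D_W^-g(x)|\mathbf{1}_{\{s<x\}}\,dW(x)\,dV(s)<\infty$; this is immediate from the Cauchy--Schwarz inequality applied to $\mathfrak{f}\in L^2_V(\mathbb{T})$ and $D_W^-g\in L^2_W(\mathbb{T})$ combined with $V(1),W(1)<\infty$. Thus the main content of the proof is bookkeeping once the two representation formulas are in hand, and no deeper analytic input is needed.
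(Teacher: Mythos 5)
Your proof is correct, but it takes a different route from the paper's. The paper disposes of this proposition in one line: in Chapter 3 it simply cites the corresponding result of Chapter 2 (Proposition \ref{intbypartsprop}), whose proof is just the observation that $\Delta_{W,V}f=\mathfrak{f}$ by \eqref{c4} together with the identity \eqref{c5}, $-\int_{\mathbb{T}}D^-_W f\,D^-_W g\,dW=\langle \mathfrak{f},g\rangle$, which is built into the characterization of the Friedrichs-extension domain and is itself deferred to an adaptation of \cite[Lemma 4]{franco}. What you do instead is prove that pairing identity directly: you read off $D_W^-f(x)=b+\int_{[0,x)}\mathfrak{f}\,dV$ from \eqref{c422}, use the compatibility condition on $b$ and the zero $W$-mean of $D_W^-g$ to remove the constant, and then a Fubini swap plus the representation $g(s)=g(0)+\int_{(0,s]}D_W^-g\,dW$ ($V$-a.e., which suffices since you integrate against $\mathfrak{f}\,dV$) and the zero $V$-mean of $\mathfrak{f}$ give $\int_{\mathbb{T}}D_W^-f\,D_W^-g\,dW=-\int_{\mathbb{T}}\mathfrak{f}g\,dV$. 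The Cauchy--Schwarz bound justifying Fubini is exactly right, and the handling of the endpoint atom via continuity of $W$ at $0$ (hence, by periodicity, at $1$) is the right subtlety to flag. The paper's approach buys brevity at the cost of leaning on the energetic-space machinery and an external lemma; yours is self-contained, makes transparent where the compatibility condition and the mean-zero hypotheses enter, and in effect supplies the missing proof of \eqref{c5}. Two cosmetic remarks: in Chapter 3, $\Delta_{W,V}f=\mathfrak{f}$ is literally Definition \ref{laplacianWV2}, so no appeal to the Friedrichs identification is needed for that step; and you should state explicitly that the representation of $g$ from Theorem \ref{sobchar2} is only $V$-a.e., which, as noted, is harmless here.
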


We can now define the $W$-$V$-Laplacian:

	\begin{definition}\label{laplacianWV2}
	We define the $W$-$V$-Laplacian as $\Delta_{W,V}: \mathcal{D}_{W,V}(\mathbb{T})\subseteq L^{2}_{V}(\mathbb{T})\to L^{2}_{V}(\mathbb{T})$, given by $\Delta_{W,V} f = \mathfrak{f}$, where $\mathfrak{f}$
	is defined by \eqref{c422}.
\end{definition}

We have from \cite[Definition 5]{keale} that $\mathcal{D}_{W,V}(\mathbb{T})$ is the domain of the Friedrichs extension of $I-D^{+}_{V}D^{-}_{W}$ denoted by $\mathcal{A}$ (we refer the reader to Zeidler \cite[Section 5.5]{zeidler} for further details on Friedrichs extensions). Therefore, the formal $W$-$V$-Laplacian, $\Delta_{W,V}: \mathcal{D}_{W,V}(\mathbb{T})\subseteq L^{2}_{V}(\mathbb{T})\to L^{2}_{V}(\mathbb{T})$, is self-adjoint. Furthermore, by \cite[Theorem 3]{keale}, $(I-\Delta_{W,V})^{-1}$ is well-defined and compact. Therefore, there exists a complete ortonormal system of functions $(\nu_n)_{n\in\mathbb{N}}$ in $L^{2}_{V}(\mathbb{T})$ such that $\nu_{n}\in H_{W,V}(\mathbb{T})$ for all $n$, and $\nu_{n}$ solves the equation $(I - \Delta_{W,V})\nu_n=\gamma_n \nu_n,$ for some $\{\gamma_n\}_{n\in\mathbb{N}}\subset\mathbb{R}.$ Furthermore,
$$1\le \gamma_{1}\le\gamma_{2}\cdots\to\infty$$ 
as $n\to\infty$. Now, observe that $\nu\in L^2_V(\mathbb{T})$ is an eigenvector of $\Delta_{W,V}$ with eigenvalue $\lambda$ if, and only if, $\nu$ is an eigenvector of $\mathcal{A}$ with eigenvalue $\gamma=1+\lambda$. We, then, have that $\{\nu_{n},\lambda_{n}\}_{n\in \mathbb{N}}$ forms a complete orthonormal system of $L^{2}_{V}(\mathbb{T})$, where $\lambda_n = \gamma_n - 1$. Moreover, we have that
\begin{equation}\label{asymplambda}
0=\lambda_{0}\le \lambda_{1}\le\lambda_{2}\cdots\to\infty,
\end{equation}
as $n\to\infty$, where for each $k\in\mathbb{N} \setminus \{0\}$, $\lambda_k$ satisfies

\begin{equation}\label{autovt2}
	-\Delta_{W,V}\nu_{k}=\lambda_{k} \nu_{k}.
\end{equation} 

This allows us to define the elements in $H_{W,V}(\mathbb{T})$ in terms of its Fourier coefficients. Indeed, \cite[Theorem 6]{keale} tells us that
\begin{equation}\label{fourierHWV}
			H_{W,V}(\mathbb{T})=\left\{f\in L^{2}_{V}(\mathbb{T}); f=\alpha_{0}+\sum_{i=1}^{\infty}\alpha_{i}\nu_{i}; \sum_{i=1}^{\infty}\lambda_{i}\alpha_{i}^2<\infty\right\}.
\end{equation}

	Finally, it is important to notice that we can interchange the places of $V$ and $W$ and obtain ``dual'' versions of the above results. More precisely,
	
	\begin{remark}\label{orderVW2}
		In the rest of the paper we need to pay attention with the position of $W$ and $V$ on the subindex. We will use the subscript $W,V$ for every structure stritly related to the operator $D^{+}_{V}D^{-}_{W}$. Similarly, whenever we use the subscript $V,W$, we will be referring to the, analogous, structure related to the operator $D^{-}_{W}D^{+}_{V}$. Notice that by doing this, we will keep changing between c\`adl\`ag and c\`agl\`ad functions. We ask the viewer to be attentive to these details as they are subtle.
	\end{remark}

	\section{Generalized Maclaurin expansion for functions in $C^{\infty}_{W,V}(\mathbb{T})$.}

The goal of this first section is to provide an analogue to Maclaurin series expansion for functions in $C^{\infty}_{W,V}(\mathbb{T})$. Our method is based on Taylor's expansion with integral remainder. We begin by identifying which expression will ``play the role'' of the terms of the form $\dfrac{x^n}{n!}$, and we which terms will ``play the role'' of the integral remainder in our version of the Maclaurin series expansion. 

In the second moment we will find estimates for our remainder and obtain semi-differential properties related to the sequence $F_{n}(x,s)$ introduced in (\ref{term:1}). Finally our main Theorem in this section will provide a sufficient condition to a function in $C^{\infty}_{W,V}(\mathbb{T})$ to be represented according a generalized version of the Maclaurin series expansion, whose expression is given in (\ref{sdetaylor}).
	
Let us begin with the computations. Let $f\in C^{\infty}_{W,V}(\mathbb{T})$. We have, by \eqref{sobchar2} and \eqref{intbypartsVW2}, that
	\begin{align*}
		f(x)-f(0) &= \int_{(0,x]}D^{-}_{W}f(s)dW(s) = \int_{(0,x]}D^{-}_{W}f(s)D^{-}_{W}W dW(s) \\
		&= D^{-}_{W}f(x)W(x)-\int_{[0,x)}D^{+}_{V}D^{-}_{W}f(s)W(s)dV(s) \\
		&= D^{-}_{W}f(0)W(x)+\int_{[0,x)}D^{+}_{V}D^{-}_{W}f(s)\left[W(x)-W(s)\right]dV(s) \\
		&= D^{-}_{W}f(0)W(x)+I_{2}(x),
	\end{align*}
	where $I_{2}(x)=\int_{[0,x)}D^{+}_{V}D^{-}_{W}f(s)\left[W(x)-W(s)\right]dV(s)$. Now, define 
	$$F_{2}(x,s)=V(s)W(x)-\int_{[0,s)}W(\xi)dV(\xi)=\int_{[0,s)}\left[W(x)-W(\xi)\right]dV(\xi).$$ 
	We have that $D^{+}_{V,s}F_{2}(x,s)=W(x)-W(s)$. We can use integration by parts again, however this time interchanging the roles of $V$ and $W$, see Remark \ref{orderVW2}, to obtain
	\begin{align*}
		I_{2}(x) &= \int_{[0,x)}D^{+}_{V}D^{-}_{W}f(s)\left[W(x)-W(s)\right]dV(s) \\
		&= D^{+}_{V}D^{-}_{W}f(x)F_{2}(x,x)-D_{V}^{+}D_{W}^{-}f(0)F(0)-\int_{(0,x]}D^{-}_{W}D^{+}_{V}D^{-}_{W}f(s)F_{2}(s)dW(s) \\
		&= D^{+}_{V}D^{-}_{W}f(x)F_{2}(x,x)-\int_{(0,x]}D^{-}_{W}D^{+}_{V}D^{-}_{W}f(s)F_{2}(x,s)dW(s)\\
		&= D^{+}_{V}D^{-}_{W}f(0)F_{2}(x,x)+\int_{(0,x]}D^{-}_{W}D^{+}_{V}D^{-}_{W}f(s)\left[F_{2}(x,x)-F_{2}(x,s)\right]dW(s)\\
		&= D^{+}_{V}D^{-}_{W}f(0)F_{2}(x,x)+I_{3}(x).
	\end{align*}
	where $I_{3}(x)=\int_{(0,x]}D^{-}_{W}D^{+}_{V}D^{-}_{W}f(s)\left[F_{2}(x,x)-F_{2}(x,s)\right]dW(s).$
	In short,
	$$
	f(x)-f(0) = D^{-}_{W}f(0)W(x)+D^{+}_{V}D^{-}_{W}f(0)F_{2}(x,x)+I_{3}(x).
	$$
	By induction, we have
	\begin{align}\label{taylorpolynomial}
		f(x)-f(0) = \sum_{k=1}^{n-1}\left[D^{(n)}_{W,V}f(0)\right]F_{k}(x,x)+I_{n}(x),
	\end{align}
	where $D^{(n)}_{W,V}$ was defined in \eqref{nderiv}, $F_{1}(x,s)=W(s)$,
	\begin{equation}\label{term:1}
		F_{n}(x,s)=\begin{cases}
			\displaystyle\int_{[0,s)}\left[F_{n-1}(x,x)-F_{n-1}(x,\xi)\right]dV(\xi),\; \mbox{n even};\\
			\displaystyle\int_{(0,s]}\left[F_{n-1}(x,x)-F_{n-1}(x,\xi)\right]dW(\xi),\; \mbox{n odd},
		\end{cases}
	\end{equation}
	and the remainder is given by
	\[I_{n}(x)=\left\{ \begin{array}{ll}
		\displaystyle\int_{[0,x)}D^{(n+1)}_{W,V}f(s)\left[F_{n}(x,x)-F_{n}(x,s)\right]dW(s),\; \mbox{n even};\\
		\displaystyle\int_{(0,x]}D^{(n+1)}_{W,V}f(s)\left[F_{n}(x,x)-F_{n}(x,s)\right]dV(s),\; \mbox{n odd}.\end{array} \right. \]
	\paragraph{}
	The next Lemma will help us to estimate the remainder $I_{n}(x)$.
	\begin{lemma}\label{taylorbound} Fix $x\in \mathbb{T}$. If $(F_{n})_{n\in\mathbb{N}}$ is the sequence defined in \eqref{term:1}, then, for $x,s\in\mathbb{T}$, we have the following bound:
	\end{lemma}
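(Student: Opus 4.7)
The plan is to establish the bound by induction on $n$, using the recursive definition \eqref{term:1}. For the base case $n=1$, we have $F_{1}(x,s) = W(s)$, and since $W$ is increasing with $W(0)=0$ and satisfies the periodic condition \eqref{eqsmedida}, both $|F_{1}(x,s)|$ and $|F_{1}(x,x) - F_{1}(x,s)|$ are trivially controlled by $W(1)$.

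For the inductive step, assume $n$ is even (the odd case is entirely analogous after swapping the roles of $W$ and $V$, see Remark \ref{orderVW2}). By \eqref{term:1}, for $s \leq x$ (the case $s > x$ being symmetric),
\begin{equation*}
F_{n}(x,x) - F_{n}(x,s) = \int_{[s,x)}\bigl[F_{n-1}(x,x) - F_{n-1}(x,\xi)\bigr]\,dV(\xi).
\end{equation*}
Applying the inductive hypothesis inside the integral, together with the monotonicity of $V$, produces a recursion in which the previous bound is multiplied by an extra factor of the form $V(x) - V(s)$ (respectively $V(x)$ when $s=0$). Iterating this recursion alternately against $dV$ and $dW$ down to the base case unfolds $F_n(x,x) - F_n(x,s)$ as an $n$-fold nested integral over a simplex in the measures $dW,dV,dW,dV,\ldots$.

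The main task, and the genuine obstacle, is to extract factorial decay in $n$ from this nested integral rather than a merely geometric estimate. A crude supremum-pull-out at each step would yield only a product of $n$ factors of $W(x)$ and $V(x)$, which is insufficient to make the remainder $I_n(x)$ in \eqref{taylorpolynomial} vanish as $n\to\infty$. Instead, one must exploit Fubini interchange on the iterated integral, analogous to the classical identity $\int_0^x \int_0^{\xi_1} \cdots d\xi_n \cdots d\xi_1 = x^n/n!$: after reordering, the integrand $\mathbf{1}_{\{0 \le \xi_n \le \xi_{n-1} \le \cdots \le \xi_1 \le x\}}$ integrated against the product measure $dW \otimes dV \otimes \cdots$ produces a factor on the order of $[W(x)V(x)]^{\lfloor n/2\rfloor}/(\lfloor n/2 \rfloor!)^{2}$ (with an extra $W(x)$ factor and an adjusted factorial when $n$ is odd), up to constants coming from interleaving the two measures.

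Thus the expected conclusion, which the strategy above is designed to produce, is a bound of the form
\begin{equation*}
|F_{n}(x,x) - F_{n}(x,s)| \;\leq\; \frac{[W(1)]^{\lceil n/2 \rceil}\,[V(1)]^{\lfloor n/2 \rfloor}}{\lceil n/2 \rceil!\;\lfloor n/2 \rfloor!},
\end{equation*}
with the same bound (or $|F_n(x,s)|$ directly) holding for $|F_n(x,s)|$ itself. Assuming $\kappa$-type decay on the iterated derivatives $D^{(n)}_{W,V}f(0)$, such a factorial denominator will then suffice in the next step of the paper to force $I_n(x)\to 0$ uniformly on $\mathbb{T}$ and validate the generalized Maclaurin expansion announced just before the lemma.
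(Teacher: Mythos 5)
There is a genuine gap, on two levels. First, the inequality the lemma actually asserts is \eqref{estimativadoresto}: $|F_{2n}(x)-F_{2n}(s)|\le |F_{2}(x)-F_{2}(s)|^{n}/n!$ with $F_{n}(s):=F_{n}(x,s)$, i.e.\ a bound that is \emph{local} in $(x,s)$, expressed through the increment of $F_2$, and vanishing as $s\to x$. The conclusion you aim for, $|F_n(x,x)-F_n(x,s)|\le [W(1)]^{\lceil n/2\rceil}[V(1)]^{\lfloor n/2\rfloor}/(\lceil n/2\rceil!\,\lfloor n/2\rfloor!)$, is a different, global inequality; it does not imply \eqref{estimativadoresto}, so even a complete proof of your statement would not prove the lemma as stated (a global factorial bound would in fact suffice for the uniform convergence in Theorem \ref{analyticalVW}, but that is not what the lemma claims, and the increment form is what the paper records).

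Second, and more importantly, the step you leave as ``up to constants coming from interleaving the two measures'' is exactly where all the work is, and the route you indicate would fail as written. The classical identity $\int_0^x\int_0^{\xi_1}\cdots d\xi_n\cdots d\xi_1=x^n/n!$ rests on symmetrizing over the $n!$ permuted copies of the simplex, which requires (i) the same measure in every coordinate and (ii) a diagonal of measure zero. Here the coordinates alternate between $dW$ and $dV$, and the paper explicitly allows both measures to have atoms, so the diagonal can carry mass and the naive bound $\mu(I)^n/n!$ is simply false for atomic measures (a single point mass already violates it); the half-open intervals in \eqref{term:1} and the direction of the inequalities then become essential. The paper's proof deals with both obstacles at once: it first collapses the inner $dW$-integral to reduce the alternating nest to iterated integrals against the single Stieltjes measure $dF_2(\xi)=(W(x)-W(\xi))\,dV(\xi)$, inserting right limits $F_{2(n-1)}(x,\xi+)$ at each level, and then extracts the $1/n!$ from the pointwise inequality $D^{+}_{F_2}(c-F_2)^{n+1}(\xi)\le -(n+1)(c-F_2(\xi+))^{n}$ integrated via $F_2$-absolute continuity in the sense of \cite{pouso} (inequalities \eqref{desig1}--\eqref{desig2}), applied recursively to the nested integral \eqref{termointegral}. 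Your intuition that the factorial must come from the nested (simplex) structure rather than crude supremum bounds is correct and is the same spirit as the paper's argument, but without a device of this kind to handle the jumps of $W$ and $V$ (or an appeal to estimates such as \eqref{estimaW}--\eqref{estimaV} used later in Proposition \ref{prop22}), the symmetrization step does not go through.
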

	\begin{equation}\label{estimativadoresto}
		|F_{2n}(x)-F_{2n}(s)|\le \dfrac{\left|F_{2}(x)-F_{2}(s)\right|^n}{n!},
	\end{equation}
	where for convenience $F_{n}(s):=F_{n}(x,s).$
	\begin{proof}
		Take $s<x$ and use \eqref{term:1} to obtain
		\begin{align}\label{recorrencia1}
			\nonumber F_{2n}(x)-F_{2n}(s) &=\int_{[s,x)}F_{2n-1}(x)-F_{2n-1}(\xi)dV(\xi)\\ \nonumber 
			&= \int_{[s,x)}\left(\int_{(\xi,x]}F_{2(n-1)}(x)-F_{2(n-1)}(\eta)dW(\eta) \right) dV(\xi) 
			\\ \nonumber 
			&\le  \int_{[s,x)}\left(F_{2(n-1)}(x)-F_{2(n-1)}(\xi+)\right)(W(x)-W(\xi))dV(\xi) \\
			&= \int_{[s,x)}F_{2(n-1)}(x)-F_{2(n-1)}(\xi+)dF_{2}(\xi).
		\end{align}
		In a similar manner, we have
		\begin{equation}\label{recorrencia2}
			F_{2n}(x)-F_{2n}(\xi+)\le \int_{(\xi,x)} F_{2(n-1)}(x)-F_{2(n-1)}(\beta+)dF_{2}(\beta).   \end{equation}
		By looking at the inequalities (\ref{recorrencia1}) and (\ref{recorrencia2}), we note that the result follows by showing that the bound holds for the following expression:
		\begin{equation}\label{termointegral}
			\int_{[s,x)}dF_{2}(\xi_{1})\int_{(\xi_{1},x)}dF_{2}(\xi_{1})\cdots \int_{(\xi_{n-1},x)}(F_{2}(x)-F_{2}(\xi_{n}+))dF_{2}(\xi_{n}).
		\end{equation}
		Now, set $c=F_{2}(x)$ and observe that
		$$D^{+}_{F_{2}}(c-F_{2})^{n+1}(\xi)=-\sum_{j=0}^{n}\left(c-F_{2}(\xi+)\right)^{j}\left(c-F_{2}(\xi)\right)^{n-j}\le -(n+1)(c-F_{2}(\xi+))^{n}.$$
		
		Now, by integrating the above inequality and noting that $\frac{(c-F_{2})^{n+1}}{n+1}$ is $F_{2}$-absolutely continuous in the sense of \cite{pouso}, we obtain that
		\begin{equation}\label{desig1}
			\dfrac{(c-F_{2}(s))^{n+1}}{n+1}\geq \int_{[s,x)}(c-F_{2}(\xi+))^{n}dF_{2}(\xi).
		\end{equation}
		Now, given any $s^\ast <x$, we can take the limit from the right as $s\to s^\ast+$ on (\ref{desig1}) to obtain that
		\begin{equation}\label{desig2}
			\dfrac{(c-F_{2}(s^\ast+))^{n+1}}{n+1}\geq \int_{(s^\ast,x)}(c-F_{2}(\xi+))^{n}dF_{2}(\xi).
		\end{equation}
		The inequality (\ref{estimativadoresto}) then follows from recursively applying (\ref{desig1}) and (\ref{desig2}) on (\ref{termointegral}).
	\end{proof}
	The next Lemma can be viewed as a version of Leibniz integral rule with respect to the $W$-left-derivative. It will help us when computing the $W$-left-derivative of $F_{n}.$
	\begin{lemma}\label{leibnizrule}
		Let $g:\mathbb{T}\times \mathbb{T}\to \mathbb{R}$ be a function such that:
		\begin{enumerate}
			\item $\forall x\in \mathbb{T},\,g(x,\cdot):\mathbb{T}\to \mathbb{R}$ is a càglàd function;
			\item $\exists D^{-}_{W,1}g:\mathbb{T}\times \mathbb{T}\to \mathbb{R}$ such that $\forall x\in\mathbb{T}$, $D^{-}_{W,1}g(x,\cdot):\mathbb{T}\to \mathbb{R}$ is a càglàd function and 
			
			$$\lim_{h\to 0} \sup_{\xi\in\mathbb{T}}\left|\dfrac{g(x,\xi)-g(x-h,\xi)}{W(x)-W(x-h)}-D^{-}_{W,1}g(x,\xi)\right|=0;$$
			\item $\forall x\in\mathbb{T},\, g(x,x)=D_{W,1}g(x,x)=0$.
		\end{enumerate}
		Then 
		$$D^{-}_{W}\left(\int_{(0,\cdot]}g(\cdot,\xi)dW(\xi)\right)(x)=\int_{(0,x]}D^{-}_{W,1}g(x,\xi)dW(\xi).$$
	\end{lemma}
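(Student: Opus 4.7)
The plan is to compute the left $W$-difference quotient of $F(x):=\int_{(0,x]}g(x,\xi)dW(\xi)$ directly from the definition, split it into a ``boundary piece'' near $x$ and a ``bulk piece'' on $(0,x-h]$, and then dispatch each piece using a distinct hypothesis. For $h>0$ small, the natural algebraic identity is
\begin{equation*}
F(x)-F(x-h) \;=\; \int_{(x-h,x]}g(x,\xi)\,dW(\xi) \;+\; \int_{(0,x-h]}\bigl[g(x,\xi)-g(x-h,\xi)\bigr]\,dW(\xi),
\end{equation*}
so after dividing by $W(x)-W(x-h)$ the problem reduces to analysing the two resulting terms as $h\to 0^+$.

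For the boundary piece, I would use the trivial estimate
\begin{equation*}
\left|\frac{1}{W(x)-W(x-h)}\int_{(x-h,x]}g(x,\xi)\,dW(\xi)\right| \;\le\; \sup_{\xi\in(x-h,x]}|g(x,\xi)|,
\end{equation*}
and invoke hypothesis (1) together with hypothesis (3): since $g(x,\cdot)$ is c\`agl\`ad and $g(x,x)=0$, the left-continuity at $\xi=x$ forces this supremum to $0$.

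For the bulk piece, the uniform estimate in hypothesis (2) gives, with $\varepsilon_h:=\sup_\xi |\Delta_h g(x,\xi)-D^{-}_{W,1}g(x,\xi)|\to 0$,
\begin{equation*}
\left|\int_{(0,x-h]}\Delta_h g(x,\xi)\,dW(\xi) - \int_{(0,x-h]}D^{-}_{W,1}g(x,\xi)\,dW(\xi)\right| \;\le\; W(1)\,\varepsilon_h \;\longrightarrow\; 0.
\end{equation*}
So it only remains to see that $\int_{(0,x-h]}D^{-}_{W,1}g(x,\xi)\,dW(\xi)\to \int_{(0,x]}D^{-}_{W,1}g(x,\xi)\,dW(\xi)$. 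The discrepancy equals $\int_{(x-h,x]}D^{-}_{W,1}g(x,\xi)\,dW(\xi)$, which I would split as $\int_{(x-h,x)}D^{-}_{W,1}g(x,\xi)\,dW(\xi)+D^{-}_{W,1}g(x,x)\bigl(W(x)-W(x^-)\bigr)$; the first summand is bounded by $\sup_{\xi\in(x-h,x)}|D^{-}_{W,1}g(x,\xi)|\cdot\bigl(W(x^-)-W(x-h)\bigr)$ which vanishes by c\`agl\`ad regularity of $D^{-}_{W,1}g(x,\cdot)$, and the second summand is exactly $0$ by the second half of hypothesis (3).

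The principal obstacle is the possibility that $W$ carries an atom at $x$, which would make the standard Leibniz-type argument (built on vanishing $W$-measure of the collapsing intervals) fail. The essential role of hypothesis (3) is precisely to compensate for this: the point mass $W(x)-W(x^-)$ is neutralised because both $g$ and its first-variable derivative vanish on the diagonal at $(x,x)$. Everything else is a careful bookkeeping of c\`agl\`ad boundedness and uniform convergence on a compact domain.
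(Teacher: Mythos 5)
Your proof is correct and follows essentially the same route as the paper: the paper's argument uses exactly this decomposition of the $W$-difference quotient into the boundary term $\frac{1}{W(x)-W(x-h)}\int_{(x-h,x]}g(x,\xi)\,dW(\xi)$, the bulk error term controlled by the uniform convergence in hypothesis (2), and the term $\int_{(0,x-h]}D^{-}_{W,1}g(x,\xi)\,dW(\xi)$, with hypotheses (1) and (3) killing the contributions near the diagonal (including a possible atom of $W$ at $x$). Your write-up simply spells out the limiting steps that the paper summarizes as ``items 1.--3. directly imply''.
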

	\begin{proof}
		Fix $x\in\mathbb{T}$ and $h>0$. Now, consider the quotient
		\begin{align}
			\nonumber \Delta(x,h) &=\dfrac{1}{W(x)-W(x-h)}\left(\int_{(0,x]}g(x,\xi)dW(\xi)-\int_{(0,x-h]}g(x-h,\xi)dW(\xi)\right)\\ \nonumber 
			&= \dfrac{1}{W(x)-W(x-h)}\int_{(x-h,x]}g(x,\xi)dW(\xi)\\
			&+\int_{(0,x-h]}\left[\dfrac{g(x,\xi)-g(x-h,\xi)}{W(x)-W(x-h)}-D_{W,1}^{-}g(x,\xi)\right]dW(\xi)
			\\ \nonumber &+\int_{(0,x-h]}D_{W,1}^{-}g(x,\xi)dW(\xi).
		\end{align}
		Now, observe that items $\textit{1.-3.}$ directly imply that $$\lim_{h\to 0}\Delta(x,h)=\int_{(0,x]}D_{W,1}^{-}g(x,\xi)dW(\xi).$$
	\end{proof}
	
	\begin{lemma}\label{taylorlemma}
		Define $G_{1}(x,s)=V(s)$ and 
		\begin{equation}\label{Gnformula}
			G_{n}(x,s)=\left\{ \begin{array}{ll}
			\displaystyle\int_{(0,s]}\left[G_{n-1}(x,x)-G_{n-1}(x,\xi)\right]dW(\xi),\; \mbox{n even};\\
			\displaystyle\int_{[0,s)}\left[G_{n-1}(x,x)-G_{n-1}(x,\xi)\right]dV(\xi),\; \mbox{n odd}.\end{array} \right. 
		\end{equation}
		Then
		
		\[
		\left\{ \begin{array}{ll}
			D^{-}_{W,1}F_{n}(\xi,s)=G_{n-1}(\xi,s);\\\\
			D^{-}_{W}F_{n}(\xi,\xi)=G_{n-1}(\xi,\xi),
		\end{array}
		\right.
		\]
		where $D^{-}_{W,1}$ was defined in Item 2 of Lemma \ref{leibnizrule}.
		
	\end{lemma}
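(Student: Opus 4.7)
The plan is to prove both identities simultaneously by induction on $n\geq 2$, mirroring the recursive definitions of $F_n$ and $G_n$.

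For the base case $n=2$, the first identity is immediate from the expansion $F_2(\xi,s)=W(\xi)V(s)-\int_{[0,s)}W(\alpha)dV(\alpha)$, in which only the first term depends on $\xi$, so $D^{-}_{W,1}F_2(\xi,s)=V(s)=G_1(\xi,s)$. For the diagonal $D^{-}_W F_2(\xi,\xi)$, I would compute the Newton quotient for $h<0$ and split it into a ``bulk'' term converging to $V(\xi)$ by the c\`agl\`ad continuity of $V$, and a ``boundary'' term $\int_{[\xi+h,\xi)}[W(\xi)-W(\alpha)]dV(\alpha)/(W(\xi)-W(\xi+h))$. The latter is controlled by the elementary monotonicity estimate $W(\xi)-W(\alpha)\leq W(\xi)-W(\xi+h)$ on $[\xi+h,\xi)$, hence bounded by $V(\xi)-V(\xi+h)\to 0$.

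For the inductive step with $n+1$ odd, we have $F_{n+1}(x,s)=\int_{(0,s]}[F_n(x,x)-F_n(x,\xi)]dW(\xi)$. With $s$ fixed, differentiation under the integral sign (legitimized by an inductive uniform estimate) produces $D^{-}_{W,1}F_{n+1}(x,s)=\int_{(0,s]}[G_{n-1}(x,x)-G_{n-1}(x,\xi)]dW(\xi)=G_n(x,s)$. The diagonal case $D^{-}_W F_{n+1}(x,x)$ is precisely where Lemma \ref{leibnizrule} intervenes: setting $g(x,\xi)=F_n(x,x)-F_n(x,\xi)$, condition (3) of that lemma is automatic since $g(x,x)=0$ and, by the inductive hypothesis, $D^{-}_{W,1}g(x,x)=G_{n-1}(x,x)-G_{n-1}(x,x)=0$; conditions (1) and (2) follow from the c\`agl\`ad regularity built into the recursion and from the remainder estimate already obtained in Lemma \ref{taylorbound}. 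For $n+1$ even the integration is against $dV$ and Lemma \ref{leibnizrule} does not apply directly, so I would mimic the base-case argument: split the Newton quotient into a bulk term that converges to $G_n(x,x)$ and a boundary term that vanishes because the inductive hypothesis combined with Lemma \ref{taylorbound} yields $|F_n(x,x)-F_n(x,\xi)|\leq C_n|W(x)-W(\xi)|$ near the diagonal, absorbing the denominator $W(x)-W(x+h)$.

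The main obstacle I anticipate is maintaining, at every step of the induction, the uniform differentiability hypothesis (2) of Lemma \ref{leibnizrule} together with its $V$-integral analogue used in the even case. These estimates are not encoded in the statement of Lemma \ref{taylorlemma}, so they must be carried along as supplementary claims in a strengthened inductive hypothesis. I would state them explicitly as uniform bounds of the form $\sup_{\xi}|F_n(x,x)-F_n(x,\xi)|\leq C_n|W(x)-W(\xi)|$ and the symmetric $V$-version, and check that they propagate through each application of the recursion via Lemma \ref{taylorbound}; once this bookkeeping is in place, the two formulas of Lemma \ref{taylorlemma} drop out of the induction above.
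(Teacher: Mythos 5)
Your proposal is correct, and it reaches the conclusion by a route that genuinely differs from the paper's at the key step. The paper's proof handles the base-case diagonal by integration by parts, writing $F_{2}(x,x)=\int_{(0,x]}V(\alpha)\,dW(\alpha)$ and differentiating, and in the inductive step it only treats the even-index diagonal: it first uses Fubini's theorem (equation (\ref{fubinada})) to rewrite $F_{n+1}(x,x)$ as the single $dW$-integral $\int_{(0,x]}\left[F_{n-1}(x,x)-F_{n-1}(x,\alpha)\right]V(\alpha)\,dW(\alpha)$, then applies Lemma \ref{leibnizrule} to that expression, and finally reverses the Fubini computation to recognize the result as $G_{n}(x,x)$; thus the Leibniz rule is the only differentiation tool, at the cost of skipping two levels of the recursion and leaving the odd case and the off-diagonal identity implicit. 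You instead invoke Lemma \ref{leibnizrule} only when the outer integral is against $dW$ (odd case), and treat the $dV$-integral case, as well as the base-case diagonal, by a direct Newton-quotient bulk/boundary splitting, killing the boundary term with left-continuity of $V$ and a bound of the form $|F_{n}(x,x)-F_{n}(x,\xi)|\le C_{n}\,(W(x)-W(\xi))$. That bound is indeed available, but your attribution of it to Lemma \ref{taylorbound} is slightly off: that lemma controls even-indexed differences by powers of $F_{2}$-differences, whereas for odd $n$ the needed Lipschitz-in-$W$ estimate follows directly from the recursion \eqref{term:1}, since $F_{n}(x,x)-F_{n}(x,\xi)=\int_{(\xi,x]}\left[F_{n-1}(x,x)-F_{n-1}(x,\alpha)\right]dW(\alpha)$ and the bracket is bounded. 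Your strengthened induction hypothesis (uniformity in the second variable) is exactly what the paper also carries, so that bookkeeping is standard. In exchange for the extra elementary estimate, your argument proceeds one recursion level at a time, covers both parities explicitly, and avoids the Fubini rewriting; the paper's approach is slicker and funnels every differentiation through Lemma \ref{leibnizrule}, but is sketchier about the cases it does not write out.
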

	
	\begin{proof}
		We will proceed by induction in $n$. First, for $n=2$ we have 
		$$F_{2}(x,s)=\int_{[0,s)}W(x)-W(\xi)dV(\xi)=V(s)W(x)-\int_{[0,s)}W(\xi)dV(\xi),$$
		which implies that $D^{-}_{W,1}F_{2}(x,s)=V(s)=G_{1}(x,s)$. Moreover
		$\frac{F_{2}(x-h,s)-F_{2}(x,s)}{W(x-h)-W(x)}\to V(s)$ when $h\to0$, with this limit holding uniformly in $s$. By the integration by parts formula (see Proposition \ref{intbypartsprop2}), we have that
		$$F_{2}(x,x)=\int_{(0,x]}V(\alpha)dW(\alpha).$$
		Therefore, $D^{-}_{W}F_{2}(x,x)=V(x)=G_{1}(x,x)$. Assume that for an odd index $n$ we have for all $k\le n$ and all $x$, the existence of $D^{-}_{W,1}F_{k}(x,s)=G_{k-1}(x,s)$, uniformly in $s$, and that $D^{-}_{W}F_{k}(x,x)=G_{k-1}(x,x)$. Now, turning our attention to the case $F_{n+1}(x,x)$, we have, by the Fubini's theorem and the definition of $F_{n}$, that
		\begin{align}\label{fubinada}
			\nonumber F_{n+1}(x,x) &=\int_{[0,x)}F_{n}(x,x)-F_{n}(x,\xi)dV(\xi)\\ \nonumber 
			&= \int_{[0,x)}\left(\int_{(\xi,x]}F_{n-1}(x,x)-F_{n-1}(x,\alpha)dW(\alpha)\right)dV(\xi)
			\\ &=\int_{(0,x]}[F_{n-1}(x,x)-F_{n-1}(x,\alpha)]V(\alpha)dW(\alpha).
		\end{align}
		By using the induction hypothesis, we obtain that the function $g(x,\alpha)=[F_{n-1}(x,x)-F_{n-1}(x,\alpha)]V(\alpha)$ satisfies all conditions to apply the Lemma \ref{leibnizrule}, with $D^{-}_{W,1}g(x,\alpha)=[G_{n-2}(x,x)-G_{n-2}(x,\alpha)]V(\alpha)$. Thus, Lemma \ref{leibnizrule} yields
		$$D^{-}_{W}F_{n+1}(x,x)=\int_{(0,x]}[G_{n-2}(x,x)-G_{n-2}(x,\alpha)]V(\alpha)dW(\alpha).$$
		By using a reasoning similar to the one in (\ref{fubinada}), we obtain that $D^{-}_{W}F_{n+1}(x,x)=G_{n}(x,x).$
	\end{proof}
	\begin{remark}\label{rmk111}
		Observe that Lemma \ref{taylorlemma}, provides us with another point of view of how to look at $F_{n}(x,x)$. Indeed, let $(p_{0}(x),q_{0}(x))=(1,W(x))$ and, for $n>0$ define recursively
		$$(p_{n+1}(x),q_{n+1}(x))=\left(\int_{(0,x]}\int_{[0,s)}p_{n}(\xi)dV(\xi)dW(s),\int_{(0,x]}\int_{[0,s)}q_{n}(\xi)dV(\xi)dW(s)\right).$$
		Now, let $F_{0}(x,x)\equiv 1$. Then, we have that, for $n\geq 0,$ $$(p_{n}(x),q_{n}(x))=(F_{2n}(x,x),F_{2n+1}(x,x)).$$
		The reader is invited to determine analogue expressions for $G_{n}.$
	\end{remark}
	\begin{theorem}\label{analyticalVW}
		Let $f\in C^{\infty}_{W,V}(\mathbb{T})$ be a function such that $$\max\left\{\|D^{(2n)}_{W,V}f\|_{\infty},\|D^{(2n+1)}_{W,V}f\|_{\infty}\right\}\le c_{n},$$  where $c_{n}=o\left(\dfrac{n!}{e^{n}}\right)$. Then, we have the following expansion for $f$:
		\begin{equation}\label{sdetaylor}
			f(x)=f(0)+\sum_{k=1}^{\infty}\left[D^{(k)}_{W,V}f(0)\right]F_{k}(x,x).
		\end{equation}
	Furthermore, the convergence in \eqref{sdetaylor} is uniform in $x\in\mathbb{T}$. 
		Moreover the conditions for that $f$ and $D^{(n)}_{W,V}f$ must satisfy in order to be well defined on the torus are:
		
		\[\left\{ \begin{array}{ll}
			\sum_{k=1}^{\infty}D^{k}_{W,V}f(0)F_{k}(1,1)&=0;\\\\
			\sum_{k=1}^{\infty}D^{k+1}_{W,V}f(0)G_{k}(1,1)&=0; \\\\ \sum_{k=1}^{\infty}D^{k+2}_{W,V}f(0)F_{k}(1,1)&=0;\\ 
			&\vdots \\  \end{array} \right. \] 
	\end{theorem}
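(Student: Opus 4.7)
The whole statement will follow by passing to the limit $n\to\infty$ in the finite Taylor-type identity \eqref{taylorpolynomial}, so the plan reduces to two independent tasks: uniformly controlling the remainder $I_{n}(x)$, and then interpreting the resulting infinite series as an element of $C^{\infty}_{W,V}(\mathbb{T})$.

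For the first task I iterate Lemma \ref{taylorbound}. Specializing \eqref{estimativadoresto} to $s=0$ and using $F_{2m}(x,0)=0$ gives
$$|F_{2m}(x,x)| \le \frac{(F_{2}(x,x))^{m}}{m!} \le \frac{(W(1)V(1))^{m}}{m!}$$
uniformly in $x\in\mathbb{T}$, and the defining recursion \eqref{term:1} for $F_{2m+1}$ then yields $|F_{2m+1}(x,x)|\le W(1)(W(1)V(1))^{m}/m!$. The same bounds, applied inside the remainder expression, produce
$$|I_{2m}(x)| \le \|D^{(2m+1)}_{W,V}f\|_{\infty}\,W(1)\,\frac{(W(1)V(1))^{m}}{m!} \le \frac{c_{m}\,W(1)\,(W(1)V(1))^{m}}{m!},$$
and an entirely analogous estimate for $I_{2m+1}$. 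The growth hypothesis $c_{m}=o(m!/e^{m})$ then forces $I_{n}(x)\to 0$ uniformly on $\mathbb{T}$, and the very same estimates show that the series on the right-hand side of \eqref{sdetaylor} is absolutely and uniformly summable. Hence \eqref{taylorpolynomial} passes to the limit and yields \eqref{sdetaylor}.

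For the compatibility conditions, recall that every element of $C^{\infty}_{W,V}(\mathbb{T})$, together with each of its lateral derivatives $D^{(j)}_{W,V}f$, must be a well-defined c\`adl\`ag or c\`agl\`ad mean-zero function on the torus, which in particular forces the value at $x=1$ (identified with $0$) to match the value at $x=0$. Applying \eqref{sdetaylor} at $x=1$ and using $f(1)=f(0)$ yields
$$\sum_{k=1}^{\infty} D^{(k)}_{W,V}f(0)\,F_{k}(1,1) = 0,$$
which is exactly the first listed compatibility condition. For the next one I would apply the same expansion to $D^{-}_{W}f\in C^{\infty}_{V,W}(\mathbb{T})$, using Lemma \ref{taylorlemma} to identify $D^{-}_{W}F_{k}(x,x)=G_{k-1}(x,x)$; evaluating at $x=1$ then produces the second condition in terms of the $G_{k}$'s. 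Iterating this procedure, alternating between $F_{k}$ and $G_{k}$ at each differentiation step, produces the entire sequence of conditions listed in the statement.

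The main technical obstacle I anticipate is rigorously justifying the term-by-term differentiation of the infinite series \eqref{sdetaylor}: uniform convergence of the raw series is not by itself sufficient. I would handle it by establishing, via the $V$-$W$ counterpart of Lemma \ref{taylorbound}, an analogous bound of the form $|G_{n}(x,x)|\le C(W(1)V(1))^{\lfloor n/2\rfloor}/\lfloor n/2\rfloor!$. Since the hypothesis $c_{n}=o(n!/e^{n})$ controls both parities of $D^{(n)}_{W,V}f$ simultaneously, each formally differentiated series then converges uniformly on $\mathbb{T}$, which legitimizes the step-by-step reduction used to derive the successive compatibility conditions.
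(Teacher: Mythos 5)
Your proposal is correct and follows essentially the same route as the paper's proof: you pass to the limit in the expansion \eqref{taylorpolynomial}, bound the remainder $I_{n}$ via the factorial estimate of Lemma \ref{taylorbound} to get uniform convergence, and obtain the torus compatibility conditions from Lemma \ref{taylorlemma} together with that uniform convergence. Your extra care in spelling out the term-by-term differentiation (via bounds on $G_{n}(x,x)$) only makes explicit a step the paper leaves implicit, so no change of approach is involved.
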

	
	\begin{proof}
		We begin by using (\ref{estimativadoresto}) to estimate the remainder $I_{n}$, which is given by the ``polynomial'' expansion \eqref{taylorpolynomial}, in terms of $\|D^{n}f\|_{\infty}$. Indeed,
		
		\begin{align*}
			I_{2n}(x)&=\int_{(0,x]}D_{W,V}^{(2n+1)}f(s)F_{2n}(x,x)-F_{2n}(x,s)dW(s)\\ 
			&\le \dfrac{(F_{2}(1-,1-))^{n}}{n!}\|D_{W,V}^{(2n+1)}f\|_{\infty},
		\end{align*}
		and 
		\begin{align*}
			I_{2n+1}(x)&=\int_{(0,x]}D_{W,V}^{(2n+2)}f(s)F_{2n+1}(x,x)-F_{2n+1}(x,s)dW(s)\\
			&\le \|D_{W,V}^{(2n+2)}f\|_{\infty}\int_{[0,x)}F_{2n+1}(x,x)-F_{2n+1}(x,s)dV(s) \\
			&=\|D_{W,V}^{(2n+2)}f\|_{\infty}\int_{[0,x)}\left(\int_{(s,x]}F_{2n}(x,x)-F_{2n}(x,\alpha)dW(\alpha)\right)dV(s) \\
			&\le \|D_{W,V}^{(2n+2)}f\|_{\infty}\int_{[0,x)}F_{2n}(x,x)-F_{2n}(x,\alpha+) dF_{2}(s) \\
			&\le \|D_{W,V}^{(2n+2)}f\|_{\infty}\dfrac{F_{2}(1-,1-)^{n+1}}{(n+1)!}.
		\end{align*}
		These estimates of $I_{n}(x)$ yield the uniform convergence of $$S_{n}(x):=f(0)+\sum_{k=1}^{n-1}\left[D^{(k)}_{W,V}f(0)\right]F_{k}(x,x)$$ to $f(x)$.
		Finally, the conditions that $f$ and $D_{W,V}^{n}f$ must satisfy to be well-defined on the torus follow from Lemma \ref{taylorlemma} and the uniform convergence of $S_{n}(x)$.
	\end{proof} 

To summarize, expression \eqref{taylorpolynomial} can be seen as a Taylor's expansion (centered at zero) with integral remainder, whereas  Theorem \ref{analyticalVW}, provides a condition for a function $f\in C^{\infty}_{W,V}(\mathbb{T})$ to have a series representation given by \eqref{sdetaylor}. The functions in $C^{\infty}_{W,V}(\mathbb{T})$ that satisfy \eqref{sdetaylor} are our analogue of analytical functions.

\section{Characterization of the eigenvectors of the $W$-$V$-Laplacian}\label{sect42}
	
	Our goal in this section is to use Theorem \ref{analyticalVW} to define suitable functions, that generalize the usual cosine and sine, in the sense that if $V(x)=W(x)=x$, then they will be the cosine and sine functions. By using these functions, we will be able to fully characterize all eigenvectors of the $W$-$V$-Laplacian, $\Delta_{W,V}$.
	
	Let us begin, by defining the functions that will generalize the cosine and sine functions, respectively. Indeed, let $\alpha>0$ and define the following functions from $\mathbb{R}$ to $\mathbb{R}$: $$C_{W,V}(\alpha,x)=\sum_{n=0}^{\infty}\alpha^{2n}(-1)^{n}F_{2n}(x,x),\,\,\,\,\,\,\,S_{W,V}(\alpha,x)=\sum_{n=0}^{\infty}\alpha^{2n+1}(-1)^{n}F_{2n+1}(x,x)$$
	and $$C_{V,W}(\alpha,x)=\sum_{n=0}^{\infty}\alpha^{2n}(-1)^{n}G_{2n}(x,x),\,\,\,\,\,\,\,S_{V,W}(\alpha,x)=\sum_{n=0}^{\infty}\alpha^{2n+1}(-1)^{n}G_{2n+1}(x,x).$$
	
	By computing $F_{n}$ and $G_{n}$ when $V(x)=W(x)=x$, we have that the functions $C_{W,V}$ and $S_{W,V}$ (or $C_{V,W}$ and $S_{V,W}$), indeed, agree with the cosine and sine functions, respectively.
		
	Note that the functions $C_{W,V}$ and $S_{W,V}$ (or $C_{V,W}$ and $S_{V,W}$) are well-defined. Indeed, by using Lemma \ref{taylorbound}, we obtain that each of these series of functions are well defined, converge uniformly, and converges absolutely on each compact interval of $\mathbb{R}$. 
	
	Now, by using Lemma \ref{leibnizrule}, we have that
	\begin{equation}\label{difcossinWV}
	D^{-}_{W}C_{W,V}(\alpha,x)=-\alpha S_{V,W}(\alpha,x),\,\,\,\,\,\,\,D^{-}_{W}S_{W,V}(\alpha,x)=\alpha C_{V,W}(\alpha,x)
	\end{equation}
	and
	\begin{equation}\label{difsincosVW}
	D^{+}_{V}C_{V,W}(\alpha,x)=-\alpha S_{W,V}(\alpha,x),\,\,\,\,\,\,\,D^{+}_{V}S_{V,W}(\alpha,x)=\alpha C_{W,V}(\alpha,x).
	\end{equation}
	The above expressions generalize the well-known $\sin'(x) = \cos(x)$ and $\cos'(x)=-\sin(x)$.

	Now, we can use the relations \eqref{difcossinWV} and \eqref{difsincosVW} to obtain that $S_{W,V}(\alpha,x)$ is the unique solution in $C^{\infty}_{W,V}(\mathbb{T})$ of 
	\begin{equation}\label{sineqpart}
		\left\{ \begin{array}{ll}
			-\Delta_{W,V}u=\alpha^2 u;\\
			u(0)=0, D^{-}_{W}u(0)=\alpha. \end{array} \right. 
	\end{equation}
	and that $C_{W,V}(\alpha,x)$  is the unique solution in $C^{\infty}_{W,V}(\mathbb{T})$ of 
	\begin{equation}\label{coseqpart}
	\left\{ \begin{array}{ll}
		-\Delta_{W,V}u=\alpha^2 u;\\
		u(0)=1, D^{-}_{W}u(0)=0. \end{array} \right. 
	\end{equation}

Now, observe that since $S_{W,V}(\alpha,\cdot)$ solves \eqref{sineqpart} and $C_{W,V}(\alpha,\cdot)$ solves \eqref{coseqpart}, it is easy to see that for any $\alpha\neq 0$, $S_{W,V}(\alpha,\cdot)$ and $C_{W,V}(\alpha,\cdot)$ are linearly independent.
	
	Therefore, by the linear independence of $S_{W,V}(\alpha,x)$ and $C_{W,V}(\alpha,x)$, for $\alpha\neq 0$, we have that any solution $u_{\alpha}$, of $-\Delta_{W,V}u=\alpha^2 u$ is given by
	$$u_{\alpha}(x)=A\cdot C_{W,V}(\alpha,x)+B\cdot S_{W,V}(\alpha,x)$$ where A and B are determined by the initial conditions at $x=0$.
	
	The next result characterizes the eigenvectors of $\Delta_{W,V}$ on the torus by using the functions $C_{W,V}(\alpha,\cdot)$ and $S_{W,V}(\alpha,\cdot)$. The reader is invited do compare this characterization with the Taylor's expansions of the eigenvectors of the standard Laplacian, $-\Delta$, on the torus.
	\begin{proposition}\label{caraceige}
		If $(\lambda_{i},\nu_{i})_{i>0}$ satisfy 
		(\ref{eigeneq}), then there exist $a_i,b_i\in\mathbb{R}$ such that
		\begin{equation}\label{s01}
			\nu_{i}(x)=a_{i}C_{W,V}\left(\sqrt{\lambda_{i}},x\right)+\dfrac{b_{i}}{\sqrt{\lambda_{i}}}S_{W,V}\left(\sqrt{\lambda_{i}},x\right),
		\end{equation}
		$V$-$a.e.$. Furthermore, for each $i$, the vector $(a_{i},b_{i})\neq(0,0)$ is obtained as solution of the system:
		\begin{equation}\label{bcondi}
			\begin{cases}
				a_{i}\left[C_{W,V}\left(\sqrt{\lambda_{i}},1\right)-1\right]+\dfrac{b_{i}}{\sqrt{\lambda_{i}}}S_{W,V}(\sqrt{\lambda_{i}},1)=0; \\
				\dfrac{a_{i}}{\sqrt{\lambda_{i}}}S_{V,W}(\sqrt{\lambda_{i}},1)-\dfrac{b_{i}}{\lambda_{i}}\left[C_{V,W}\left(\sqrt{\lambda_{i}},1\right)-1\right]=0.
			\end{cases}
		\end{equation}
	\end{proposition}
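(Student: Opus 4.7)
The plan is to apply the generalised MacLaurin expansion (Theorem~\ref{analyticalVW}) to $\nu_i$ and recognise the resulting series as the defining series of $C_{W,V}(\sqrt{\lambda_i},\cdot)$ and $S_{W,V}(\sqrt{\lambda_i},\cdot)$. Since $\lambda_i > 0$, Theorem~\ref{eigenreg} gives $\nu_i \in C^{\infty}_{W,V,0}(\mathbb{T})$, so all iterated derivatives $D^{(n)}_{W,V}\nu_i$ exist pointwise. Iterating $D^{+}_{V}D^{-}_{W}\nu_i = -\lambda_i \nu_i$ produces
\begin{equation*}
D^{(2m)}_{W,V}\nu_i = (-\lambda_i)^{m}\nu_i, \qquad D^{(2m+1)}_{W,V}\nu_i = (-\lambda_i)^{m}D^{-}_{W}\nu_i,
\end{equation*}
so that $\|D^{(n)}_{W,V}\nu_i\|_{\infty} \leq \lambda_i^{n/2} M_i$ with $M_i := \max\{\|\nu_i\|_{\infty},\|D^{-}_{W}\nu_i\|_{\infty}/\sqrt{\lambda_i}\}$. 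Since $\lambda_i^{n/2}e^{n}/n! \to 0$, the hypothesis $c_n = o(n!/e^n)$ of Theorem~\ref{analyticalVW} is satisfied.

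Setting $a_i := \nu_i(0)$ and $b_i := D^{-}_{W}\nu_i(0)$, Theorem~\ref{analyticalVW} then yields
\begin{equation*}
\nu_i(x) = a_i + a_i\sum_{m=1}^{\infty}(-\lambda_i)^{m}F_{2m}(x,x) + b_i\sum_{m=0}^{\infty}(-\lambda_i)^{m}F_{2m+1}(x,x),
\end{equation*}
uniformly in $x \in \mathbb{T}$. Writing $(-\lambda_i)^{m} = (-1)^{m}(\sqrt{\lambda_i})^{2m}$, using the convention $F_0(x,x) \equiv 1$ from Remark~\ref{rmk111}, and comparing with the defining series of $C_{W,V}$ and $S_{W,V}$, the first sum collapses to $a_i[C_{W,V}(\sqrt{\lambda_i},x)-1]$ and the second to $(b_i/\sqrt{\lambda_i}) S_{W,V}(\sqrt{\lambda_i},x)$, which gives exactly \eqref{s01}.

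It remains to derive the system \eqref{bcondi} from periodicity on $\mathbb{T}$. Since $\nu_i$ is c\`adl\`ag and periodic, and $W$ is continuous at $0$, we have $\nu_i(1) = \nu_i(0) = a_i$; inserting $x=1$ in \eqref{s01} produces the first equation of \eqref{bcondi}. For the second, differentiating \eqref{s01} by means of \eqref{difcossinWV} yields $D^{-}_{W}\nu_i(x) = -a_i\sqrt{\lambda_i}\,S_{V,W}(\sqrt{\lambda_i},x) + b_i\,C_{V,W}(\sqrt{\lambda_i},x)$; imposing $D^{-}_{W}\nu_i(1) = D^{-}_{W}\nu_i(0) = b_i$ and dividing by $\lambda_i$ yields the second equation. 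Finally $(a_i,b_i) \neq (0,0)$, since $(a_i,b_i) = (0,0)$ in \eqref{s01} would force $\nu_i \equiv 0$, contradicting that $\nu_i$ is an eigenvector. The main obstacle is the verification of the decay bound needed to invoke Theorem~\ref{analyticalVW}; once in hand, the remaining steps reduce to careful odd/even bookkeeping and routine appeals to \eqref{difcossinWV}.
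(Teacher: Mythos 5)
Your proposal is correct, but it takes a genuinely different route from the paper's. The paper never invokes Theorem \ref{eigenreg} or Theorem \ref{analyticalVW}: it starts from the integral identity \eqref{caracauto}, which comes from the characterization \eqref{c422} of $\mathcal{D}_{W,V}(\mathbb{T})$, substitutes it recursively into itself (a Picard-type iteration), recognizes the resulting absolutely and uniformly convergent series, split into even and odd terms, as \eqref{s01}, and then reads off \eqref{bcondi} from the two constraints built into \eqref{c422}, namely $\int_{\mathbb{T}}\nu_i\,dV=0$ and the closing condition on $b_i$. You instead feed the already-proved regularity (Theorem \ref{eigenreg}) into the generalized MacLaurin expansion (Theorem \ref{analyticalVW}) after checking the factorial decay hypothesis, and you obtain \eqref{bcondi} from periodicity of $\nu_i$ and of $D_W^-\nu_i$ at the tagged point via \eqref{difcossinWV}. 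The two derivations of \eqref{bcondi} are equivalent: since $D_W^-\nu_i(y)=b_i-\lambda_i\int_{[0,y)}\nu_i\,dV$, your condition $D_W^-\nu_i(1)=b_i$ is exactly the zero-mean constraint, and $\nu_i(1)=a_i$ is exactly the closing condition in \eqref{c422}; your computation does reproduce \eqref{bcondi} verbatim. What the paper's route buys is independence from the regularity theorem, which is why Corollary \ref{regeigenvect} can be stated as ``another proof'' of regularity — with your argument that corollary would become circular; what your route buys is brevity and a clean reuse of the Taylor machinery. Two details you should spell out: boundedness of $\|\nu_i\|_\infty$ and $\|D_W^-\nu_i\|_\infty$ (immediate, as the regular versions are c\`adl\`ag/c\`agl\`ad on the compact torus), and the evaluation at $x=1$, where one needs that $\nu_i$, $D_W^-\nu_i$ and the diagonals $F_n(x,x)$, $G_n(x,x)$ are left-continuous at $1\equiv 0$ (true because $W$ and $V$ are continuous at the tagged point, so $dW$ and $dV$ have no atom there), so that inserting $x=1$ into the uniformly convergent series is legitimate.
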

	
	\begin{proof}
		We begin by noting that, by \eqref{c422}, the eigenvector associated to $\lambda_{i}>0$ satisfies 
		\begin{equation}\label{caracauto}
			\nu_{i}(x)=a_{i}+b_{i}W(x)-\lambda_{i}\int_{(0,x]}\int_{[0,s)}\nu_{i}(\xi)dV(\xi)dW(s),
		\end{equation}
		with $a_{i}$ and $b_{i}$ fulfilling the relations $\int_{\mathbb{T}}\nu_{i}dV=0$ and $b_{i}W(1)+\int_{\mathbb{T}}\int_{[0,s)}\nu_{i}(\xi)dV(\xi)dW(s)=0.$ So up to a set of $V$-measure zero we can recursively substitute (\ref{caracauto}) into itself, obtaining on the right side an absolutely and uniformly convergent series. Since we have absolute convergence, we can separate the into even and odd terms, thus obtaining (\ref{s01}). Finally, by applying (\ref{caracauto}) on $\int_{\mathbb{T}}\nu_{i}dV=0$ and $b_{i}W(1)+\int_{\mathbb{T}}\int_{[0,s)}\nu_{i}(\xi)dV(\xi)dW(s)=0$, we obtain the relations in (\ref{bcondi}).
	\end{proof}
	\begin{remark}
		We used Remark \ref{rmk111} when doing the substitution steps in the proof of Proposition \ref{caraceige} to reduce the amount of computations.
	\end{remark}

As a direct consequence of Proposition \ref{caracauto}, we have (another proof of) the regularity of eigenvectors of $\Delta_{W,V}$ (compare with \cite[Theorem 4]{keale}):

	\begin{corollary}[Regularity of the eigenvectors]\label{regeigenvect}
		If $u\in L^{2}_{V}(\mathbb{T})\setminus \{0\}$ satisfies
		\begin{equation}
			\begin{cases}
				\Delta_{W,V}u=\lambda u;\\
				u\in \mathcal{D}_{W,V}(\mathbb{T}).
			\end{cases}
		\end{equation}
		for some $\lambda>0$, then there is $v\in C^{\infty}_{W,V}(\mathbb{T})\cap L^{2}_{V,0}(\mathbb{T})$ such that $u=v$ V-$a.e.$
	\end{corollary}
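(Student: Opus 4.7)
The plan is to exhibit $v$ explicitly via the measure-trigonometric representation supplied by Proposition \ref{caraceige}. Applying that proposition to the eigenpair $(\lambda,u)$ produces constants $a,b\in\mathbb{R}$, not both zero, satisfying the torus compatibility relations \eqref{bcondi} and such that
$$
u(x) \;=\; a\,C_{W,V}\!\bigl(\sqrt{\lambda},x\bigr) + \frac{b}{\sqrt{\lambda}}\,S_{W,V}\!\bigl(\sqrt{\lambda},x\bigr) \qquad V\text{-a.e.}
$$
I would then take $v$ to be the right-hand side, which is a well-defined pointwise function on $\mathbb{T}$, and verify the two membership assertions.

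For $v\in C^{\infty}_{W,V}(\mathbb{T})$, the key ingredients are Lemmas \ref{taylorbound} and \ref{taylorlemma}. The first supplies the decay bound on $F_{2n}(x,x)-F_{2n}(x,s)$ needed to conclude that the series defining $C_{W,V}(\sqrt{\lambda},\cdot)$ and $S_{W,V}(\sqrt{\lambda},\cdot)$, and every series obtained from them by applying finitely many $D_{W}^{-}$'s and $D_{V}^{+}$'s, converge uniformly (and absolutely) on $\mathbb{T}$. The second identifies the termwise lateral derivatives of the building blocks $F_{n}(x,x)$ and $G_{n}(x,x)$; combined with Lemma \ref{leibnizrule} this is exactly what legitimizes the formal identities \eqref{difcossinWV}--\eqref{difsincosVW}, so that each application of $D_{W}^{-}$ or $D_{V}^{+}$ carries one of the four building blocks into another while flipping the c\`adl\`ag/c\`agl\`ad class. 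Passage to the uniform limit preserves that class, hence $D^{(n)}_{W,V}v$ exists pointwise and belongs to $C_{\sigma(n)}(\mathbb{T})$ for every $n$; the torus compatibility at every order is the precise content of \eqref{bcondi} together with its higher-order analogues collected at the end of Theorem \ref{analyticalVW}.

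For the mean-zero condition $v\in L^{2}_{V,0}(\mathbb{T})$, I would use the definition of $\mathcal{D}_{W,V}(\mathbb{T})$ directly. By that definition there exists $\mathfrak{f}\in L^{2}_{V,0}(\mathbb{T})$ associated to $u$, and Definition \ref{laplacianWV2} gives $\Delta_{W,V}u=\mathfrak{f}$; the eigenvalue equation then forces $\mathfrak{f}$ to be a nonzero scalar multiple of $u$, so $u$ itself has zero $V$-mean, and hence so does $v$ since $u=v$ $V$-a.e.

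The main obstacle is purely technical: upgrading the uniform convergence of the defining series to pointwise identities for the iterated lateral derivatives at every order. This is exactly what Lemma \ref{leibnizrule} is designed for, but it must be applied in tandem with Lemma \ref{taylorbound} to control the tails of all derivative series simultaneously, and with Lemma \ref{taylorlemma} to identify the limiting derivatives with the ``trigonometric'' partners of the original series. Everything else--linear independence of $S_{W,V}$ and $C_{W,V}$, non-vanishing of $(a,b)$, and the $V$-a.e.\ identification with $u$--is already packaged inside Proposition \ref{caraceige}.
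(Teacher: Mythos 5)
Your proposal follows essentially the same route as the paper, which presents Corollary \ref{regeigenvect} precisely as a direct consequence of Proposition \ref{caraceige}: take $v$ to be the combination $a\,C_{W,V}(\sqrt{\lambda},\cdot)+\tfrac{b}{\sqrt{\lambda}}S_{W,V}(\sqrt{\lambda},\cdot)$, whose membership in $C^{\infty}_{W,V}(\mathbb{T})$ rests on Lemmas \ref{taylorbound}, \ref{leibnizrule} and \ref{taylorlemma} exactly as the paper uses them for \eqref{difcossinWV}--\eqref{difsincosVW}. Your additional observations (the mean-zero property via $\mathfrak{f}=\lambda u\in L^{2}_{V,0}(\mathbb{T})$, and the higher-order torus compatibility reducing to \eqref{bcondi} because $v$ satisfies the second-order equation) correctly fill in details the paper leaves implicit.
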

	
	\section{Higher order $W$-$V$-Sobolev spaces}
	
	We will now introduce the $W$-$V$-Sobolev spaces of higher orders in a natural manner. First, let us provide a Definition of $V$-right weak derivative, which is similar to Definition \ref{defWleft}.
	
	\begin{definition}
Let $f\in L^{2}_{W}(\mathbb{T})$. We say that $f$ has $V$-right weak derivative if there exists $F\in L^{2}_{V}(\mathbb{T})$ such that, for all $g\in C^{\infty}_{W,V}(\mathbb{T})$, we have
\begin{equation}\label{intbparts}
	\int_{\mathbb{T}}fD_{W}^{-}g dW=-\int_{\mathbb{T}}FgdV.
\end{equation}
 If there is a function F satisfying (\ref{intbparts}) this function is unique and we denote it by $F:=D_{V}^{+}f$. We use the same notation as the one used in lateral derivative in view of Remark 9 in \cite{keale}.
	\end{definition}

We are now in a position to define the higher order $W$-$V$-Sobolev spaces.
	
	\begin{definition}\label{highersob}
		Given $k\in\mathbb{N}$, we define the $W$-$V$-Sobolev space of order $k$ as
		$$H^{k}_{W,V}(\mathbb{T})=\left\{f\in L^{2}_{V}(\mathbb{T}); \exists D_{W,V}^{(n)}f\in L^{2}_{\kappa(n)}(\mathbb{T})\,\forall n=1,\ldots,k\right\},$$
		where $D_{W,V}^{(n)}$ is defined in the same manner as in expression \eqref{nderiv},
		and 
		\begin{equation}\label{kappa_order}
			\kappa(n):=\left\{ \begin{array}{ll}
				W,\;\mbox{if}\; n\; \mbox{is odd}; \\
				V,\;\mbox{if}\; n\; \mbox{is even}.\end{array} \right. 
		\end{equation}
		We endow $H^{k}_{W,V}(\mathbb{T})$ with the norm
		$$\|f\|^2_{k,W,V}(\mathbb{T}):=\|f\|_{V}^{2}+\sum_{i=i}^{k}\|\partial^{(i)}_{W,V}f\|_{\kappa(i)}^{2}.$$
	\end{definition}
	
	\begin{remark}
		It is easy to notice that the space $H^{k}_{W,V}(\mathbb{T})$, endowed with the norm $\|f\|^2_{k,W,V}$,
		is a Hilbert space.
	\end{remark}
	
	\begin{remark}\label{approxsmoothhigh}
		By proceeding as in \cite[Corollary 2]{keale}, we could also define $H^k_{W,V}(\mathbb{T})$ as $$H^{k}_{W,V}(\mathbb{T})=\overline{C^{\infty}_{W,V}(\mathbb{T})}^{\|.\|_{k,W,V}}.$$
	\end{remark}
	The next result characterize the space $H^{k}_{W,V}(\mathbb{T})$ in terms of its Fourier coeficients, which generalizes expression \eqref{fourierHWV}:
	\begin{theorem}\label{fouriersob}
		We have the following characterization 
		$$H^{k}_{W,V}(\mathbb{T})=\left\{f\in L^{2}_{V}(\mathbb{T}); f=\alpha_{0}+\sum_{i=1}^{\infty}\alpha_{i}\nu_{i}\hbox{ and } \sum_{i=1}^{\infty}\lambda_{i}^{k}\alpha_{i}^2<\infty\right\}.$$
	\end{theorem}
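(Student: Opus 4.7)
The plan is to pass through the eigenbasis: prove the characterization by combining the orthogonality of the system $\{\nu_i\}$ in $L^2_V(\mathbb{T})$ with the induced orthogonality of $\{D_W^-\nu_i\}$ in $L^2_{W,0}(\mathbb{T})$. The base case $k=1$ is already given by \eqref{fourierHWV}, so only the higher-order behavior needs new input.

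First I would compute $\langle D_W^-\nu_i, D_W^-\nu_j\rangle_W$ by one integration by parts (Proposition \ref{intbypartsprop2}) to obtain $\lambda_i\delta_{ij}$; this is the orthogonality that lets us read off $L^2_W$-norms of $W$-left-derivatives from Fourier data. Then, for any finite eigen-combination $h = \beta_0 + \sum_{i=1}^N \beta_i\nu_i \in C^\infty_{W,V}(\mathbb{T})$, iterating $-\Delta_{W,V}\nu_i = \lambda_i\nu_i$ together with $D^{(2m+1)}_{W,V} = D_W^- D^{(2m)}_{W,V}$ shows
\begin{equation*}
D^{(2m)}_{W,V}h = (-1)^m\sum_{i=1}^N\lambda_i^m\beta_i\nu_i,\qquad D^{(2m+1)}_{W,V}h = (-1)^m\sum_{i=1}^N\lambda_i^m\beta_i D_W^-\nu_i,
\end{equation*}
and Parseval produces the clean norm formula
\begin{equation*}
\|h\|_{k,W,V}^2 = \beta_0^2 + \sum_{i=1}^N\beta_i^2\bigl(1+\lambda_i+\cdots+\lambda_i^k\bigr).
\end{equation*}

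The inclusion $\supseteq$ is then easy: for $f = \alpha_0 + \sum\alpha_i\nu_i$ with $\sum\lambda_i^k\alpha_i^2<\infty$, the partial sums $f_N = \alpha_0 + \sum_{i\le N}\alpha_i\nu_i$ belong to $C^\infty_{W,V}(\mathbb{T})\subset H^k_{W,V}(\mathbb{T})$ by Corollary \ref{regeigenvect}, and the norm identity above gives $\|f_N - f_M\|^2_{k,W,V} = \sum_{M<i\le N}\alpha_i^2(1+\lambda_i+\cdots+\lambda_i^k)$, hence $(f_N)$ is Cauchy in $H^k_{W,V}(\mathbb{T})$; completeness plus uniqueness of $L^2_V$-limits identifies the $H^k$-limit with $f$.

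For $\subseteq$, take $f\in H^k_{W,V}(\mathbb{T})$ and a sequence $h_n\in C^\infty_{W,V}(\mathbb{T})$ with $h_n\to f$ in $\|\cdot\|_{k,W,V}$ via Remark \ref{approxsmoothhigh}. Writing the $L^2_V$-Fourier expansion $h_n = \beta_0^{(n)} + \sum_i \beta_i^{(n)}\nu_i$, I would extend the norm formula above from finite eigen-combinations to arbitrary $h_n\in C^\infty_{W,V}(\mathbb{T})$ by testing $D^{(j)}_{W,V}h_n$ against $\nu_i$ (for $j$ even) or against $D_W^-\nu_i$ (for $j$ odd) through iterated integration by parts against the eigenfunctions; this identifies the coefficients of $D^{(j)}_{W,V}h_n$ in the appropriate orthogonal system as $(-1)^{\lfloor j/2\rfloor}\lambda_i^{\lfloor j/2\rfloor}\beta_i^{(n)}$ and then Parseval yields $\|D^{(j)}_{W,V}h_n\|^2_{\kappa(j)} = \sum_i \lambda_i^j (\beta_i^{(n)})^2$. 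Cauchyness of $(h_n)$ in $H^k_{W,V}(\mathbb{T})$ becomes Cauchyness of the coefficient sequences in the weighted $\ell^2$ with weights $(1+\lambda_i+\cdots+\lambda_i^k)$, whose completeness together with the $L^2_V$-convergence $h_n\to f$ forces the limit coefficients to be the $L^2_V$-Fourier coefficients $\alpha_i$ of $f$, giving $\sum\lambda_i^k\alpha_i^2<\infty$.

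The main obstacle is the last point: extending the Parseval-type norm formula from finite eigen-expansions to a general $h\in C^\infty_{W,V}(\mathbb{T})$. It forces one to thread the iterated integration-by-parts argument up to order $k$, alternating between the $W$-left and $V$-right weak-derivative dualities and between the $L^2_V$ and $L^2_W$ orthogonal systems; all the analytic content of the theorem sits in verifying this identity carefully (and uniformly in $n$), after which the rest of the proof is soft functional-analytic bookkeeping.
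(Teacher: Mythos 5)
Your proposal is correct and is essentially the paper's own argument: both directions rest on the orthogonality $\langle D_W^-\nu_i,D_W^-\nu_j\rangle_W=\lambda_i\delta_{ij}$, Parseval's identity in the systems $\{\nu_i\}$ and $\{D_W^-\nu_i/\sqrt{\lambda_i}\}$, and approximation by eigenfunction partial sums combined with Remark \ref{approxsmoothhigh}. The only cosmetic difference is in the inclusion $H^{k}_{W,V}(\mathbb{T})\subseteq\{f:\sum_i\lambda_i^k\alpha_i^2<\infty\}$: the paper expands the weak derivative $D^{(k)}_{W,V}f$ directly in the relevant orthonormal system and applies Parseval at once, whereas you obtain the same norm identity by first proving it on $C^{\infty}_{W,V}(\mathbb{T})$ and then passing the Fourier coefficients of an approximating sequence to the limit in the weighted $\ell^2$ space.
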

	\begin{proof} Let $k$ be odd and $f=\alpha_{0}+\sum_{i=1}^{\infty}\alpha_{i}\nu_{i}$. Then, by using Lemma \ref{Wderivclosed}, we have that $$\sum_{i=1}^{\infty}\alpha_{i}\lambda_{i}^{\frac{k-1}{2}}\sqrt{\lambda_{i}}\frac{D^{-}_{W}\nu_{i}}{\sqrt{\lambda_i}}=D^{(k)}_{W,V}f\in L^{2}_{W}(\mathbb{T}).$$
		Indeed, by Parseval's identity we have that
		$$ \sum_{i=1}^{\infty}\alpha_{i}^2\lambda_{i}^{k}=\|D^{(k)}_{W,V}f\|^{2}_{W}<\infty.$$
		Conversely, let $f=\alpha_{0}+\sum_{i=1}^{\infty}\alpha_{i}\nu_{i}\hbox{ and } \sum_{i=1}^{\infty}\lambda_{i}^{k}\alpha_{i}^2<\infty$. Further, recall that, by \eqref{asymplambda}, $\lambda_{i}\to\infty$ as $i\to\infty.$ This implies that, for large $i\in\mathbb{N}$,  we have $\lambda_{i}>1$. Therefore, if $\sum_{i=1}^{\infty}\alpha_{i}^2\lambda_{i}^{k}<\infty$, then $\sum_{i=1}^{\infty}\alpha_{i}^2\lambda_{i}^{j}<\infty$, for $j\le k.$ Now, let $(f_k)_{k\in\mathbb{N}}$ be the sequence given by $$f_{k}=\alpha_{0}+\sum_{i=1}^{k}\alpha_{i}\nu_{i}.$$
		We have that $f_{k}\in C^{\infty}_{W,V}(\mathbb{T})$ and that $D^{(n)}_{W,V}f_{k}$ is Cauchy in $L^{2}_{\kappa(n)}(\mathbb{T})$ for $n=1,\ldots,k$. Therefore, $f_{k}\to f$ with respect to the norm $\|.\|_{k,W,V}.$ The result, thus, follows from Remark \ref{approxsmoothhigh}.
	\end{proof}
	
	We have the following strong regularity for these higher order $W$-$V$-Sobolev spaces:
	
	\begin{theorem}\label{regularity}
		For every $k\geq 1$:
		$$H^k_{W,V}(\mathbb{T}) \subset C_{W,V}^{k-1}(\mathbb{T}),$$
		where $C^0_{W,V}(\mathbb{T}) = \{f:\mathbb{T}\to\mathbb{R};\, \hbox{$f$ is càdlàg and $D(f)\subset D(V)$}\}$, where $D(f)$ stands for the set of discontinuity points of $f$.
	\end{theorem}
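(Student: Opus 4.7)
I would argue by induction on $k\geq 1$, exploiting the integral representation of Theorem \ref{sobchar2} and the symmetric role played by $(W,V)$ and $(V,W)$ (Remark \ref{orderVW2}).

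\textbf{Base case $k=1$.} Let $f\in H^1_{W,V}(\mathbb{T})=H_{W,V}(\mathbb{T})$. By Theorem \ref{sobchar2}, there is a càdlàg representative of $f$ satisfying
$$f(x)=f(0)+\int_{(0,x]}D_W^-f(s)\,dW(s),\qquad V\text{-a.e.}$$
Since $D_W^-f\in L^2_W(\mathbb{T})$ and the Stieltjes integral of an $L^2_W$ function against $dW$ is càdlàg with jumps contained in the atoms of $W$, the right-hand side defines a function with $D(f)\subset D(W)$. This shows $f\in C_{W,V}^0(\mathbb{T})$ (interpreting the discontinuity set condition in the only natural way compatible with the $W$-integral representation).

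\textbf{Inductive step.} Assume $H^k_{W,V}(\mathbb{T})\subset C^{k-1}_{W,V}(\mathbb{T})$ and also its symmetric analogue $H^k_{V,W}(\mathbb{T})\subset C^{k-1}_{V,W}(\mathbb{T})$. Let $f\in H^{k+1}_{W,V}(\mathbb{T})$. Put $g:=D_W^-f\in L^2_W(\mathbb{T})$. Unfolding the definition of the iterated weak derivatives in Definition \ref{highersob}, and swapping the roles of $V$ and $W$, one sees that $g$ admits the lateral weak derivatives $D_{V,W}^{(n)}g$ in $L^2_{\kappa'(n)}(\mathbb{T})$ for $n=1,\ldots,k$, with $\kappa'$ the swapped analogue of $\kappa$ in \eqref{kappa_order}. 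Hence $g\in H^k_{V,W}(\mathbb{T})$ and, by the induction hypothesis applied in the $(V,W)$-setting, $g\in C^{k-1}_{V,W}(\mathbb{T})$; in particular $g$ is càglàd with the prescribed zero-mean and regularity of its iterated pointwise lateral derivatives up to order $k-1$.

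\textbf{From weak to pointwise.} The key step is to upgrade the $V$-a.e. identity $f(x)=f(0)+\int_{(0,x]}g(s)\,dW(s)$ to a genuine pointwise identity on $\mathbb{T}$ and then check that the pointwise $W$-left derivative of the right-hand side equals $g(x)$ everywhere. Because $g$ is càglàd, a direct estimate
$$\left|\frac{\int_{(x-h,x]}g(s)\,dW(s)}{W(x)-W(x-h)}-g(x)\right|\leq \sup_{s\in(x-h,x]}|g(s)-g(x)|\xrightarrow[h\to 0^+]{}0,$$
combined with the integral representation, shows that $D_W^-f(x)=g(x)$ holds pointwise. Iterating this observation (alternating the roles of $(W,V)$ and $(V,W)$) yields $D_{W,V}^{(n)}f=D_{V,W}^{(n-1)}g$ pointwise for every $n=1,\ldots,k$, with all càdlàg/càglàd and zero-mean conditions inherited from the membership $g\in C^{k-1}_{V,W}(\mathbb{T})$. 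Consequently $f\in C^k_{W,V}(\mathbb{T})$, completing the induction.

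\textbf{Main obstacle.} The routine part is the integral representation and the induction bookkeeping; the delicate point is the passage from the $V$-a.e. Stieltjes identity to a pointwise identity that also identifies the pointwise lateral derivative with the weak one. This requires using precisely the càglàd (resp.\ càdlàg) regularity produced by the previous step of the induction, so that the Newton-type quotient of the Stieltjes integral converges at every point of $\mathbb{T}$ rather than merely almost everywhere.
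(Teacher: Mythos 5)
Your argument is correct and follows essentially the same route as the paper: the paper's own (two-line) proof rests on the integral representation of Theorem \ref{sobchar2} for the base case $k=1$ and then induction on $k$, which is exactly what you carry out, only in more detail (in particular the pointwise identification of the weak with the lateral derivative via the c\`agl\`ad regularity inherited from the previous inductive step). Your reading of the discontinuity condition as $D(f)\subset D(W)$, forced by the $dW$-integral representation, is the natural one.
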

	\begin{proof}
		Begin by noticing that the case $k=1$ follows directly from \cite[Theorem 2]{keale}. By using induction on $k$ and \cite[Theorem 2]{keale}, again, we obtain the result for any $k\geq 1$.
	\end{proof}
An immediate application of Theorem \ref{regularity} yields the following Corollary regarding strong regularity of $H^\infty_{W,V}(\mathbb{T})$:
	\begin{corollary}\label{hinfinityreg}
		$$\bigcap_{k=1}^{\infty}H^{k}_{W,V}(\mathbb{T}) = : H^{\infty}_{W,V}(\mathbb{T}) = C^\infty_{W,V}(\mathbb{T}).$$
	\end{corollary}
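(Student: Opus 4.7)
The plan is to prove the two inclusions $C^\infty_{W,V}(\mathbb{T}) \subseteq H^\infty_{W,V}(\mathbb{T})$ and $H^\infty_{W,V}(\mathbb{T}) \subseteq C^\infty_{W,V}(\mathbb{T})$ separately. The second inclusion will follow by iterating Theorem \ref{regularity} across all orders $k$, while the first reduces to checking that the strong lateral derivatives of a function in $C^\infty_{W,V}(\mathbb{T})$ are square integrable and coincide with the corresponding weak derivatives.

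For the inclusion $C^\infty_{W,V}(\mathbb{T}) \subseteq H^\infty_{W,V}(\mathbb{T})$, I would fix $f \in C^\infty_{W,V}(\mathbb{T})$ and $n\geq 1$. By the defining property of $C^\infty_{W,V}(\mathbb{T})$, the strong lateral derivative $D^{(n)}_{W,V}f$ exists pointwise on $\mathbb{T}$ and belongs to $C_{\sigma(n)}(\mathbb{T})$. Being c\`adl\`ag or c\`agl\`ad on the compact set $\mathbb{T}$, it is bounded and hence lies in $L^{2}_{\kappa(n)}(\mathbb{T})$. Applying the integration-by-parts formula of Theorem \ref{thm:1}(b) (and its dual counterpart from Remark \ref{orderVW2}) iteratively against test functions $g \in C^\infty_{V,W}(\mathbb{T})$, one obtains that these strong derivatives satisfy the weak defining identity \eqref{defWd2} at every order. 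This yields $f \in H^k_{W,V}(\mathbb{T})$ for every $k \geq 1$, and therefore $f \in H^\infty_{W,V}(\mathbb{T})$.

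For the inclusion $H^\infty_{W,V}(\mathbb{T}) \subseteq C^\infty_{W,V}(\mathbb{T})$, take $f \in \bigcap_{k\geq 1} H^k_{W,V}(\mathbb{T})$. By Theorem \ref{regularity}, for every $k \geq 1$ the function $f$ admits a representative in $C^{k-1}_{W,V}(\mathbb{T})$; intersecting over $k$, all strong lateral derivatives of $f$ exist pointwise on $\mathbb{T}$ with the c\`adl\`ag or c\`agl\`ad laterality dictated by $\sigma(\cdot)$. The zero-mean condition required by the definition of $C^n_{W,V,0}(\mathbb{T})$ is obtained by testing the weak identity \eqref{defWd2} against the constant function $g\equiv 1 \in C^\infty_{V,W}(\mathbb{T})$, which forces $\int_{\mathbb{T}} D^{(n)}_{W,V}f\, d\kappa(n) = 0$ for every $n\geq 1$. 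Setting $\bar f = V(1)^{-1}\int_\mathbb{T} f\, dV$, one then has $f - \bar f \in \bigcap_{n\geq 1} C^n_{W,V,0}(\mathbb{T})$, and thus $f = \bar f + (f-\bar f) \in \langle 1 \rangle \oplus \bigcap_{n\geq 1} C^n_{W,V,0}(\mathbb{T}) = C^\infty_{W,V}(\mathbb{T})$.

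The main technical point is ensuring that the strong and weak lateral derivatives agree at each order, particularly when iterating through the alternation between $W$-left and $V$-right derivatives and between c\`adl\`ag and c\`agl\`ad spaces. This identification is handled by Remark 9 of \cite{keale} together with Theorem \ref{thm:1}(b); once those ingredients are in place, both inclusions become direct consequences of the definitions and a single appeal to the regularity theorem.
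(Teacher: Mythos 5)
Your proof is correct and takes essentially the same route as the paper, which presents the corollary as an immediate consequence of Theorem \ref{regularity}. You merely make explicit the two inclusions the paper leaves implicit: $H^{\infty}_{W,V}(\mathbb{T})\subseteq C^{\infty}_{W,V}(\mathbb{T})$ by iterating Theorem \ref{regularity} (with the mean-zero conditions recovered by testing against $g\equiv 1$), and the easy converse $C^{\infty}_{W,V}(\mathbb{T})\subseteq H^{k}_{W,V}(\mathbb{T})$ from boundedness of the strong lateral derivatives plus integration by parts, which in the paper is subsumed by Remark \ref{approxsmoothhigh} and the identification of strong and weak derivatives.
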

	
	\section{Fractional order Sobolev spaces $H^s_{W,V}(\mathbb{T})$}\label{fracsect}
	
In this section we will consider a different approach for defining higher order Sobolev spaces in such a way that it will agree with the definition given in the previous section, but it will also be suitable to define fractional order Sobolev spaces. One key difference in the approach we will use is that we will only deal with the space $L^2_V(\mathbb{T})$, that is, we will not be switching between the spaces $L^2_V(\mathbb{T})$ and $L^2_W(\mathbb{T})$. 

We begin by characterizing the fractional order $W$-$V$-Sobolev spaces for non-negative orders.
	
	\begin{definition}\label{fracsob}
		Let $s\geq 0$, we define the $s$th order $W$-$V$-Sobolev space by
		$$H^s_{W,V}(\mathbb{T}) = D((I-\Delta_{W,V})^{s/2})= \left\{f \in L^2_V(\mathbb{T});\quad f = \alpha_0 + \sum_{i=1}^\infty \alpha_i \nu_i;\quad \sum_{i=0}^\infty \gamma_i^s \alpha_i^2<\infty\right\},$$
		endowed with the norm
		$$\|f\|_{H^s_{W,V}(\mathbb{T})}^2 = \|(I-\Delta_{W,V})^{s/2}f\|_{L^2_V(\mathbb{T})}^2 = \sum_{i=0}^\infty \gamma_i^s \alpha_i^2,$$
		where $(\gamma_i)_{i\in\mathbb{N}}$ are the eigenvalues of $I-\Delta_{W,V}$ and were introduced in the comments just below Definition \ref{laplacianWV2}.
	\end{definition}
	
	The next Proposition shows that the above definition agrees with the definitions given the previous section when $s\in\mathbb{N}:$
	
	\begin{proposition}
		If $s=k\in\mathbb{N}$, then, we have the following equality of sets:
		$$H^s_{W,V}(\mathbb{T}) = H^k_{W,V}(\mathbb{T}),$$
		where the left-hand side refers to the set in Definition \ref{fracsob} and the right-hand side refers to the set in Definition \ref{highersob}. Moreover, 
		the norms $\|\cdot\|_{H^k_{W,V}(\mathbb{T})}$ and $\|\cdot\|_{k,W,V}$ are equivalent.
	\end{proposition}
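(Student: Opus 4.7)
The plan is to reduce everything to the Fourier characterization via Theorem \ref{fouriersob}, and then compare the two summability conditions. This is essentially a bookkeeping exercise because the nontrivial content is already contained in Theorem \ref{fouriersob}; what remains is to match the two weight sequences $\{\gamma_i^k\}$ and $\{\lambda_i^k\}$.

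First I would expand an arbitrary $f\in L^2_V(\mathbb{T})$ in the orthonormal basis $(\nu_i)_{i\in\mathbb{N}}$ as $f=\alpha_0+\sum_{i=1}^\infty \alpha_i \nu_i$. By Theorem \ref{fouriersob}, membership in $H^k_{W,V}(\mathbb{T})$ in the sense of Definition \ref{highersob} is equivalent to $\sum_{i=1}^\infty \lambda_i^k \alpha_i^2 <\infty$, while membership in $H^k_{W,V}(\mathbb{T})$ in the sense of Definition \ref{fracsob} is equivalent to $\sum_{i=0}^\infty \gamma_i^k \alpha_i^2<\infty$, where $\gamma_i=1+\lambda_i$. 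Since $\lambda_i\ge 0$ we have the two-sided estimate
\begin{equation*}
\lambda_i^k \;\le\; \gamma_i^k \;=\;(1+\lambda_i)^k \;\le\; 2^{k-1}(1+\lambda_i^k),
\end{equation*}
so combined with $\sum_i \alpha_i^2<\infty$ (which is automatic from $f\in L^2_V(\mathbb{T})$), the two summability conditions are equivalent. This proves the equality of the two sets.

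For the equivalence of norms, I would invoke (and iterate) the Fourier-side computation carried out in the proof of Theorem \ref{fouriersob}, which shows that for every $1\le j\le k$,
\begin{equation*}
\|D^{(j)}_{W,V}f\|_{\kappa(j)}^2 \;=\; \sum_{i=1}^\infty \lambda_i^j \alpha_i^2 .
\end{equation*}
Summing these identities gives
\begin{equation*}
\|f\|_{k,W,V}^2 \;=\; \alpha_0^2+\sum_{i=1}^\infty \alpha_i^2 \Bigl(1+\sum_{j=1}^k \lambda_i^j\Bigr),
\qquad
\|f\|_{H^k_{W,V}(\mathbb{T})}^2 \;=\; \alpha_0^2+\sum_{i=1}^\infty \alpha_i^2(1+\lambda_i)^k,
\end{equation*}
where I used $\gamma_0=1$. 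By the binomial theorem $(1+\lambda)^k=\sum_{j=0}^k \binom{k}{j}\lambda^j$, which is bounded above and below by positive constant multiples (depending only on $k$) of $1+\sum_{j=1}^k \lambda^j$ uniformly in $\lambda\ge 0$. Applying this term by term yields the equivalence of the two norms.

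I do not anticipate any substantial obstacle: the key analytical input (identification of $\|D^{(j)}_{W,V}f\|_{\kappa(j)}$ with the weighted Fourier sum) is already established in Theorem \ref{fouriersob}, and the rest is a purely algebraic comparison of weight sequences via the binomial expansion. The only minor care point is verifying that the Fourier formula for $\|D^{(j)}_{W,V}f\|_{\kappa(j)}$ holds for every $j\le k$, not just for $j=k$, which follows by applying the argument of Theorem \ref{fouriersob} to each intermediate order.
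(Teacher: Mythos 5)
Your proposal is correct and follows essentially the same route as the paper: equality of sets via $\gamma_i=1+\lambda_i$ together with the Fourier characterization of Theorem \ref{fouriersob}, and norm equivalence via the identity $\|D^{(j)}_{W,V}f\|_{\kappa(j)}^2=\sum_i\lambda_i^j\alpha_i^2$ (obtained by iterating the integration-by-parts formula) combined with an elementary comparison of the weights $(1+\lambda)^k$ and $1+\sum_{j\le k}\lambda^j$. The paper does the same weight comparison slightly differently (using $\gamma_i\ge 1$ for one direction and $\gamma_i^k\le 2^k(1+\lambda_i^k)$ for the other), but this is only a cosmetic difference.
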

	\begin{proof}
		Notice that $\gamma_i = 1 + \lambda_i$. So, if $\sum_{i=1}^\infty \alpha_i^2<\infty$, we have that
		$$\sum_{i=0}^\infty \gamma_i^k\alpha_i^2<\infty \Longleftrightarrow \sum_{i=1}^\infty \lambda_i^k \alpha_i^2<\infty.$$
		Hence, by Theorem \ref{fouriersob}, we have the equality of sets. It remains to be proved that the norms $\|\cdot\|_{H^s_{W,V}(\mathbb{T})}$ and $\|\cdot\|_{k,W,V}$ are equivalent.
		
		To this end, it readily follows by repeated application of the integration-by-parts formula (see Proposition \ref{intbypartsprop2}) that
		for any function $f\in H^k_{W,V}(\mathbb{T})$, with
		$$f = \alpha_0 + \sum_{i=1}^\infty\alpha_i \nu_i,$$
		and any $0\leq j\leq k$,
		\begin{equation}\label{fouriernormhigher}
			\|D_{W,V}^{(j)}f\|_{\kappa(j)}^2 = \sum_{i=1}^\infty \lambda_i^j \alpha_i^2 \leq \sum_{i=0}^\infty \gamma_i^j \alpha_i^2 = \|f\|_{H^j_{W,V}(\mathbb{T})}^2,
		\end{equation}
		where $\kappa$ is given by equation \eqref{kappa_order}. Therefore, 
		$$\|f\|_{k,W,V}^2 \leq \|f\|_{H^0_{W,V}(\mathbb{T})}^2 + \cdots + \|f\|_{H^k_{W,V}(\mathbb{T})}^2\leq (k+1)\|f\|_{H^k_{W,V}(\mathbb{T})}^2,$$
		where the last inequality comes from the fact that $\gamma_j\geq 1$, so $\|f\|_{H^i_{W,V}(\mathbb{T})}^2\leq \|f\|_{H^j_{W,V}(\mathbb{T})}^2$ if $i\leq j$. Thus,
		$$\|f\|_{k,W,V}\leq \sqrt{k+1} \|f\|_{H^j_{W,V}(\mathbb{T})}.$$
		Conversely, note that for each $i,k\in\mathbb{N}$ we have $\gamma_i^k \leq 2^k(1 + \lambda_i^k)$. Therefore, 
		$$\|f\|_{H^k_{W,V}(\mathbb{T})}^2 \leq 2^k\left(\sum_{i=1}^\infty \alpha_i^2 + \sum_{i=1}^\infty \lambda_i^k \alpha_i^2\right) = 2^k(\|f\|_V^2 + \|\partial^{(k)}_{W,V}f\|_{\kappa(n)}^2) \leq 2^k\|f\|_{k,W,V}^2.$$
	\end{proof}
	
	\begin{remark}
		Note that for $f\in H^s_{W,V}(\mathbb{T})$ the norm $\|f\|_{H^s_{W,V}(\mathbb{T})} = \|(I-\Delta_{W,V})^{s/2}f\|_{L^2_{V}(\mathbb{T})}$ is defined only in terms of $L^2_V(\mathbb{T})$. However, this norm is not suitable for dealing with odd natural numbers. Indeed, if $s=2k-1,$ where $k\in\mathbb{N}$ is an odd natural number, then $\|f\|_{H^s_{W,V}(\mathbb{T})}$ can be seen as a ``non-local'' counterpart to the norm $\|f\|_{s,W,V}$, which only involves local operators, but ``pays the price'' of depending on the space $L^2_W(\mathbb{T})$.
	\end{remark}
	
	For $s\geq 0$, let $H^{-s}_{W,V}(\mathbb{T}) := (H^s_{W,V}(\mathbb{T}))^\ast$, that is, $H^{-s}_{W,V}(\mathbb{T})$ is the dual of $H^s_{W,V}(\mathbb{T})$. We have the following characterization for $H^{-s}_{W,V}(\mathbb{T})$:
	
	\begin{proposition}\label{dualsob}
		For $s\geq 0$, we have that
		$$H^{-s}_{W,V}(\mathbb{T}) \cong \left\{f = \sum_{i=1}^\infty \alpha_i\nu_i; \quad \sum_{i=1}^\infty \gamma_i^{-s}\alpha_i^2<\infty\right\},$$
		with norm
		\begin{equation}\label{formuladual}
			\|f\|_{H^{-s}_{W,V}(\mathbb{T})}^2 = \sum_{i=1}^\infty \gamma_i^{-s}\alpha_i^2
		\end{equation}
		and dual pairing
		$$(f,g) = \sum_{i=0}^\infty \alpha_i \langle \nu_i, g\rangle_V,$$
		with 
		$$\alpha_i = (f,\nu_i).$$
	\end{proposition}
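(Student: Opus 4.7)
The strategy exploits the Hilbert space structure of $H^s_{W,V}(\mathbb{T})$ encoded by Definition \ref{fracsob}. Specifically, the family $\{\gamma_i^{-s/2}\nu_i\}_{i\geq 0}$ is a complete orthonormal system of $H^s_{W,V}(\mathbb{T})$ with respect to the inner product $\langle u,v\rangle_{H^s_{W,V}(\mathbb{T})} = \langle (I-\Delta_{W,V})^{s/2}u, (I-\Delta_{W,V})^{s/2}v\rangle_V$, and therefore the operator $(I-\Delta_{W,V})^{s/2}$ is an isometric isomorphism between $H^s_{W,V}(\mathbb{T})$ and $L^2_V(\mathbb{T})$. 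It is this isometry that transfers the standard $\ell^2$-duality to the desired description of $H^{-s}_{W,V}(\mathbb{T})$.

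I would first verify the direction $(\supseteq)$. Given a formal series $f = \sum_i \alpha_i \nu_i$ with $\sum_i \gamma_i^{-s}\alpha_i^2 < \infty$, define $L_f$ on $H^s_{W,V}(\mathbb{T})$ by $L_f(g) = \sum_i \alpha_i\beta_i$ when $g = \sum_i\beta_i\nu_i \in H^s_{W,V}(\mathbb{T})$. Cauchy-Schwarz yields
$$|L_f(g)| \leq \Bigl(\sum_i \gamma_i^{-s}\alpha_i^2\Bigr)^{1/2}\Bigl(\sum_i \gamma_i^s\beta_i^2\Bigr)^{1/2} = \Bigl(\sum_i \gamma_i^{-s}\alpha_i^2\Bigr)^{1/2}\|g\|_{H^s_{W,V}(\mathbb{T})},$$
so $L_f \in H^{-s}_{W,V}(\mathbb{T})$ with operator norm bounded by the right-hand side of \eqref{formuladual}. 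Testing against $g = \sum_i \gamma_i^{-s}\alpha_i\nu_i$, which belongs to $H^s_{W,V}(\mathbb{T})$ precisely because of the summability condition, attains this bound, so \eqref{formuladual} holds with equality. Plugging $g=\nu_j$ into $L_f$ produces $L_f(\nu_j)=\alpha_j$, giving both $\alpha_i = (f,\nu_i)$ and the pairing identity $(f,g) = \sum_i \alpha_i\langle\nu_i,g\rangle_V$.

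For the reverse $(\subseteq)$, take $L \in H^{-s}_{W,V}(\mathbb{T})$ arbitrary. The Riesz representation theorem applied to the Hilbert space $H^s_{W,V}(\mathbb{T})$ produces a unique $h$ with $L(g) = \langle h, g\rangle_{H^s_{W,V}(\mathbb{T})}$. Expanding $h = \sum_i h_i(\gamma_i^{-s/2}\nu_i)$ in the orthonormal basis above and setting $\alpha_i := \gamma_i^{s/2}h_i$, Parseval's identity gives
$$\sum_i\gamma_i^{-s}\alpha_i^2 \;=\; \sum_i h_i^2 \;=\; \|h\|_{H^s_{W,V}(\mathbb{T})}^2 \;=\; \|L\|_{H^{-s}_{W,V}(\mathbb{T})}^2,$$
while evaluating $L$ at $\nu_j$ yields $L(\nu_j)=\alpha_j$. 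By linearity and continuity on finite Fourier sums, $L(g)$ and the functional $L_f$ constructed in the previous step coincide on a dense subset of $H^s_{W,V}(\mathbb{T})$, hence agree everywhere, establishing both the isometric identification and the stated pairing formula.

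The main technical point I anticipate is making precise the meaning of the formal sum $f = \sum_i \alpha_i\nu_i$ when $s > 0$ and $\sum_i\alpha_i^2$ diverges, since in that regime $f$ is not an element of $L^2_V(\mathbb{T})$. I would address this by interpreting the right-hand side of the proposition as the abstract completion of $\mathrm{span}\{\nu_i\}$ under the norm \eqref{formuladual}, and then identifying that completion with $H^{-s}_{W,V}(\mathbb{T})$ via the canonical bijection $f \mapsto L_f$. Under this interpretation the partial sums $\sum_{i\leq N}\alpha_i\nu_i$ form a Cauchy sequence in $H^{-s}_{W,V}(\mathbb{T})$ converging precisely to $L_f$, which makes the identification coherent and intrinsic.
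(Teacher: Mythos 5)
Your proof is correct and follows essentially the same route as the paper: Riesz representation in the Hilbert space $H^s_{W,V}(\mathbb{T})$, expansion of the representer in the eigenbasis (your normalized basis $\{\gamma_i^{-s/2}\nu_i\}$ is just a reparametrization of the paper's $\alpha_i=\gamma_i^s\beta_i$), and Parseval yielding \eqref{formuladual} and the pairing formula. You additionally supply the converse inclusion via Cauchy--Schwarz with an extremizing test function, and make explicit the completion interpretation of the formal sums --- both points the paper leaves to the reader --- and these details are sound.
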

	\begin{proof}
		Let $f\in H^{-s}_{W,V}(\mathbb{T})$. By Riesz's representation theorem, there exists $u\in H^s_{W,V}(\mathbb{T})$ such that for every $g\in  H^s_{W,V}(\mathbb{T})$, we have
		$$(f,g) = \langle u, g\rangle_{H^{s}_{W,V}(\mathbb{T})}.$$
		Now, since $u\in H^s_{W,V}(\mathbb{T})$, we can write 
		$$u = \sum_{i=1}^\infty \beta_i \nu_i,$$
		where
		$$\sum_{i=1}^\infty \gamma_i^s \beta_i^2 <\infty.$$
		Hence,
		$$(f,g) = \sum_{i=0}^\infty \gamma_i^s \beta_i \<\nu_i,g\>_V.$$
		So, if we define $\alpha_i = \gamma_i^s\beta_i$, we have that $(f,\nu_i) = \<u,\nu_i\>_{H^s_{W,V}(\mathbb{T})} = \gamma_i^s\beta_i$ and
		$$\sum_{i=0}^\infty \gamma_i^{-s}\alpha_i^2 = \sum_{i=0}^\infty \gamma_i^s\beta_i^2 <\infty.$$
		Finally, by Riesz's representation theorem, we have that
		$$\|f\|_{H^{-s}_{W,V}(\mathbb{T})}^2 = \|u\|_{H^s_{W,V}(\mathbb{T})}^2 = \sum_{i=1}^\infty \gamma_i^s \beta_i^2 = \sum_{i=1}^\infty \gamma_i^{-s}\alpha_i^2.$$
		This proves one direction. The other direction is simpler and we leave it for the reader.
	\end{proof}
	
	\section{Asymptotic behavior of the eigenvalues of $\Delta_{W,V}$ and the embedding $L_{V}^{2}(\mathbb{T}) \hookrightarrow H^{-s}_{W,V}(\mathbb{T})$}
	
	We begin this section by recalling the definition of our generalizations of the cosine and sine functions, namely $C_{W,V}(\alpha,\cdot)$ and $S_{W,V}(\alpha,\cdot)$, studied in Section \ref{sect42}. We begin by proving a fundamental relation that is analogous, and generalizes, the well-known relation $\sin^2(x) + \cos^2(x)=1$.
	
	\begin{proposition}\label{prop11}
		For any $x\in\mathbb{T}$ and any $\alpha\neq 0$, the following fundamental relation is true:
		\begin{equation}\label{relfun}
			C_{W,V}(\alpha,x)C_{V,W}(\alpha,x)+S_{W,V}(\alpha,x)S_{V,W}(\alpha,x)=1
		\end{equation}
	\end{proposition}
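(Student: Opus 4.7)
The plan is to show that the function
\[
\Phi(x) := C_{W,V}(\alpha,x)C_{V,W}(\alpha,x)+S_{W,V}(\alpha,x)S_{V,W}(\alpha,x)
\]
is constant on $\mathbb{T}$ and then evaluate at a convenient point. Evaluating at $x=0$ is immediate: from the recursive construction \eqref{term:1} and its analogue \eqref{Gnformula} one checks $F_n(0,0)=G_n(0,0)=0$ for $n\ge 1$, together with the conventions $F_0(x,x)=G_0(x,x)\equiv 1$ of Remark \ref{rmk111}. Hence $C_{W,V}(\alpha,0)=C_{V,W}(\alpha,0)=1$ and $S_{W,V}(\alpha,0)=S_{V,W}(\alpha,0)=0$, so $\Phi(0)=1$. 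Constancy of $\Phi$ will then give \eqref{relfun}.

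To prove constancy, I would rely on the integration-by-parts formula \eqref{e3} established in the proof of Theorem \ref{thm:1}(b). First I need to verify the regularity hypotheses for each factor. Induction on $n$ using \eqref{term:1} (resp.\ \eqref{Gnformula}) together with Lemma \ref{taylorlemma} shows that $F_n(x,x)$ is c\`adl\`ag and $W$-left differentiable with $D^-_W F_n(x,x)=G_{n-1}(x,x)$ c\`agl\`ad, while $G_n(x,x)$ is c\`agl\`ad and $V$-right differentiable with $D^+_V G_n(x,x)=F_{n-1}(x,x)$ c\`adl\`ag. Uniform convergence of the four series (obtained in Section \ref{sect42} from Lemma \ref{taylorbound}) then transfers these properties termwise to $C_{W,V}(\alpha,\cdot)$ and $S_{W,V}(\alpha,\cdot)$ (c\`adl\`ag with c\`agl\`ad $W$-left derivative) and to $C_{V,W}(\alpha,\cdot)$ and $S_{V,W}(\alpha,\cdot)$ (c\`agl\`ad with c\`adl\`ag $V$-right derivative). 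Thus each of the pairs $(C_{W,V},C_{V,W})$ and $(S_{W,V},S_{V,W})$ plays exactly the $(g,f)$ roles required by \eqref{e3}.

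Applying \eqref{e3} on an arbitrary interval $[x_0,x_1]\subset\mathbb{T}$, together with the differentiation rules \eqref{difcossinWV}--\eqref{difsincosVW}, I obtain
\[
C_{W,V}C_{V,W}\Big|_{x_0}^{x_1}=\int_{[x_0,x_1)}C_{W,V}D^{+}_{V}C_{V,W}\,dV+\int_{(x_0,x_1]}C_{V,W}D^{-}_{W}C_{W,V}\,dW
\]
\[
=-\alpha\!\int_{[x_0,x_1)}C_{W,V}S_{W,V}\,dV-\alpha\!\int_{(x_0,x_1]}C_{V,W}S_{V,W}\,dW,
\]
and similarly
\[
S_{W,V}S_{V,W}\Big|_{x_0}^{x_1}=\alpha\!\int_{[x_0,x_1)}S_{W,V}C_{W,V}\,dV+\alpha\!\int_{(x_0,x_1]}S_{V,W}C_{V,W}\,dW.
\]
Adding the two identities the four integrals cancel pairwise, yielding $\Phi(x_1)=\Phi(x_0)$; since $x_0,x_1$ are arbitrary, $\Phi\equiv\Phi(0)=1$. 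The main delicate point is the c\`adl\`ag/c\`agl\`ad bookkeeping needed to justify the use of \eqref{e3} (including that uniform limits of c\`adl\`ag functions are c\`adl\`ag, and analogously for c\`agl\`ad), since \eqref{e3} is sharply sensitive to on which side of the discontinuities each factor is integrated; once that is in place, the cancellation is purely algebraic and mirrors the classical identity $(\cos^2+\sin^2)'=0$.
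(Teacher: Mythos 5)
Your proof is correct, but it follows a genuinely different route from the paper's. The paper expands the left-hand side of \eqref{relfun} as a power series in $\alpha^{2}$ via Cauchy products of the defining series and shows that each coefficient vanishes through the convolution identity \eqref{relarela}, $\sum_{j=0}^{2n}(-1)^{j}q_{j}(x)p_{2n-j}(x)=0$, which is itself obtained by integration by parts (Proposition \ref{intbypartsprop2}) at the level of the building blocks $F_{n}(x,x)$, $G_{n}(x,x)$; this coefficientwise strategy (inspired by Arzt's work \cite{trig}) works entirely with absolutely convergent series and never needs pointwise lateral differentiability of the summed functions. You instead work with the summed functions directly: using the local integration-by-parts formula \eqref{e3} from Theorem \ref{thm:1}(b) together with the differentiation rules \eqref{difcossinWV}--\eqref{difsincosVW} (already established before the proposition), you show that $\Phi$ is constant on intervals and evaluate at $x=0$ — the exact analogue of the classical proof that $(\sin^{2}+\cos^{2})'=0$. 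Your route is arguably more conceptual and avoids double-series manipulations, but its burden is precisely where you flag it: the c\`adl\`ag/c\`agl\`ad bookkeeping and the term-by-term lateral differentiation needed to place the four functions in the hypotheses of \eqref{e3}, which the paper sidesteps. Two cosmetic points: the convention $G_{0}(x,x)\equiv 1$ is only implicit in the paper (Remark \ref{rmk111} states it for $F_{0}$ and leaves the $G$ analogue to the reader), and the vanishing of $F_{1}(0,0)$ and $G_{1}(0,0)$ uses the standing normalization $W(0)=V(0)=0$, which is worth saying explicitly.
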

	\begin{proof}
		Let $(\alpha_n)_{n\in\mathbb{N}}$ be given by $$\alpha_{2k}(x)=(G_{2k}(x,x),F_{2k}(x,x))$$
		and 
		$$\alpha_{2k+1}(x)=(F_{2k+1}(x,x),G_{2k+1}(x,x))$$ for $k\geq 0$, where $F_n$ is given by \eqref{term:1} and $G_n$ is given by \eqref{Gnformula}. Let us denote $\alpha_{n}(x)=(p_{n}(x),q_{n}(x))$. By using the integration by parts formula (see Proposition \ref{intbypartsprop2}), we obtain the following relation:
		\begin{equation}\label{relarela}
			\sum_{j=0}^{2k}(-1)^{j}q_{j}(x)p_{2n-j}(x)=0.
		\end{equation}
		Finally, by using (\ref{relarela}) on 
		$$C_{W,V}(\alpha,x)C_{V,W}(\alpha,x)+S_{W,V}(\alpha,x)S_{V,W}(\alpha,x)=1+\sum_{n=1}^{\infty}(-1)^{n}\alpha^{2n}\sum_{k=0}^{2n}(-1)^{k}q_{k}(x)p_{2n-k}(x),$$
		the relation (\ref{relfun}) follows.
	\end{proof}
	\begin{remark}
		The strategy of the above proof and the relation (\ref{relfun}) were inspired by the ideas developed on \cite[Theorem 5.3]{trig}.
	\end{remark}

We will now obtain a sharp estimate related to $F_{2n}$ and $G_{2n}$, which will be fundamental in our study on the asymptotic behavior of the eigenvalues of $\Delta_{W,V}$.

	\begin{proposition}\label{prop22}
		There exists some constant $C>0$ such that 
		 \begin{equation}\label{kjrsas}
			|F_{2n}(1,1)+G_{2n}(1,1)|\le  \dfrac{C^{n}}{(n!)^{2}}.
		\end{equation}
	\end{proposition}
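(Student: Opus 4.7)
My plan is to establish \eqref{kjrsas} directly by unravelling the recursion \eqref{term:1} into a multiple integral over an alternating simplex and then applying Fubini, bypassing the fundamental relation in Proposition \ref{prop11}. The point is that although $F_{2n}(1,1)$ interleaves $n$ integrations against $dV$ with $n$ integrations against $dW$, the geometry of this simplex forces the two classes of variables to be independently strictly ordered, so factoring produces the $1/(n!)^2$ gain, as opposed to the $1/n!$ one obtains from Lemma \ref{taylorbound}.

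Concretely, I will first prove by induction on $n$ the representation
\[
F_{2n}(x,x)=\int_{A_n(x)}\prod_{i=1}^{n}dV(\phi_i)\,dW(\psi_i),\qquad A_n(x):=\{0\le\phi_1<\psi_1\le\phi_2<\psi_2\le\cdots\le\phi_n<\psi_n\le x\},
\]
together with the companion formula $F_{2n}(x,x)-F_{2n}(x,\eta)=\int_{A_n(x)\cap\{\phi_1\ge\eta\}}\prod dV\,dW$. The base case $n=1$ is Fubini applied to $F_2(x,x)=\int_{[0,x)}(W(x)-W(\xi))dV(\xi)$. For the inductive step I combine two steps of \eqref{term:1} into
\[
F_{2n}(x,x)=\int_{[0,x)}dV(\xi)\int_{(\xi,x]}[F_{2n-2}(x,x)-F_{2n-2}(x,\eta)]dW(\eta),
\]
insert the inductive simplex description of the inner difference, and observe that $\xi$ and $\eta$ then become the two smallest coordinates of a simplex of height $n$; the mixed strict/weak inequalities fall out of the càdlàg convention on $W$ and the càglàd convention on $V$. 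A completely analogous representation holds for $G_{2n}(x,x)$, with the roles of $V$ and $W$ (and of the strict and weak inequalities) interchanged.

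The crucial observation is that membership in $A_n(x)$ forces both families $(\phi_i)$ and $(\psi_i)$ to be strictly increasing: from $\phi_i<\psi_i\le\phi_{i+1}$ one reads $\phi_i<\phi_{i+1}$, and from $\psi_i\le\phi_{i+1}<\psi_{i+1}$ one reads $\psi_i<\psi_{i+1}$. Hence, by Fubini,
\[
F_{2n}(x,x)\le\left(\int_{0\le\phi_1<\cdots<\phi_n<x}\prod_{i=1}^{n}dV(\phi_i)\right)\left(\int_{0<\psi_1<\cdots<\psi_n\le x}\prod_{i=1}^{n}dW(\psi_i)\right).
\]
For any finite Borel measure $\mu$ on a compact interval $I$, the symmetrisation identity
\[
\mu(I)^n=n!\int_{y_1<\cdots<y_n}d\mu^{\otimes n}+\int_{\{\exists i\ne j:\,y_i=y_j\}}d\mu^{\otimes n}
\]
gives $\int_{y_1<\cdots<y_n}d\mu^{\otimes n}\le \mu(I)^n/n!$, and since the inequalities are strict this bound survives even when $\mu$ has atoms. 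Applying it to $dV$ and $dW$ on $[0,x]$ yields $F_{2n}(x,x)\le V(x)^n W(x)^n/(n!)^2$, and similarly $G_{2n}(x,x)\le V(x)^nW(x)^n/(n!)^2$. Specialising to $x=1$ and summing gives
\[
|F_{2n}(1,1)+G_{2n}(1,1)|\le\frac{2(V(1)W(1))^n}{(n!)^2}\le\frac{C^n}{(n!)^2}
\]
with, for instance, $C=2V(1)W(1)$, which is \eqref{kjrsas}.

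The main obstacle will be the bookkeeping inside the inductive derivation of the simplex representation: one must track the mixed strict ($<$) and weak ($\le$) inequalities that arise from the càdlàg/càglàd conventions carefully enough that (i) the simplex is genuinely contained in the product region $\{\phi_1<\cdots<\phi_n\}\times\{\psi_1<\cdots<\psi_n\}$ needed for Fubini, and (ii) the strict-ordering symmetrisation bound goes through in the presence of possible atoms of $V$ or $W$ without spoiling the factor $1/(n!)^2$.
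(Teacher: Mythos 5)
Your argument is correct, but it follows a genuinely different route from the paper's. The paper bounds $F_{2n}(1,1)$ and $G_{2n}(1,1)$ through a ``Stieltjes power rule'': it observes that $\tfrac{W^{n+1}}{n+1}$ and $\tfrac{V^{n+1}}{n+1}$ are $W$- and $V$-absolutely continuous in the sense of \cite{pouso}, computes their Stieltjes derivatives, and deduces the inequalities \eqref{estimaW}--\eqref{estimaV}, which are then applied iteratively to the nested-integral representation of $F_{2n}(x,x)$ from Remark \ref{rmk111} (each $dV$/$dW$ pair contributing a factor $\tfrac{1}{k+1}\cdot\tfrac{1}{k+1}$, with the cross terms bounded by $V(1)$ or $W(1)$). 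You instead unroll the recursion into the interleaved simplex $A_n(x)$, observe that the interleaving forces both the $\phi$- and the $\psi$-coordinates to be strictly increasing, and invoke the elementary symmetrisation bound $\int_{y_1<\cdots<y_n}d\mu^{\otimes n}\le \mu(I)^n/n!$, which is valid in the presence of atoms because the permuted strict simplices are pairwise disjoint. Your simplex representation is indeed the unrolled form of Remark \ref{rmk111} (your combined two-step recursion and the relabelling in the inductive step check out, with $\xi,\eta$ as the two smallest coordinates), so deriving it from that remark rather than from \eqref{term:1} would shorten the bookkeeping you flag as the main obstacle. What each approach buys: yours is self-contained (no appeal to the absolute-continuity and fundamental-theorem results of \cite{pouso}), gives the explicit constant $C=2V(1)W(1)$ for $n\ge 1$, and makes the role of atoms completely transparent; the paper's version is shorter once Remark \ref{rmk111} and \cite[Theorem 5.4]{pouso} are in hand, and keeps the argument in the same generalized-calculus style used elsewhere in the chapter (e.g.\ the $F_2$-absolute continuity argument in Lemma \ref{taylorbound}).
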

	\begin{proof}
		We begin by observing that the functions $\frac{W^{n+1}}{n+1}$ and $\frac{V^{n+1}}{n+1}$ are, respectively, $W$-absolutely continuous and $V$-absolutely continuous in the sense of \cite{pouso}. So, by using the derivatives in the sense of \cite{pouso}, we have that 
		$$\left(\frac{W^{n+1}}{n+1}\right)'_{W}(x)=\frac{1}{n+1}\sum_{j=0}^{n}W(x-)^{j}W(x)^{n-j}$$
		and 
		$$\left(\frac{V^{n+1}}{n+1}\right)'_{V}(x)=\frac{1}{n+1}\sum_{j=0}^{n}V(x+)^{j}V(x)^{n-j}.$$ 
		By using that $W(x)\geq W(x-)$ and $V(x+)\geq V(x)$ together with \cite[Theorem 5.4]{pouso} we obtain the following inequalities:
		\begin{equation}\label{estimaW}
			\frac{[W(x)]^{n+1}}{n+1}\geq\int_{(0,x]}W^{n}(\xi-)dW(\xi)
		\end{equation}
		and 
		\begin{equation}\label{estimaV}
			\frac{[V(x)]^{n+1}}{n+1}\geq\int_{[0,x)}V^{n}(\xi)dV(\xi).
		\end{equation}
 Finally, by using Remark \ref{rmk111}, and successively applying the inequalities (\ref{estimaW}) and (\ref{estimaV}) on $F_{2n}(x,x)$, we obtain that there exists a constant $A>0$ such that $|F_{2n}(1,1)|\le \frac{A^{n}}{(n!)^2}$. By proceeding similarly for $G_{2n}$ (with the corresponding expression for $G_{k}$ obtained as in Remark \ref{rmk111}) and successively applying (\ref{estimaW}) and (\ref{estimaV}) on $G_{2n}(x,x)$, we  obtain that there exists $B>0$ such that $|G_{2n}(1,1)|\le \frac{B^{n}}{(n!)^2}$. Therefore, inequality (\ref{kjrsas}) follows from choosing $C=2(A+B)$.
	\end{proof}

We are now in a position to obtain the main result of this section, which provides a lower bound on the growth of the eigenvalues of $\Delta_{W,V}$.

	\begin{theorem}\label{prop6}
		Let $\{\lambda_{i}\}_{i\geq1}$ be the sequence of non-negative eigenvalues (counting according their multiplicity) of $\Delta_{W,V}$. Then, there exists $\rho\in (0,\frac{1}{2}]$ such that \begin{equation}\label{eigenineqeq}
			Cn^{1/\rho}\le \lambda_{n}
		\end{equation} for some constant $C>0$. Moreover, 
	\begin{equation}\label{convsumeigen}
		\sum_{i=1}^{\infty}\dfrac{1}{\lambda_{i}^s}<\infty.
	\end{equation}
		for $s>\rho$. In particular, if $s>\rho$, then $\mathcal{A}^{-s}=\left(I-\Delta_{W,V}\right)^{-s}$ is a trace-class operator.
	\end{theorem}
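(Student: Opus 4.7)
The plan is to reduce the positive eigenvalue problem to a zero-counting problem for a concrete entire function of order at most $1/2$, and then invoke Jensen's formula.

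First, I would use Proposition \ref{caraceige} to rewrite the existence of a nontrivial eigenvector associated to $\lambda>0$ as the vanishing of the determinant of the $2\times 2$ linear system \eqref{bcondi} in $(a_i,b_i)$. Setting $\alpha=\sqrt{\lambda}$, the determinant condition reads
$$[C_{W,V}(\alpha,1)-1][C_{V,W}(\alpha,1)-1]+S_{W,V}(\alpha,1)S_{V,W}(\alpha,1)=0.$$
After expansion and using the fundamental relation \eqref{relfun} of Proposition \ref{prop11}, namely $C_{W,V}(\alpha,1)C_{V,W}(\alpha,1)+S_{W,V}(\alpha,1)S_{V,W}(\alpha,1)=1$, this collapses to the clean characteristic equation
$$C_{W,V}(\alpha,1)+C_{V,W}(\alpha,1)=2.$$
Since both generalized cosines involve only even powers of $\alpha$, setting $z=\alpha^{2}$ yields the entire function
$$\Phi(z):=\sum_{n=1}^{\infty}(-z)^{n}\bigl[F_{2n}(1,1)+G_{2n}(1,1)\bigr],$$
whose positive real zeros are precisely the positive eigenvalues of $-\Delta_{W,V}$.

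Next, I would estimate the growth of $\Phi$. By Proposition \ref{prop22}, the coefficients satisfy $|F_{2n}(1,1)+G_{2n}(1,1)|\le C^{n}/(n!)^{2}$, so termwise comparison with the modified Bessel series gives
$$|\Phi(z)|\le\sum_{n=1}^{\infty}\frac{(C|z|)^{n}}{(n!)^{2}}\le A\exp\!\bigl(B\sqrt{|z|}\bigr)$$
for suitable constants $A,B>0$. Hence $\Phi$ is an entire function of order at most $1/2$ and of finite type. Letting $M(r)=\max_{|z|=r}|\Phi(z)|$ and $N(r)$ denote the number of zeros (with multiplicity) of $\Phi/z$ in $\{|z|\le r\}$, Jensen's formula in the form $N(r)\log 2\le\int_{r}^{2r}N(t)t^{-1}\,dt\le\log M(2r)+O(1)$ yields $N(r)\le C' r^{1/2}$ for all sufficiently large $r$. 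Enumerating the nonzero zeros in order of increasing modulus gives $|z_{n}|\ge c\, n^{2}$ asymptotically, hence $\lambda_{n}\ge Cn^{1/\rho}$ with $\rho=1/2$. The summability statement $\sum_{i\ge1}\lambda_{i}^{-s}<\infty$ for $s>\rho$ is then immediate by comparison with $\sum n^{-s/\rho}$, and trace-classness of $\mathcal{A}^{-s}=(I-\Delta_{W,V})^{-s}$ follows because $\gamma_{i}=1+\lambda_{i}\ge\lambda_{i}$, so $\sum\gamma_{i}^{-s}<\infty$ as well.

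The main subtlety I expect concerns multiplicity counting: one must verify that the spectral multiplicity of $\lambda$ as an eigenvalue of $-\Delta_{W,V}$ is controlled by its multiplicity as a zero of $\Phi$. Because problem \eqref{eigeneq} is of second order in one dimension, each eigenspace has dimension at most $2$, so any discrepancy is bounded by a fixed factor and is harmless for the asymptotic exponent. A secondary point is to justify factoring out a simple zero at $z=0$ before applying Jensen; this is legitimate since the $n=1$ coefficient of $\Phi$ equals $-[F_{2}(1,1)+G_{2}(1,1)]$, which is strictly negative because $W$ and $V$ are strictly increasing, making both $F_{2}(1,1)=\int_{[0,1)}[W(1)-W(\xi)]dV(\xi)$ and $G_{2}(1,1)=\int_{(0,1]}[V(1)-V(\xi)]dW(\xi)$ strictly positive.
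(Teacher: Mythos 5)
Your proposal is correct, and its skeleton is the same as the paper's: reduce, via Proposition \ref{caraceige} and the fundamental relation of Proposition \ref{prop11}, the existence of a nontrivial solution of \eqref{bcondi} to the characteristic equation $C_{W,V}(\sqrt{\lambda},1)+C_{V,W}(\sqrt{\lambda},1)=2$, view the eigenvalues as zeros of the entire function with coefficients $F_{2n}(1,1)+G_{2n}(1,1)$, and use the bound of Proposition \ref{prop22} to control its growth. Where you diverge is the endgame: the paper takes $\rho$ to be the \emph{order} of this entire function, cites the Stein--Shakarchi zero-counting theorem for the upper bound $n(r)\le Cr^{\rho}$, and then invokes Levin's theorem that the order equals the convergence exponent of the zero set to get $\sum_i \lambda_i^{-s}<\infty$ for every $s>\rho$; you instead inline the Jensen-formula argument with the explicit type bound $|\Phi(z)|\le A e^{B\sqrt{|z|}}$ and simply take $\rho=1/2$. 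Since the theorem is stated existentially, your instantiation is perfectly valid, and it even yields the sharp lower bound $\lambda_n\gtrsim n^2$ directly; the price is that you only obtain summability for $s>1/2$, whereas the paper's formulation keeps the (possibly smaller) true order $\rho$, which is the version quoted verbatim in Proposition \ref{HSinclusion} (though all later $d$-dimensional applications fall back on $\rho\le 1/2$ anyway, so nothing downstream breaks). Two points you handle more carefully than the paper: the explicit $2\times 2$ determinant computation behind the characteristic equation, and the bookkeeping issues --- the simple zero of $\Phi$ at $z=0$ (via $F_2(1,1)+G_2(1,1)>0$, which also shows $\Phi\not\equiv 0$) and the comparison of spectral multiplicity (at most $2$, since solutions of $-\Delta_{W,V}u=\alpha^2 u$ are spanned by $C_{W,V}$ and $S_{W,V}$) with zero multiplicity, which the paper glosses over with its subsequence argument. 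Note only that you do not need your claim that the positive zeros are \emph{precisely} the eigenvalues; the inclusion of the eigenvalues in the zero set is all the argument uses.
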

	\begin{proof}
		Begin by noting that Proposition \ref{prop11} implies that the system (\ref{bcondi}) has a non trivial solution if $\lambda_{i}>0$ satisfies
		$$2=C_{W,V}(\sqrt{\lambda_{i}},1)+C_{V,W}(\sqrt{\lambda_{i}},1).$$
		That is, if $\lambda_{i}$
		are positive roots of the entire function
		$$f(z)=-2+\sum_{n\geq 0}(-1)^{n}z^{n}(F_{2n}(1,1)+G_{2n}(1,1))=\sum_{n\geq 1}(-1)^{n}z^{n}(F_{2n}(1,1)+G_{2n}(1,1)).$$
		Therefore, the eigenvalues of $\Delta_{W,V}$ are zeroes of $f(z)$.
		Now, the inequality $$|F_{2n}(1,1)+G_{2n}(1,1)|\le\dfrac{C^{n}}{(n!)^2}$$
		implies that the order of growth $\rho$ of the series $f(z)$ satisfies $0\le\rho\le \dfrac{1}{2}$. Indeed, $$\rho=\limsup_{n\to\infty}\dfrac{n\ln n}{-\ln \left|(-1)^{n}[F_{2n}(1,1)+G_{2n}(1,1)]\right|}\le \limsup_{n\to\infty}\dfrac{n\ln n}{-\ln\left[\dfrac{C^{n}}{(n!)^2}\right]}=\dfrac{1}{2}.$$ 
		Now, denote by $\mathcal{Z}=\{z_{i}\}_{i\in\mathbb{N}}$ the zeroes of $f$ indexed according their respective multiplicity and in such a way that they are ordered according their moduli:
		$$0<|z_{1}|\le |z_{2}| \le \ldots .$$
		Let $n(r)$ be number of zeroes of the function $f$ whose moduli are less or equal to $r$, that is,
		 $$n(r)=\#\{i\in\mathbb{N}; |z_{i}|<r\}.$$
		 Then, we have by \cite[Chapter 5, Theorem 2.1]{stein} that for any $\beta\ge\rho$ there exists a constant $C>0$ such that for sufficient large $r>0$,  
		$$n(r)\le Cr^{\beta}.$$
		Now, observe that $\{\lambda_{i}\}_{i\in\mathbb{R}}\subset \mathcal{Z}$, and that by \eqref{asymplambda}, we have $\lambda_{i}\to \infty$ as $i\to\infty$. Therefore, $\lim_{i\to\infty}|z_{i}|=+\infty$ which implies that $\rho\neq 0$. The last results prove, in particular, that $\rho\not\in\mathbb{Z}$.
		Now, by \cite[Chapter 5, Theorem 1]{levin}, the order $\rho$ is equal to the convergence exponent of $\mathcal{Z}$. In particular, there exists $C>0$ such that
		\begin{equation}\label{estim11}
			\limsup_{r\to+\infty}\dfrac{n(r)}{r^{\rho}}=\limsup_{n\to+\infty}\dfrac{n}{|z_{n}|^{\rho}}\le C.
		\end{equation}
		From (\ref{estim11}), there exists a subsequence $n_{k}$, such that
		\begin{equation}\label{lowerboundeq}
			\dfrac{n_{k}}{C}\le \lambda_{k}^{\rho}.
		\end{equation}
	 Now, by the definition of subsequence, we have that $n_{k}\geq k$. By combining \eqref{lowerboundeq} and $n_k\geq k$, we obtain the inequality (\ref{eigenineqeq}). Finally, by using that $\rho$ is equal to the convergence exponent of $\mathcal{Z}$, we directly obtain that if  $s>\rho$, then
		\begin{equation}\label{convseries}
			\sum_{i\geq 1}\dfrac{1}{\lambda_{i}^{s}}<\infty.
		\end{equation}
	\end{proof}

\begin{remark}
	From Weyl's asymptotics (see, for instance,\cite[Theorem 6.3.1]{davies} ), we have that the order in the lower bound \eqref{eigenineqeq} is sharp. Indeed, Weyl's asymptotics yields that the exact order for the Laplacian, which corresponds to the case in which $W(x) = V(x) = x$ is $\lambda_n \propto n^{2}.$
\end{remark}

	An immediate, and very important, consequence is that Theorem \ref{prop6} provides us with a condition on the order $s$ for the inclusion $i:L^2_V(\mathbb{T})\to H^{-s}_{W,V}(\mathbb{T})$, $i(f) = f$ to be Hilbert-Schmidt:
	\begin{proposition}\label{HSinclusion}
		Let $\rho\in (0,1/2]$ be as in Theorem \ref{prop6}. For any $s>\rho$, the inclusion $i:L^2_V(\mathbb{T})\to H^{-s}_{W,V}(\mathbb{T})$ is trace-class. In particular, for any $s>1/2$, the inclusion $i:L^2_V(\mathbb{T})\to H^{-s}_{W,V}(\mathbb{T})$ is trace-class.
	\end{proposition}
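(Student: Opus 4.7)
The plan is to diagonalize the inclusion $i$ with respect to the natural orthonormal bases of source and target, read off its singular values, and then invoke Theorem \ref{prop6} to verify their summability.

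First, I would construct an orthonormal basis of $H^{-s}_{W,V}(\mathbb{T})$ adapted to $\{\nu_i\}$. From Proposition \ref{dualsob}, the dual norm is $\|f\|_{H^{-s}_{W,V}}^2 = \sum_i \gamma_i^{-s}\,\alpha_i^2$ with $\alpha_i = (f, \nu_i)$, and a direct verification shows that $e_i := \gamma_i^{s/2} \nu_i$ ($i \geq 0$) is orthonormal in $H^{-s}_{W,V}(\mathbb{T})$, with completeness following from the Fourier expansion $f = \sum_i (\gamma_i^{-s/2} \alpha_i)\, e_i$ valid for every $f \in H^{-s}_{W,V}(\mathbb{T})$. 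Since $\{\nu_k\}$ is an orthonormal basis of $L^2_V(\mathbb{T})$ and $i(\nu_k) = \nu_k = \gamma_k^{-s/2}\,e_k$, the inclusion is diagonal between these two bases, so its singular values are exactly $\sigma_k(i) = \gamma_k^{-s/2}$. Consequently, $i$ lies in the trace class if and only if its trace norm $\|i\|_{\mathcal{S}^1} = \sum_k \gamma_k^{-s/2}$ is finite, and this is the quantity that must be controlled (as the feedback emphasizes, the singular values themselves, not their squares, are what must sum).

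It remains to establish this summability, which is the main obstacle. Using $\gamma_k = 1 + \lambda_k$, the tail $\sum_{k \geq 1} \gamma_k^{-s/2}$ is comparable to $\sum_{k \geq 1} \lambda_k^{-s/2}$, so I would apply Theorem \ref{prop6} with exponent $s/2$ in place of $s$; that theorem guarantees convergence as soon as the exponent exceeds $\rho$, which here means $s > 2\rho$. Combined with $\rho \leq 1/2$, this route yields trace-class membership in the range $s > 1$. I note that the range $\rho < s \leq 2\rho$ (including $1/2 < s \leq 1$ in the worst case $\rho = 1/2$) is not reached by this direct singular-value count: within it, the sharp lower bound $\lambda_n \gtrsim n^{1/\rho}$ from Theorem \ref{prop6} only gives $\sigma_k(i) \lesssim k^{-s/(2\rho)}$, an exponent which may fail to exceed $1$. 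Therefore, under the stated hypothesis $s > \rho$ alone, the direct argument only establishes $\sum_k \sigma_k(i)^2 = \sum_k \gamma_k^{-s} < \infty$, i.e., the Hilbert--Schmidt property (consistent with the label \texttt{HSinclusion}); the full trace-class conclusion as written would require a strengthening of the lower bound on the $\lambda_k$ beyond what Theorem \ref{prop6} directly provides.
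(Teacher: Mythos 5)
Your computation is, in substance, the same as the paper's: the paper's proof consists exactly of evaluating $\sum_{i}\|\nu_i\|_{H^{-s}_{W,V}(\mathbb{T})}^2=\sum_i\gamma_i^{-s}$ via Proposition \ref{dualsob} and invoking Theorem \ref{prop6} (together with $\gamma_i\geq\lambda_i$) to get convergence for $s>\rho$. This is precisely the quantity you identify as $\sum_k\sigma_k(i)^2=\operatorname{tr}(i^\ast i)$, i.e.\ the squared Hilbert--Schmidt norm of the inclusion. Where you depart from the paper is in the bookkeeping of which Schatten class this sum controls, and there your accounting is the correct one: for an operator between two different Hilbert spaces, trace class means summability of the singular values themselves, here $\sum_k\gamma_k^{-s/2}$, and Theorem \ref{prop6} only yields this for $s>2\rho$ (hence $s>1$ in the worst case $\rho=1/2$), exactly as you observe. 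The paper's expression $\sum_i (i(\nu_i),\nu_i)_{H^{-s}_{W,V}(\mathbb{T})}$ is a valid trace criterion only for a positive self-adjoint operator on a single Hilbert space against its own orthonormal basis; for the inclusion it computes $\operatorname{tr}(i^\ast i)$, i.e.\ Hilbert--Schmidt-ness, not the trace norm.

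So what you have found is a genuine slip in the paper's terminology rather than a gap in your own argument: what the paper's proof (and yours) establishes for $s>\rho$ is that $i$ is Hilbert--Schmidt --- equivalently, that $ii^\ast$ is trace class on $H^{-s}_{W,V}(\mathbb{T})$, which is presumably the intended reading. This is corroborated by the proposition's label \texttt{HSinclusion} and by the only downstream use, Corollary \ref{whitenoisedual}, which needs exactly the Hilbert--Schmidt property of the inclusion to make sense of the white-noise expansion (and indeed passes there from ``trace-class'' to ``therefore Hilbert--Schmidt''). Your proposal is thus correct and complete as a proof of the Hilbert--Schmidt statement for all $s>\rho$, reproduces the paper's computation, and correctly flags that the literal trace-class claim in the stated range does not follow from the singular-value estimates that Theorem \ref{prop6} provides.
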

	\begin{proof}
		Consider the orthonormal basis of $L^2_V(\mathbb{T})$ given by the eigenvectors of $\mathcal{A} = I - \Delta_{W,V}$, namely, $\{\nu_i\}_{i\in\mathbb{N}}$. Therefore, by equation \eqref{formuladual}, we have that
		$$\sum_{i=1}^\infty (i(\nu_{i}),\nu_{i})_{H^{-s}_{W,V}(\mathbb{T})}=\sum_{i=1}^\infty \|\nu_i\|_{H^{-s}_{W,V}(\mathbb{T})}^2 = \sum_{i=1}^\infty \gamma_i^{-s},$$
		which converges for $s> \rho$ in view of Theorem \ref{prop6} and the inequality $\gamma_{i}\geq\lambda_{i}$ for all $i\geq 0$. Since $\rho\leq 1/2$, we have that $s>1/2$ will always satisfy $s>\rho$.
	\end{proof}
	
	\section{Extension to the $d$-dimensional case}
	
We will now extend our operator $\Delta_{W,V}$ to the $d$-dimensional torus $\mathbb{T}^d$. The idea is to take tensor products of copies of our one-dimensional operator. 

To such an end, for each $k=1,\ldots,d$, let $W_{k}:\mathbb{T}\to\mathbb{R}$ and $V_{k}:\mathbb{T}\to\mathbb{R}$, be strictly increasing functions, where $W_{k}$ and $V_{k}$ are respectively càdlàg and càglàd functions satisfying (\ref{eqsmedida}) and such that $W_{k}(0)=W_{k}(0-)=V_{k}(0+)=V_{k}(0)=0$.

Now, observe that for $k=1,\ldots,d$, the functions $W_{k}$ and $V_{k}$ induce Borel measures $dW_{k}$ and $dV_{k}$ on $\mathbb{T}$. Now, for each $k=1,\ldots,d$, consider the Borel measures $dW$, $dV$, $dW^k\otimes dV_{k}$ and $dV^k\otimes dW_{k}$ on $\mathbb{T}^{d}$, obtained as suitable product measures. More precisely, they are the unique measures satisfying the following relations:
	$$dW\left((0,x_{1}]\times\ldots\times(0,x_{d}]\right)=\prod_{i=1}^{d}W_{i}(x_{i}), \quad dV\left([0,x_{1})\times\ldots\times[0,x_{d})\right)=\prod_{i=1}^{d}V_{i}(x_{i}),$$
$$dW^k\otimes dV_{k}\left(\prod_{i=1}^{k-1}(0,x_{i}]\times[0,x_{k})\times\prod_{j=k+1}^{d}(0,x_{j}]\right)=V(x_{k})\prod_{i=1, i\neq k}^{d}W_{i}(x_{i}).$$
	and
	$$dV^k\otimes dW_{k}\left(\prod_{i=1}^{k-1}[0,x_{i})\times(0,x_{k}]\times\prod_{j=k+1}^{d}[0,x_{j})\right)=W(x_{k})\prod_{i=1, i\neq k}^{d}V_{i}(x_{i}).$$
Hence, we can write them, for $k=1,\ldots,d$, as

\begin{equation*}
	dW=\bigotimes_{i=1}^{d}dW_{i};\,\,\,\,\,\,\,dV=\bigotimes_{i=1}^{d}dV_{i};
	\end{equation*}
	\begin{equation*}
	dV^k\otimes dW_{k}=dW_{k}\otimes\bigotimes_{i=1, i\neq k}^{d}dV_{i};\,\,\,\,\,\,\,dW^k\otimes dV_{k}=dV_{k}\otimes\bigotimes_{i=1, i\neq k}^{d}dW_{i}.
	\end{equation*}
	
	Now denote by $\mathcal{A}_{W_{k},V_{k}}\subset C^{\infty}_{W_{k},V_{k}}(\mathbb{T})$ the collection of all eigenvectors of $-\nabla_{W_{k},V_{k}}$ and define the set
	$$\mathcal{A}_{W,V}=\left\{\bigotimes_{i=1}^{d}f_{i}:\mathbb{T}^d\to\mathbb{R},\, \bigotimes_{i=1}^{d}f(x_{1},\ldots,x_{d}):=\prod_{i=1}^{d}f_{i}(x_{i});\, f_{i}\in \mathcal{A}_{W_{k},V_{k}}\right\}.$$
	
	Now, let us define the lateral partial derivatives. More precisely, given $f:\mathbb{T}^{d}\to\mathbb{R}$, we say that $f$ has the partial $W_{k}$-left-derivative at $(x_{1},\ldots,x_{d})\in\mathbb{T}^{d}$ if the following limit exists
	\begin{align*}
		\partial^{-}_{W_{k}}f(x_{1},\ldots,x_{d}):=\lim_{h\to 0^{+}}\dfrac{f(x_{1},\ldots,x_{d})-f(x_{1},\ldots,x_{k-1},x_{k}-h,x_{k+1},\ldots,x_{d})}{W_{k}(x_{k})-W_{k}(x_{k}-h)}.
	\end{align*}
If $f$ has the partial $W_{k}$-left-derivative at every point of $\mathbb{T}^d$, we say that $f$ is $W_{k}$-left-differentiable and we can define the function $\partial^{-}_{W_{k}}f:\mathbb{T}^d\to \mathbb{R}$. In such a case, this function is called the partial $W_{k}$-left-derivative of $f$.  We define the partial $V_{k}$-right-derivative and $V_{k}$-right-differentiable functions in an analogous manner.

Now, we define $C^{\infty}_{W,V,0}(\mathbb{T}^{d})$ as the space of functions
  $f:\mathbb{T}^{d}\to\mathbb{R}$ such that for all $(x_{1},\ldots,x_{k-1},x_{k+1},\ldots,x_{d})\in \mathbb{T}^{d-1}$ and $k=1,\ldots,d,$ the application $$f^{(k)}(x):=f(x_{1},\ldots,x_{k-1},x,x_{k+1},\ldots,x_{d})$$ is càdlàg.
  
  Observe that for all $n\geq 1$, the application $\partial_{W_{k},V_{k}}^{(n)}f(x_{1},\ldots,x_{k-1},x,x_{k+1},\ldots,x_{d}):=D^{(n)}_{W_{k},V_{k}}f^{(k)}(x)$ on $\mathbb{T}$  is well defined and is càglàd (resp. càdlàg) for $n$ odd (resp. $n$ even), where $D_{W_k,V_k}^{(n)}$ is defined as in \eqref{nderiv}.
  Now, let $n\geq 0$, and observe that $\partial^{(n)}_{W_{k},V_{k}}f \in L^{2}_{dV^k\otimes dW_{k},0}(\mathbb{T}^{d}) \left(\mbox{resp.}\,L^{2}_{dV,0} \mathbb{T}^{d})\right)$ if $n$ is odd (resp. $n$ is even). Clearly $$\mathcal{A}_{W,V}\subset C^{\infty}_{W,V}(\mathbb{T}^{d}):=\<1\>\oplus C^{\infty}_{W,V}(\mathbb{T}^d).$$Therefore, we are able to define the operator $\mathcal{L}_{W,V}:C^{\infty}_{W,V}(\mathbb{T}^{d})\subset L^{2}_{dV}(\mathbb{T}^{d})\to L^{2}_{dV}(\mathbb{T}^{d})$ given by
	$$\mathcal{L}_{W,V}f = \sum_{i=1}^{d}\partial^{(2)}_{W_{k},V_{k}}f.$$
	Indeed, the operator $I-\mathcal{L}_{W,V}$ is symmetric, positive and monotone. Furthermore, note that $I-\mathcal{L}_{W,V}$ is densely defined. Indeed, we have that
	$$\bigotimes_{i=1}^d C^{\infty}_{W_i,V_i}(\mathbb{T})\subset C^{\infty}_{W,V}(\mathbb{T}^{d}),$$
	where
	$$\bigotimes_{i=1}^d C^{\infty}_{W_i,V_i}(\mathbb{T}) = \textrm{span}\left\{f_1\otimes\cdots\otimes f_d; f_i\in C^{\infty}_{W_i,V_i}(\mathbb{T}),i=1,\ldots,d  \right\}.$$ 
	 Therefore, by Proposition \ref{densenesscinfinity2}, we have that for each $i=1,\ldots,d$, $C^{\infty}_{W_i,V_i}(\mathbb{T})$ is dense in $L^2_{V_i}(\mathbb{T})$. Now, by standard limiting arguments, it is enough to show that we can approximate $\mathbf{1}_R$, where $R$ is a measurable rectangle on the product $\sigma$-algebra $\mathcal{B}(\mathbb{T}^d) = \mathcal{B}(\mathbb{T})\times\cdots\times \mathcal{B}(\mathbb{T})$. Now, by the definition of measurable rectangles, we have that there exist $R_i\in \mathcal{B}(\mathbb{T}), i=1,\ldots,d$, such that $R = R_1\times \cdots \times R_d$, hence $\mathbf{1}_R = \mathbf{1}_{R_1}\otimes\cdots\otimes\mathbf{1}_{R_d}$. Then, by the density of $C^{\infty}_{W_i,V_i}(\mathbb{T})$ in $L^2_{V_i}(\mathbb{T})$ and Fubini's theorem, we have that $\bigotimes_{i=1}^d C^{\infty}_{W_i,V_i}(\mathbb{T})$ is dense in $L^2_{dV}(\mathbb{T}^d)$. Therefore, we obtain that $C^{\infty}_{W,V}(\mathbb{T}^{d})$ is dense in $L^2_{dV}(\mathbb{T}^d)$.
	
	 Therefore, $I-\mathcal{L}_{W,V}$ admits a Friedrich's extension  (we refer the reader to Zeidler \cite[Section 5.5]{zeidler} for further details on Friedrichs extensions).  Thus, let $I-\Delta_{W,V}:\mathcal{D}_{W,V}(\mathbb{T}^d)\subset L^2_{dV}(\mathbb{T}^d)\to L^2_{dV}(\mathbb{T}^d)$ be the Friedrich's extension of the operator $I-\mathcal{L}_{W,V}$. Note that $C^{\infty}_{W,V}(\mathbb{T}^{d})\subset \mathcal{D}_{W,V}(\mathbb{T}^d)$.
	 
	 We then, proceed as in \cite{keale} to define the $d$-dimensional $W$-$V$-Sobolev space:
	 \begin{definition}
		The $W$-$V$-Sobolev space of order 1, $H^{1}_{W,V}(\mathbb{T}^{d})$, is defined as the energetic space (in the sense of Zeidler, \cite[Section 5.3]{zeidler}) associated to the operator $I-\mathcal{L}_{W,V}: C^{\infty}_{W,V}(\mathbb{T}^{d})\subset L^2_{dV}(\mathbb{T}^d)\to L^2_{dV}(\mathbb{T}^d)$. That is, we define the norm $$\|f\|_{1,2}^2 = \langle (I - \mathcal{L}_{W,V})f,f\rangle_{V}$$
		in $C^{\infty}_{W,V}(\mathbb{T}^{d})$ and say that $f\in L^2_{V}(\mathbb{T}^d)$ belongs to $H_{W,V}^1(\mathbb{T}^d)$ if, and only if, the following conditions hold:
		\begin{enumerate}
			\item There exists a sequence $f_n\in C^{\infty}_{W,V}(\mathbb{T}^{d})$ such that $f_n\to f$ in $L^2_V(\mathbb{T}^d)$;
			\item The sequence $f_n$ is Cauchy with respect to the energetic norm $\|\cdot\|_{1,2}$.
		\end{enumerate}
		A sequence $(f_n)_{n\in\mathbb{N}}$ in $C^{\infty}_{W,V}(\mathbb{T}^{d})$ satisfying 1 and 2 is called an \emph{admissible sequence}. 
	 \end{definition}
 
 	We can also notice that Theorem \ref{fouriersob} implies that the domain of the Friedrich's extension of $I - D_V^+D_W^-$ coincides with the $W$-$V$-Sobolev of order 2. Therefore, we can readily define the $d$-dimensional $W$-$V$-Sobolev space of order 2 as the domain of the Friedrich's extension:
 	
 	\begin{definition}
 		The $W$-$V$-Sobolev space of order 2, $H^{2}_{W,V}(\mathbb{T}^{d})$, is defined as the domain of the Friedrich's extension of the operator $I-\mathcal{L}_{W,V}: C^{\infty}_{W,V}(\mathbb{T}^{d})\subset L^2_{dV}(\mathbb{T}^d)\to L^2_{dV}(\mathbb{T}^d)$. Thus, $H^{2}_{W,V}(\mathbb{T}^{d}) = \mathcal{D}_{W,V}(\mathbb{T}^d)$. 
 	\end{definition}
 
	  Let us now show that the above definitions coincide with a corresponding definition of $W$-$V$-Sobolev spaces in terms of weak derivatives. To this end, consider the following definition of weak derivative.
	
	\begin{definition}
		We say that $f\in L^{2}_{dV}(\mathbb{T}^d)$ has a weak partial $W_{k}$-left-derivative if there exists $F\in L^{2}_{dV\otimes dW_{k}}(\mathbb{T}^{d})$ satisfying 
		$$\int_{\mathbb{T}^d}f\partial^{(1)}_{V_{k},W_{k}}g\, dV=-\int_{\mathbb{T}^d}F_{k}g\,dV^k\otimes dW_{k}$$
		$\forall g\in C^{\infty}_{V,W}(\mathbb{T}^{d}).$ In this case F is unique and we use the notation $\tilde{\partial}^{(1)}_{W_{k},V_{k}}f:=F$
	\end{definition}
	
	\begin{remark}
	    Observe that our test functions, that is the functions that belong to $C^{\infty}_{V,W}(\mathbb{T}^{d})$ not only are smooth in the sense that the operator can be applied indefinitely, they are also smooth in the sense that they can be point-wisely evaluated. To the best of our knowledge, this is the first time a space of test functions in a $d$-dimensional setup has both of these features. For instance, in \cite{wsimas, farfansimasvalentim, valentim, simasvalentimjde} all the test functions either were able to be point-wisely evaluated or were such that the operator could be applied indefinitely, but in none of the cases, it was shown that one could do both.
	\end{remark}
	
	The definition of weak partial $V_{k}$-right-derivative can be defined analogously. The next result characterizes the energetic space $H^{1}_{W,V}(\mathbb{T}^d)$ in terms of weak derivatives.
	\begin{theorem}
		If we define the space 
		$$\tilde{H}^{1}_{W,V}(\mathbb{T}^d)=\left\{f\in L^{2}_{dV}(\mathbb{T}^{d}); \exists\tilde{\partial}^{(1)}_{W_{k},V_{k}}f\in L^{2}_{dV\otimes dW_{k}}(\mathbb{T}^{d})\right\}.$$
		
		Then $H^{1}_{W,V}(\mathbb{T}^d)=\tilde{H}^{1}_{W,V}(\mathbb{T}^d).$ In particular, the space $\tilde{H}^{1}_{W,V}(\mathbb{T}^d)$ is a Hilbert Space with the energetic inner product $$\<f,g\>_{W,V}=\<f,g\>_{L_{dV}^{2}(\mathbb{T}^d)}+\sum_{i=1}^{d}\left\<\tilde{\partial}^{(1)}_{W_{i},V_{i}}f,\tilde{\partial}^{(1)}_{W_{i},V_{i}}g\right\>_{L_{dV\otimes dW_{i}}^{2}(\mathbb{T}^d)}.$$
	\end{theorem}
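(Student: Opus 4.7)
The plan is to follow the one-dimensional strategy of Theorem \ref{sobequal}, exploiting the tensor-product spectral structure inherited from the one-dimensional operators $-\Delta_{W_i,V_i}$. The proof splits naturally into the two inclusions, after which the Hilbert structure will follow from a spectral identity of norms.

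\emph{Forward inclusion $H^{1}_{W,V}(\mathbb{T}^d)\subseteq\widetilde H^{1}_{W,V}(\mathbb{T}^d)$.} For $f\in H^{1}_{W,V}(\mathbb{T}^d)$ with admissible sequence $(f_n)\subset C^\infty_{W,V}(\mathbb{T}^d)$, I would apply the one-dimensional integration by parts formula (Proposition \ref{intbypartsprop2}) in the $k$th variable and use Fubini on the remaining variables to obtain, for every $g\in C^\infty_{V,W}(\mathbb{T}^d)$ and every $k\in\{1,\ldots,d\}$,
$$\int_{\mathbb{T}^d} f_n\,\partial^{(1)}_{V_k,W_k} g\,dV=-\int_{\mathbb{T}^d}\partial^{-}_{W_k}f_n\cdot g\,dV^k\otimes dW_k.$$
The energetic Cauchy property makes $(\partial^{-}_{W_k}f_n)$ convergent in $L^{2}_{dV\otimes dW_k}(\mathbb{T}^d)$, and passing to the limit identifies that limit as $\widetilde\partial^{(1)}_{W_k,V_k}f$.

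\emph{Reverse inclusion.} Here I would use the tensor-product basis $\nu_\alpha:=\nu_{\alpha_1}^{(1)}\otimes\cdots\otimes\nu_{\alpha_d}^{(d)}$, where $\{\nu_j^{(i)}\}_{j\geq 0}$ are the eigenvectors of $-\Delta_{W_i,V_i}$ with eigenvalues $\lambda_j^{(i)}$. These form a complete orthonormal basis of $L^2_{dV}(\mathbb{T}^d)$, belong to $C^\infty_{W,V}(\mathbb{T}^d)$, and a direct computation using Proposition \ref{intbypartsprop2} coordinatewise gives the orthogonality
$$\langle\partial^{-}_{W_k}\nu_\alpha,\partial^{-}_{W_k}\nu_\beta\rangle_{L^2_{dV\otimes dW_k}}=\lambda_{\alpha_k}^{(k)}\delta_{\alpha,\beta}.$$
For $f=\sum_\alpha c_\alpha\nu_\alpha\in\widetilde H^{1}_{W,V}(\mathbb{T}^d)$, the key identification to establish is
$$\sum_\alpha c_\alpha^2\,\lambda_{\alpha_k}^{(k)}=\|\widetilde\partial^{(1)}_{W_k,V_k}f\|^2_{L^2_{dV\otimes dW_k}}<\infty,$$
which I would obtain by expanding $\widetilde\partial^{(1)}_{W_k,V_k}f$ against the orthogonal system $(\lambda_{\alpha_k}^{(k)})^{-1/2}\partial^{-}_{W_k}\nu_\alpha$ and testing the weak-derivative identity against tensor products of eigenvectors of $-\Delta_{V_i,W_i}$, which belong to $C^\infty_{V,W}(\mathbb{T}^d)$ by the one-dimensional regularity. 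This summability guarantees that the partial sums $S_Nf=\sum_{|\alpha|\leq N}c_\alpha\nu_\alpha$ lie in $C^\infty_{W,V}(\mathbb{T}^d)$, converge to $f$ in $L^2_{dV}(\mathbb{T}^d)$, and are Cauchy in the energetic norm, exhibiting $f$ as a member of $H^{1}_{W,V}(\mathbb{T}^d)$. The same computation yields the spectral identity of norms
$$\|f\|^2_{1,2}=\sum_\alpha c_\alpha^2\Bigl(1+\sum_{k=1}^d\lambda_{\alpha_k}^{(k)}\Bigr)=\|f\|^2_{L^2_{dV}(\mathbb{T}^d)}+\sum_{k=1}^d\|\widetilde\partial^{(1)}_{W_k,V_k}f\|^2_{L^2_{dV\otimes dW_k}},$$
from which the Hilbert structure of $\widetilde H^{1}_{W,V}(\mathbb{T}^d)$ with the stated inner product transfers from that of the energetic space.

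The main obstacle will be the Parseval-type identification at the heart of Step 2. The subtlety is that the weak-derivative identity pairs $f$ with $\partial^{(1)}_{V_k,W_k}g$ in $L^2_{dV}(\mathbb{T}^d)$, while both $\widetilde\partial^{(1)}_{W_k,V_k}f$ and $\partial^{-}_{W_k}\nu_\alpha$ live in the different space $L^2_{dV\otimes dW_k}(\mathbb{T}^d)$, so one must carefully bridge these two pairings. This is precisely the point at which the tensorized form of the one-dimensional density result of Proposition \ref{densenesscinfinity2}, applied coordinatewise and combined with a Fubini argument, will be indispensable.
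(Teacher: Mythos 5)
Your proposal is correct and takes essentially the route the paper itself indicates: the paper offers no detailed proof, saying only that the theorem follows from a ``simple adaptation'' of the one-dimensional equality $H_{W,V}(\mathbb{T})=\widetilde{H}_{W,V}(\mathbb{T})$ together with the fact that the $d$-dimensional eigenvectors are tensor products of the one-dimensional ones, which is precisely your two-step plan (integration by parts plus limits for one inclusion, the tensor eigenbasis and a Parseval bound for the other). The bridging step you flag does go through as you anticipate: keep the $k$-th factor equal to the exact dual eigenvector $D^{-}_{W_k}\nu^{(k)}_{\beta_k}\in C^{\infty}_{V_k,W_k}(\mathbb{T})$, approximate each remaining factor $\nu^{(i)}_{\beta_i}$ in $L^{2}_{V_i}(\mathbb{T})$ by elements of $C^{\infty}_{V_i,W_i}(\mathbb{T})$ via Proposition \ref{densenesscinfinity2}, and pass to the limit using the $L^{2}$-continuity of both pairings, which yields $\bigl\langle \tilde{\partial}^{(1)}_{W_k,V_k}f,\partial^{-}_{W_k}\nu_{\beta}\bigr\rangle_{L^{2}_{dV\otimes dW_k}}=\lambda^{(k)}_{\beta_k}c_{\beta}$ and hence, by Bessel's inequality applied to the orthogonal system $\partial^{-}_{W_k}\nu_{\beta}$, the summability $\sum_{\beta}\lambda^{(k)}_{\beta_k}c_{\beta}^{2}<\infty$ that your argument requires.
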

	
	The proof of the above theorem follows from a simple adaptation of \cite[Theorem 5]{keale} together with the fact that the eigenvectors of the $d$-dimensional $\Delta_{W,V}$ are tensor products of the eigenvectors of $\Delta_{W_k,V_k},k=1,\ldots,d$.
	
	Indeed , the set $\mathcal{A}_{W,V}$ consists precisely on the orthonormal basis of $L^{2}_{dV}(\mathbb{T}^d)$ which are eigenfunction of operator $I-\Delta_{W,V}$. Furthermore, we can index the set of eigenvalues $\{\alpha_{n}\}_{n\in\mathbb{N}}$ satisfying $$0<\alpha_{1}\le\alpha_{2}\le ... \to +\infty,$$
	as $n\to +\infty$, where the enumeration takes into account the multiplicities of each eigenvalue.
	
	 Now, note that by the definition of the space $H^{2}_{W,V}(\mathbb{T}^d)$ and by the density of $C^{\infty}_{W,V}(\mathbb{T}^d)$ on $H^{1}_{W,V}(\mathbb{T}^d)$ with respect to the $\|\cdot\|_{W,V}$ we have
	$$H^{2}_{W,V}(\mathbb{T}^d)=\left\{f\in H^{1}_{W,V}(\mathbb{T}^d); \exists \tilde{\partial}^{(2)}_{W_{k},V_{k}}f:=\tilde{\partial}^{(1)}_{W_{k},V_{k}}\left(\tilde{\partial}^{(1)}_{W_{k},V_{k}}f\right)\in L^{2}_{dV}(\mathbb{T}^d)\right\}$$
	which is a Hilbert Space with respect to the inner product defined by
	$$\<f,g\>_{2,W,V}=\<f,g\>_{W,V}+ \sum_{i=1}^{d}\left\<\tilde{\partial}^{(1)}_{W_{i},V_{i}}f,\tilde{\partial}^{(1)}_{W_{i},V_{i}}g\right\>_{L_{dV\otimes dW_{i}}^{2}(\mathbb{T}^d)}  + \sum_{i=1}^{d}\left\<\tilde{\partial}^{(2)}_{W_{i},V_{i}}f,\tilde{\partial}^{(2)}_{W_{i},V_{i}}g\right\>_{L^{2}_{dV}(\mathbb{T}^d)}.$$
	
	We can also define the norms 
		\begin{equation}\label{normoddd}
			\<f,g\>_{n,W,V}=\<f,g\>_{n-1,W,V}+ \sum_{i=1}^{d}\left\<\tilde{\partial}^{(n)}_{W_{i},V_{i}}f,\tilde{\partial}^{(n)}_{W_{i},V_{i}}g\right\>_{L_{dV\otimes dW_{i}}^{2}(\mathbb{T}^d)},
		\end{equation}
		if $n$ is odd, and 
		\begin{equation}\label{normevend}
			\<f,g\>_{n,W,V}=\<f,g\>_{n-1,W,V}+ \sum_{i=1}^{d}\left\<\tilde{\partial}^{(n)}_{W_{i},V_{i}}f,\tilde{\partial}^{(n)}_{W_{i},V_{i}}g\right\>_{L^{2}_{dV}(\mathbb{T}^d)}.
			\end{equation}
				if $n$ is even. These norms allow us to define the $n$th order $W$-$V$-Sobolev in a similar manner, in terms of integration by parts.
	
	 Now, similarly to the one dimensional case, by repeating the arguments in \cite[Theorem 4.1]{simasvalentim2}, we can characterize the spaces $H^{1}_{W,V}(\mathbb{T}^d)$ and $H^{2}_{W,V}(\mathbb{T}^d)$ as follows:
	$$H^{1}_{W,V}(\mathbb{T}^d)=\left\{\sum_{i=1}^{\infty}b_{i}\nu_{i}\in L^{2}_{dV}(\mathbb{T}^d); \sum_{i=1}^{\infty}\alpha_{i}b_{i}^2<\infty\right\},$$$$H^{2}_{W,V}(\mathbb{T}^d)=\left\{\sum_{i=1}^{\infty}b_{i}\nu_{i}\in L^{2}_{dV}(\mathbb{T}^d); \sum_{i=1}^{\infty}\alpha_{i}^{2}b_{i}^2<\infty\right\},$$
	and more generally,
	$$H^{n}_{W,V}(\mathbb{T}^d)=\left\{\sum_{i=1}^{\infty}b_{i}\nu_{i}\in L^{2}_{dV}(\mathbb{T}^d); \sum_{i=1}^{\infty}\alpha_{i}^{n}b_{i}^2<\infty\right\},$$
	where $\nu_{i}$ is the satisfies $(I-\nabla_{W,V})\nu_{i}=\alpha_{i}\nu_{i}.$ From the above characterization and following the analogue theory developed on above sections allow us to define for $s>0$ the the $d$-dimensional fractional measure theoretic $W$-$V$-Sobolev space as the set $$H^{s}_{W,V}(\mathbb{T}^d)=\mathscr{D}(I-\nabla_{W,V})^{s/2}=\left\{\sum_{i=1}^{\infty}b_{i}\nu_{i}\in L^{2}_{dV}(\mathbb{T}^d); \sum_{i=1}^{\infty}\alpha_{i}^{s}b_{i}^2<\infty\right\}.$$
	
	In what follows is important notice that if $\alpha_{n}$ is the eigenvalue of $I-\Delta_{W,V}$ associated to the eigenvector $f_{n}$, then, there exist $\gamma_{n_1,1},\ldots,\gamma_{n_k,k}$, where each $\gamma_{n,k}$ is the eigenvalue of $I-\Delta_{W_{k},V_{k}}$ associated to the eigenvector $f_{n,k}$, and they are such that $f_{n}$ is the tensor product of the $f_{n,k}$, that is, $f_n=\bigotimes_{i=1}^{d}f_{n_{k}}$ and $\alpha_n$ is the sum of the eigenvalues $\gamma_{n_k,k}$, that is, $\alpha_{n} = \sum_{k=1}^d\gamma_{n_k,k}.$ 
	
Our goal now is to study the asymptotic behavior of the eigenvalues of the $d$-dimensional $\Delta_{W,V}$. More precisely, we want to investigate for which values of $s$ we have the convergence of the series
	$$\sum_{k=1}^{\infty}\dfrac{1}{\alpha_{k}^{s}}.$$ 
	The convergence of the above series will determine under which conditions inclusion of $L^{2}_{V}(\mathbb{T})$ in $H^{-s}_{W,V}(\mathbb{T})$ is trace-class. 
	
	By the relation (\ref{eigenineqeq}) (note that to simplify the result, we are not using the convergence order $\rho$ given in the statement of Theorem \ref{prop6}, instead we are using the fact that $\rho\leq 1/2$) we know that there exists a constant $C>0$ such that
	$$n_{k}^{2}C\le \gamma_{n_{k}}.$$
	Therefore, the following inequality is true in $\mathbb{R}\cup\{\infty\}:$
	$$\sum_{k=1}^{\infty}\dfrac{1}{\alpha_{k}^{s}}\le \sum_{k\in \mathbb{N}^{d}}\dfrac{1}{\|k\|^{2s}_{2}},$$
	where $$\|k\|_{2}=\left(\sum_{j=1}^{d}k_{j}^{2}\right)^{\frac{1}{2}},$$
	 for $k=(k_{1},\ldots, k_{d})$. We, now, have that for $s>d/2$,
	  $$\sum_{k\in \mathbb{N}^{d}}\dfrac{1}{\|k\|^{2s}_{2}}<\infty.$$
	Indeed, for each $j\in\mathbb{N}$ the number of solutions on $\mathbb{N}^d$ for the equation $\sum_{k=1}^{d}n_{k}^2 = j^2$ is $O(j^{d-1})$, so
	$$\sum_{k\in \mathbb{N}^{d}}\dfrac{1}{\|k\|^{2s}_{2}}\le C\sum_{j=1}^{\infty}\dfrac{j^{d-1}}{j^{2s}}$$
	and the series on the right-hand side converges for $s>d/2.$ This last discussion allows us to state the following theorem
	\begin{theorem}\label{traceclassd}
		For $s>d/2$, the operator $(I-\nabla_{W,V})^{-s}$ is trace-class. 
	\end{theorem}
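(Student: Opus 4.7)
The plan is to reduce the trace-class property to the convergence of $\sum_{k\geq 1} \alpha_k^{-s}$ and then exploit the tensor-product structure of the $d$-dimensional eigenvalue problem together with the one-dimensional bound from Theorem~\ref{prop6}. Since $(I-\Delta_{W,V})^{-s}$ is a nonnegative self-adjoint operator on the separable Hilbert space $L^2_{dV}(\mathbb{T}^d)$, diagonalized by the orthonormal basis $\mathcal{A}_{W,V}$ with eigenvalues $\alpha_k^{-s}$, it is trace-class if and only if $\sum_k \alpha_k^{-s} < \infty$; thus everything boils down to proving this convergence.

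First, I would invoke the tensor-product structure already established before the statement: each eigenvector $\nu_n = \bigotimes_{j=1}^d f_{n_j,j}$ has eigenvalue $\alpha_n = \sum_{j=1}^d \gamma_{n_j,j}$, where $\gamma_{n_j,j}$ is the $n_j$-th eigenvalue of the one-dimensional operator $I-\Delta_{W_j,V_j}$. Applying Theorem~\ref{prop6} coordinatewise (and using $\rho \leq 1/2$, which is all we need for the simple $s>d/2$ bound), we obtain a constant $C_0>0$ such that
\begin{equation*}
\gamma_{n_j,j} \;\geq\; C_0\, n_j^{1/\rho} \;\geq\; C_0\, n_j^{2}, \qquad j=1,\dots,d,\ n_j\geq 1.
\end{equation*}
Summing in $j$ gives $\alpha_n \geq C_0\,\|n\|_2^2$ for every multi-index $n=(n_1,\dots,n_d)\in\mathbb{N}^d$ with all coordinates positive (and one handles the coordinates that may correspond to the zero eigenvalue by the shift $\gamma=1+\lambda\geq 1$ that already appears in the discussion above).

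Consequently,
\begin{equation*}
\sum_{k=1}^\infty \frac{1}{\alpha_k^s} \;\leq\; C_0^{-s}\sum_{n\in\mathbb{N}^d} \frac{1}{\|n\|_2^{2s}},
\end{equation*}
and it remains to verify that the right-hand series converges for $s>d/2$. I would do this by a lattice-point count: the number of $n\in\mathbb{N}^d$ with $\|n\|_2 \in [j,j+1)$ is $O(j^{d-1})$, so the series is dominated by $\sum_{j\geq 1} j^{d-1}/j^{2s}$, which converges precisely when $2s-d+1 > 1$, i.e.\ $s>d/2$. Combining these three estimates yields $\sum_k \alpha_k^{-s}<\infty$, hence $(I-\Delta_{W,V})^{-s}$ is trace-class.

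The only genuinely nontrivial ingredient is the one-dimensional lower bound $\gamma_{n} \geq C n^2$, which was the main content of Theorem~\ref{prop6}; once this is in hand, the $d$-dimensional case follows by standard lattice-point estimates and the additivity of eigenvalues under tensor products. So I do not anticipate any real obstacle beyond bookkeeping: the main care point is to make sure the coordinatewise bound is applied correctly to the mixed sum $\sum_{j=1}^d \gamma_{n_j,j}$ and that the degenerate case $n_j=0$ (corresponding to the constant eigenfunction in the $j$-th variable) is absorbed into the lower bound via the shift $\gamma_{n_j,j}\geq 1$.
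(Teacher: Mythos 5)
Your proposal is correct and follows essentially the same route as the paper: reduce trace-class to convergence of $\sum_k \alpha_k^{-s}$, use the additivity $\alpha_n=\sum_{j}\gamma_{n_j,j}$ of eigenvalues under tensor products together with the one-dimensional bound $\gamma_n\geq Cn^2$ from Theorem \ref{prop6} (via $\rho\leq 1/2$), and conclude by the lattice-point estimate $\sum_{n\in\mathbb{N}^d}\|n\|_2^{-2s}<\infty$ for $s>d/2$. Your handling of the degenerate coordinates through the shift $\gamma=1+\lambda\geq 1$ is a minor bookkeeping point the paper glosses over, but the argument is the same.
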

Which directly implies that:
\begin{corollary}\label{hilbertschmidtd}
	For any $s>d/2$, the inclusion $i:L^2_{dV}(\mathbb{T}^d)\to H^{-s}_{W,V}(\mathbb{T}^d)$ is trace-class.
\end{corollary}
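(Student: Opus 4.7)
The plan is to mirror almost verbatim the argument used in the one-dimensional Proposition \ref{HSinclusion}, with Theorem \ref{traceclassd} replacing the appeal to Theorem \ref{prop6}. The key observation is that the inclusion operator $i:L^2_{dV}(\mathbb{T}^d)\to H^{-s}_{W,V}(\mathbb{T}^d)$ is positive and self-adjoint in a suitable sense, so its trace can be computed against any orthonormal basis of $L^2_{dV}(\mathbb{T}^d)$. The natural choice is the complete orthonormal system $\{\nu_i\}_{i\in\mathbb{N}}$ of eigenvectors of $I-\Delta_{W,V}$, indexed so that the associated eigenvalues $\{\alpha_i\}_{i\in\mathbb{N}}$ satisfy $0<\alpha_1\leq\alpha_2\leq\cdots\to\infty$.

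First I would establish the $d$-dimensional analogue of Proposition \ref{dualsob}. Since the eigenvectors of the $d$-dimensional $I-\Delta_{W,V}$ are obtained as tensor products of one-dimensional eigenvectors and form a complete orthonormal basis of $L^2_{dV}(\mathbb{T}^d)$, the very same Riesz representation argument used in the proof of Proposition \ref{dualsob} goes through without change. This yields the identification
\[
H^{-s}_{W,V}(\mathbb{T}^d)\cong\left\{f=\sum_{i=1}^\infty \alpha_i\nu_i;\ \sum_{i=1}^\infty \alpha_i^{-s}\alpha_i^2<\infty\right\},
\]
with norm $\|f\|_{H^{-s}_{W,V}(\mathbb{T}^d)}^2=\sum_{i=1}^\infty \alpha_i^{-s}\alpha_i^2$. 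In particular, for each basis element, $\|\nu_i\|_{H^{-s}_{W,V}(\mathbb{T}^d)}^2=\alpha_i^{-s}$.

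Next, I would compute the trace of $i$ by summing
\[
\sum_{i=1}^\infty (i(\nu_i),\nu_i)_{H^{-s}_{W,V}(\mathbb{T}^d)}=\sum_{i=1}^\infty \|\nu_i\|_{H^{-s}_{W,V}(\mathbb{T}^d)}^2=\sum_{i=1}^\infty \alpha_i^{-s}.
\]
By Theorem \ref{traceclassd}, the right-hand side converges precisely when $s>d/2$, so under this hypothesis the inclusion $i$ is trace-class.

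The only potentially delicate step is the $d$-dimensional version of Proposition \ref{dualsob}, but since the proof only relies on Riesz representation and the existence of a complete orthonormal basis of eigenvectors in $L^2_{dV}(\mathbb{T}^d)$, both of which are already available in the $d$-dimensional setup, the argument is purely formal and presents no real obstacle. In summary, the corollary is an immediate consequence of Theorem \ref{traceclassd} together with the spectral characterization of the negative-order Sobolev norm.
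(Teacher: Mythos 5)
Your proposal is correct and takes essentially the same route as the paper: the corollary is read off exactly as in the one-dimensional Proposition \ref{HSinclusion}, by summing $\|\nu_i\|_{H^{-s}_{W,V}(\mathbb{T}^d)}^2=\alpha_i^{-s}$ over the eigenbasis and invoking Theorem \ref{traceclassd}, the $d$-dimensional dual characterization being the routine analogue of Proposition \ref{dualsob}. Only beware the notational clash in your display, where $\alpha_i$ denotes both the eigenvalues and the Fourier coefficients; the intended meaning is clear, but you should use separate symbols (e.g.\ $b_i$ for the coefficients, as the paper does in the $d$-dimensional section).
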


	\begin{remark}
		In particular case where $W(x)=V(x)=x$ we recover the classical results for Laplacian on $\mathbb{T}^{d}$. Furthermore, since they are exact for the Laplacian, our general lower bound is sharp for all finite measures $W$ and $V$ defined on $\mathbb{T}^d.$
	\end{remark}

	\section{Applications}
	
	\subsection{The operator $L^{s}_{W,V}$ and the $\dot{H}_{L,W,V}$ spaces}
	
Let $H:\mathbb{T}^d \to \mathbb{R}^d$, be given by $H(x) = (H_1(x),\ldots,H_d(x))$, where for each $k=1,\ldots,d$, we have $H_k\in C^{\infty}_{V,W}(\mathbb{T}^d)$. Further, assume that $H$ is bounded away from zero, that is, there exists $H_0>0$ such that for every $x\in \mathbb{T}$ and every $k$, $H_k(x)\geq H_0$. Let, also, $\kappa: \mathbb{T}^d\to \mathbb{R}$ be a function in $C^{\infty}_{W,V}(\mathbb{T}^d)$ which is bounded away from zero.

Through this section we will consider the operator $L_{W,V}$ associated to the bilinear form
\begin{equation}\label{bilinearLVW}
B_{L_{W,V}}[u,v] = \int_{\mathbb{T}^d} \kappa^2 uv\,dV + \sum_{k=1}^d \int H_{k}\left( \partial^{(1)}_{V_{k},W_{k}}u\right)\left(\partial^{(1)}_{V_{k},W_{k}}v\right)\,dV^k\otimes dW_{k}.
\end{equation}

By the assumptions, the operator $L_{W,V}$ defined above is symmetric, positive and monotone. Furthermore,
by a simple calculation, we have that for $f\in C^{\infty}_{W,V}(\mathbb{T}^d)$ and every $k$, the following derivative of the product holds:
\begin{equation}\label{prodrule}
\partial_{V_k}^+\left( H_k \partial_{W_k}^- f\right)(x) = \left(\partial_{V_k}^+H_k(t)\right)\left( \partial_{W_k}^-f(t)\right) + H_k(t+) \partial_{V_k}^+\partial_{W_k}^-f(t).
\end{equation}

Therefore, equation \eqref{prodrule} implies that $L_{W,V}$ is densely defined, as it is well-defined in $C^{\infty}_{W,V}(\mathbb{T}^d)$. Hence, we can use Friedrich's extension (see \cite[Section 5.5]{zeidler}) on $L_{W,V}$ to obtain a self-adjoint extension, which we will also denote by $L_{W,V}$.
	
Now, observe that the assumptions on $H$ and $\kappa$ imply that there exist constants $C_{1}>0$ and $C_{2}>0$ depending only on $H$ and $\kappa$. Such that $\forall f \in H_{W,V}^1(\mathbb{T}^d)$ the following inequalities are verified
	
	\begin{equation}\label{courant}
		C_{1}\langle I-\Delta_{W,V}f,f\rangle_{V} \le B_{L_{W,V}}[f,f] \le C_{2}\langle I-\Delta_{W,V}f,f\rangle_V.
	\end{equation}
	Applying the Courant min-max principle on (\ref{courant}) we have the following inequalities related to the eigenvalues $\gamma_{i}$ of $I-\Delta_{W,V}$ and the eigenvalues $\gamma_{i,L}$ of $L_{W,V}$ :
	\begin{equation}\label{ineqauto}
		C_{1}\gamma_{i}\le \gamma_{i,L}\le C_{2}\gamma_{i}.
	\end{equation}
	So, the asymptotic behavior of $\{\gamma_{i}\}_{i\geq 0}$ and  $\{\gamma_{i}\}_{i\geq 0}$ are the same.
	\paragraph{}Denote by $\lambda_{i,L}$ the $i$th eigenvalue of $L_{W,V}$ with corresponding eigenvector given by $e_{i,L}.$ Let now, for $s\geq 0$ the Hilbert space
	$$\dot{H}_{L,W,V}^s(\mathbb{T}^{d}) := \mathscr{D}(L_{W,V}^{s/2}) = \left\{v\in L_{dV}^2(\mathbb{T}^d):  \sum_{j\in\mathbb{N}}  \gamma_{j,L}^{s} \<v, e_{j,L}\>_{{V}}^2<\infty\right\},$$
	with inner product and norm given by
	$$\<u,v\>_{\dot{H}_{L,W,V}^s} := \<L_{W,V}^{s/2}u, L_{W,V}^{s/2}v\>_{{V}} = \sum_{j\in\mathbb{N}}{\gamma_{j,L}^s} \<u, e_{j,L}\>_{V}\<v, e_{j,L}\>_{V}$$
	and
	$$\|v\|_{\dot{H}_{L,W,V}^s}^2 = \<v, v\>_{\dot{H}_{L,W,V}^s}.$$
	
The following Theorem is a consequence of (\ref{ineqauto}) and Theorem \ref{traceclassd}.
	
	\begin{theorem}\label{coro4}
		For any $s>d/2$, the operator $L^{-s}_{W,V}:L^{2}_{W,V}(\mathbb{T})\to H^{2s}_{W,V}(\mathbb{T})\subset L^{2}_{W,V}(\mathbb{T})$ is trace class.
	\end{theorem}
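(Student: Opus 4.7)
The plan is to reduce the trace-class statement for $L_{W,V}^{-s}$ to the corresponding statement for $(I-\Delta_{W,V})^{-s}$ already established in Theorem \ref{traceclassd}, using the eigenvalue comparison \eqref{ineqauto} supplied by the Courant min-max principle.

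First I would recall that since $L_{W,V}$ is the Friedrichs extension of a densely defined, symmetric and monotone operator on $L^2_{dV}(\mathbb{T}^d)$, it is self-adjoint, positive and has compact resolvent (the last property is inherited from $(I-\Delta_{W,V})^{-1}$ via the norm equivalence encoded in \eqref{courant} and the compact embedding $H^1_{W,V}(\mathbb{T}^d)\hookrightarrow L^2_{dV}(\mathbb{T}^d)$, which is the $d$-dimensional counterpart of Theorem \ref{compactemb}). Consequently, by the spectral theorem for self-adjoint operators with compact resolvent, $L_{W,V}$ admits a complete orthonormal system of eigenvectors $\{e_{i,L}\}_{i\in\mathbb{N}}$ in $L^2_{dV}(\mathbb{T}^d)$ with strictly positive eigenvalues $\{\gamma_{i,L}\}_{i\in\mathbb{N}}$ tending to $+\infty$. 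The functional calculus then furnishes $L_{W,V}^{-s}$ as the self-adjoint operator with the same eigenvectors and eigenvalues $\gamma_{i,L}^{-s}$, and its domain coincides with (a superset of) $L^2_{dV}(\mathbb{T}^d)$, with range contained in $\mathscr{D}(L_{W,V}^{s/2})$, which by the eigenvalue comparison and the definition of $H^{2s}_{W,V}(\mathbb{T}^d)$ in Section \ref{fracsect} agrees as a set with $H^{2s}_{W,V}(\mathbb{T}^d)$.

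Next I would compute the trace. Since $\{e_{i,L}\}_{i\in\mathbb{N}}$ is an orthonormal basis diagonalizing $L_{W,V}^{-s}$, we have
\begin{equation*}
\operatorname{tr}\bigl(L_{W,V}^{-s}\bigr) = \sum_{i=1}^{\infty}\langle L_{W,V}^{-s} e_{i,L}, e_{i,L}\rangle_{V} = \sum_{i=1}^{\infty}\gamma_{i,L}^{-s}.
\end{equation*}
By the left-hand inequality in \eqref{ineqauto}, namely $\gamma_{i,L}\geq C_1\gamma_i$, we obtain $\gamma_{i,L}^{-s}\leq C_1^{-s}\gamma_i^{-s}$ for every $i$, and therefore
\begin{equation*}
\operatorname{tr}\bigl(L_{W,V}^{-s}\bigr)\leq C_1^{-s}\sum_{i=1}^{\infty}\gamma_{i}^{-s} = C_1^{-s}\operatorname{tr}\bigl((I-\Delta_{W,V})^{-s}\bigr).
\end{equation*}
By Theorem \ref{traceclassd}, the right-hand side is finite as soon as $s>d/2$, which yields the claim.

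Since the argument is almost entirely spectral, there is no genuinely hard step; the only subtlety worth double-checking is that the eigenvalue comparison \eqref{ineqauto}, originally obtained through Courant's min-max on the energetic (form) level, genuinely transfers to the self-adjoint realization of $L_{W,V}$ as the Friedrichs extension. This however follows from the fact that the Friedrichs extension is the unique self-adjoint operator whose associated quadratic form is the closure of $B_{L_{W,V}}$, so its Rayleigh quotients coincide with those used in \eqref{courant}; once this identification is made, \eqref{ineqauto} applies to the spectrum verbatim and the summability bound above concludes the proof.
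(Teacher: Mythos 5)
Your proposal is correct and follows essentially the same route as the paper, which deduces the theorem directly from the eigenvalue comparison \eqref{ineqauto} (obtained via Courant's min-max principle from \eqref{courant}) together with Theorem \ref{traceclassd}; you have merely written out the spectral details (compact resolvent, diagonalization, and the bound $\gamma_{i,L}^{-s}\leq C_1^{-s}\gamma_i^{-s}$) that the paper leaves implicit.
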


Now, let us obtain a result on elliptic regularity of $L_{W,V}$:

	\begin{proposition}\label{prop99}
		Fix $s\geq0$ and $m>0$. If $f\in \dot{H}^m_{L,W,V}(\mathbb{T}^d)$, then the function $u\in\dot{H}_{L,W,V}^s(\mathbb{T}^d)$ given by the unique weak solution of $L^s_{W,V}u=f$ satisfies $u\in \dot{H}_{L,W,V}^{m+2s}(\mathbb{T}^d)$
	\end{proposition}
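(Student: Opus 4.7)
The plan is to exploit the spectral characterization of the scales $\dot{H}^\alpha_{L,W,V}(\mathbb{T}^d)$ in terms of the eigensystem $\{(\gamma_{j,L},e_{j,L})\}_{j\in\mathbb{N}}$ of the self-adjoint operator $L_{W,V}$. Since $L_{W,V}$ admits a spectral resolution with this complete orthonormal basis of $L^2_{dV}(\mathbb{T}^d)$, the fractional powers $L_{W,V}^{t}$, for any real $t$, are diagonal in this basis with multipliers $\gamma_{j,L}^{t}$. The proof will then reduce to a bookkeeping of Fourier coefficients.

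First, I would write $u = \sum_j u_j e_{j,L}$ and $f = \sum_j f_j e_{j,L}$, with $u_j = \langle u, e_{j,L}\rangle_V$ and $f_j = \langle f, e_{j,L}\rangle_V$. By the definition of weak solution and the self-adjointness of $L_{W,V}^{s/2}$ (whose square is $L^s_{W,V}$), testing $L^s_{W,V}u = f$ against each $e_{j,L}$ yields the identity
\begin{equation*}
\gamma_{j,L}^{s}\, u_j \;=\; f_j, \qquad j\in\mathbb{N}.
\end{equation*}
This uses that $e_{j,L}\in\bigcap_{\alpha\geq 0}\dot{H}^{\alpha}_{L,W,V}(\mathbb{T}^d)$ (as it lies in every spectral subspace) so it is an admissible test vector, and the hypothesis $u\in \dot{H}^{s}_{L,W,V}(\mathbb{T}^d)$ is what makes $L^{s}_{W,V}u$ well-defined and paired with $e_{j,L}$.

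Next, I would compute the norm of $u$ in $\dot{H}_{L,W,V}^{m+2s}(\mathbb{T}^d)$ directly from the definition:
\begin{equation*}
\|u\|_{\dot{H}_{L,W,V}^{m+2s}}^{2} \;=\; \sum_{j\in\mathbb{N}} \gamma_{j,L}^{\,m+2s}\, u_j^{2} \;=\; \sum_{j\in\mathbb{N}} \gamma_{j,L}^{\,m+2s}\,\gamma_{j,L}^{-2s}\, f_j^{2} \;=\; \sum_{j\in\mathbb{N}} \gamma_{j,L}^{\,m}\, f_j^{2} \;=\; \|f\|_{\dot{H}^{m}_{L,W,V}}^{2}.
\end{equation*}
Since by hypothesis $f\in\dot{H}^{m}_{L,W,V}(\mathbb{T}^d)$, the right-hand side is finite, and in fact the above chain of equalities gives the sharp isometry $\|u\|_{\dot{H}^{m+2s}_{L,W,V}} = \|f\|_{\dot{H}^{m}_{L,W,V}}$, which immediately yields $u\in \dot{H}_{L,W,V}^{m+2s}(\mathbb{T}^d)$ as required.

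There is no real obstacle here beyond justifying that the spectral identity $\gamma_{j,L}^{s} u_j = f_j$ holds termwise; the only subtle point is to be careful that the weak formulation of $L^{s}_{W,V}u=f$ indeed tests legitimately against the eigenvectors $e_{j,L}$. This is automatic from $L_{W,V}$ being self-adjoint with compact resolvent (which follows from Theorem \ref{coro4} and the Friedrichs construction), so the spectral theorem gives $L^{s}_{W,V}u = \sum_{j}\gamma_{j,L}^{s} u_j e_{j,L}$ in $L^{2}_{dV}(\mathbb{T}^d)$, matching $f$ coefficientwise.
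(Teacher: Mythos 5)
Your proposal is correct and follows essentially the same route as the paper: expand $u$ and $f$ in the eigenbasis $\{e_{j,L}\}$, derive the coefficientwise relation $\gamma_{j,L}^{s}u_j=f_j$ from the weak formulation (the paper writes this as $\beta_i=\langle L^{s/2}_{W,V}u,L^{s/2}_{W,V}e_{i,L}\rangle_V=\gamma_{i,L}^{s}\alpha_i$), and then conclude by the identity $\sum_j \gamma_{j,L}^{m+2s}u_j^2=\sum_j\gamma_{j,L}^{m}f_j^2<\infty$. No gaps; the observation that this is in fact an isometry is a small bonus beyond what the paper states.
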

	\begin{proof}
		Indeed, if 
		$$f=\sum_{i\in\mathbb{N}}\beta_{i}e_{i,L}$$
		and
		$$u=\sum_{i\in\mathbb{N}}\alpha_{i}e_{i,L}$$
		then by definition
		$$\beta_{i}=\<L^{s/2}_{W,V}u,L^{s/2}_{W,V}e_{i,L}\>_V=\gamma_{i,L}^{s}\<e_{i,L},u\>_V=\gamma_{i,L}^{s}\alpha_{i}.$$
		By, using that $f\in\dot{H}^m_{L,W,V}(\mathbb{T}^d)$ and the above identity
		$$\sum_{i\in\mathbb{N}}\gamma_{i,L}^{m+2s}\alpha_{i}^{2}=\sum_{i\in\mathbb{N}}\gamma_{i,L}^{m}\beta_{i}^{2}<\infty$$
		which implies $u\in\dot{H}_{L,W,V}^{m+2s}(\mathbb{T}^d).$
	\end{proof}
	For the next regularity result consider the following notation $$\dot{H}_{L,W,V}^{\infty}(\mathbb{T}^d):=\bigcap_{m>0} \dot{H}_{L,W,V}^{m}(\mathbb{T}^d)$$
	
	\begin{theorem}\label{reguregu}
		Fix $s\geq0$. If $f\in\dot{H}_{L,W,V}^{\infty}(\mathbb{T}^d)$ and $u\in\dot{H}_{L,W,V}^s(\mathbb{T}^d)$ is the unique weak solution of the problem  $$L^{s}_{W,V}u=f.$$ Then $u\in \dot{H}_{L,W,V}^{\infty}(\mathbb{T}^d).$ 
	\end{theorem}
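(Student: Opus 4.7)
The plan is to derive Theorem \ref{reguregu} as an almost immediate consequence of the preceding Proposition \ref{prop99}, exploiting the fact that membership in $\dot{H}_{L,W,V}^{\infty}(\mathbb{T}^d)$ is, by definition, membership in $\dot{H}_{L,W,V}^{m}(\mathbb{T}^d)$ for every $m>0$. Since $L^{s}_{W,V}$ is a self-adjoint positive operator with a complete orthonormal basis of eigenvectors $(e_{i,L})_{i\in\mathbb{N}}$ and positive eigenvalues $\gamma_{i,L}^{s}$, the weak solution of $L^{s}_{W,V}u=f$ is unique in $\dot{H}_{L,W,V}^{s}(\mathbb{T}^d)$, so there is no ambiguity in ``the'' solution whose regularity we wish to upgrade.

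First I will fix an arbitrary $m>0$ and observe that, by hypothesis, $f\in\dot{H}_{L,W,V}^{m}(\mathbb{T}^d)$. Next I will invoke Proposition \ref{prop99} with this choice of $m$ (and with the same $s$ as in the statement of the theorem) to conclude that the unique weak solution $u\in\dot{H}_{L,W,V}^{s}(\mathbb{T}^d)$ of $L^{s}_{W,V}u=f$ actually belongs to $\dot{H}_{L,W,V}^{m+2s}(\mathbb{T}^d)$. In particular, $u\in\dot{H}_{L,W,V}^{m}(\mathbb{T}^d)$. Since $m>0$ was arbitrary, intersecting over all such $m$ yields
\[
u\in\bigcap_{m>0}\dot{H}_{L,W,V}^{m}(\mathbb{T}^d)=\dot{H}_{L,W,V}^{\infty}(\mathbb{T}^d),
\]
which is exactly the conclusion.

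There is essentially no obstacle to overcome: the work has already been done in Proposition \ref{prop99}, where the Fourier/eigenfunction characterization of the spaces $\dot{H}_{L,W,V}^{m}$ reduced the elliptic regularity gain to a summability comparison $\beta_{i}=\gamma_{i,L}^{s}\alpha_{i}$. The only mild subtlety worth flagging in the write-up is to emphasize that the same $u\in\dot{H}_{L,W,V}^{s}(\mathbb{T}^d)$ witnesses the conclusion of Proposition \ref{prop99} for every choice of $m$, which is guaranteed by uniqueness of the weak solution, so the countable intersection indeed contains $u$. No additional limiting argument or compatibility check is needed.
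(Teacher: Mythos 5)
Your proposal is correct and follows exactly the route the paper takes: the paper's proof consists solely of citing Proposition \ref{prop99}, and your argument simply makes explicit the step of fixing an arbitrary $m>0$, applying that proposition, and intersecting over all $m$, with uniqueness of the weak solution ensuring the same $u$ works for each $m$. No discrepancy to report.
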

	\begin{proof}
		Follows directly from the propositions \ref{prop99}.
	\end{proof}
	\begin{corollary}
		For $s\geq0$ and $f\in C^{\infty}_{W,V}(\mathbb{T}^d)$ the unique weak solution $u\in H^{s}_{W,V}(\mathbb{T})$ of $$(I-\Delta_{W,V}u)^{s}u=f$$
		belongs to $H^{\infty}_{W,V}(\mathbb{T})$. In particular, if $d=1$, then $u \in C^{\infty}_{W,V}(\mathbb{T})$.
	\end{corollary}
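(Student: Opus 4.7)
The plan is to deduce this from Theorem \ref{reguregu} by specializing the bilinear form in \eqref{bilinearLVW} to the trivial coefficients $H_k\equiv 1$ and $\kappa\equiv 1$. With this choice, a direct inspection of \eqref{bilinearLVW} shows that $L_{W,V}$ is precisely the operator $I-\Delta_{W,V}$ (both arise as Friedrich's extensions of the same symmetric operator on $C^{\infty}_{W,V}(\mathbb{T}^d)$, and they share the bilinear form). Consequently, their spectral decompositions coincide, and for every $s\geq 0$ we have the identification $\dot{H}^s_{L,W,V}(\mathbb{T}^d)=H^s_{W,V}(\mathbb{T}^d)$, and therefore also $\dot{H}^{\infty}_{L,W,V}(\mathbb{T}^d)=H^{\infty}_{W,V}(\mathbb{T}^d)$.

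To apply Theorem \ref{reguregu}, the first step is to verify the inclusion $C^{\infty}_{W,V}(\mathbb{T}^d)\subset H^{\infty}_{W,V}(\mathbb{T}^d)$. Given $f\in C^{\infty}_{W,V}(\mathbb{T}^d)$, all iterated lateral partial derivatives $\partial^{(n)}_{W_k,V_k}f$ exist pointwise on $\mathbb{T}^d$ and are, according to the parity of $n$, either càdlàg or càglàd in each coordinate; since $\mathbb{T}^d$ is compact, such functions are bounded, and hence lie in the relevant $L^2$ spaces with respect to the product measures $dV$ or $dV^k\otimes dW_k$. Using the Fourier characterization $H^m_{W,V}(\mathbb{T}^d)=\{\sum_i b_i\nu_i\in L^2_{dV}(\mathbb{T}^d):\sum_i\alpha_i^m b_i^2<\infty\}$ together with the repeated integration-by-parts formula (which, applied to $f$ against the eigenfunctions $\nu_i$, relates $\alpha_i^m b_i^2$ to $\|\partial^{(m)}_{W_k,V_k}f\|^2$ for each $k$), one concludes $f\in H^m_{W,V}(\mathbb{T}^d)$ for every $m\geq 0$. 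Thus $f\in H^{\infty}_{W,V}(\mathbb{T}^d)=\dot{H}^{\infty}_{L,W,V}(\mathbb{T}^d)$, and Theorem \ref{reguregu} immediately yields $u\in H^{\infty}_{W,V}(\mathbb{T}^d)$.

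For the final ``in particular'' statement, the plan is simply to invoke Corollary \ref{hinfinityreg}, which gives the equality of sets $H^{\infty}_{W,V}(\mathbb{T})=C^{\infty}_{W,V}(\mathbb{T})$ in dimension one. Combining this with the conclusion of the previous paragraph yields $u\in C^{\infty}_{W,V}(\mathbb{T})$ when $d=1$.

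The main obstacle is the verification in the second paragraph that every $f\in C^{\infty}_{W,V}(\mathbb{T}^d)$ lies in $H^m_{W,V}(\mathbb{T}^d)$ for every $m$; all the remaining steps are direct appeals to previously established results. This obstacle is mild because the one-dimensional tensor-product structure of the eigenfunctions $\nu_i=\bigotimes_{k=1}^d\nu_{i_k,k}$ reduces the matter to $d$ independent one-dimensional applications of the integration-by-parts formula of Proposition \ref{intbypartsprop2}, after which the required summability of $\sum_i\alpha_i^m b_i^2$ follows from the boundedness of the lateral derivatives together with Parseval's identity.
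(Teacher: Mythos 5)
Your proposal takes essentially the same route as the paper: specialize the bilinear form \eqref{bilinearLVW} to $\kappa\equiv H\equiv 1$, identify $\dot{H}^s_{L,W,V}(\mathbb{T}^d)$ with $H^s_{W,V}(\mathbb{T}^d)$, apply Theorem \ref{reguregu}, and invoke Corollary \ref{hinfinityreg} in the case $d=1$. The only difference is that you spell out the inclusion $C^{\infty}_{W,V}(\mathbb{T}^d)\subset H^{\infty}_{W,V}(\mathbb{T}^d)$ needed to place $f$ in the hypothesis of Theorem \ref{reguregu}, a step the paper's proof leaves implicit; your verification via boundedness of the lateral derivatives and integration by parts against the eigenfunctions is sound.
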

	\begin{proof}
		Indeed, in the particular case where $\kappa\equiv H\equiv 1$, we have $\dot{H}_{L,W,V}^s(\mathbb{T}^d)= H^s_{W,V}(\mathbb{T}^d)$, so, the result follows from Theorem \ref{reguregu}. Finally, if $d=1$, we have by Corollary \ref{hinfinityreg} that $H^{\infty}_{W,V}(\mathbb{T})=C^{\infty}_{W,V}(\mathbb{T})$.
	\end{proof}

\subsection{$H^{\infty}_{W,V}(\mathbb{T}^d)$ is a nuclear space}

Let $X$ be a vector space and let  $\<\cdot,\cdot\>_n, \;\; n\in\mathbb{N}$ be a sequence of inner products such that their norms are increasing, that is, for every $x\in X$ and $n<m$, we have $\|x\|_n\leq \|x\|_m$, where $\|\cdot\|_n$ is the norm associated to $\<\cdot,\cdot\>_n$. Denote by $X_n$ the completion of $X$ with respect to $\|\cdot \|_n$. Now, define
\begin{equation*}
	X_\infty = \bigcap_{n=1}^\infty X_n.
\end{equation*}

Then, $(X_\infty, (\|\cdot\|_n)_{n\in\mathbb{N}})$ is called a Countably Hilbert space and is a Fréchet space with respect to the metric
\begin{equation}\label{metric}
	d(f,g) = \sum^\infty_{n=1}2^{-n}\frac{\|f-g\|_n}{1+\|f-g\|_n}.
\end{equation}
Note that since the norms are increasing, we have
$$X_m\subset X_n\;\;\text{for all}\;\;m\ge n.$$

We say that countably Hilbert space $X_\infty$ is Nuclear if, for each $n \geq 0$, there exists $m>n$ such that the inclusion $i_{m,n}:X_m\to X_n$ is Hilbert-Schmidt. That is, given an orthonormal basis in $X_m$, say $\{e_j\}_{j\geq 1}$, we have
$$\sum_{j=1}^\infty\|e_j\|^2_n<\infty.$$

Let us now show that $H^{\infty}_{W,V}(\mathbb{T}^d)$ is a nuclear space. In particular, if $d=1$ we will have that $C^\infty_{W,V}(\mathbb{T})$ is a nuclear space, which generalizes the well-known fact that the Schwarz space $C^\infty(\mathbb{T})$ is a nuclear space.

This is a major advance in the field, as we can choose $V(x)=x$ to obtain that the very abstract space $\mathcal{S}_W(\mathbb{T}^d)$ considered in \cite{farfansimasvalentim} is actually the concrete $H^\infty_{W,V}(\mathbb{T}^d)$, which if $d=1$ is actually $C_{W,V}^\infty(\mathbb{T})$. We believe that $H^\infty_{W,V}(\mathbb{T}^d) = C^\infty_{W,V}(\mathbb{T}^d)$ but it is still an open problem for us.

Then, note that the sequence of norms $\|\cdot\|_{n,W,V}$ given in \eqref{normoddd} and \eqref{normevend} are increasing. If we choose $X = C_{W,V}^\infty(\mathbb{T}^d)$, we obtain that for each $n$, 
$$X_n = H^n_{W,V}(\mathbb{T}^d).$$
All that is left to do is to show that for every $n$, we can find $m\geq n$ such that the inclusion $i_{m,n}:H^m_{W,V}(\mathbb{T}^d) \to H^n_{W,V}(\mathbb{T}^d)$ is Hilbert-Schmidt. From Corollary \ref{hilbertschmidtd}, given $n\in\mathbb{N}$, for any $m\in\mathbb{N}$ such that $m > n+ d/4$, the inclusion $i_{m,n}$ is Hilbert-Schdmit. This shows that $X_\infty = H^\infty_{W,V}(\mathbb{T}^d)$ is a nuclear space.
	
	The fact that $H^\infty_{W,V}(\mathbb{T}^d)$ makes it an ideal space of test functions as it can be used in the Mitoma's compactness criterion (see \cite{Mit}) and also as test functions to obtain existence and uniqueness of evolution stochastic differential equations taking values on dual of nuclear spaces.

	\subsection{Second-order elliptic stochastic partial differential equations}

	We will now provide the tools to work with the fractional counterpart to the stochastic partial differential equation considered in Section 8 of \cite{keale}. For this approach we will only ``operate'' on the $L^2_V(\mathbb{T})$ space as remarked at the beginning of Section \ref{fracsect}.
	
	We begin by providing a definition for the $V$-Gaussian white noise on $L^2_V(\mathbb{T})$:
	
	\begin{definition}\label{gaussianwhitenoiseV}
		We define the $V$-Gaussian white noise as the $L^2_V(\mathbb{T}^d)$-isonormal Gaussian process, that is, a process $\{\dot{B}_V(h); h\in L^2_V(\mathbb{T}^d)\}$ defined on a complete probability space $(\Omega,\mathcal{F},P)$, such that for every $g,h\in L^2_V(\mathbb{T}^d)$, $\dot{B}_V(g)$ and $\dot{B}_V(h)$ are centered $\mathbb{R}$-valued gaussian random variables such that $E(\dot{B}_V(g)\dot{B}_V(h)) = \int_{\mathbb{T}^d} gh dV$.
	\end{definition}
	
	\begin{remark}\label{whitenoiselinear}
		It follows directly (see \cite[Chapter 1]{nualart} for further details) that $\dot{B}_V$ is a linear isometry between $L^2_V(\mathbb{T}^d)$ and a closed subspace of $L^2(\Omega,\mathcal{F},P)$.
	\end{remark}
	
	\begin{remark}
		The existence of the $V$-Gaussian white noise follows directly from Kolmogorov's extension theorem.
	\end{remark}

Let us connect the one-dimensional $V$-Gaussian white noise with the $V$-Brownian motion considered in \cite{keale}.

\subsubsection{The one-dimensional $V$-Gaussian white noise}

Recall from \cite{keale} that a $V$-Brownian motion in law is a process $B_V(\cdot)$ such that
\begin{definition}\label{W-brownian-in-law2}
	We say that $B_V(t)$ is a $V$-Brownian motion in law if it satisfies the following conditions:
	\begin{enumerate}
		\item $B_V(0) = 0$ almost surely;
		\item If $t>s$, then $B_V(t)-B_V(s)$ is independent of $\sigma(B_V(u); u\leq s)$;
		\item If $t>s$, then $B_V(t)-B_V(s)$ has $N(0,V(t)-V(s))$ distribution.
	\end{enumerate}
If, additionally, $B_V(\cdot)$ has c\`adl\`ag sample paths, we say that $B_V(\cdot)$ is a $V$-Brownian motion.
\end{definition}
	
	We can recover the $V$-Brownian motion in law in Definition \ref{W-brownian-in-law2} from the one-dimensional $V$-Gaussian white noise by defining 
	\begin{equation}\label{brownian_from_noise}
		B_V(t) = \dot{B}_V(\boldsymbol{1}_{[0,t]}).
	\end{equation}
	
	\begin{proposition}\label{whitenoisebrownianinlaw}
		The process $B_V(\cdot)$ defined by \eqref{brownian_from_noise} is a $V$-Brownian motion in law.
	\end{proposition}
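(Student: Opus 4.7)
The plan is to verify the three conditions of Definition \ref{W-brownian-in-law2} directly from the defining properties of the $V$-Gaussian white noise (Definition \ref{gaussianwhitenoiseV}) and the linear isometry property recorded in Remark \ref{whitenoiselinear}. The three conditions are independent in nature, so I would handle them one by one, leaving condition 2 for the end since it requires the most care.

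First, for condition 1, I would observe that $\boldsymbol{1}_{[0,0]}$ is the zero element of $L^2_V(\mathbb{T})$ (its $L^2_V$-norm is zero because $V$ is continuous at $0$). By the linear isometry in Remark \ref{whitenoiselinear}, $B_V(0) = \dot{B}_V(\boldsymbol{1}_{[0,0]})$ has variance zero, hence equals zero almost surely. For condition 3, I would use linearity to write, for $t>s$,
$$B_V(t) - B_V(s) = \dot{B}_V(\boldsymbol{1}_{[0,t]}) - \dot{B}_V(\boldsymbol{1}_{[0,s]}) = \dot{B}_V(\boldsymbol{1}_{(s,t]}),$$
and then apply Definition \ref{gaussianwhitenoiseV} to conclude that this is a centered Gaussian random variable with variance $\int_{\mathbb{T}} \boldsymbol{1}_{(s,t]}\,dV = V(t) - V(s)$.

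For condition 2, the main point is that the family $\{\dot{B}_V(h):h\in L^2_V(\mathbb{T})\}$ generates a Gaussian Hilbert space in $L^2(\Omega,\mathcal{F},P)$. Hence, for Gaussian random variables in this space, orthogonality with respect to $L^2(\Omega)$ is equivalent to independence, and joint Gaussianity passes to the generated $\sigma$-algebras via standard results on Gaussian spaces. Thus it suffices to show that $B_V(t)-B_V(s) = \dot{B}_V(\boldsymbol{1}_{(s,t]})$ is uncorrelated with every $B_V(u) = \dot{B}_V(\boldsymbol{1}_{[0,u]})$ for $u\le s$, which follows from the isometry:
$$E\bigl[\dot{B}_V(\boldsymbol{1}_{(s,t]})\,\dot{B}_V(\boldsymbol{1}_{[0,u]})\bigr] = \int_{\mathbb{T}} \boldsymbol{1}_{(s,t]}\boldsymbol{1}_{[0,u]}\,dV = 0,$$
since the supports are disjoint.

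The only mildly delicate point is justifying that orthogonality to each generator of $\sigma(B_V(u);u\le s)$ yields independence from the whole $\sigma$-algebra; the hard part is thus not a computation but invoking the correct Gaussian-space fact (namely, that if $\xi$ lies in a Gaussian Hilbert space $\mathcal{H}$ and is orthogonal to a subfamily $\mathcal{G}\subset\mathcal{H}$, then $\xi$ is independent of $\sigma(\mathcal{G})$). Once that is in place, conditions 1--3 are each a one-line verification.
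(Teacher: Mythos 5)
Your proposal is correct and follows essentially the same route as the paper: verify conditions 1 and 3 via linearity of $\dot{B}_V$ and the isometry, and deduce condition 2 from the vanishing covariance $\langle \boldsymbol{1}_{(s,t]},\boldsymbol{1}_{[0,u]}\rangle_V = 0$ together with the fact that uncorrelated jointly Gaussian variables are independent. Your only addition is to spell out the Gaussian-space lemma that orthogonality to the generators yields independence from the whole $\sigma$-algebra $\sigma(B_V(u);u\le s)$, a step the paper states more tersely but relies on in the same way.
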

	\begin{proof}
		We have that $B_V(0)$ follows a Gaussian distribution with mean 0 and variance $V(0) = 0$. Therefore, $B_V(0) = 0$ almost surely. So, condition 1 follows.
		
		Now, since $\dot{B}_V(\cdot)$ is linear, we have that, for $t>s$,
		\begin{eqnarray*}
			B_V(t) - B_V(s) &=& \dot{B}_V(\boldsymbol{1}_{[0,t]}) - \dot{B}_V(\boldsymbol{1}_{[0,s]})\\
			&=& \dot{B}_V(\boldsymbol{1}_{(s,t]}).
		\end{eqnarray*}
		So, $B_V(t) - B_V(s)$ follows a gaussian distribution with mean 0 and variance $\int_{\mathbb{T}} \boldsymbol{1}_{(s,t]} dV = V(t) - V(s)$. Thus, condition 3 holds.
		
		Finally, condition 2 is a simple consequence of the fact that uncorrelated jointly gaussian random variables are independent and that for any $u\leq s < t$, we have $E((B_V(t)-B_V(s))B_V(u)) = E(\dot{B}_V(\boldsymbol{1}_{(s,t]})\dot{B}_V(\boldsymbol{1}_{[0,u]})) = \langle \boldsymbol{1}_{(s,t]},\boldsymbol{1}_{[0,u]}\rangle_V = 0,$ since the intervals $(s,t]$ and $[0,u]$ are disjoint.
	\end{proof}

Up to a modification, the $V$-Gaussian white noise induces a $V$-Brownian motion.
	
	\begin{corollary}\label{brownianfromnoise}
		The process $B_V(\cdot)$ has a modification that is a $V$-Brownian motion.
	\end{corollary}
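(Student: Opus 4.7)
The plan is to reduce the corollary to the existence result already established for $V$-Brownian motions in the previous chapter, namely Proposition \ref{existencebrownian} (applied with the role of $W$ replaced by $V$), which shows that any process satisfying conditions 1--3 of Definition \ref{W-brownian-in-law2} admits a c\`adl\`ag modification. Since Corollary \ref{brownianfromnoise} is the conjunction of Proposition \ref{whitenoisebrownianinlaw} with that general existence theorem, essentially no new work is required; the corollary is a direct specialization.

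More concretely, the argument proceeds in two short steps. First, I would invoke Proposition \ref{whitenoisebrownianinlaw} to conclude that the process $B_V(t) = \dot{B}_V(\mathbf{1}_{[0,t]})$ is a $V$-Brownian motion in law, so it satisfies items 1--3 of Definition \ref{W-brownian-in-law2}. Second, I would apply the existence construction of Proposition \ref{existencebrownian}: the $V$-Brownian motion in law is automatically a martingale (by the argument that $E(B_V(t)-B_V(s))=0$ together with the independent-increments property), hence a real-valued quasimartingale in the sense of \cite{brooks}; moreover the Chebyshev estimate
\[
P(|B_V(t)-B_V(s)|>\varepsilon) \le \frac{V(t)-V(s)}{\varepsilon^2}
\]
together with the right-continuity of $V$ gives right-continuity in probability. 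Theorem 2.1 of \cite{brooks} then supplies a c\`adl\`ag modification.

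This modification, by construction, still satisfies conditions 1--3 of Definition \ref{W-brownian-in-law2} (since modifications preserve finite-dimensional distributions), and additionally has c\`adl\`ag sample paths, so it is a $V$-Brownian motion in the sense of Definition \ref{W-brownian-in-law2} (the c\`adl\`ag version). There is no genuine obstacle here; the only subtlety is a bookkeeping one, namely to verify that Proposition \ref{existencebrownian} and the quasimartingale/right-continuity argument are stated in sufficient generality that swapping the function $W$ for $V$ is legitimate, which follows since the proof there only uses the properties that $V$ is increasing, right-continuous (after the identification used in the definition), and bounded on $\mathbb{T}$. Thus the corollary is obtained immediately by combining Proposition \ref{whitenoisebrownianinlaw} with Proposition \ref{existencebrownian}.
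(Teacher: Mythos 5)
Your strategy coincides with the paper's: the paper proves Corollary \ref{brownianfromnoise} by simply citing the proof of Proposition \ref{existencebrownian} (Proposition 10 of \cite{keale}), i.e., exactly the combination of Proposition \ref{whitenoisebrownianinlaw} with the martingale/quasimartingale argument and Theorem 2.1 of \cite{brooks} that you spell out, so there is no difference in route.

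There is, however, a real issue at the one step you dismiss as bookkeeping, and your justification of it is wrong: in this paper $V$ is c\`agl\`ad, not right-continuous, and right-continuity is precisely what the Brooks step needs (a point the paper's one-line proof also glosses over). Concretely, if condition 3 of Definition \ref{W-brownian-in-law2} is read literally, then $B_V(t)-B_V(t^\ast)\sim N\left(0,V(t)-V(t^\ast)\right)$, and at a jump $t^\ast$ of $V$, where $\delta:=V(t^\ast+)-V(t^\ast)>0$, this variance stays bounded below by $\delta$ as $t\downarrow t^\ast$; hence $B_V$ is not right-continuous in probability at $t^\ast$, Theorem 2.1 of \cite{brooks} does not apply, and in fact no modification can be right-continuous there (almost sure convergence $\tilde{B}_V(t)\to\tilde{B}_V(t^\ast)$ would force the laws $N\left(0,V(t)-V(t^\ast)\right)$ to converge to $\delta_0$, contradicting the variance lower bound). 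What rescues the argument for the process actually built from the white noise is the correct variance computation: $\operatorname{Var}\left(B_V(t)-B_V(s)\right)=\operatorname{Var}\left(\dot{B}_V(\mathbf{1}_{(s,t]})\right)=dV\left((s,t]\right)=V(t+)-V(s+)$, and $t\mapsto V(t+)$ is right-continuous, so the Chebyshev bound does yield right-continuity in probability and the Brooks argument goes through; the resulting c\`adl\`ag version has increment variances $V(t+)-V(s+)$, which agree with $V(t)-V(s)$ off the countable set of discontinuities of $V$. So you should either run the argument with $dV\left((s,t]\right)$ in place of $V(t)-V(s)$ (equivalently, with $V(\cdot+)$), or assume $V$ continuous; as written, the claim that the swap of $W$ for $V$ is legitimate because $V$ is right-continuous is the gap.
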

	\begin{proof}
		It follows directly from the proof of Proposition 10 in \cite{keale}.
	\end{proof}
	
	Notice, then, that in dimension 1, Proposition \ref{whitenoisebrownianinlaw} and Corollary \ref{brownianfromnoise} connect the $V$-Brownian motion with the $V$-Gaussian white noise on $L^2_V(\mathbb{T})$.
	
	Let $f = \sum_{i=1}^n \alpha_i \boldsymbol{1}_{I_i}$, be a simple function, with $n\in\mathbb{N}$, $\alpha_1,\ldots,\alpha_n\in\mathbb{R}$ and $I_1,\ldots, I_n$ are disjoint intervals. Define the stochastic integral of simple functions as
	$$\int_{\mathbb{T}} f(s) dB_V(s) = \sum_{i=1}^n \alpha_i \dot{B}_V(\boldsymbol{1}_{I_i}) = \dot{B}_V(f).$$
	
	It then follows immediately that if $f$ is a simple function, then the following isometry holds:
	\begin{equation}\label{isometryintegral}
		E[(\dot{B}_V(f))^2] = E\left[\left(\int_{\mathbb{T}} f dB_V\right)^2\right] = \int_{\mathbb{T}} f^2 dV.
	\end{equation}
	
	Let $h\in L^2_V(\mathbb{T})$ and $h_n$ be a sequence of simple functions converging to $h$ in $L^2_V(\mathbb{T})$. It follows from \eqref{isometryintegral}, that $\int_{\mathbb{T}}h_ndB_V$ is a Cauchy sequence in $L^2(\Omega,\mathcal{F},P)$, so we define
	\begin{equation}\label{stochintegral}
		\int_{\mathbb{T}} h dB_V = \lim_{n\to\infty} \int_{\mathbb{T}} h_n dB_V = \lim_{n\to\infty} \dot{B}_V(h_n),
	\end{equation}
	where the limit is taken in $L^2(\Omega,\mathcal{F},P)$.
	
	It also follows directly from the isometry \eqref{isometryintegral} that for any $h\in L^2_V(\mathbb{T})$
	\begin{equation}\label{whitenoisestochint}
	\dot{B}_V(h) = \int_{\mathbb{T}} h dB_V.
	\end{equation}

	\begin{remark}\label{whitenoiseL2Sobolev}
		Observe that \cite[Proposition 12]{keale} and \eqref{whitenoisestochint} show that the functional on $H_{W,V,\mathcal{D}}(\mathbb{T}):=\{f: f\in H^1_{W,V}(\mathbb{T}), f(0)=0\}$ induced by the $V$-Gaussian white noise on $L^2_V(\mathbb{T})$ introduced in Definition \ref{gaussianwhitenoiseV} through the $L^2_V(\mathbb{T})$-isonormal gaussian process coincides with the pathwise $V$-Gaussian white noise (with respect to the measure induced by $V$) introduced in \cite[Definition 12]{keale}.
	\end{remark}

	\subsubsection{$V$-Gaussian white noise on dimension $d$}
	Consider a dimension $d\geq 1$. We will now provide the well-known white noise expansion, which will enable us to solve the fractional equation. We will provide the proof for completeness since it is short.
	
	\begin{proposition}\label{whitenoise-exp}
		There exists an iid sequence of standard normal random variables $\xi_1,\xi_2,\ldots$ such that for every $h\in L^2_V(\mathbb{T}^d)$
		$$\dot{B}_V(h) = \sum_{i=0}^\infty \xi_i \<h,\nu_i\>_V,$$
		where the sum converges in $L^2(\Omega,\mathcal{F},P)$ and $P$-a.s.
	\end{proposition}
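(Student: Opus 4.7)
The plan is to identify the iid sequence by applying $\dot{B}_V$ to the orthonormal basis $\{\nu_i\}_{i\in\mathbb{N}}$ of $L^2_V(\mathbb{T}^d)$ furnished by the eigenvectors of $I-\Delta_{W,V}$ (or, more simply, any complete orthonormal system of $L^2_V(\mathbb{T}^d)$), and then use that $\dot{B}_V$ is a linear isometry into a Gaussian subspace of $L^2(\Omega,\mathcal{F},P)$. Concretely, I would set $\xi_i := \dot{B}_V(\nu_i)$. Each $\xi_i$ is a centered Gaussian by the definition of the $V$-Gaussian white noise, and by the isometry property
\[
E(\xi_i \xi_j) = \int_{\mathbb{T}^d} \nu_i \nu_j \, dV = \langle \nu_i,\nu_j\rangle_V = \delta_{ij}.
\]
Since $(\xi_i)_{i\in\mathbb{N}}$ lives in the Gaussian subspace generated by $\dot{B}_V$, the random variables are jointly Gaussian, and uncorrelation therefore upgrades to independence. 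Hence $(\xi_i)$ is an iid sequence of standard normals.

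Next, for any $h\in L^2_V(\mathbb{T}^d)$, Parseval's identity gives
\[
h = \sum_{i=0}^\infty \langle h,\nu_i\rangle_V \nu_i \quad \text{in } L^2_V(\mathbb{T}^d).
\]
Since $\dot{B}_V$ is a linear isometry between $L^2_V(\mathbb{T}^d)$ and a closed subspace of $L^2(\Omega,\mathcal{F},P)$ (see Remark \ref{whitenoiselinear}), it is in particular continuous, so applying it term by term yields
\[
\dot{B}_V(h) = \sum_{i=0}^\infty \langle h,\nu_i\rangle_V \, \dot{B}_V(\nu_i) = \sum_{i=0}^\infty \langle h,\nu_i\rangle_V \, \xi_i
\]
with convergence in $L^2(\Omega,\mathcal{F},P)$. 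One can also verify this convergence directly: the partial sums form a Cauchy sequence in $L^2(\Omega,\mathcal{F},P)$ since
\[
E\left[\left(\sum_{i=m}^n \langle h,\nu_i\rangle_V \xi_i\right)^2\right] = \sum_{i=m}^n \langle h,\nu_i\rangle_V^2 \longrightarrow 0
\]
as $m,n\to\infty$, by Parseval.

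For the almost sure convergence, I would invoke the Itô--Nisio theorem (or, equivalently, Lévy's equivalence theorem for series of independent symmetric random variables): for a series $\sum_i X_i$ of independent random variables, convergence in probability implies almost sure convergence. Since our partial sums are sums of independent random variables and converge in $L^2$ (hence in probability), almost sure convergence follows. I do not anticipate any real obstacle here; the only subtle point is justifying the joint independence of the $\xi_i$'s, which is immediate from being jointly Gaussian and pairwise uncorrelated.
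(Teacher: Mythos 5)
Your proposal is correct and follows essentially the same route as the paper: you define $\xi_i=\dot{B}_V(\nu_i)$, use the isometry to get uncorrelated jointly Gaussian (hence independent) standard normals, and obtain $L^2$-convergence from Parseval together with the isometry/Cauchy argument. The only divergence is in the almost sure convergence step: the paper observes that the partial sums $M_n=\sum_{i=0}^n \xi_i\langle h,\nu_i\rangle_V$ form a martingale with $\sup_n E|M_n|\leq \|h\|_V<\infty$ and applies the $L^1$-bounded martingale convergence theorem, whereas you invoke L\'evy's equivalence theorem (or It\^o--Nisio) for series of independent summands, upgrading convergence in probability to almost sure convergence. Both tools apply here (your summands are independent and centered, so symmetry is not even needed in the classical L\'evy equivalence statement), and both implicitly identify the a.s.\ limit with $\dot{B}_V(h)$ via the $L^2$ limit; the martingale route has the mild advantage of being self-contained within the paper's cited reference, while yours is equally standard and marginally more direct.
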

	\begin{proof}
		Begin by noticing that since $\{\nu_i\}_{i\in\mathbb{N}}$ are orthogonal, $\{\dot{B}_V(\nu_i)\}_{i\in\mathbb{N}}$ is a sequence of uncorrelated and jointly gaussian distributions. Thus, the random variables $\dot{B}_V(\nu_1),$  $\dot{B}_V(\nu_2),\ldots$ are independent and gaussian with variance $\|\nu_i\|_V=1$. So they form an iid sequence of standard normal distributions.
		
		By Remark \ref{whitenoiselinear}, $\dot{B}_V$ is a linear isometry between $L^2_V(\mathbb{T}^d)$ and a closed subspace of $L^2(\Omega,\mathcal{F},P)$. Hence, it is easy to check that the sum converges in $L^2(\Omega,\mathcal{F},P)$.
		
		Finally, let $M_n = \sum_{i=0}^n \xi_i \<h,\nu_i\>_V.$
		Then, $(M_n)_{n\in\mathbb{N}}$ is a martingale such that
		$$\sup_{n\in\mathbb{N}} E|M_n| \leq \sqrt{E((\dot{B}_V(h))^2)} = \|h\|_V <\infty.$$
		Therefore, by Theorem 4.2.11 in \cite{durrett}, $M_n$ converges a.s., which completes the proof.
	\end{proof}
	
	\begin{remark}\label{remarkexpwhite}
		The expansion in Proposition \ref{whitenoise-exp} motivates the following expansion for $\dot{B}_V$:
		\begin{equation}\label{eqexpwhite}
			\dot{B}_V = \sum_{i=0}^\infty \xi_i \nu_i.
		\end{equation}
		However, the above series does not converge on $L^2_V(\mathbb{T}^d)$.
	\end{remark}
	
	In the next Proposition we use the characterization in Proposition \ref{dualsob} to obtain a space in which the expansion in Remark \ref{remarkexpwhite} converges.
	
	\begin{proposition}\label{expwhitedual}
		If $T:L^2_V(\mathbb{T}^d)\to H$ is a Hilbert-Schmidt operator, where ${H}$ is some Hilbert space with norm $\|\cdot\|_H$. Then, the expansion
		\begin{equation}\label{expwhitenoiseHS}
			T\dot{B}_V := \sum_{i=0}^\infty \xi_i T\nu_i
		\end{equation}
		is a well-defined $L^2(\Omega,\mathcal{F},P)$-random variable in $H$, where the random variables $\xi_1,\xi_2,\ldots$ are the iid standard normal random variables defined in Proposition \ref{whitenoise-exp}.
	\end{proposition}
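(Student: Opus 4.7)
The plan is to show that the partial sums $S_n = \sum_{i=0}^n \xi_i T\nu_i$ form a Cauchy sequence in the Bochner space $L^2(\Omega,\mathcal{F},P;H)$, and hence converge to a limit in that space, which we define to be $T\dot{B}_V$. The argument is the classical one for Hilbert--Schmidt operators applied to white noise, with the input being the Hilbert--Schmidt property of $T$ and the fact that $\{\nu_i\}_{i\in\mathbb{N}}$ is an orthonormal basis of $L^2_V(\mathbb{T}^d)$ (as established in the discussion just below Definition \ref{laplacianWV2} and in the tensor-product extension).

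First I would fix $m > n$ and expand the squared $H$-norm of the increment, using that $H$ is a Hilbert space:
$$\mathbb{E}\|S_m - S_n\|_H^2 = \mathbb{E}\left\langle \sum_{i=n+1}^m \xi_i T\nu_i, \sum_{j=n+1}^m \xi_j T\nu_j\right\rangle_H = \sum_{i,j=n+1}^m \mathbb{E}[\xi_i \xi_j]\,\langle T\nu_i, T\nu_j\rangle_H.$$
Since $\xi_1,\xi_2,\ldots$ are iid standard normals, $\mathbb{E}[\xi_i\xi_j] = \delta_{ij}$, so the double sum collapses to
$$\mathbb{E}\|S_m - S_n\|_H^2 = \sum_{i=n+1}^m \|T\nu_i\|_H^2.$$

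Next, I would invoke the Hilbert--Schmidt hypothesis. Since $\{\nu_i\}_{i\in\mathbb{N}}$ is an orthonormal basis of $L^2_V(\mathbb{T}^d)$, the Hilbert--Schmidt norm of $T$ satisfies $\|T\|_{\mathrm{HS}}^2 = \sum_{i=0}^\infty \|T\nu_i\|_H^2 < \infty$. Therefore the tails $\sum_{i=n+1}^\infty \|T\nu_i\|_H^2$ tend to zero as $n\to\infty$, which by the identity above shows that $(S_n)_{n\in\mathbb{N}}$ is Cauchy in $L^2(\Omega,\mathcal{F},P;H)$. Since $H$ is a Hilbert space, $L^2(\Omega,\mathcal{F},P;H)$ is a (Hilbert) Banach space and hence complete, so $S_n$ converges to a limit that we denote by $T\dot{B}_V$. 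This limit is, by construction, a well-defined $L^2(\Omega,\mathcal{F},P)$-random variable taking values in $H$.

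There is no significant obstacle here; the only care required is to verify that the cross terms vanish because of the orthonormality of $\{\xi_i\}$ in $L^2(\Omega,\mathcal{F},P)$ (which was already observed in the proof of Proposition \ref{whitenoise-exp}) and to recognize that the Hilbert--Schmidt norm of $T$ is precisely the $\ell^2$-norm of $(\|T\nu_i\|_H)_{i\in\mathbb{N}}$ when computed against the orthonormal basis $\{\nu_i\}$. Optionally, one can also note that $(S_n)$ is a square-integrable martingale with respect to the filtration $\mathcal{F}_n = \sigma(\xi_0,\ldots,\xi_n)$ with $\sup_n \mathbb{E}\|S_n\|_H^2 \leq \|T\|_{\mathrm{HS}}^2 < \infty$, so that a Hilbert-space valued martingale convergence argument yields $P$-almost sure convergence as well, though this is not strictly required by the statement.
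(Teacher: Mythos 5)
Your proposal is correct and follows essentially the same route as the paper: both compute the second moment of the partial sums using the orthonormality of the $\xi_i$ and identify $\sum_i \|T\nu_i\|_H^2$ with the (finite) Hilbert--Schmidt norm of $T$ to conclude $L^2(\Omega,\mathcal{F},P)$-convergence in $H$. Your version merely makes the Cauchy-increment step explicit (which the paper leaves implicit in passing from bounded orthogonal partial sums to convergence), so it is, if anything, a slightly more complete write-up of the same argument.
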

	\begin{proof}
		We have that
		$$
		\left\|\sum_{i=0}^N\xi_i T\nu_i\right\|_H^2 = \sum_{i,j=1}^N \xi_i\xi_j \<T\nu_i,T\nu_j\>_H.
		$$
		Thus,
		$$E\left(\left\|\sum_{i=0}^N\xi_i T\nu_i\right\|_H^2\right) = \sum_{i=1}^N \|T\nu_i\|_H^2 \leq \sum_{i=1}^\infty \|T\nu_i\|_H^2 <\infty,$$
		since $T$ is Hilbert-Schmidt. Therefore, the sum
		$\sum_{i=0}^N\xi_i T\nu_i$
		converges to some $H$-valued random variable in $L^2(\Omega,\mathcal{F},P)$. 
	\end{proof}
	
	\begin{corollary}\label{whitenoisedual}
		We have that if $s > d/2$, then $P$-almost surely $\dot{B}_V|_{H^s_{W,V}(\mathbb{T}^d)}\in H^{-s}_{W,V}(\mathbb{T}^d)$, where $\dot{B}_V|_{H^s_{W,V}(\mathbb{T}^d)}$ is the restriction of the $V$-Gaussian white noise to $H^s_{W,V}(\mathbb{T}^d)$. Furthermore, the following expansion converges in $H^{-s}_{W,V}(\mathbb{T}^d)$:
		$$\dot{B}_V = \sum_{i=0}^\infty \xi_i \nu_i.$$
	\end{corollary}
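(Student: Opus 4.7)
\textbf{Proof proposal for Corollary \ref{whitenoisedual}.}

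The plan is to assemble three pieces that are already in place: the Hilbert--Schmidt inclusion from Corollary \ref{hilbertschmidtd}, the Hilbert-space white-noise expansion in Proposition \ref{expwhitedual}, and the Fourier characterization of $H^{-s}_{W,V}(\mathbb{T}^d)$ in Proposition \ref{dualsob}. Concretely, since $s>d/2$, Corollary \ref{hilbertschmidtd} gives that the inclusion $i:L^2_{dV}(\mathbb{T}^d)\to H^{-s}_{W,V}(\mathbb{T}^d)$ is Hilbert--Schmidt. Applying Proposition \ref{expwhitedual} with $T=i$ and $H=H^{-s}_{W,V}(\mathbb{T}^d)$ yields that the partial sums $S_N:=\sum_{i=0}^N \xi_i\,\nu_i$ converge in $L^2(\Omega;H^{-s}_{W,V}(\mathbb{T}^d))$ to a well-defined $H^{-s}_{W,V}(\mathbb{T}^d)$-valued random variable $F$.

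Next, I would upgrade this to $P$-almost sure convergence in $H^{-s}_{W,V}(\mathbb{T}^d)$. Two equivalent routes are available. The slicker one uses Proposition \ref{dualsob}: since $\{\nu_i\}_{i\geq 1}$ are independent in $H^{-s}_{W,V}(\mathbb{T}^d)$ and $\|\nu_i\|_{H^{-s}_{W,V}(\mathbb{T}^d)}^2=\gamma_i^{-s}$, one simply observes
\begin{equation*}
E\Big[\sum_{i=0}^\infty \gamma_i^{-s}\xi_i^2\Big]=\sum_{i=0}^\infty \gamma_i^{-s}<\infty,
\end{equation*}
by Corollary \ref{hilbertschmidtd} (or Theorem \ref{traceclassd}), so $\sum_{i=0}^\infty \gamma_i^{-s}\xi_i^2<\infty$ $P$-a.s.; by the Fourier characterization of $H^{-s}_{W,V}(\mathbb{T}^d)$ this is exactly the condition for $F_\omega:=\sum_i \xi_i(\omega)\nu_i$ to belong to $H^{-s}_{W,V}(\mathbb{T}^d)$. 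Alternatively, one can observe that $(S_N)_{N\geq 1}$ is an $L^2$-bounded martingale valued in the Hilbert space $H^{-s}_{W,V}(\mathbb{T}^d)$ with respect to $\mathcal{F}_N=\sigma(\xi_1,\ldots,\xi_N)$, and apply the Hilbert-space martingale convergence theorem.

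The remaining step, which I expect to be the most delicate, is identifying $F$ with the restriction $\dot B_V|_{H^s_{W,V}(\mathbb{T}^d)}$. For any fixed $g\in H^s_{W,V}(\mathbb{T}^d)$, write $g=\alpha_0+\sum_{i\geq 1}\alpha_i\nu_i$ with $\alpha_i=\langle g,\nu_i\rangle_V$. Proposition \ref{whitenoise-exp} gives $\dot B_V(g)=\sum_i \xi_i\alpha_i$ $P$-a.s., while the pairing formula in Proposition \ref{dualsob} gives $(F_\omega,g)=\sum_i \xi_i(\omega)\alpha_i$. Therefore, for each fixed $g$, $\dot B_V(g)=(F,g)$ holds $P$-a.s. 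The subtlety is that the exceptional null set depends on $g$. This is resolved by exploiting the separability of $H^s_{W,V}(\mathbb{T}^d)$: pick a countable dense set $\{g_k\}_{k\geq 1}$ in $H^s_{W,V}(\mathbb{T}^d)$ and let $N=\bigcup_k N_{g_k}$, where $N_{g_k}$ is the exceptional null set for $g_k$; then $P(N)=0$, and on $\Omega\setminus N$ the functionals $g\mapsto(F_\omega,g)$ and $g\mapsto \dot B_V(g)(\omega)$ agree on a countable dense subset. Since $(F_\omega,\cdot)$ is a bounded linear functional on $H^s_{W,V}(\mathbb{T}^d)$, this forces $\dot B_V|_{H^s_{W,V}(\mathbb{T}^d)}$ to admit the version $\omega\mapsto (F_\omega,\cdot)$, which lies $P$-a.s.\ in $H^{-s}_{W,V}(\mathbb{T}^d)$.

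The main obstacle is this last identification: a priori $\dot B_V$ is only defined modulo $P$-null sets separately for each $g$, and some care is needed to pick a pointwise version that is a bounded linear functional simultaneously on all of $H^s_{W,V}(\mathbb{T}^d)$. Once separability of $H^s_{W,V}(\mathbb{T}^d)$ (which follows from the Fourier characterization in Definition \ref{fracsob}) is invoked and the pairing in Proposition \ref{dualsob} is matched with the expansion in Proposition \ref{whitenoise-exp}, the corollary follows.
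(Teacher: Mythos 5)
Your proposal is correct and follows essentially the same route as the paper: the Hilbert--Schmidt inclusion from Corollary \ref{hilbertschmidtd}, the expansion of Proposition \ref{expwhitedual} applied to $T=i$, and the identification with $\dot B_V$ on $H^s_{W,V}(\mathbb{T}^d)$ via Proposition \ref{whitenoise-exp}. You are in fact more careful than the paper's own two-line proof, since you make the almost-sure convergence explicit (via $\sum_i\gamma_i^{-s}\xi_i^2<\infty$ a.s.\ or Hilbert-space martingale convergence) and handle the $g$-dependent null sets through separability of $H^s_{W,V}(\mathbb{T}^d)$, points the paper leaves implicit; only the phrase ``$\{\nu_i\}$ are independent in $H^{-s}_{W,V}(\mathbb{T}^d)$'' should read ``orthogonal''.
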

	\begin{proof}
		By Corollary \ref{hilbertschmidtd}, if $s > d/2,$ then the inclusion $i:L^2_V(\mathbb{T}^d)\to H^{-s}_{W,V}(\mathbb{T}^d)$ is trace-class and therefore Hilbert-Schmidt. By Proposition \ref{expwhitedual}, 
		$$i(\dot{B}_V) = \sum_{i=0}^\infty \xi_i i(\nu_i)$$
		is a well defined $L^2(\Omega,\mathcal{F},P)$-random variable in $H^{-s}_{W,V}(\mathbb{T}^d)$. Furthermore, by Proposition \ref{whitenoise-exp}, we have the following equality for every $h\in H^s_{W,V}(\mathbb{T}^d)\subset L^2_V(\mathbb{T}^d)$:
		$$\dot{B}_V(h) = i(\dot{B}_V)(h).$$
		Therefore, $\dot{B}_V|_{H^s_{W,V}(\mathbb{T}^d)} = i(\dot{B}_V)$.
	\end{proof}
	
	We are now in a position to solve the Mat\'ern-like fractional elliptic equation driven by $V$-generalized gaussian white noise:
	
	\begin{equation}\label{fracstoch}
		L_{W,V}^\beta u = \dot{B}_V,
	\end{equation}
	where $L_{W,V}$ is the operator induced by the bilinear form \eqref{bilinearLVW}. Note that if $H=I$, then the equation becomes
	$$(\kappa^2 I - \Delta_{W,V})^\beta u =\dot{B}_V.$$
	
	\begin{remark}
		Observe that the stochastic partial differential equation (\ref{fracstoch}) generalizes the Mat\'ern equation on the $d$-dimensional torus.
	\end{remark}
	
	\begin{remark}
		Notice that unlike the stochastic partial differential equation considered in Section 8 of \cite{keale}, the fractional equation \eqref{fracstoch} is driven by a gaussian white noise on $L^2_V(\mathbb{T}^d)$.
	\end{remark}
	
	\begin{theorem}
		Let $L_{W,V}$ be the operator with bilinear form \eqref{bilinearLVW}. If $\beta>d/4$, then the solution of \eqref{fracstoch} given by
		$$u = L_{W,V}^{-\beta} \dot{B}_V$$
		is a well-defined centered gaussian $L^2(\Omega,\mathcal{F},P)$-random variable taking values in the space $\dot{H}^{
	2\beta-\frac{d}{2}-\epsilon}_{L,W,V}(\mathbb{T}^d)$ for all $\epsilon >0$ and covariance operator given by $L_{W,V}^{-2\beta}$ a.s.
	\end{theorem}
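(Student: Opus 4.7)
The strategy is to interpret $u = L_{W,V}^{-\beta}\dot{B}_V$ via a spectral series in the orthonormal basis $\{e_{i,L}\}_{i\in\mathbb{N}}$ of $L^2_V(\mathbb{T}^d)$ consisting of eigenvectors of $L_{W,V}$ with eigenvalues $\gamma_{i,L}$. The proof of Proposition \ref{whitenoise-exp} uses only that $\{\nu_i\}$ is an orthonormal basis of $L^2_V(\mathbb{T}^d)$, so it applies verbatim with $\{e_{i,L}\}$ in place of $\{\nu_i\}$: setting $\xi_i := \dot{B}_V(e_{i,L})$ gives an iid sequence of standard normal random variables with $\dot{B}_V(h) = \sum_i \xi_i \langle h, e_{i,L}\rangle_V$ for every $h \in L^2_V(\mathbb{T}^d)$. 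Applying $L_{W,V}^{-\beta}$ formally term by term motivates defining
$$u := \sum_{i=0}^\infty \gamma_{i,L}^{-\beta}\,\xi_i\, e_{i,L}.$$

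Next, I will show that this series converges in $L^2(\Omega; \dot{H}^{2\beta-d/2-\epsilon}_{L,W,V}(\mathbb{T}^d))$. By the definition of the $\dot{H}^{2\beta-d/2-\epsilon}_{L,W,V}$-norm together with the orthonormality of the $e_{i,L}$'s and the independence and unit variance of the $\xi_i$'s,
$$E\Bigl\|\sum_{i=M}^N \gamma_{i,L}^{-\beta}\xi_i e_{i,L}\Bigr\|^2_{\dot{H}^{2\beta-d/2-\epsilon}_{L,W,V}} = \sum_{i=M}^N \gamma_{i,L}^{2\beta-d/2-\epsilon}\,\gamma_{i,L}^{-2\beta} = \sum_{i=M}^N \gamma_{i,L}^{-d/2-\epsilon},$$
and the tail vanishes as $M,N\to\infty$ because Theorem \ref{coro4} (applied with $s = d/2+\epsilon > d/2$) ensures that $L_{W,V}^{-(d/2+\epsilon)}$ is trace-class and thus $\sum_i \gamma_{i,L}^{-d/2-\epsilon} < \infty$. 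This simultaneously establishes well-definedness of $u$ as an $L^2(\Omega)$-random variable with values in $\dot{H}^{2\beta-d/2-\epsilon}_{L,W,V}(\mathbb{T}^d)$ for each $\epsilon>0$. Gaussianity is automatic: $u$ is an $L^2(\Omega)$-limit of finite linear combinations of the independent centered Gaussians $\xi_i$, and such limits remain centered Gaussian.

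For the covariance operator, the hypothesis $\beta>d/4$ is exactly $2\beta>d/2$, so the same trace-class argument applied with $s=2\beta$ shows that the series also converges in $L^2(\Omega; L^2_V(\mathbb{T}^d))$, giving $u\in L^2_V(\mathbb{T}^d)$ almost surely. For $v=\sum_i b_i e_{i,L}$ and $w=\sum_i c_i e_{i,L}$ in $L^2_V(\mathbb{T}^d)$, the independence of the $\xi_i$'s then yields
$$E\bigl[\langle u,v\rangle_V\langle u,w\rangle_V\bigr] = \sum_{i=0}^\infty \gamma_{i,L}^{-2\beta}\,b_i c_i = \langle L_{W,V}^{-2\beta}v,w\rangle_V,$$
identifying the covariance operator on $L^2_V(\mathbb{T}^d)$ as $L_{W,V}^{-2\beta}$. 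The only technical subtlety is the eigenvalue bookkeeping: one needs both $2\beta>d/2$ (to secure the $L^2_V$-valued random variable used in the covariance identity) and $d/2+\epsilon>d/2$ (to obtain the gain of $2\beta-d/2-\epsilon$ Sobolev derivatives), which is precisely why the endpoint $\epsilon=0$ is unreachable.
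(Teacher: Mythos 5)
Your argument is correct, and it follows the same skeleton as the paper's proof (spectral expansion of the white noise plus summability of the eigenvalues of $L_{W,V}$), but it executes one key step differently. The paper notes that $\beta>d/4$ makes the restriction of $L_{W,V}^{-\beta}$ to $L^2_V(\mathbb{T}^d)$ Hilbert--Schmidt (via Theorem \ref{coro4}), invokes Proposition \ref{expwhitedual} for well-definedness, and then outsources the Sobolev regularity statement (values in $\dot{H}^{2\beta-\frac{d}{2}-\epsilon}_{L,W,V}(\mathbb{T}^d)$) to \cite[Lemma 2.1]{daviboo}, with the covariance ``readily obtained'' from the expansion \eqref{expwhitenoiseHS}. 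You instead work directly in the eigenbasis $\{e_{i,L}\}$ of $L_{W,V}$ and compute the second moment of the partial sums in the $\dot{H}^{2\beta-d/2-\epsilon}_{L,W,V}$-norm, reducing everything to $\sum_i \gamma_{i,L}^{-d/2-\epsilon}<\infty$, which is exactly Theorem \ref{coro4} with $s=d/2+\epsilon$; this makes the regularity step self-contained (it is essentially a proof of the cited external lemma in this setting) and makes the covariance identification an explicit two-line computation rather than an appeal to the expansion. The only point worth adding is that your $u=\sum_i \gamma_{i,L}^{-\beta}\xi_i e_{i,L}$, built from $\xi_i=\dot{B}_V(e_{i,L})$, coincides with the paper's object $L_{W,V}^{-\beta}\dot{B}_V$ defined through the $\nu_i$-expansion in \eqref{expwhitenoiseHS}: for a Hilbert--Schmidt $T$ the sum $\sum_i \dot{B}_V(u_i)Tu_i$ is independent of the orthonormal basis $\{u_i\}$, since pairing with any $h$ yields $\dot{B}_V(T^\ast h)$ in both cases; a sentence to this effect would close the gap between your construction and the statement being proved. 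Your closing observation on the bookkeeping ($2\beta>d/2$ for the $L^2_V$-valued covariance computation, $d/2+\epsilon>d/2$ for the regularity gain, hence the unreachable endpoint $\epsilon=0$) is accurate and in fact clarifies why the hypothesis $\beta>d/4$ enters.
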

	\begin{proof}
		By Theorem \ref{coro4}, the operator $L_{W,V}^{-s}$ is trace-class for $s>d/2$. In particular, if $\beta>d/4$, the restriction of $L_{W,V}^{-\beta}$ to $L^2_V(\mathbb{T})$ is Hilbert-Schmidt. Hence, from Proposition \ref{expwhitedual} and \cite[Lemma  2.1]{daviboo}, if $\beta > d/4$, then $L_{W,V}^{-\beta}\dot{B}_V$ is a well-defined $L^2(\Omega,\mathcal{F},P)$-random variable taking values in $\dot{H}^{
	2\beta-\frac{d}{2}-\epsilon}_{L,W,V}(\mathbb{T}^d)$. Finally, by writing $u = L_{W,V}^{-\beta}\dot{B}_V$ in terms of expansion in equation \eqref{expwhitenoiseHS} we readily obtain that $u$ is a centered gaussian random variable with covariance operator given by $L_{W,V}^{-2\beta}$.
	\end{proof}
\chapter{Final Discussion}

At this point of the work, we believe we were able to show the importance of the $W$-$V$-Sobolev spaces $H_{W,V}^{s}(\mathbb{T})$, of the space of test functions $C_{W,V}^{\infty}(\mathbb{T})$. Further, we showed that the $W$-Brownian motion is deeply connected to the $W$-$V$-Sobolev spaces. Moreover, by using the theory, we were able to solve some elliptic partial differential equations and stochastic partial differential equations.

However, some interesting questions can be raised in view of the nature and of the results we obtained. As shown above, we can extend our one-dimensional model to higher dimensions, thus obtaining analogues to $W$-$V$-Sobolev spaces and for space of test functions. By following the ideas in Chapter 2 it is straightforward to establish the existence and uniqueness for problems related to the elliptic differential operator $$
E_{W,V}u=\sum_{i=1}^d -\partial^{+}_{V_{i}}\left(a_{i}\partial^{-}_{W_{i}}\right)u+cu,$$ where $c$ and $a_{i}$ satisfy some suitable conditions. The operator $E_{W,V}$ is a generalization of the classical elliptic differential operator $$\tilde{E}u=-\sum_{i=1}^d \partial_{x_{i}}\left(a_{i}\partial_{x_{i}}\right)u+cu.$$ But, as we know, we can study the operator $\tilde{E}$ in a more complete manner, which renders the operator asymmetric. More precisely, one can consider the operator given by 
$$Lu=-\sum_{i,j=1}^d\partial_{x_{i}}\left(a_{i,j}\partial_{x_{j}}\right)u+\sum_{i=1}^{d}b_{i}\partial_{x_{i}}u+cu$$ where the matrix $\textbf{A}=(a_{ij})_{i,j=1,\ldots, d}$ and and the vector $\textbf{b}=(b_{i})_{i=1,\ldots,d}$ satisfies some suitable conditions. Therefore, the following question is natural: 
\begin{center}
    ``How can we generalize the operator $E_{W,V}$ in such manner to study its complete version where necessarily the matrix $\textbf{A}$ is not taken as a diagonal and $\textbf{b}\neq 0?$''
\end{center}
Still talking about the operator $E_{W,V}$ and motivated by the results presented in \cite{simasvalentim2}, we can point the following interesting question about the regularity:
\begin{center}
    ``What is the shape of regularity results that we can establish for the operator $\sum_{i=1}^d -\partial^{+}_{V_{i}}\left(\partial^{-}_{W_{i}}\right)u+u$?,  and how about $E_{W,V}$?''
\end{center}
Back to the topic of high-dimensional extensions of the the one dimensional model of $-\Delta_{W,V}$. Note that our $d$-dimensional differential operator $-\Delta_{\textbf{W,V}}$, can be seen as an operator depending on $\textbf{W}$ and $\textbf{V}$. If one considers $\textbf{W}$ and $\textbf{V}$ as probability distribution functions, the case we considered means that we assumed the jointly distribution to the product (which would mean they are the distribution of independent random variables). We can also have the following question
\begin{center}
    ``Is it possible consider generalized second order operators by considering formal derivatives with respect an Borel measure on $\mathbb{T}^d$ not necessarily given by the product measure?.''
\end{center}
Finally, turning our attention to the $W$-Brownian motion $B_{W}$, and pointing that, the jumps of $B_{W}$ is subordinated to the jumps of $W$ and expecting that this model of stochastic process can be useful in future works related to stocks in the financial market one nice question about future models related to this object is 
\begin{center}
    ``By embedding $B_W$ on a random environment, in such a manner that the functions $V$ and $W$ are random, thus making the jump sites random, is there a choice of the random environment such that the (unconditional) finite dimensional distributions of $B_W$ are still Gaussian?''
\end{center}





\cleardoublepage
\addcontentsline{toc}{chapter}{Bibliography}

%
	

\end{document}